\newtheorem{theorem}{Theorem}[section]
\newtheorem{lemma}[theorem]{Lemma}
\newtheorem{proposition}[theorem]{Proposition}
\theoremstyle{definition}
\newtheorem{definition}[theorem]{Definition}
\newtheorem{remark}[theorem]{Remark}
\newtheorem*{acknowledgements}{Acknowledgements}
\newcommand{\id}{\mathrm{id}}
\newcommand{\Coder}{\mathrm{Coder}}
\newcommand{\End}{\mathrm{End}}
\newcommand{\K}{\mathbb{K}}
\def\aaltbin#1#2{\ensuremath{\left(\kern-.35em\left(\genfrac{}{}{0pt}{}{#1}{#2}\right)\kern-.35em\right)} }
\newcommand{\Hom}{\operatorname{Hom}}
\begin{document}
	
	\title{Cumulants, Koszul brackets and homological perturbation theory for commutative $BV_\infty$ and $IBL_\infty$ algebras}
\title[Cumulants, Koszul brackets and homological perturbation theory ...]
{Cumulants, Koszul brackets and homological perturbation theory for commutative $BV_\infty$ and $IBL_\infty$ algebras}
\author{Ruggero Bandiera}
\address{Universit\`{a} degli studi di Roma ``La Sapienza''}
\email{bandiera@mat.uniroma1.it}
	\maketitle
\begin{abstract}
	We explore the relationship between the classical constructions of cumulants and Koszul brackets, showing that the former are an expontial version of the latter. Moreover, under some additional technical assumptions, we prove that both constructions are compatible with standard homological perturbation theory in an appropriate sense. As an application of these results, we provide new proofs for the homotopy transfer Theorem for $L_\infty$ and $IBL_\infty$ algebras based on the symmetrized tensor trick and the standard perturbation Lemma, as in the usual approach for $A_\infty$ algebras. Moreover, we prove a homotopy transfer Theorem for commutative $BV_\infty$ algebras in the sense of Kravchenko which appears to be new. Along the way, we introduce a new definition of morphism between commutative $BV_\infty$ algebras. \end{abstract}
\tableofcontents	
	\renewcommand{\1}{\mathbf{1}}
	\section*{Introduction}
	
Given a graded commutative algebra $A$ and an endomorphism $\delta:A\to A$, the \emph{Koszul brackets} $\mathcal{K}(\delta)_n:A^{\odot n}\to A$ are graded symmetric maps measuring the deviation of $\delta$ from being a derivation. These were introduced by J. L. Koszul \cite{Kos} in the algebraic study of differential operators, and have been applied to a variety of situations, from the study of the BV formalism in mathematical physics \cite{Alf,BDA,Bat0,Bat1,Bat2,Ber} to Poisson geometry \cite{Kos,KosSchw,FMKos,BMcois}. Their algebraic properties have been extensively studied, see for instance \cite{Markl1,Markl2,MKos1,MKos2}, and several generalizations to non-commutative settings have been investigated: see \cite{Ber,Bjor,Markl1,Bder,MKos2} for generalizations in the associative setting and \cite{BKap} for a generalization in the pre-Lie setting. See also \cite{Vor1,Vor2} for a closely related construction of higher (derived) brackets.

Given graded commutative algebras $A, B$ and a degree zero map $f:A\to B$, the (classical) \emph{cumulants} $\kappa(f)_n:A^{\odot n}\to B$ are graded symmetric maps measuring the deviation of $f$ from being a morphism of graded algebras. Certain non-commutative generalizations have been considered as well, see \cite{Lehn,Park1,Park2,Ranade1}. These play an important role in probability theory, as well as some of its non-classical variations (such as non-commutative probability theory \cite{Lehn} or homotopy probability theory \cite{Park1,Park2,ParkHP}). In their algebraic formulation considered here, they have also been also applied in the study of homotopy algebras and homotopy morphisms \cite{Ranade1,Ranade2}, as they give obstructions for a linear map between $A_\infty$ or $C_\infty$ algebras to be the linear part of an $\infty$-morphism.
 
In Section \ref{sec:cumulantsandkoszul} we explore the relation between (classical) cumulants and Koszul brackets. Given a graded commutative algebra $A$, we shall denote by $S(A)$ the symmetric coalgebra over $A$, and by $E:S(A)\to S(A)$ the coalgebra morphism which corestricts to $pE:S(A)\to A:a_1\odot\cdots\odot a_n\to a_1\cdots a_n$. Since $E$ has linear part the identity, it is an automorphism of $S(A)$, whose inverse we denote by $L$. It has been observed in several places that given $\delta:A\to A$, its Koszul brackets $\mathcal{K}(\delta)_n:A^{\odot n}\to A$ might be characterized as the Taylor coefficients of the coderivation $\mathcal{K}(\delta):=L\circ \widetilde{\delta}\circ E:S(A)\to S(A)$, where $\widetilde{\delta}:S(A)\to S(A)$ is the linear coderivation extending $\delta$, see for instance \cite{Markl1,Markl2,SulRan,BKap}. Moreover, a similar description holds for the cumulants of a map $f:A\to B$: they are the Taylor coefficients of the coalgebra morphism  $\kappa(f):=L\circ S(f)\circ E:S(A)\to S(B)$, as was for instance observed in \cite{SulRan}. This common description is the starting point of our analysis. From it, one easily derives the basic algebraic properties of cumulants and Koszul brackets, as well as the compatibility between the two constructions, see for instance Propositions \ref{prop:cumulantsvsKoszuldg} and \ref{prop:cumulantsvsKoszulexp}: in particular, the latter shows that cumulants should be considered as an exponential version of the Koszul brackets, shedding further light on the relation between the two constructions.

In Section \ref{subsec:comp}, we investigate the behavior of cumulants and Koszul brackets with respect to homological perturbation theory. Given a differential $d_A$ on $A$ (not necessarily an algebra derivation), it is well known that the associated Koszul brackets $\mathcal{K}(d_A)_n$ satisfy higher Jacobi relations, thus inducing a structure of $L_\infty[1]$ algebra on $A$ (or in other words, a structure of strong homotopy Lie algebra on $A[-1]$, in the sense of \cite{LaSt}). Given a second graded algebra $B$ with a differential (not necessarily an algebra derivation) $d_B$, together with a contraction $\sigma:A\to B$, $\tau:B\to A$, $h:A\to A[-1]$ of $A$ onto $B$, by the usual homotopy transfer Theorem for $L_\infty[1]$ algebras (this will be reviewd in \ref{th:transfer}, and then again in Section \ref{subsec:Lootransfer}), there is an induced $L_\infty[1]$ algebra structure on $B$ and an induced $L_\infty[1]$ morphism $B\to A$. When the contraction satisfies certain additional technical assumptions (we say that it is a \emph{semifull algebra contraction}, see Definition \ref{def:semifullalgebracontraction}), we show in Proposition \ref{prop:transfer} that these are precisely the $L_\infty[1]$ algebra structure on $B$ associated with the Koszul brackets $\mathcal{K}(d_B)_n$ and the $L_\infty[1]$ morphism $B\to A$ associated with the cumulants $\kappa(\tau)_n$. This result will be one of our main technical tools in the proof of the homotopy transfer Theorems \ref{th:transferL}, \ref{th:transferBV} and \ref{th:transferIBL}. Semifull algebra contractions, as well as the dual class of semifull coalgebra contractions,  were introduced by Real \cite{Real}: for our purposes, one of their more useful features will be the fact that they are closed under the Standard Perturbation Lemma \ref{lem:SPL}, see Lemmas \ref{lem:semifullstable} and \ref{lem:semifullcostable}. 

In Section \ref{sec:coetcet} we introduce the dual notions of \emph{cocumulants} and \emph{Koszul cobrackets}. Namely, given a map $f:C\to D$ between graded cocommutative coalgebras, its cocumulants are maps $\kappa^{co}(f)_n:C\to D^{\odot n}$ measuring the deviation of $f$ from being a coalgebra morphism. Given a map $\delta:C\to C$, the Koszul cobrackets $\mathcal{K}(\delta)_n:C\to C^{\odot n}$ are maps 
measuring the deviation of $\delta$ from being a coderivation. All the results from Sections \ref{sec:cumulantsandkoszul} and \ref{subsec:comp} admit analogues in this dual setting.

Finally, in Section \ref{subsec:Lootransfer} we revisit the homotopy transfer Theorem for $L_\infty[1]$ algebras, providing a new proof in light of the results from the previous sections. Our approach is based on the symmetrized tensor trick and the standard perturbation Lemma, in close analogy with the usual argument for $A_\infty[1]$ algebras, see for instance \cite{Hueb2}. 
We point out that if we were only interested in homotopy transfer for $L_\infty[1]$ algebras a similar but simpler argument might be given  \cite{prep}. For our purposes, this section serves more as a preparation for the similar proofs of the homotopy transfer Theorems \ref{th:transferBV}, \ref{th:transfercoBV} and \ref{th:transferIBL} for $BV_\infty$ (co)algebras and $IBL_\infty$ algebras. Moreover, the technical lemmas we establish along the way seem interesting in their own right. In Lemma \ref{lem:coalgebramorphisms} we give necessary and sufficient conditions for a map $F:S(U)\to S(V)$ to be a coalgebra morphism in terms of the associated cumulants $\kappa(F)_n$, which is curious since by construction the latter measure the deviation of $F$ from being a morphism of algebras, rather than coalgebras. Similarly, in Lemma \ref{lem:transkos} we give necessary and sufficient conditions for a map $Q:S(U)\to S(U)$ to be a coderivation in terms of the associated Koszul brackets $\mathcal{K}(Q)_n$ (an analogous characterization of codifferential operators shall be given later, in Lemma \ref{lem:IBLtransKos}). Finally, in Lemma \ref{lem:symmetrizedcontraction} we make the key observation that the contraction of $S(U)$ onto $S(V)$ induced by a contraction of $U$ onto $V$ via the symmetrized tensor trick is a semifull contraction with respect to both the algebraic and the coalgebraic structures.

\emph{Commutative $BV_\infty$ algebras} were introduced by O. Kravchenko \cite{Krav} and have been applied in several contexts, such as deformation quantization, quantum field theory and Poisson geometry, just to name a few, see for instance \cite{Hueb1,CL,DSV1,DSV2,BrLaz,Vit,BashVor,Campos,ParkQFT,LRS,AVor} (in the references \cite{ParkQFT,LRS} they are called \emph{binary QFT algebras}, but in fact the two notions seem to be equivalent).

More precisely, a commutative $BV_\infty$ algebra is a graded algebra $A$ together with a $\K[[t]]$ linear differential $\Delta=\sum_{n\ge0}t^n\Delta_n:A[[t]]\to A[[t]]$ on the algebra $A[[t]]$ of formal power series: moreover, one requires that $\Delta_n$ is a differential operator of order $\le n+1$, $\forall \,n\ge0$. The last condition might be compactly rephrased in terms of the associated Koszul brackets $\mathcal{K}(\Delta)_n$: it is equivalent to requiring $\mathcal{K}(\Delta_i)_n=0$ for all $i<n-1$, or in other words, it is equivalent to 
\begin{equation}\label{intro1}  \mathcal{K}(\Delta)_n\equiv0\pmod{t^{n-1}},\qquad\forall n\ge2. 
\end{equation}

We should remark that commutative $BV_\infty$ algebras in the above sense are not homotopy $BV$ algebras in the full operadic sense \cite{GCTV}, as we maintain strict associativity of the commutative product on $A$, and we are only relaxing the conditions on the $BV$ operator up to coherent homotopies. For this reason, the name $BV_\infty$ algebra might be misleading, as the usual machinery for  $\infty$-algebras (see for instance \cite[\S 10]{LV}), such as the homotopy transfer Theorem, doesn't apply to this kind of structures without further hypotheses. A similar problem is shared for instance by $P_\infty$ algebras in the sense of Cattaneo and Felder \cite{CF}, which are a type of homotopy Poisson algebras where we relax only the Lie algebra structure up to homotopy, while maintaining strict associativity of the commutative product. To avoid this ambiguity, in the paper \cite{BCSX} we proposed the name \emph{derived Poisson algebras} for this kind of algebras: in line with this choice, in the body of the paper we shall call \emph{derived BV algebras} the commutative $BV_\infty$ algebras in the sense of Kravchenko (in the rest of the introduction, however, we shall stick to the more usual terminology). 

In Section \ref{sec:BVmor} we introduce morphisms between commutative $BV_\infty$ algebras. Namely, given commutative $BV_\infty$ algebras $(A,\Delta)$ and $(B,\Delta')$, a morphism of commutative $BV_\infty$ algebras from $A$ to $B$ is a degree zero $\K[[t]]$-linear map $f=\sum_{n\ge0} t^nf_n:A[[t]]\to B[[t]]$ such that $f\circ\Delta=\Delta'\circ f$, and moreover the cumulants satisfy
\begin{equation}\label{intro2} \kappa(f)_n\equiv0\pmod{t^{n-1}},\qquad\forall n\ge2.
\end{equation}
We provide some justification for this definition, and in particular we show that with the above morphisms commutative $BV_\infty$ algebras form indeed a category. More convincingly, we show that the above Equation \eqref{intro2} is in fact an exponential version of \eqref{intro1}. Moreover, we extend the usual correspondence from commutative $BV_\infty$ algebras to $L_\infty$ algebras (more precisely, to derived Poisson algebras as in \cite{BCSX}) to a full-fledged functor. After a first draft of this paper was ready, we learned that the same notion was considered by J.-S. Park \cite{ParkQFT}. Another definition of morphism was introduced by Cieliebak and Latschev \cite{CL} (and further investigated in \cite{BashVor}), but only in the special case when the source algebra is free. We conclude the section by showing that our definition is essentially equivalent with the one from \cite{CL}, in the situations when the latter applies. More precisely, in Proposition \ref{prop:morphismscomparison} we show that morphisms in our sense and in the sense of \cite{CL} are in bijective correspondence with each other via the exponential and the logarithm in a certain convolution algebra. 

In Section \ref{sec:BVtrans} we prove that commutative $BV_\infty$ algebra structures can be transferred along semifull algebra contractions via homological perturbation theory. More precisely, given a contraction $(\sigma,\tau,h)$ of $A$ onto $B$ and a  commutative $BV_\infty$ algebra structure $\Delta=\sum_{n\ge0}t^n\Delta_n$ on $A$, we can apply the standard perturbation Lemma to the perturbation $\Delta_+=\sum_{n\ge1}t^n\Delta_n$ in order to get a perturbed differential $\Delta'$ on $B[[t]]$ and a perturbed contraction $(\breve{\sigma},\breve{\tau},\breve{h})$ of $(A[[t]],\Delta)$ onto $(B[[t]],\Delta')$. In the above situation, we prove in Theorem \ref{th:transferBV} that if $(\sigma,\tau,h)$ is a semifull algebra contraction, then $\Delta'$ is a commutative $BV_\infty$ algebra structure on $B$ and $\breve{\tau}:B[[t]]\to A[[t]]$ is a morphism of commutative $BV_\infty$ algebras.

In Section \ref{sec:BVco} we consider the dual category of $BV_\infty$ cocommutative coalgebras, and extend the previous results to this dual setting.

\emph{$IBL_\infty$ algebras} (short for Involutive Lie Bialgebras up to coherent homotopies) were introduced in the paper \cite{Fuk}, with applications to string topology, symplectic field theory and Lagrangian Floer theory, and have been further investigated in several other papers since then, for instance \cite{Campos,MarklVor,DJP,Haj1,Merk,HLV,NWill,CV,Haj}

We shall work with a definition of $IBL_\infty$ algebra slightly different from, and in a certain sense dual to, the one usually appearing in the literature. More precisely, whereas $IBL_\infty$ algebras are usually regarded as commutative $BV_\infty$ algebras whose underlying algebras are free (see for instance \cite{CL,Campos,MarklVor}), we shall regard them dually as cocommutative $BV_\infty$ coalgebras whose underlying coalgebras are cofree. In Remark \ref{rem:IBL} we shall compare the two definitions, showing  that they are essentially equivalent, aside from some minor differences regarding the degrees of the structure maps. For our purposes, our definition presents some technical advantages: we shall call $IBL_\infty[1]$ algebras the $IBL_\infty$ algebras in our sense. We also establish necessary and sufficient conditions for a map $\delta:S(U)[[t]]\to S(U)[[t]]$ to define an $IBL_\infty[1]$ algebra structure on $U$ in terms of the associated Koszul brackets, see Lemma \ref{lem:IBLtransKos}.

In Section \ref{sec:IBLtrans}, as an application of the results from the previous sections, we shall present a new proof of the homotopy transfer Theorem for $IBL_\infty[1]$ algebras based on homological perturbation theory (different proofs can be found in \cite{Fuk,HLV}). More precisely, given a contraction $(g,f,h)$ of $(U,d_U)$ onto $(V,d_V)$ there is, via the symmetrized tensor trick, an induced contraction of $(S(U)[[t]],d_U)$ onto $(S(V)[[t]],d_V)$. Given $\delta:S(U)[[t]]\to S(U)[[t]]$ an $IBL_\infty[1]$ algebra structure on $U$, we can apply the standard perturbation Lemma to the perturbation $\delta_+:=\delta-d_U$ in order to get a perturbed differential $\delta'$ on $S(V)[[t]]$ and a perturbed contraction $(G,F,H)$ of $(S(U)[[t]],\delta)$ onto $(S(V)[[t]],\delta')$. In the above situation, in Theorem \ref{th:transferIBL} we shall prove that $\delta'$ is an $IBL_\infty[1]$ algebra structure on $V$, and that $G:S(U)[[t]]\to S(V)[[t]]$ and $F:S(V)[[t]]\to S(U)[[t]]$ are $IBL_\infty[1]$ morphisms. 

Finally, we shall consider the behavior of Maurer-Cartan sets under homotopy transfer. In the $L_\infty$ case, this is the content of a formal analog of Kuranishi's Theorem due to Fukaya \cite{Fuk00} and Getzler \cite{GetzlerLie,Getzlerpert}, which will be reviewed in Theorem \ref{th:kuranishi}. With a little more work, we obtain analogue results in the $BV_\infty$ setting (Theorem \ref{th:BVKur}) and $IBL_\infty$ setting (Theorem \ref{th:IBLKur}).

\begin{acknowledgements} It is a pleasure to thank Marco Manetti, Niels Kowalzig, Hsuan-Yi Liao and Luca Vitagliano for useful discussions on the subject of this paper.
\end{acknowledgements}

	
	
	\section{(Co)cumulants and Koszul (co)brackets}
	
	\newcommand{\G}{\widehat{\mathbf{G}}}		
	\newcommand{\DG}{\widehat{\mathbf{DG}}}	
	\newcommand{\GAlg}{\widehat{\mathbf{GAlg}}}

	\subsection{Review of $L_\infty[1]$ algebras}

We shall work over a fixed field $\mathbb{K}$ of characteristic zero.

Given $V=\oplus_{i\in\mathbb{Z}} V^i$ a graded space, we denote by $V^{\odot n}$ the symmetric powers of $V$ ($V^{\odot0}:=\mathbb{K}$), and by $S(V) = \bigoplus_{n\ge0} V^{\odot n}$ the symmetric coalgebra over $V$. The coproduct $\Delta$ on $S(V)$ is explicitly given by
\[ \Delta(v_1\odot\cdots\odot v_n) =\sum_{i=0}^n\sum_{\sigma\in S(i,n-i)}\pm_K (v_{\sigma(1)}\odot\cdots\odot v_{\sigma(i)})\otimes(v_{\sigma(i+1)}\odot\cdots\odot v_{\sigma(n)}),  \]
where we denote by $S(i,n-i)$ the set of $(i,n-i)$-unshuffles and by $\pm_K$ the appropriate Koszul sign. As well known, $S(V)$ is the cofree object cogenerated by $V$ in an appropriate category of coalgebras. In particular, given graded spaces $V$ and $W$, a coalgebra morphism $F:S(V)\to S(W)$ is completely determined by its corestriction \[f=pF=(f_1,\ldots,f_n,\ldots):S(V)\to W,\qquad f_n:V^{\odot n}\to W,\] ($p:S(W)\to S(W)$ being the natural projection) via the formula 
\begin{equation}\label{morfromtaylor}
 F(v_1\odot\cdots\odot v_n) = \sum_{\stackrel{k,i_1,\ldots,i_k\ge1}{i_1+\cdots+i_k=n}}\frac{1}{k!}\sum_{\sigma\in S(i_1,\ldots,i_k)} \pm_K f_{i_1}(v_{\sigma(1)},\ldots)\odot\cdots\odot f_{i_k}(\ldots, v_{\sigma(n)}),
\end{equation}
where $\pm_K$ is the appropriate Koszul sign
(in particular, for $n=0$ this becomes $F(\mathbf{1}_{S(V)})=\mathbf{1}_{S(W)}$). The maps $f_n:V^{\odot n}\to W$, $n\ge1$, are called the \emph{Taylor coefficients} of $F$. 

In a similar manner, every coderivation $Q:S(V)\to S(V)$ is completely determined by its corestriction
\[q=pQ=(q_0,q_1,\ldots,q_n,\ldots):S(V)\to V,\qquad q_n:V^{\odot n}\to V,\] via the formula 
\begin{equation}\label{coderfromtaylor}
	 Q(v_1\odot\cdots\odot v_n) = \sum_{i=0}^n\sum_{\sigma\in S(i,n-i)} \pm_K q_{i}(v_{\sigma(1)},\ldots, v_{\sigma(i)})\odot v_{\sigma(i+i)}\odot\cdots\odot v_{\sigma(n)}, \end{equation}
where $\pm_K$ is the appropriate Koszul sign
(in particular, for $n=0$ this becomes $Q(\mathbf{1}_{S(V)})=q_0(1)$). The maps $q_n:V^{\odot n}\to W$, $n\ge0$, are called once again the \emph{Taylor coefficients} of $Q$. 

We shall denote by $\Coder(S(V))$ the graded Lie algebra of coderivations of $S(V)$, with the commutator bracket. In Taylor coefficients, given coderivations $q=(q_0,q_1,\ldots,q_n,\ldots)$ and $r=(r_0,r_1,\ldots,r_n,\ldots)$, their bracket is $[q,r]=([q,r]_0,[q,r]_1,\ldots,[q,r]_n,\ldots)$
\begin{multline*} [q,r]_n(v_1,\ldots,v_n) = \sum_{i=0}^n\sum_{\sigma\in S(i,n-i)} \pm_K \Big( q_{n-i+1}\left(r_{i}\left(v_{\sigma(1)},\ldots, v_{\sigma(i)}\right), v_{\sigma(i+1)},\ldots, v_{\sigma(n)}\right)\\-(-1)^{|q||r|}r_{n-i+1}\left(q_{i}\left(v_{\sigma(1)},\ldots, v_{\sigma(i)}\right), v_{\sigma(i+1)},\ldots, v_{\sigma(n)}\right) \Big).
\end{multline*}  

\begin{definition} An $L_\infty[1]$ algebra structure on a graded space $V$ is a degree one coderivation $Q\in\Coder^1(S(V))$ such that $Q(\mathbf{1}_{S(V)})=\mathbf{1}_{S(V)}$ (i.e., $q_0=0$) and $Q^2=\frac{1}{2}[Q,Q]=0$. Given $L_\infty[1]$ algebras $(V,q_1,\ldots,q_n,\ldots)$ and $(W,r_1,\ldots,r_n,\ldots)$, an $L_\infty[1]$ morphism between them is a morphism of DG coalgebras $F:(S(V),Q)\to (S(W),R)$.
\end{definition}

\begin{remark} Given an $L_\infty[1]$ algebra $(V,q_1,\ldots,q_n,\ldots)$, the relation $[Q,Q]=0$ translates into a hierarchy of relations on the Taylor coefficients $q_1,\ldots,q_n,\ldots$. The first few relations say that $q_1$ squares to zero and satisfies the Leibniz identity with respect to the bracket $[x,y]:=(-1)^{|x|}q_2(x, y)$. Furthermore, this bracket satisfies the Jacobi identity up to a homotopy given by $q_3$. The higher Taylor coefficients $q_n$ give coherent homotopies for higher Jacobi relations. In particular, the shifted cohomology $H(V,q_1)[-1]$ is a graded Lie algebra via the induced bracket (in fact, it carries additional algebraic structure - a minimal $L_\infty$ algebra structure - corresponding topologically to higher Whitehead brackets). Conversely, any DG Lie algebra $(L,d_L,[-,-])$ gives rise to an $L_\infty[1]$ algebra $(L[1],Q)$ with $q_1(x)=-d_L(x)$, $q_2(x,y)=(-1)^{|x|}[x,y]$ and $q_n=0$ for $n>2$. More in general, $L_\infty[1]$ algebra structures on $V$ correspond to strong homotopy Lie algebra structures (as in \cite{LaSt}) on the suspension $V[-1]$.

In a similar way, given an $L_\infty[1]$ morphism $F:(V,q_1,\ldots,q_n,\ldots)\to(W,r_1,\ldots,r_n,\ldots)$, the identity $FQ=RF$ translates into a hierarchy of relations on the Taylor coefficients of $F,Q,R$. The first few relations say that $f_1:(V,q_1)\to (W,r_1)$ is a morphism of complexes, compatible with the brackets (induced by) $q_2$ and $r_2$ up to the homotopy $f_2$. 
	
\end{remark}

$L_\infty[1]$ algebras are homotopy invariant algebraic structures, meaning that they can be transferred along quasi-isomorphisms. In case the quasi-isomorphism fits into a contraction (see Definition \ref{def:contraction}), the transfer can be made explicit via symmetrized tensor trick and homological perturbation theory, which is the content of the following homotopy transfer Theorem. 

\begin{theorem}\label{th:transfer} Given an $L_\infty[1]$ algebra $(V,q_1,\ldots,q_n,\ldots)$ and a contraction $(g_1,f_1,h)$ of $(V,q_1)$ onto a complex $(W,r_1)$, there is an induced $L_\infty[1]$ algebra structure $R$ on $W$ with linear part $r_1$, together with $L_\infty$ morphisms $F:(W,R)\to(V,Q)$, $G:(V,Q)\to (W,R)$ with linear parts $f_1$, $g_1$ respectively. Denoting by $F^k_i$ the composition $W^{\odot i}\hookrightarrow S(W)\xrightarrow{F} S(V)\twoheadrightarrow V^{\odot k}$, $F$ and $R$ are detemined recursively by (notice that by formula \eqref{morfromtaylor} $F^k_i$ only depends on $f_1,\ldots, f_{i-k+1}$)
\[ f_i = \sum_{k=2}^i hq_k F^k_i \qquad \mbox{for $i\geq2$}, \]	
\[ r_i = \sum_{k=2}^i g_1q_k F^k_i \qquad \mbox{for $i\geq2$}. \]	
It is possible to establish recursive formulas for $G$ as well, but these are a bit more complicated. We denote by $\widehat{h}:V^{\odot i}\to V^{\odot i}$ the contracting homotopy
\[ \widehat{h}(v_1\odot\cdots\odot v_i) = \frac{1}{i!} \sum_{\sigma\in S_i}\sum_{j=1}^{i}\pm_K f_1g_1(v_{\sigma(1)})\odot\cdots\odot f_1g_1(v_{\sigma(j-1)})\odot h(v_{\sigma(j)})\odot v_{\sigma(j+1)}\odot\cdots\odot v_{\sigma(i)}, \]
(where $\pm_K$ is the appropriate Koszul sign, taking into account that $|h|=-1$) given by the symmetrized tensor trick. Denoting by $Q^k_i$ the composition $V^{\odot i}\hookrightarrow S(V)\xrightarrow{Q} S(V)\twoheadrightarrow V^{\odot k}$, the $L_\infty$ morphism $G$ is determined recursively by
	\[ g_i = \sum_{k=1}^{i-1} g_k Q^k_i \widehat{h}\qquad\mbox{for $i\geq2$}.  \]
\end{theorem}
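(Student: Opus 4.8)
The plan is to deduce the theorem from the Standard Perturbation Lemma \ref{lem:SPL}, applied to the contraction of symmetric coalgebras obtained from $(g_1,f_1,h)$ via the symmetrized tensor trick, in exact analogy with the classical treatment of $A_\infty[1]$ algebras. First I would extend $q_1$ and $r_1$ to the linear coderivations $\widetilde{q_1}$ on $S(V)$ and $\widetilde{r_1}$ on $S(W)$, turning $(S(V),\widetilde{q_1})$ and $(S(W),\widetilde{r_1})$ into DG coalgebras. The symmetrized tensor trick then produces a contraction $(S(g_1),S(f_1),\widehat{h})$ of $(S(V),\widetilde{q_1})$ onto $(S(W),\widetilde{r_1})$, where $S(g_1)$, $S(f_1)$ are the cofree coalgebra extensions of $g_1$, $f_1$ and $\widehat{h}$ is the symmetrized homotopy displayed in the statement. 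Verifying the contraction identities and side conditions (keeping track of the Koszul signs from $|h|=-1$) is routine.

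Next I would write $Q=\widetilde{q_1}+Q_+$, where $Q_+\in\Coder(S(V))$ has Taylor coefficients $q_2,q_3,\ldots$. Since each $q_k$ with $k\geq2$ strictly lowers word length, $\widehat{h}Q_+$ is locally nilpotent on $S(V)$, so $\id-\widehat{h}Q_+$ is invertible and $Q_+$ is an admissible perturbation. Applying the Standard Perturbation Lemma yields a perturbed differential $R$ on $S(W)$ and a perturbed contraction $(\breve{G},\breve{F},\breve{H})$ of $(S(V),Q)$ onto $(S(W),R)$, with the explicit geometric-series formulas
\[ \breve{F}=\sum_{n\geq0}(\widehat{h}Q_+)^n S(f_1),\qquad \breve{G}=S(g_1)\sum_{n\geq0}(Q_+\widehat{h})^n,\qquad R=\widetilde{r_1}+S(g_1)\,Q_+\breve{F}. \]
By construction $\breve{F}$ and $\breve{G}$ are chain maps and $R^2=0$ follows from $Q^2=0$ through the lemma.

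The hard part is that the Standard Perturbation Lemma, applied to a contraction of coalgebras with a coderivation perturbation, does \emph{not} in general return a coderivation and coalgebra morphisms; one must exploit the special structure of the symmetrized contraction. This is precisely where I would invoke that it is a semifull coalgebra contraction (Lemma \ref{lem:symmetrizedcontraction}) together with the stability of semifull coalgebra contractions under the Standard Perturbation Lemma (Lemma \ref{lem:semifullcostable}). Combined with the characterizations of Lemmas \ref{lem:coalgebramorphisms} and \ref{lem:transkos}, this guarantees that $R$ is a coderivation, hence an $L_\infty[1]$ structure on $W$ with linear part $r_1$, and that $\breve{F}$, $\breve{G}$ are coalgebra morphisms; being moreover chain maps, they are $L_\infty[1]$ morphisms with linear parts $f_1$, $g_1$. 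I expect this step to be the main obstacle, as everything else is either formal or computational.

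Finally I would recover the stated recursions by extracting Taylor coefficients from the fixed-point equations $\breve{F}=S(f_1)+\widehat{h}Q_+\breve{F}$ and $\breve{G}=S(g_1)+\breve{G}Q_+\widehat{h}$. Corestricting and using that $\widehat{h}$, $S(f_1)$, $S(g_1)$ preserve word length, so that $p\widehat{h}=hp$, $pS(f_1)=f_1p$ and $pS(g_1)=g_1p$ on the word-length-one component, I obtain for $i\geq2$
\[ f_i=h\,pQ_+\breve{F}\big|_{W^{\odot i}}=\sum_{k=2}^i hq_kF^k_i,\qquad r_i=g_1\,pQ_+\breve{F}\big|_{W^{\odot i}}=\sum_{k=2}^i g_1q_kF^k_i, \]
since the $V^{\odot k}$-component of $\breve{F}|_{W^{\odot i}}$ is by definition $F^k_i$. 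The recursion for $g_i$ follows analogously from the fixed-point equation for $\breve{G}$, noting that $Q_+$ strictly lowers word length so only $k\leq i-1$ contribute, and that for such $k$ the word-length-preserving part $\widetilde{q_1}$ of $Q$ does not contribute, so $Q_+$ may be replaced by the full $Q$ to give $g_i=\sum_{k=1}^{i-1}g_kQ^k_i\widehat{h}$. Checking the side conditions, tracking all Koszul signs, and confirming that each coefficient depends only on lower-order data is then routine.
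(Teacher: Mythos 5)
Your overall architecture coincides with the paper's own proof (Theorem \ref{th:transferL} in subsection \ref{subsec:Lootransfer}): symmetrized tensor trick, Standard Perturbation Lemma applied to the perturbation $Q_+$, semifullness of the induced contraction, and extraction of the recursions from the fixed-point equations. Your SPL formulas, your treatment of $\breve G$ and $R$, and the final Taylor-coefficient extraction are all sound. However, there is a genuine gap at precisely the step you single out as the main obstacle. The semifull \emph{coalgebra} contraction property and its stability under coderivation perturbations (Lemmas \ref{lem:symmetrizedcontraction} and \ref{lem:semifullcostable}) only control compositions of the form $(\breve G\otimes\breve G)\Delta$, $(\breve G\otimes\breve H)\Delta$, $(\breve H\otimes\breve H)\Delta$ precomposed with $\breve H$ or $\breve F$: as in the proof of Proposition \ref{prop:cotransfer}, these identities force the perturbed \emph{projection} $\breve G$ to be a coalgebra morphism (via $\kappa^{co}(\breve G)_2=(\breve G\odot\breve G)\mathcal{K}^{co}(Q)_2\breve H=0$) and $R$ to be a coderivation, but they impose no relation whatsoever between $\Delta\breve F$ and $(\breve F\otimes\breve F)\Delta$; the perturbed \emph{inclusion} of a semifull DG coalgebra contraction is not in general a coalgebra morphism. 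So your sentence ``this guarantees that \dots\ $\breve F$, $\breve G$ are coalgebra morphisms'' is unjustified for $\breve F$, and merely citing Lemmas \ref{lem:coalgebramorphisms} and \ref{lem:transkos} does not close it: to apply Lemma \ref{lem:coalgebramorphisms} to $\breve F$ one must first prove that its algebra-side cumulants satisfy $\kappa(\breve F)_n(W,\ldots,W)\subset V$, and that containment is not formal.

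That containment is where the paper's real work lies, and it uses the \emph{algebra} side of the picture, which you never invoke. One needs: (i) that the symmetrized contraction is also a semifull DG \emph{algebra} contraction (the other half of Lemma \ref{lem:symmetrizedcontraction}); (ii) that semifull algebra contractions are stable under \emph{arbitrary} perturbations (Lemma \ref{lem:semifullstable}) --- essential here, because $Q_+$ is a coderivation but not an algebra derivation, so no DG-algebra version of stability is available; (iii) Proposition \ref{prop:transfer}, the paper's main (and long) computation, which says that along a semifull algebra contraction the cumulants of the perturbed inclusion obey the homotopy-transfer recursion
\[ \kappa(\breve F)_n(x_1,\ldots,x_n)=\sum\pm_K\frac{1}{k!}\,\breve H\,\mathcal{K}(Q_+)_k\left(\kappa(\breve F)_{i_1}(x_{\sigma(1)},\ldots),\ldots,\kappa(\breve F)_{i_k}(\ldots,x_{\sigma(n)})\right), \]
the sum running over $k\geq2$, $i_1+\cdots+i_k=n$ and unshuffles $\sigma$; and (iv) Remark \ref{rem:H} (the perturbed homotopy $\breve H$ restricts to $h$ on $V\subset S(V)$) together with Lemma \ref{lem:transkos} ($\mathcal{K}(Q)_k(V,\ldots,V)\subset V$ with restriction $q_k$, since $Q$ is a coderivation). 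With these in hand, induction on $n$ gives $\kappa(\breve F)_n(W,\ldots,W)\subset V$, Lemma \ref{lem:coalgebramorphisms} then shows that $\breve F$ is a coalgebra morphism, and the same induction simultaneously produces your recursion $f_i=\sum_{k=2}^i hq_kF^k_i$. Without item (iii) the argument for $F$ --- and hence for the existence of the transferred $L_\infty[1]$ morphism with linear part $f_1$ --- does not go through.
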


\begin{remark}\label{rem:transfer} The statements about $F$ and $R$ are well known: for a proof we refer to \cite{HuebSt,Fuk00,Hueb4,Mcontr}. The statement about $G$ is less standard, and to the best of the author's knowledge was first showed by Berglund \cite{BerglundHPT} (as a particular case of a much more general result): we shall present a different proof in subsection \ref{subsec:Lootransfer}, where we will revisit the previous theorem in light of the results from the following subsections.
\end{remark}
\begin{remark}\label{rem:complete} Sometimes we shall need to work in the complete setting, especially when considering Maurer-Cartan functors. A \emph{complete space} is a graded space $V$ equipped with a complete descending filtration, i.e., 
$V=F^1V\supset F^2V\supset\cdots\supset F^pV\supset\cdots$
and the natural map $V\to\underleftarrow{\lim}_p V/F^pV$ is an isomorphism. A morphism of complete spaces is continuous if it respects the filtrations. A \emph{complete $L_\infty[1]$ algebra} is an $L_\infty[1]$ algebra $(V,q_1,\ldots,q_n,\ldots)$ equipped with a complete compatible filtration $F^\bullet V$, i.e., $q_i(F^{p_1}V,\ldots,F^{p_i}V)\subset F^{p_1+\cdots+p_i}V$. A morphism of complete $L_\infty[1]$ algebras $F:(V,q_1,\ldots,q_n,\ldots)\to(W,r_1,\ldots,r_n,\ldots)$ is continuous if so are its Taylor coefficients. If the contraction data is continuous, the previous homotopy transfer Theorem can be extended to the complete setting: we refer to \cite{BDel} for a precise statement.
\end{remark}

\begin{definition} The \emph{Maurer-Cartan functor} from complete $L_\infty[1]$ algebras to sets sends a complete $L_\infty[1]$ algebra $(V,q_1,\ldots,q_n,\ldots)$ to its \emph{Maurer-Cartan set} 
	\[ \operatorname{MC}(V):=\{x\in V^0\,\,\mbox{s.t.}\,\,\sum_{n\ge1}\frac{1}{n!}q_n(x^{\odot n})=0 \},\]
	and a continuous morphism $F:(V,q_1,\ldots,q_n,\ldots)\to(W,r_1,\ldots,r_n,\ldots)$ to the push-forward
	\[ \operatorname{MC}(F):\operatorname{MC}(V)\to\operatorname{MC}(W):x\to\sum_{n\ge1}\frac{1}{n!}f_n(x^{\odot n}). \]
\end{definition}

The following formal analog of Kuranishi Theorem, due to Fukaya \cite{Fuk00} and Getzler \cite{GetzlerLie,Getzlerpert}, explains how Maure-Cartan sets behave under homotopy transfer. For a proof of the result as stated here, we refer to \cite[Theorem 1.13]{BDel}.

\begin{theorem}\label{th:kuranishi} In the same hypotheses as in Theorem \ref{th:transfer}, if we are furthermore in a complete setting, then the correspondence
	\[\rho:\operatorname{MC}(V)\to\operatorname{MC}(W)\times h(V^0):x\to(\operatorname{MC}(G)(x),h(x))\]
	is bijective. The inverse $\rho^{-1}$ admits the following recursive construction: given $y\in\operatorname{MC}(W)$ and $h(v)\in h(V^0)$, we define a succession of elements $x_n\in V^0$, $n\geq 0$, by $x_{0}=0$ and 
	\begin{equation}\label{eq:getzlerrecursion} x_{n+1}=f_1(y)-q_1h(v)+\sum_{i\geq2}\frac{1}{i!}\left(hq_i-f_1g_i\right)\left(x_n^{\odot i}\right).    \end{equation}
	This succession converges (with respect to the complete topology induced by the filtration on $V$) to a well defined $x\in \operatorname{MC}(V)$, and we have  $\rho^{-1}(y,h(v))=x$. Finally, $\rho^{-1}(-,0)$ coincides with $\operatorname{MC}(F):\operatorname{MC}(W)\to\operatorname{MC}(V)$: this induces a bijective correspondence between the sets $\operatorname{MC}(W)$ and $\operatorname{Ker}(h)\bigcap\operatorname{MC}(V)$, whose inverse is the restriction of $g_1$. \end{theorem}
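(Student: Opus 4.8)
The plan is to treat the two halves of the statement in tandem: the recursion \eqref{eq:getzlerrecursion} will be shown to converge to a Maurer--Cartan element realizing the fibre of $\rho$ over $(y,h(v))$, which simultaneously produces an explicit inverse and proves surjectivity of $\rho$; injectivity will then follow from the observation that \emph{every} element of $\operatorname{MC}(V)$ is a fixed point of the same recursion. Throughout I write $Q_*(x):=\sum_{n\ge1}\frac{1}{n!}q_n(x^{\odot n})$, so that $x\in\operatorname{MC}(V)$ means $Q_*(x)=0$, and $G_*(x):=\operatorname{MC}(G)(x)=\sum_{n\ge1}\frac{1}{n!}g_n(x^{\odot n})$, and I freely use the contraction identities $g_1f_1=\id$, $g_1h=0$, $hf_1=0$, $h^2=0$ together with the homotopy relation $f_1g_1-\id=q_1h+hq_1$ (note the sign, consistent with the convention $q_1=-d$).

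First I would record the elementary rewriting
\[ \sum_{i\ge2}\frac{1}{i!}\left(hq_i-f_1g_i\right)(x^{\odot i})=hQ_*(x)-hq_1(x)-f_1G_*(x)+f_1g_1(x), \]
valid for every $x$, which exhibits the recursion map $\Phi(x)=f_1(y)-q_1h(v)+\sum_{i\ge2}\frac{1}{i!}(hq_i-f_1g_i)(x^{\odot i})$ as a constant plus an at-least-quadratic term. In the complete filtered setting $\Phi$ is then a contraction for the filtration topology, so the iteration $x_{n+1}=\Phi(x_n)$, $x_0=0$, is Cauchy and converges to the unique fixed point $x$. Applying $g_1$ to the fixed-point equation $x=\Phi(x)$ and using $g_1f_1=\id$, $g_1h=0$ and $g_1q_1=r_1g_1$ collapses it to $G_*(x)=y$; applying $h$ and using $hf_1=0$, $h^2=0$ together with $hq_1h=-h$ (a consequence of the homotopy relation) collapses it to $h(x)=h(v)$. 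Thus the candidate inverse already lands in the correct fibre.

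The main obstacle is to show that this fixed point is genuinely Maurer--Cartan, i.e. that $w:=Q_*(x)=0$. Substituting $G_*(x)=y$ and $h(x)=h(v)$ back into the fixed-point equation and using the homotopy relation, all constant terms cancel and one is left with $hw=0$. Now I would invoke two quadratic identities twisted by $x$: the Bianchi identity $d_x(Q_*(x))=0$ coming from $Q^2=0$, and the morphism identity $D_x(Q_*(x))=R_*(G_*(x))$ coming from $GQ=RG$, where $d_x(w)=\sum_{n\ge0}\frac{1}{n!}q_{n+1}(x^{\odot n},w)$ and $D_x(w)=\sum_{n\ge0}\frac{1}{n!}g_{n+1}(x^{\odot n},w)$. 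Since $y=G_*(x)\in\operatorname{MC}(W)$ forces $R_*(y)=0$, both identities express $q_1(w)$ and $g_1(w)$ as sums in which every term carries at least one factor of $x$. Feeding this, together with $hw=0$, into the decomposition $w=f_1g_1(w)-hq_1(w)$ (again from the homotopy relation) exhibits $w$ as equal to an expression strictly raising the filtration degree of $w$; completeness then forces $w=0$. This bootstrap is the crux of the argument.

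It remains to assemble the bijection and identify the distinguished cases. For injectivity I would check that any $x'\in\operatorname{MC}(V)$ satisfies $x'=f_1G_*(x')-q_1h(x')+\sum_{i\ge2}\frac{1}{i!}(hq_i-f_1g_i)(x'^{\odot i})$ --- this is immediate from the rewriting above once $Q_*(x')=0$ and the homotopy relation are used --- so $x'$ is \emph{the} fixed point of $\Phi$ attached to $(G_*(x'),h(x'))$ and is therefore determined by $\rho(x')$. Finally, for the last assertions I would use the explicit recursive shapes $f_n=\sum_k hq_kF^k_n$ and $g_n=\sum_k g_kQ^k_n\widehat h$ from Theorem \ref{th:transfer}: the former gives $hf_n=0$ and $g_1f_n=0$ for $n\ge2$, so that $h\,\operatorname{MC}(F)(y)=0$ and $g_1\,\operatorname{MC}(F)(y)=y$, whence $\rho(\operatorname{MC}(F)(y))=(y,0)$ and $\rho^{-1}(-,0)=\operatorname{MC}(F)$; the latter gives $g_n(x^{\odot n})=0$ for $n\ge2$ whenever $h(x)=0$, since each $\widehat h(x^{\odot n})$ then vanishes, so on $\ker(h)\cap\operatorname{MC}(V)$ the map $\operatorname{MC}(G)$ reduces to $g_1$. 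This identifies the restriction of $\rho$ to $\ker(h)\cap\operatorname{MC}(V)$ with the claimed bijection onto $\operatorname{MC}(W)$ and its inverse with $g_1$.
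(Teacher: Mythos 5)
Your proof is correct, but there is nothing in the paper to compare it against line by line: the paper does not prove Theorem \ref{th:kuranishi} at all, it defers the proof to \cite[Theorem 1.13]{BDel}. Your argument is therefore a self-contained replacement, following the fixed-point strategy that the recursion \eqref{eq:getzlerrecursion} itself suggests, and all the key steps check out: the rewriting of the recursion map as $\Phi(x)=f_1(y)-q_1h(v)+hQ_*(x)-hq_1(x)-f_1G_*(x)+f_1g_1(x)$; the fact that $\Phi$ strictly raises filtration degree on differences, hence has a unique fixed point by completeness; the identity $hq_1h=-h$ (which indeed follows from $(f_1g_1-\operatorname{id})h=hq_1h$ and $g_1h=0$), which together with the side conditions places the fixed point in the prescribed fibre $(G_*(x),h(x))=(y,h(v))$; and, crucially, the bootstrap proving $w:=Q_*(x)=0$, where $hw=0$ comes from the fixed-point equation, $d_x(w)=0$ from $Q^2=0$, and $D_x(w)=R_*(y)=0$ from $GQ=RG$ together with $y\in\operatorname{MC}(W)$, so that the decomposition $w=f_1g_1(w)-hq_1(w)$, after substitution, exhibits $w\in F^{p+1}V$ whenever $w\in F^pV$ and completeness forces $w=0$. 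Injectivity via the remark that every Maurer--Cartan element is the fixed point of its own recursion is also correct, as are the final identifications using the recursive formulas of Theorem \ref{th:transfer}. Two small points deserve attention. First, the twisted identities $d_x(Q_*(x))=0$ and $D_x(Q_*(x))=R_*(G_*(x))$, which are the crux of the Maurer--Cartan verification, are asserted rather than proved: they follow by evaluating $Q^2=0$ and $GQ=RG$ on the group-like element $\sum_{n\ge0}\frac{1}{n!}x^{\odot n}$ and cancelling the invertible factor $e^{\odot G_*(x)}$ (resp. $e^{\odot x}$), and this justification should be included. Second, in your last paragraph the conclusion $\rho(\operatorname{MC}(F)(y))=(y,0)$ already uses the reduction of $\operatorname{MC}(G)$ to $g_1$ on $\operatorname{Ker}(h)$ (since $\rho$ involves $\operatorname{MC}(G)$, not $g_1$), which you only establish afterwards via $\widehat{h}(x^{\odot n})=0$; the two observations should be given in the opposite order, but the needed ingredient is present.
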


Finally, we consider the dual notion of $L_\infty[-1]$ coalgebra. To motivate the definition, we observe that given a finite dimensional $L_\infty[1]$ algebra $(V,q_1,\ldots,q_n,\ldots)$, the dual maps $q_n^\vee:V^\vee\to(V^{\odot n})^\vee\to(V^\vee)^{\odot n}$ assemble to a degree one map $q^\vee=q_1^\vee+\cdots+q_n^\vee+\cdots:V^\vee\to\prod_{n\ge0}(V^\vee)^{\odot n}=\widehat{S}(V^\vee)$, which in turn extends uniquely to a derivation $Q^\vee$ of the completed symmetric algebra $\widehat{S}(V^\vee)$ squaring to zero. 

\begin{definition} An \emph{$L_\infty[-1]$ coalgebra} structure on a graded space $V$ is a DG algebra structure $d=d_1+d_2+\cdots+d_n+\cdots$ on $\widehat{S}(V)$, where $d_n:V\to V^{\odot n}$. Given $L_\infty[-1]$ coalgebras $(V,d=d_1+d_2+\cdots+d_n+\cdots)$, $(W,d'=d'_1+d'_2+\cdots+d'_n+\cdots)$, a morphism between them is a morphism of DG algebras $f=f_1+f_2+\cdots+f_n+\cdots:(\widehat{S}(V),d)\to(\widehat{S}(W),d')$.
\end{definition} 

\begin{remark} In particular, $d_1:V\to V$ squares to zero, and the (shifted) cobracket associated to $d_2$ satisfies the co-Jacobi identity up to the homotopy $d_3$. Hence, the shifted cohomology $H(V)[1]$ is a graded Lie coalgebra.
\end{remark}

\begin{remark} If $(V,d=d_1+\cdots+d_n+\cdots)$ is an $L_\infty[-1]$ coalgebra, then the dual maps $d_n^\vee:(V^\vee)^{\odot n}\to(V^{\odot n})^\vee\to V^\vee$ assemble to an $L_\infty[1]$ algebra structure $(d_1^\vee,\ldots,d_n^\vee,\ldots)$ on $V^\vee$, even when $V$ is infinite dimensional.
\end{remark}

\begin{remark} $L_\infty[-1]$ coalgebra structures transfer along contractions, and, as in the $L_\infty[1]$ algebra case, this can be made explicit by combining the symmetrized tensor trick with the homological perturbation lemma (see \cite{BerglundHPT} for a proof, as well as Remark \ref{rem:Loocotrans}).
\end{remark}

\subsection{Cumulants and Koszul brackets}\label{sec:cumulantsandkoszul} 
	
	Given a graded commutative unitary algebra $A$, we denote by $E:S(A)\to S(A)$ the coalgebra automorphism with Taylor coefficients
	\[pE=(e_1,\ldots,e_n,\ldots),\qquad e_n(a_1,\ldots,a_n):=a_1\cdots a_n. \]
	($p:S(A)\to A$ being the natural projection). We denote by $L:S(A)\to S(A)$ the inverse of $E$. It is easily checked that
	\[pL=(l_1,\ldots,l_n,\ldots),\qquad l_n(a_1,\ldots,a_n)= (-1)^{n-1}(n-1)! a_1\cdots a_n. \]
	We call $E$ and $L$ respectively the \emph{exponential automorphism} and the \emph{logarithmic automorphism} of $S(A)$.
	\begin{remark} In certain situations we shall need to work in a complete augmented setting. In this case, we shall assume that $A$ has an augmentation $\epsilon:A\to\mathbb{K}$ and is equipped with a compatible complete filtration $F^0A=A\supset F^1A=\operatorname{Ker}(\epsilon)\supset\cdots\supset F^p A\supset\cdots$. In this situation the push-forwards of $a\in F^1A^0$ along $E$ and $L$ are given by the usual exponential and logarithmic series
\begin{multline*} E_*(a):=\sum_{n\ge1}\frac{1}{n!}e_n(a^{\odot n})=\sum_{n\ge1}\frac{1}{n!}a^n= e^a-\mathbf{1}_A,\\ L_*(a):=\sum_{n\ge1}\frac{1}{n!}l_n(a^{\odot n})=\sum_{n\ge1}\frac{(-1)^{n-1}}{n}a^n= \log(\mathbf{1}_A+a),
\end{multline*}		
		which justifies the names. 	
	\end{remark}

\begin{remark} Here and in the rest of the paper, we shall use the following notations: given a graded unitary algebra $A$, we shall denote by \[ \operatorname{End}_u(A):=\{\Delta\in \operatorname{End}(A)\,\,\mbox{s.t.}\,\, \Delta(\mathbf{1}_A)=0 \}.\]
Given a pair of graded unitary algebras $A$ and $B$, we shall denote by 
\[ \operatorname{Hom}^0_u(A,B):=\{f\in \operatorname{Hom}^0(A,B)\,\,\mbox{s.t.}\,\, f(\mathbf{1}_A)=\mathbf{1}_B \}\]
(notice in particular that with these notations $\operatorname{End}^0_u(A)$ and $\operatorname{Hom}^0_u(A,A)$ have different meanings).\end{remark}
	\begin{definition}\label{def:cumulants} Given a pair of graded commutative unitary algebras $A$ and $B$, together with a degree zero linear map  $f\in\Hom_u^0(A,B)$, the \emph{cumulants} of $f$ are the Taylor coefficients $\kappa(f)_n:A^{\odot n}\to B$ of the coalgebra morphism $\kappa(f):S(A)\to S(B)$ defined as the composition
		\begin{equation}\label{eq:defcumulants} \kappa(f) := S(A)\xrightarrow{E}S(A)\xrightarrow{S(f)}S(B)\xrightarrow{L}S(B),  \end{equation} 
where $S(f):S(A)\to S(B)$ is the coalgebra morphism  
\[ S(f)(\mathbf{1}_{S(A)})=\mathbf{1}_{S(B)},\qquad S(f)(a_1\odot\cdots\odot a_n)=f(a_1)\odot\cdots\odot f(a_n).\]
	\end{definition}
	
	\begin{remark} The cumulants of $f$ measure the deviation of $f$ from being a morphism of unitary algebras. For instance, a direct computation shows that the first few cumulants of $f$ are 
		\[ \kappa(f)_1(a)=f(a),\qquad \kappa(f)_2(a,b)=f(ab)-f(a)f(b), \]
		\begin{multline*} \kappa(f)_3(a,b,c)=f(abc)-f(ab)f(c)-(-1)^{|b||c|}f(ac)f(b)-\\-(-1)^{|a|(|b|+|c|)}f(bc)f(a)+2f(a)f(b)f(c). 
		\end{multline*}
		In general we have the following explicit formula for the higher cumulants (cf. for instance with \cite{SulRan,Ranade2}), which again can be checked by a straightforward computation,
		\begin{equation}\label{eq:cumulants} \kappa(f)_n(a_1,\ldots,a_n) = \sum_{\stackrel{1\leq k\leq n,\,I_1,\ldots,I_k\neq\emptyset}{I_1\bigsqcup\cdots\bigsqcup I_k=\{1,\ldots,n \}}}(-1)^{k-1}(k-1)!\pm_K\prod_{1\leq j\leq k} f\left(\prod_{i\in I_j} a_i\right). 
		\end{equation}
		Here the sum runs over the ${n\brace k}$ \emph{non-ordered} partitions of $\{1,\ldots,n\}$ into the disjoint union of $k$ non-empty subsets $I_1,\ldots,I_k$, and we denote by $\pm_K$ the appropriate Koszul sign associated with the resulting permutation of $a_1,\ldots,a_n$. In particular, $f$ is a morphism of unitary graded algebras if and only if $\kappa(f)_n=0$ for all $n>1$.
		
	\end{remark} 
	It follows immediately from the definitions that cumulants are compatible with composition, as in the claim of the following lemma.
\begin{lemma}\label{prop:cumulantsandcomposition} Given graded commutative algebras $A,B,C$ and maps $f\in\Hom^0_{u}(A,B),g\in\Hom^0_{u}(B,C)$, then 
	\[ \kappa(g\circ f) =\kappa(g)\circ\kappa(f). \] 
\end{lemma}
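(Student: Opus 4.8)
The plan is to verify the identity directly at the level of coalgebra morphisms, using the definition $\kappa(f) = L\circ S(f)\circ E$ from \eqref{eq:defcumulants}. Since both $\kappa(g\circ f)$ and $\kappa(g)\circ\kappa(f)$ are coalgebra morphisms $S(A)\to S(C)$, and such a morphism is determined by its Taylor coefficients, it suffices to prove the equality of the two composite maps $S(A)\to S(C)$. To keep track of which exponential and logarithmic automorphisms act on which symmetric coalgebra, I would write $E_A$ for the exponential automorphism of $S(A)$, similarly $E_B, L_B$ for those of $S(B)$, and $L_C$ for the logarithmic automorphism of $S(C)$; recall that by definition $L_B = E_B^{-1}$.

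The key step is a simple cancellation. Expanding both factors on the right-hand side according to \eqref{eq:defcumulants},
\[ \kappa(g)\circ\kappa(f) = \bigl(L_C\circ S(g)\circ E_B\bigr)\circ\bigl(L_B\circ S(f)\circ E_A\bigr). \]
The two inner factors meet on $S(B)$, where $E_B\circ L_B = \id$ since $L_B$ is the inverse of $E_B$, so the middle collapses, leaving
\[ \kappa(g)\circ\kappa(f) = L_C\circ S(g)\circ S(f)\circ E_A. \]
It then remains to observe that the symmetric coalgebra construction is functorial on (unital, degree-zero) linear maps, that is $S(g)\circ S(f) = S(g\circ f)$. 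This is immediate from the defining formula $S(h)(a_1\odot\cdots\odot a_n) = h(a_1)\odot\cdots\odot h(a_n)$, applied first with $h = f$ and then with $h = g$. Substituting gives $\kappa(g)\circ\kappa(f) = L_C\circ S(g\circ f)\circ E_A = \kappa(g\circ f)$, as desired.

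In truth this lemma is essentially formal, so I do not expect a genuine obstacle; the only points deserving a moment's care are bookkeeping ones. First, one must check that the cancellation $E_B\circ L_B=\id$ really occurs on the intermediate space $S(B)$ — this is exactly where the target of $S(f)$ meets the source of $S(g)$, and it is the whole reason the logarithmic and exponential corrections undo one another. Second, one should note that $E$ and $L$ depend only on the (commutative, unital) algebra structures of $A,B,C$ and not on the maps $f,g$, so that the functoriality identity $S(g)\circ S(f)=S(g\circ f)$ — valid for arbitrary unital degree-zero linear maps, not only for algebra morphisms — is precisely what is needed to conclude.
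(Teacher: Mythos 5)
Your proof is correct and follows exactly the paper's own argument: expand both cumulant morphisms via the definition $\kappa(h)=L\circ S(h)\circ E$, cancel $E_B\circ L_B=\id$ on the intermediate coalgebra $S(B)$, and invoke functoriality $S(g)\circ S(f)=S(g\circ f)$. The only difference is that you spell out the bookkeeping (subscripted automorphisms, validity of functoriality for arbitrary unital degree-zero maps) that the paper leaves implicit.
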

\begin{proof} This follows immediately from the definitions $\kappa(f):=L\circ S(f)\circ E$, $\kappa(g):=L\circ S(g)\circ E$, the fact that $E:S(B)\to S(B)$ and $L:S(B)\to S(B)$ are inverses to each other and the fact that $S(g\circ f)=S(g)\circ S(f)$.
\end{proof}

	Yet another way to define the higher cumulants is the following recursion: starting with $\kappa(f)_1(a)=f(a)$, for all $n\geq0$ we put
	\begin{multline}\label{eq:recursioncumulants} \kappa(f)_{n+2}(a_1,\ldots,a_n,b,c)=\kappa(f)_{n+1}(a_1,\ldots,a_n,bc)-\\-\sum_{i=0}^{n} \sum_{\sigma\in S(i,n-i)}\pm_K\kappa(f)_{i+1}(a_{\sigma(1)},\ldots,a_{\sigma(i)},b)\kappa(f)_{n-i+1}(a_{\sigma(i+1)},\ldots,a_{\sigma(n)},c). 
	\end{multline}
	Here we denote by $S(i,n-i)$ the set of $(i,n-i)$-unshuffles, and by $\pm_K$ the Koszul sign associated to the permutation $a_1,\ldots,a_n,b,c\mapsto a_{\sigma(1)},\ldots,a_{\sigma(i)},b,a_{\sigma(i+1)},\ldots,a_{\sigma(n)},c$. This recursion can be shown by a simple calculation, whose details are left to the reader, using the explicit formula \eqref{eq:cumulants} for the cumulants. Essentially, if we expand both the left and the right hand side of \eqref{eq:recursioncumulants} according to \eqref{eq:cumulants}, then $\kappa(f)_{n+1}(a_1,\ldots,a_n,bc)$ accounts for all the terms in the expansion of $\kappa(f)_{n+2}(a_1,\ldots,a_n,b,c)$ such that $b$ and $c$ belong to the same block of the partition, while $-\sum_{i=0}^{n} \sum_{\sigma\in S(i,n-i)}(\pm)\kappa(f)_{i+1}(a_{\sigma(1)},\ldots,a_{\sigma(i)},b)\kappa(f)_{n-i+1}(a_{\sigma(i+1)},\ldots,a_{\sigma(n)},c)$ accounts for all those terms such that $b$ and $c$ belong to different blocks. 
\newcommand{\Kos}{\mathcal{K}}

\begin{remark} It follows immediately from the definitions that in the complete setting, assuming furthermore that $f$ is continuous, push-forward along $\kappa(f)$ sends $a\in F^1A^0$ to
\begin{equation}\label{eq:MCcumulants}
 \kappa(f)_*(a) := \sum_{n\ge 1}\frac{1}{n!}\kappa(f)_n(a^{\odot n})=\log(f(e^a)).
\end{equation}
\end{remark}
	
Next we recall the classical construction of Koszul brackets (see \cite{Kos,Markl1} for the equivalence between the following definitions), and explore its relation with the previous construction of cumulants. 
\begin{definition}\label{def:koszulbrackets} Given a degree $k$  endomorphism $\Delta\in\End_u^k(A)$ of $A$, we denote by $\widetilde{\Delta}:S(A)\to S(A)$ the coderivation given in Taylor coefficients by $\widetilde{\Delta}_1=\Delta$ and $\widetilde{\Delta}_n=0$ for $n\neq1$.
	The \emph{Koszul brackets} $\Kos(\Delta)_n:A^{\odot n}\to A$, $n\ge0$, of $\Delta$ are the Taylor coefficients of the coderivation
$\Kos(\Delta):S(A)\to S(A)$ defined as the composition		\begin{equation}\label{eq:defkoszulbrackets} \Kos(\Delta) := S(A)\xrightarrow{E}S(A)\xrightarrow{\widetilde{\Delta}}S(A)\xrightarrow{L}S(A).  \end{equation} 
\end{definition}

\begin{remark} The Koszul brackets of $\Delta$ measure the deviation of $\Delta$ from being a graded algebra derivation. For instance, once again by a direct computation, the first few Koszul brackets are	
	\[ \Kos(\Delta)_1(a)=\Delta(a)-\Delta(1_A)a,\]\[ \Kos(\Delta)_2(a,b)=\Delta(ab)-\Delta(a)b-(-1)^{|a||b|}\Delta(b)a+\Delta(1_A)ab, \]
	\begin{multline*} \Kos(\Delta)_3(a,b,c)=\Delta(abc)-\Delta(ab)c-(-1)^{|b||c|}\Delta(ac)b-(-1)^{|a|(|b|+|c|)}\Delta(bc)a+\\+\Delta(a)bc +(-1)^{|a||b|}\Delta(b)ac+(-1)^{(|a|+|b|)|c|}\Delta(c)ab-\Delta(1_A)abc. 
	\end{multline*}
In general, \begin{equation}\label{eq:koszul}
	\Kos(\Delta)_n(a_1,\ldots,a_n) = \sum_{i=1}^n\sum_{\sigma\in S(i,n-i)}\pm_K(-1)^{n-i}\Delta(a_{\sigma(1)}\cdots a_{\sigma(i)})a_{\sigma(i+1)}\cdots a_{\sigma(n)}.\end{equation}
\end{remark}
Equivalently, the Koszul brackets $\Kos(\Delta)_n$ might be defined by the recursion $\Kos(\Delta)_1(a)=\Delta(a)$ and for $n\geq0$
	\begin{multline}\label{eq:recursionkoszul} \Kos(\Delta)_{n+2}(a_1,\ldots,a_n,b,c)=\Kos(\Delta)_{n+1}(a_1,\ldots,a_n,bc)-\\-\Kos(\Delta)_{n+1}(a_1,\ldots,a_n,b)c-(-1)^{|b||c|}\Kos(\Delta)_{n+1}(a_1,\ldots,a_n,c)b. 
	\end{multline}
We have the following results, the latter two illustrating the compatibility between cumulants and Koszul brackets.
\begin{lemma}\label{lem:KosDGLiemor} If we regard $\End_{u}(A)$ and $\Coder(S(A))$ as graded Lie algebras via the commutator brackets, then the correspondence $\Kos(\#):\End_{u}(A)\to\Coder(S(A)):\Delta\to\Kos(\Delta)$ is a morphism of graded Lie algebras.
\end{lemma}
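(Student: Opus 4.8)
The plan is to factor $\Kos(\#)$ as a composition of two morphisms of graded Lie algebras: the linear-extension map $\Delta\mapsto\widetilde{\Delta}$, followed by conjugation by the inverse pair $(L,E)$. First I would record that $\Kos(\Delta)=L\circ\widetilde{\Delta}\circ E$ genuinely lands in $\Coder(S(A))$, which is already built into Definition \ref{def:koszulbrackets}: since $E$ is a coalgebra automorphism with inverse $L$, the conjugate of the coderivation $\widetilde{\Delta}$ is again a coderivation. Linearity of $\Kos(\#)$ is then immediate, as both $\Delta\mapsto\widetilde{\Delta}$ and conjugation by the fixed maps $L,E$ are linear. So it remains only to check compatibility with the brackets.

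The first key step is to verify that the linear extension $\Delta\mapsto\widetilde{\Delta}$ is itself a morphism of graded Lie algebras, i.e. $\widetilde{[\Delta_1,\Delta_2]}=[\widetilde{\Delta_1},\widetilde{\Delta_2}]$. I would establish this directly from the Taylor-coefficient formula for the commutator bracket of coderivations recorded above. Since $\widetilde{\Delta_i}$ has only the single nonzero Taylor coefficient $(\widetilde{\Delta_i})_1=\Delta_i$, inspection of that formula shows every summand vanishes except the one with $n=1$, which reads $\Delta_1\Delta_2-(-1)^{|\Delta_1||\Delta_2|}\Delta_2\Delta_1=[\Delta_1,\Delta_2]$. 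Thus $[\widetilde{\Delta_1},\widetilde{\Delta_2}]$ is again a linear coderivation, with single Taylor coefficient $[\Delta_1,\Delta_2]$, which is precisely $\widetilde{[\Delta_1,\Delta_2]}$. One also checks $[\Delta_1,\Delta_2]\in\End_u(A)$, which follows at once from $\Delta_i(\mathbf{1}_A)=0$.

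The second step is that conjugation $D\mapsto L\circ D\circ E$ is an automorphism of the graded Lie algebra $\Coder(S(A))$. Using $E\circ L=L\circ E=\id_{S(A)}$, the intermediate factors cancel:
\[ [LD_1E,\,LD_2E] = LD_1D_2E-(-1)^{|D_1||D_2|}LD_2D_1E = L[D_1,D_2]E. \]
Composing the two morphisms then yields the claim:
\[ \Kos([\Delta_1,\Delta_2]) = L\,\widetilde{[\Delta_1,\Delta_2]}\,E = L\,[\widetilde{\Delta_1},\widetilde{\Delta_2}]\,E = [L\widetilde{\Delta_1}E,\,L\widetilde{\Delta_2}E] = [\Kos(\Delta_1),\Kos(\Delta_2)]. \]

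The only genuinely computational point is the Taylor-coefficient verification in the first step, and even there the sole subtlety is bookkeeping the degrees so that the unique surviving term carries the correct Koszul sign $(-1)^{|\Delta_1||\Delta_2|}$; everything else is formal manipulation with the mutually inverse automorphisms $E$ and $L$. I therefore expect the degree/sign tracking in the $n=1$ term to be the main (and only mild) obstacle.
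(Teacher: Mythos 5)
Your proof is correct and takes essentially the same approach as the paper: the paper's argument likewise reduces the claim, via Definition \ref{def:koszulbrackets}, to the statement that $\Delta\mapsto\widetilde{\Delta}$ is a morphism of graded Lie algebras, with conjugation by the mutually inverse automorphisms $E$ and $L$ handling the rest formally. The only difference is that you write out the Taylor-coefficient verification (and the bracket-preservation of conjugation) that the paper dismisses as immediate.
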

\begin{proposition}\label{prop:cumulantsvsKoszuldg} Given graded commutative unitary algebras $A$ and $B$, endomorphisms $\Delta\in\End_u(A)$, $\Delta'\in\End_u(B)$ and a degree zero map $f\in\Hom_u^0(A,B)$, if $f\circ\Delta =\Delta'\circ f$, then also $\kappa(f)\circ\Kos(\Delta)=\Kos(\Delta')\circ\kappa(f)$. \end{proposition}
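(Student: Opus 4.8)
The plan is to unwind the definitions \eqref{eq:defcumulants} and \eqref{eq:defkoszulbrackets} and exploit the fact that $E$ and $L$ are mutually inverse automorphisms, so that the exponential and logarithmic factors cancel in the relevant compositions. Writing $E_A,L_A$ and $E_B,L_B$ for the exponential and logarithmic automorphisms of $S(A)$ and $S(B)$, I have $\kappa(f)=L_B\circ S(f)\circ E_A$, $\Kos(\Delta)=L_A\circ\widetilde{\Delta}\circ E_A$ and $\Kos(\Delta')=L_B\circ\widetilde{\Delta'}\circ E_B$. Composing and using $E_A\circ L_A=\id$ and $E_B\circ L_B=\id$ gives
\[ \kappa(f)\circ\Kos(\Delta)=L_B\circ S(f)\circ\widetilde{\Delta}\circ E_A,\qquad \Kos(\Delta')\circ\kappa(f)=L_B\circ\widetilde{\Delta'}\circ S(f)\circ E_A. \]
Since $L_B$ and $E_A$ are invertible, the claim is therefore equivalent to the identity $S(f)\circ\widetilde{\Delta}=\widetilde{\Delta'}\circ S(f)$ of maps $S(A)\to S(B)$.

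The remaining step is to verify this last identity, which no longer involves cumulants or Koszul brackets and is simply the statement that $S(f)$ intertwines the two linear coderivations. I would prove it by a direct computation on a monomial $a_1\odot\cdots\odot a_n$, using the explicit description of $\widetilde{\Delta}$ afforded by \eqref{coderfromtaylor} with $\widetilde{\Delta}_1=\Delta$ and all other Taylor coefficients zero: both $S(f)\circ\widetilde{\Delta}$ and $\widetilde{\Delta'}\circ S(f)$ send $a_1\odot\cdots\odot a_n$ to a sum over $i=1,\ldots,n$ of terms in which $\Delta$ (resp.\ $\Delta'$) acts on the $i$-th slot, and the hypothesis $f\circ\Delta=\Delta'\circ f$ identifies the $i$-th summands via $f(\Delta(a_i))=\Delta'(f(a_i))$.

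The one point requiring care is the bookkeeping of Koszul signs, and this is precisely where the degree-zero hypothesis on $f$ is used: on the left the sign produced when $\widetilde{\Delta}$ moves $\Delta$ past $a_1,\ldots,a_{i-1}$ is $(-1)^{|\Delta|(|a_1|+\cdots+|a_{i-1}|)}$, after which $S(f)$ contributes no further sign; on the right $S(f)$ first replaces each $a_j$ by $f(a_j)$ of the same degree, and then $\widetilde{\Delta'}$ produces the sign $(-1)^{|\Delta'|(|f(a_1)|+\cdots+|f(a_{i-1})|)}$, which agrees since $|\Delta'|=|\Delta|$ and $|f(a_j)|=|a_j|$. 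Thus the two sides agree termwise, establishing $S(f)\circ\widetilde{\Delta}=\widetilde{\Delta'}\circ S(f)$ and hence the proposition.

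There is essentially no genuine obstacle: the whole content of the statement collapses, via the cancellation of $E$ and $L$, to the trivial functoriality fact that the chain-map condition $f\circ\Delta=\Delta'\circ f$ lifts to the cofree coalgebras; the only thing to get right is the sign analysis just described. Alternatively, one could observe that both composites $S(f)\circ\widetilde{\Delta}$ and $\widetilde{\Delta'}\circ S(f)$ are coderivations along the coalgebra morphism $S(f)$ and, since $S(B)$ is cofree, compare their corestrictions to $B$, which reduces at once to $f\circ\Delta=\Delta'\circ f$ on linear parts and to $0=0$ in higher arities.
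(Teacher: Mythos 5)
Your proof is correct and follows essentially the same route as the paper, which proves this proposition precisely by comparing Definitions \ref{def:cumulants} and \ref{def:koszulbrackets}: the $E$–$L$ factors cancel, reducing everything to the intertwining identity $S(f)\circ\widetilde{\Delta}=\widetilde{\Delta'}\circ S(f)$, which is immediate from $f\circ\Delta=\Delta'\circ f$. Your write-up simply makes explicit the sign bookkeeping that the paper leaves as "straightforward."
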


\begin{proposition}\label{prop:cumulantsvsKoszulexp} Given a degree zero endomorphism $\Delta\in\End_u^0(A)$ as above, such that the exponential $\exp(\Delta)\in\operatorname{Hom}_u^0(A,A)$ is well defined (i.e., the corresponding infinte sum makes sense, either by nilpotency of $\Delta$ or some other appropriate hypothesis), then the exponential $\exp(\Kos(\Delta)):S(A)\to S(A)$ of the coderivation $\Kos(\Delta)$ is also well defined, and an automorphism of the symmetric coalgebra $S(A)$. Moreover, \[ \exp(\Kos(\Delta)) = \kappa(\exp(\Delta)). \]
\end{proposition}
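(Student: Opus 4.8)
The plan is to reduce everything to the two defining factorizations $\Kos(\Delta)=L\circ\widetilde{\Delta}\circ E$ and $\kappa(f)=L\circ S(f)\circ E$, together with the single nontrivial identity $\exp(\widetilde{\Delta})=S(\exp(\Delta))$.

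First I would exploit that $E$ and $L$ are mutually inverse automorphisms of $S(A)$, so that the factorization telescopes under composition: for every $n\ge0$,
\[ \Kos(\Delta)^n=(L\,\widetilde{\Delta}\,E)^n=L\,\widetilde{\Delta}\,(EL)\,\widetilde{\Delta}\,(EL)\cdots\widetilde{\Delta}\,E=L\,\widetilde{\Delta}^{\,n}\,E, \]
since each internal $EL$ cancels. Summing against $1/n!$ and factoring out the automorphisms $L$ on the left and $E$ on the right, I obtain
\[ \exp(\Kos(\Delta))=L\Big(\sum_{n\ge0}\tfrac{1}{n!}\widetilde{\Delta}^{\,n}\Big)E=L\,\exp(\widetilde{\Delta})\,E, \]
provided the middle sum makes sense. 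This already localizes both the convergence question and the main computation to the linear coderivation $\widetilde{\Delta}$.

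The key step is the identity $\exp(\widetilde{\Delta})=S(\exp(\Delta))$. I would prove it via a one-parameter group argument: functoriality of $S(-)$ gives $S(g\circ g')=S(g)\circ S(g')$ (as already used in the proof of Lemma \ref{prop:cumulantsandcomposition}), so $t\mapsto S(\exp(t\Delta))$ is a one-parameter group of coalgebra automorphisms of $S(A)$. Differentiating at $t=0$ yields a coderivation whose only nonzero Taylor coefficient is $\Delta$ in degree one, which is by definition $\widetilde{\Delta}$; hence $\frac{d}{dt}S(\exp(t\Delta))=\widetilde{\Delta}\circ S(\exp(t\Delta))$ and $S(\exp(t\Delta))=\exp(t\widetilde{\Delta})$, which at $t=1$ gives the claim. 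Alternatively, one checks the identity directly on $A^{\odot n}$: since $\widetilde{\Delta}$ acts there by the symmetrized Leibniz sum $\sum_j\id^{\odot(j-1)}\odot\Delta\odot\id^{\odot(n-j)}$, the multinomial expansion of $\exp(\widetilde{\Delta})$ collapses to $\exp(\Delta)^{\odot n}$, which is exactly $S(\exp(\Delta))$ restricted to $A^{\odot n}$.

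With this identity in hand, the convergence issue resolves itself: on each summand $A^{\odot n}$ the operator $S(\exp(\Delta))$ is just $\exp(\Delta)^{\odot n}$, which is defined precisely because $\exp(\Delta)$ is, so $\exp(\widetilde{\Delta})$, and therefore $\exp(\Kos(\Delta))=L\,\exp(\widetilde{\Delta})\,E$, is well defined. Comparing with the definition $\kappa(\exp(\Delta))=L\circ S(\exp(\Delta))\circ E$ gives the desired equality. Finally, since $\exp(\Delta)$ is invertible with inverse $\exp(-\Delta)$, the morphism $S(\exp(\Delta))$ is an automorphism of $S(A)$, and hence so is the composite $\kappa(\exp(\Delta))=\exp(\Kos(\Delta))$. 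I expect the main obstacle to be the careful justification of $\exp(\widetilde{\Delta})=S(\exp(\Delta))$, in particular making the one-parameter-group differentiation rigorous in the possibly non-nilpotent setting; the multinomial check on each $A^{\odot n}$ is the safe fallback.
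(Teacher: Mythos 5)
Your proposal is correct and follows essentially the same route as the paper: the paper's (very terse) proof likewise reduces everything to comparing the factorizations $\Kos(\Delta)=L\circ\widetilde{\Delta}\circ E$ and $\kappa(f)=L\circ S(f)\circ E$, with the key added observation that $\exp(\widetilde{\Delta})$ is well defined and equals $S(\exp(\Delta))$. Your telescoping of $(L\widetilde{\Delta}E)^n=L\widetilde{\Delta}^nE$ and your verification of $\exp(\widetilde{\Delta})=S(\exp(\Delta))$ (by the multinomial collapse on each $A^{\odot n}$, with the one-parameter-group argument as an alternative) simply spell out the details the paper leaves implicit.
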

By Definition \ref{def:koszulbrackets} the proof of the first result reduces to show  that $\End_u(A)\to\operatorname{Coder}(S(A)):\Delta\to\widetilde{\Delta}$ is a morpshims of graded Lie algebras, which is immediate. The second result follows straightforwardly by comparing the Definitions \ref{def:cumulants} and \ref{def:koszulbrackets}. The same applies for the last result, with the further observation that in the hypotheses of the enunciate the exponential of the coderivation $\widetilde{\Delta}$ is well defined, and is the coalgebra automorphism $\exp(\widetilde{\Delta})=S(\exp(\Delta))$.

Let us further point out the following formula, valid in the complete setting for any $a\in F^1A^0$,
\begin{equation}\label{eq:MCkoszul} \sum_{n\ge1}\frac{1}{n!}\Kos(\Delta)_n(a^{\odot n}) = e^{-a}\Delta(e^a). 
\end{equation}
In particular, $\sum_{n\ge1}\frac{1}{n!}\Kos(\Delta)_n(a^{\odot n})=0 $ if and only if $\Delta(e^a) =0$.
\newcommand{\Diff}{\operatorname{Diff}}

We conclude this subsection by recalling the definition of differential operators on a graded commutative unitary algebra.
\begin{definition}\label{def:diffop}
	Given a graded commutative algebra $A$ and $k\geq1$, we denote by 
\[ \Diff_{u,\leq k}(A) := \{ \Delta\in\End_u(A)\,\,\mbox{s.t.}\,\,\Kos(\Delta)_{k+1}=0 \}= \{ \Delta\in\End_u(A)\,\,\mbox{s.t.}\,\,\Kos(\Delta)_{n}=0,\,\forall\,n>k \},  \]
where the second identity follows immediately from the recursion \ref{eq:recursionkoszul} for the Koszul brackets. In other words, $\Delta\in\Diff_{u,\leq k}(A)$ if $\Delta$ is a differential operator of order $\leq k$ on $A$, such that moreover $\Delta(\1_A)=0$. We denote by $\Diff_u(A) = \bigcup_{k\geq 1} \Diff_{u,\leq k}(A)$. It is well known that $\Diff_{u,\leq j}(A)\circ\Diff_{u,\leq k}(A)\subset\Diff_{u,\leq (k+j)}(A)$ and $\big[\Diff_{u,\leq j}(A),\Diff_{u,\leq k}(A)\big]\subset\Diff_{u,\leq (k+j-1)}(A)$, so that $\Diff_u(A)\subset\End_u(A)$ is both a graded subalgebra and a graded Lie subalgebra. 
\end{definition}

\subsection{Homological perturbation theory}\label{subsec:comp}
\newcommand{\s}{\breve{\sigma}}
\renewcommand{\t}{\breve{\tau}}
\newcommand{\h}{\breve{h}}
\renewcommand{\d}{\breve{d}}

\begin{definition}\label{def:contraction}
	Given a pair of DG spaces $(A,d_A)$ and $(B,d_B)$, a \emph{contraction} $(\sigma,\tau,h)$ of $A$ onto $B$ is the datum of DG maps $\sigma: (A,d_A)\to(B,d_B)$, $\tau: (B,d_B)\to(A,d_A)$
	and a contracting homotopy $h: A\to A[-1]$ such that
	\[ \sigma\tau = \id_B,\qquad hd_A+d_Ah = \tau\sigma-\id_A ,\]
	\[ \sigma h=0,\qquad h\tau=0,\qquad h^2=0.\]
\end{definition}

We shall often consider the datum of a graded commutative unitary algebra $(A,\cdot)$ equipped with a degree one differential $d_A$ squaring to zero, which \textbf{might not be an algebra derivation},  and is only required to satisfy $d_A(\textbf{1}_A)=0$: we shall call this datum a \emph{graded commutative unitary algebra with a differential}, whereas as usual if $d_A$ is an algebra derivation we call $(A,d_A,\cdot)$ a \emph{DG commutative unitary algebra}.



\begin{definition}\label{def:semifullalgebracontraction} Given graded commutative algebras $A$ and $B$ together with differentials $d_A$, $d_B$  (not necessarily algebra derivations, but as explained above we do require $d_A(\mathbf{1}_A)=0$, $d_B(\mathbf{1}_B)=0$) and a contraction $(\sigma,\tau,h)$ of $(A,d_A)$ onto $(B,d_B)$, we say that $(\sigma,\tau,h)$ is a \emph{semifull algebra contraction} if the following identities are satisfied for all $a,b\in A$, $x,y\in B$
\begin{multline*}  h(h(a)h(b)) = h(h(a)\tau(x))=h(\tau(x)\tau(y)) = h(\mathbf{1}_A) = 0,\\ \sigma(h(a)h(b)) = \sigma(h(a)\tau(x))=0,\quad\sigma(\tau(x)\tau(y)) = xy,\quad\sigma(\mathbf{1}_A)=\mathbf{1}_B.
\end{multline*}
If furthermore $d_A$ is an algebra derivations, we say that $(\sigma,\tau,h)$ is a \emph{semifull DG algebra contraction}. 
Semifull algebra contractions were introduced by Real \cite{Real}: we refer to loc. cit. for further discussion on this class of contractions and several examples. \end{definition}

\begin{remark}\label{rem:semifullDG} When $(\sigma,\tau,h)$ is a semifull DG algebra contraction, i.e., when $d_A$ is an algebra derivation, then the identities in the previous definition are equivalent to the seemingly stronger 
\begin{eqnarray}
\label{eqA1bis} h(\,(-1)^{|a|+1} h(a)b + ah(b)\,) &=& h(a)h(b),\\
\label{eqA2bis} h( a\tau(x)) &=& h(a)\tau(x),\\\sigma(\,(-1)^{|a|+1} h(a)b + ah(b)\,) &=& 0,\\
\label{eqA4bis} \sigma( a\tau(x)) &=& \sigma(a)x.
\end{eqnarray}
as can be seen from the following identities \eqref{eq:failureA1}, \eqref{eq:failureA2}, \eqref{eq:failureA3}, \eqref{eq:failureA4}. Moreover, the following Proposition \ref{prop:transfer} will imply immediately that if $(\sigma,\tau,h)$ is a semifull DG algebra contraction, then necessarily $d_B$ is an algebra derivation and $\tau$ is a morphism of graded algebras.\end{remark}

The main technical result of this section shows that given graded commutative unitary algebras with a differential $(A,d_A)$ and $(B,d_B)$, together with a semifull algebra contraction $(\sigma,\tau,h)$ of $(A,d_A)$ onto $(B,d_B)$,  the Koszul brackets $\Kos(d_B)_n$ on $B$ and the ones $\Kos(d_A)_n$ on $A$ are related via homotopy transfer for $L_\infty[1]$ algebras. More precisely, the sequence of Koszul brackets $\Kos(d_A)_n$, $n\geq1$, induces a (homotopy abelian) $L_\infty[1]$ algebra structure on $A$. Via homotopy transfer along the contraction $(\sigma,\tau,h)$ (see Theorem \ref{th:transfer}), there is an induced $L_\infty[1]$ algebra structure on $B$, as well as an $L_\infty[1]$ morphism from $B$ to $A$. We shall see that the former is precisely $\Kos(d_B):S(B)\to S(B)$, and the latter precisely $\kappa(\tau):S(B)\to S(A)$.

Before we do this, given unitary graded commutative algebras $A,B$ with differentials (not necessarily derivations) $d_A,d_B$ and a semifull algebra contraction $(\sigma,\tau,h)$ between them, we need to look more closely at the failure of the previous identities \eqref{eqA1bis}-\eqref{eqA4bis}.

	We have 
	\[ h\left(ah(b)\right) = h\left((\tau\sigma - d_Ah-hd_A)(a)h(b)\right) = -h\left(d_Ah(a)h(b)\right), \]
	and similarly $h\left(h(a)b\right)=-h\left(h(a)d_Ah(b)\right)$. Moreover $h(a)h(b)=(\tau\sigma-d_Ah-hd_A)\left(h(a)h(b)\right) = -hd_A\left(h(a)h(b)\right)$. Thus,  
\begin{multline}\label{eq:failureA1}h\left((-1)^{|a|+1}h(a)b +ah(b) \right) -h(a)h(b) =\\=h\left( d_A\left(h(a)h(b)\right) -d_Ah(a)h(b)-(-1)^{|a|+1}h(a)d_Ah(b) \right) =\\= h\Kos(d_A)_2\left(h(a),h(b)\right). 
\end{multline}
In the same way, since $h(a)\tau(x)=-hd_A\left(h(a)\tau(x)\right)$ and \begin{multline*} h\left(a\tau(x)\right)=-h\left(d_Ah(a)\tau(x)\right)=-h\left(d_Ah(a)\tau(x)+(-1)^{|a|+1}h(a)\tau d_B(x)\right)=\\=-h\left(d_Ah(a)\tau(x)+(-1)^{|a|+1}h(a)d_A\tau(x)\right), 
\end{multline*}
we see that
\begin{multline}\label{eq:failureA2}h\left(a\tau(x) \right) -h(a)\tau(x) =\\=h\left( d_A\left(h(a)\tau(x)\right) -d_Ah(a)\tau(x)-(-1)^{|a|+1}h(a)d_A\tau(x) \right) =\\= h\Kos(d_A)_2\left(h(a),\tau(x)\right). 
\end{multline}
The same kind of computations shows that
\begin{equation}\label{eq:failureA3} \sigma\left((-1)^{|a|+1}h(a)b +ah(b) \right) = \sigma\Kos(d_A)_2\left(h(a),h(b)\right). 
\end{equation}
\begin{equation}\label{eq:failureA4}\sigma\left(a\tau(x) \right) -\sigma(a)x = \sigma\Kos(d_A)_2\left(h(a),\tau(x)\right). 
\end{equation}

\begin{proposition}\label{prop:transfer} Given a semifull algebra contraction as above, the $L_\infty[1]$ algebra structure on $A$ associated with the Koszul brackets $\Kos(d_A)_n$ transfers to the one on $B$ associated with the Koszul brackets $\Kos(d_B)_n$. Moreover, the induced $L_\infty[1]$ morphism $(S(B),\Kos(d_B))\to(S(A),\Kos(d_A))$ is precisely $\kappa(\tau)$.
\end{proposition}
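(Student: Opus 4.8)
The plan is to recognize $\Kos(d_B)$ and $\kappa(\tau)$ as the unique solutions of the recursions of Theorem~\ref{th:transfer}. Since $\tau$ is a chain map, $\tau d_B=d_A\tau$, Proposition~\ref{prop:cumulantsvsKoszuldg} gives the intertwining $\Kos(d_A)\circ\kappa(\tau)=\kappa(\tau)\circ\Kos(d_B)$, so $\kappa(\tau)$ is already a morphism of DG coalgebras $(S(B),\Kos(d_B))\to(S(A),\Kos(d_A))$ with linear part $\kappa(\tau)_1=\tau$, and $\Kos(d_B)=L_B\widetilde{d_B}E_B$ is a genuine $L_\infty[1]$ structure, its square vanishing because it is conjugate to the square-zero $\widetilde{d_B}$ (cf.\ Lemma~\ref{lem:KosDGLiemor}). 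Writing $\Kos(d_A)_+:=\Kos(d_A)-\widetilde{d_A}$ for the higher Koszul brackets and $(S(\sigma),S(\tau),\widehat{h})$ for the contraction of $(S(A),\widetilde{d_A})$ onto $(S(B),\widetilde{d_B})$ produced by the symmetrized tensor trick, I would reduce the whole statement to the two operator identities $\kappa(\tau)=S(\tau)+\widehat{h}\,\Kos(d_A)_+\,\kappa(\tau)$ and $\Kos(d_B)=\widetilde{d_B}+S(\sigma)\,\Kos(d_A)_+\,\kappa(\tau)$: corestricting these to $A$, resp.\ $B$, and restricting to $B^{\odot i}$ recovers exactly the recursions $f_i=\sum_{k\ge2}h\,\Kos(d_A)_k\,\kappa(\tau)^k_i$ and $r_i=\sum_{k\ge2}\sigma\,\Kos(d_A)_k\,\kappa(\tau)^k_i$ of Theorem~\ref{th:transfer} with $F^k_i=\kappa(\tau)^k_i$, whence $F=\kappa(\tau)$ and $R=\Kos(d_B)$ by uniqueness.

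The two operator identities I would in turn deduce from the two \emph{side conditions} $S(\sigma)\circ\kappa(\tau)=\id_{S(B)}$ and $\widehat{h}\circ\kappa(\tau)=0$. Substituting $\Kos(d_A)_+\,\kappa(\tau)=\kappa(\tau)\Kos(d_B)-\widetilde{d_A}\kappa(\tau)$ (the intertwining again) and using the homotopy identity $\widehat{h}\widetilde{d_A}+\widetilde{d_A}\widehat{h}=S(\tau)S(\sigma)-\id$ of Definition~\ref{def:contraction}, one gets $\widehat{h}\,\Kos(d_A)_+\,\kappa(\tau)=\widehat{h}\kappa(\tau)\Kos(d_B)-(S(\tau)S(\sigma)-\id)\kappa(\tau)+\widetilde{d_A}\widehat{h}\kappa(\tau)$, which the side conditions collapse to $\kappa(\tau)-S(\tau)$, giving the first identity; for the second, $S(\sigma)\Kos(d_A)_+\kappa(\tau)=S(\sigma)\kappa(\tau)\Kos(d_B)-\widetilde{d_B}S(\sigma)\kappa(\tau)=\Kos(d_B)-\widetilde{d_B}$ using $S(\sigma)\widetilde{d_A}=\widetilde{d_B}S(\sigma)$ and $S(\sigma)\kappa(\tau)=\id$.

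The heart of the matter, and where the semifull hypothesis of Definition~\ref{def:semifullalgebracontraction} is indispensable, is the two side conditions. Both follow from the single invariant that $\kappa(\tau)_1=\tau$ while $\kappa(\tau)_m(\dots)\in\operatorname{Ker}(\sigma)\cap\operatorname{Ker}(h)$ for all $m\ge2$. Granting it: $S(\sigma)\kappa(\tau)$ is a composite of coalgebra morphisms, hence determined by its corestriction $\sigma\circ p_A\kappa(\tau)=(\sigma\tau,\sigma\kappa(\tau)_2,\dots)=(\id,0,0,\dots)$, so $S(\sigma)\kappa(\tau)=\id$; and since every $\kappa(\tau)_m$ lands in $\operatorname{Ker}(h)$ (using $h\tau=0$ for $m=1$), in each term of $\widehat{h}\kappa(\tau)$ the factor placed in the $h$-slot is annihilated, so $\widehat{h}\kappa(\tau)=0$.

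I would prove the invariant by induction on $m$ using the cumulant recursion \eqref{eq:recursioncumulants} (whose source algebra is $B$). The base case $\kappa(\tau)_2(x,y)=\tau(xy)-\tau(x)\tau(y)$ lies in $\operatorname{Ker}(\sigma)\cap\operatorname{Ker}(h)$ directly from $\sigma\tau=\id$, $h\tau=0$ and the semifull identities $\sigma(\tau(x)\tau(y))=xy$, $h(\tau(x)\tau(y))=0$. For the inductive step one applies $\sigma$ and $h$ to \eqref{eq:recursioncumulants}: the merged term $\kappa(\tau)_{m-1}(\dots,yz)$ lies in $\operatorname{Ker}(\sigma)\cap\operatorname{Ker}(h)$ by induction, while each split term is a product of two lower cumulants in which at least one factor has $\ge2$ arguments. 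Here I would use the elementary fact, immediate from the contraction identities, that $\operatorname{Ker}(\sigma)\cap\operatorname{Ker}(h)\subseteq\operatorname{Im}(h)$ (if $\sigma(a)=h(a)=0$ then $a=-hd_A(a)$): by induction the higher cumulant factors lie in $\operatorname{Im}(h)$ and the linear ones in $\operatorname{Im}(\tau)$, so the remaining semifull identities $\sigma(h(a)h(b))=\sigma(h(a)\tau(x))=0$ and $h(h(a)h(b))=h(h(a)\tau(x))=0$ kill every split term under both $\sigma$ and $h$. The main obstacle is precisely this bookkeeping — verifying that the higher cumulants sit in $\operatorname{Im}(h)$ so that the two-variable semifull identities propagate to all orders — after which the induction closes and the proposition follows.
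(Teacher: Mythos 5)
Your proof is correct, and it takes a genuinely different route from the paper's. The paper establishes exactly the two recursions you reduce to, but by a long element-wise induction whose engine is the set of ``failure identities'' \eqref{eq:failureA1}--\eqref{eq:failureA4}: these express how far $h$ and $\sigma$ are from being multiplicative on $\operatorname{Im}(h)\cdot\operatorname{Im}(h)$ and $\operatorname{Im}(h)\cdot\operatorname{Im}(\tau)$ as exact correction terms $h\mathcal{K}(d_A)_2(-,-)$ and $\sigma\mathcal{K}(d_A)_2(-,-)$, which are then recombined, via the Koszul recursion \eqref{eq:recursionkoszul}, into the higher brackets appearing in the transfer formulas; the half of the statement concerning $\mathcal{K}(d_B)$ is finally obtained from the injectivity of $\kappa(\tau)$ together with Proposition \ref{prop:cumulantsvsKoszuldg}. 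You instead invoke Proposition \ref{prop:cumulantsvsKoszuldg} at the outset, for both halves at once, and funnel all use of semifullness into the single kernel-membership invariant $\kappa(\tau)_m(B,\ldots,B)\subset\operatorname{Ker}(\sigma)\cap\operatorname{Ker}(h)$ for $m\ge2$, whose inductive proof needs only the raw semifull identities, the cumulant recursion \eqref{eq:recursioncumulants}, and the elementary inclusion $\operatorname{Ker}(\sigma)\cap\operatorname{Ker}(h)\subset\operatorname{Im}(h)$ --- no exact correction terms and no use of \eqref{eq:recursionkoszul} at all. What the paper's computation buys is the failure identities themselves, which have independent use (cf.\ Remark \ref{rem:semifullDG}); what your argument buys is brevity and conceptual clarity: your fixed-point identity $\kappa(\tau)=S(\tau)+\widehat{h}\,\mathcal{K}(d_A)_+\,\kappa(\tau)$ iterates (by local nilpotency of $\widehat{h}\,\mathcal{K}(d_A)_+$, since $\mathcal{K}(d_A)_+$ strictly lowers word length while $\widehat{h}$ preserves it) to $\kappa(\tau)=\sum_{k\ge0}\bigl(\widehat{h}\,\mathcal{K}(d_A)_+\bigr)^{k}S(\tau)$, i.e., it exhibits $\kappa(\tau)$ as precisely the perturbed map of the Standard Perturbation Lemma \ref{lem:SPL} applied to the symmetrized contraction of Lemma \ref{lem:symmetrizedcontraction} with perturbation $\mathcal{K}(d_A)_+$. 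This makes the relation to Theorem \ref{th:transferL} transparent, and it also reinforces the non-circularity point of Remark \ref{rem:circular}, since nothing in your argument uses Theorem \ref{th:transfer} beyond reading off its recursions as the definition of the transferred data.
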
	

\begin{proof} This is a rather long computation. We need to prove that the $\kappa(\tau)_n$, $\Kos(d_B)_n$ satisfy the following recursions: starting from $\kappa(\tau)_1=\tau$, $\Kos(d_B)_1=d_B$, for all $n\geq2$ and $x_1,\ldots,x_n\in B$, we have to show
	\[ \kappa(\tau)_n(x_1,\ldots,x_n) = \sum_{\stackrel{k\geq2,i_1,\ldots,i_k\geq1}{i_1+\cdots+i_k=n}}\sum_{\sigma\in S(i_1,\ldots,i_k)}\pm_K\frac{1}{k!}h\Kos(d_A)_k\left(\kappa(\tau)_{i_1}(x_{\sigma(1)}\ldots),\ldots,\kappa(\tau)_{i_k}(\ldots,x_{\sigma(n)})\right) \]
	\[ \Kos(d_B)_n(x_1,\ldots,x_n) = \sum_{\stackrel{k\geq2,i_1,\ldots,i_k\geq1}{i_1+\cdots+i_k=n}}\sum_{\sigma\in S(i_1,\ldots,i_k)}\pm_K\frac{1}{k!}\sigma\Kos(d_A)_k\left(\kappa(\tau)_{i_1}(x_{\sigma(1)}\ldots),\ldots,\kappa(\tau)_{i_k}(\ldots,x_{\sigma(n)})\right) \]
	As usual, in the above formulas we denote by $\pm_K$ the Koszul sign associated to the permutation $x_1,\ldots,x_n\mapsto x_{\sigma(1)},\ldots,x_{\sigma(n)}$.
	
	We prove first that the cumulants $\kappa(\tau)_n$ obey the above recursion, by induction on $n$. As a warm up, we begin by considering the cases $n=2$ and $n=3$. For $n=2$, since $h(\tau(y)\tau(z))=0$ and $\sigma(\tau(y)\tau(z))=yz$,
	\[ \kappa(\tau)_2(y,z) = \tau(yz)-\tau(y)\tau(z) = (\tau\sigma-\id_A)(\tau(y)\tau(z)) = hd_A(\tau(y)\tau(z))=h\Kos(d_A)_2(\tau(y),\tau(z)),  \]
	which is what we needed to show. In the last passage, we used the fact that $h\left(d_A\tau(y)\tau(z)\right)=h\left(\tau d_B(y)\tau(z)\right)=0$, and for the same reason $h\left(\tau(y)d_A\tau(z)\right)=0$.
	
	Next, we consider the case $n=3$: the following computation should already give a hint of our inductive argument for the general case, which is based on the recursions \eqref{eq:recursioncumulants} and \eqref{eq:recursionkoszul} for the cumulants and the Koszul brackets respectively, as well as the previous identities \eqref{eq:failureA1}-\eqref{eq:failureA2}.  For simplicity, we get rid of Koszul signs by showing the necessary relation only for degree zero arguments $x,y,z\in B^0$.
	\begin{multline*} \kappa(\tau)_3(x,y,z) = (\mbox{using \eqref{eq:recursioncumulants}})=  \kappa(\tau)_2(x,yz)-\kappa(\tau)_2(x,y)\tau(z)-\kappa(\tau)_2(x,z)\tau(y) = \\ = (\mbox{using the already done $n=2$ case})=\\= h\Kos(d_A)_2(\tau(x),\tau(yz)) - h\Kos(d_A)_2(\tau(x),\tau(y))\tau(z) - h\Kos(d_A)_2(\tau(x),\tau(z))\tau(y) = \\ = \mbox{(using  \eqref{eq:failureA2})}  = h\Big( \Kos(d_A)_2(\tau(x),\tau(y)\tau(z)) - \Kos(d_A)_2(\tau(x),\tau(y))\tau(z) - \Kos(d_A)_2(\tau(x),\tau(z))\tau(y) \Big) + \\ + h\Kos(d_A)_2\left(\tau(x),\kappa(\tau)_2(y,z)\right)+ h\Kos(d_A)_2\left(\tau(y),h\Kos(d_A)_2(\tau(x),\tau(z))\right)+\\+ h\Kos(d_A)_2\left(\tau(y),h\Kos(d_A)_2(\tau(x),\tau(z))\right) = \mbox{(using  \eqref{eq:recursionkoszul})} = h\Kos(d_A)_3(\tau(x),\tau(y),\tau(z)) + \\ + h\Kos(d_A)_2(\tau(x),\kappa(\tau)_2(y,z))+ h\Kos(d_A)_2(\tau(y),\kappa(\tau)_2(x,z))+ h\Kos(d_A)_2(\tau(z),\kappa(\tau)_2(x,y)).
	\end{multline*}
	
	Now we do the general case. For simplicity, in the following computations we denote by $\kappa_n:=\kappa(\tau)_n$, $\Kos_k:=\Kos(d_A)_k$. Furthermore, as before, we shall get rid of Koszul signs by showing the necessary relation only for $x_1,\ldots,x_n,y,z\in B^0$. Finally, in order to further abbreviate the following equations, we shall omit the arguments $x_1,\ldots,x_n$, which should fill the suspension dots in a way which should be obvious from the context. For instance, with these notational simplifications, the recursion \eqref{eq:recursioncumulants} for the cumulants $\kappa(\tau)_{n+2}$ becomes
	\begin{equation*} \kappa_{n+2}(\ldots,y,z)=\kappa_{n+1}(\ldots,yz)-\sum_{i=0}^{n} \sum_{\sigma\in S(i,n-i)}\kappa_{i+1}(\ldots,y)\kappa_{n-i+1}(\ldots,z). 
	\end{equation*}
	As another example, the recursion for the cumulants we need to show becomes
	\begin{multline}\label{eq:needtoprove}
	\kappa_{n+2}(\ldots,y,z) = 
		\sum_{\stackrel{k\geq2,i_1,\ldots,i_{k-1}\geq1, i_k\geq0}{i_1+\cdots+i_k=n}}\sum_{\sigma\in S(i_1,\ldots,i_k)}\frac{1}{(k-1)!}h\Kos_k\left(\kappa_{i_1}(\ldots),\ldots,\kappa_{i_k+2}(\ldots,y,z)\right) +  \\ +\sum_{\stackrel{k,i_1,\ldots,i_{k-1}\geq1, i_{k},i_{k+1}\geq0}{i_1+\cdots+i_{k+1}=n}}\sum_{\sigma\in S(i_1,\ldots,i_{k+1})}\frac{1}{(k-1)!}h\Kos_{k+1}\left(\kappa_{i_1}(\ldots),\ldots,\kappa_{i_{k}+1}(\ldots,y),\kappa_{i_{k+1}+1}(\ldots,z)\right).
	\end{multline}	
	
	Next, by using in order the inductive hypothesis, the recursions \eqref{eq:recursioncumulants} for the cumulants and the one \eqref{eq:recursionkoszul} for the Koszul brackets,  we see that 
	\begin{multline}\label{longeq1} \kappa_{n+1}(\ldots,yz) =  \sum_{\stackrel{k\geq2,i_1,\ldots,i_{k-1}\geq1,i_k\geq0}{i_1+\cdots+i_k=n}}\sum_{\sigma\in S(i_1,\ldots,i_k)}\frac{1}{(k-1)!}h\Kos_{k}\left(\kappa_{i_1}(\ldots),\ldots,\kappa_{i_{k}+1}(\ldots,yz)  \right) = \\  \sum_{\stackrel{k\geq2,i_1,\ldots,i_{k-1}\geq1,i_k\geq0}{i_1+\cdots+i_k=n}}\sum_{\sigma\in S(i_1,\ldots,i_k)}\frac{1}{(k-1)!}h\Kos_{k}\left(\kappa_{i_1}(\ldots),\ldots,\kappa_{i_{k}+2}(\ldots,y,z)  \right) +\\ 
	\sum_{\stackrel{k\geq2,i_1,\ldots,i_{k-1}\geq1,i_k,i_{k+1}\geq0}{i_1+\cdots+i_{k+1}=n}}\sum_{\sigma\in S(i_1,\ldots,i_{k+1})}\frac{1}{(k-1)!}h\Kos_{k}\left(\kappa_{i_1}(\ldots),\ldots,\kappa_{i_{k}+1}(\ldots,y)\kappa_{i_{k+1}+1}(\ldots,z)  \right) = \\ \sum_{\stackrel{k\geq2,i_1,\ldots,i_{k-1}\geq1,i_k\geq0}{i_1+\cdots+i_k=n}}\sum_{\sigma\in S(i_1,\ldots,i_k)}\frac{1}{(k-1)!}h\Kos_{k}\left(\kappa_{i_1}(\ldots),\ldots,\kappa_{i_{k}+2}(\ldots,y,z)  \right) +\\ 
	\sum_{\stackrel{k\geq2,i_1,\ldots,i_{k-1}\geq1,i_k,i_{k+1}\geq0}{i_1+\cdots+i_{k+1}=n}}\sum_{\sigma\in S(i_1,\ldots,i_{k+1})}\frac{1}{(k-1)!}h\Kos_{k+1}\left(\kappa_{i_1}(\ldots),\ldots,\kappa_{i_{k}+1}(\ldots,y),\kappa_{i_{k+1}+1}(\ldots,z)  \right) +  \\ \sum_{\stackrel{k\geq2,i_1,\ldots,i_{k-1}\geq1,i_k,i_{k+1}\geq0}{i_1+\cdots+i_{k+1}=n}}\sum_{\sigma\in S(i_1,\ldots,i_{k+1})}\frac{1}{(k-1)!}h\Big( \Kos_{k}\left(\kappa_{i_1}(\ldots),\ldots,\kappa_{i_{k}+1}(\ldots,y)  \right)\kappa_{i_{k+1}+1}(\ldots,z)\Big) +\\ \sum_{\stackrel{k\geq2,i_1,\ldots,i_{k-1}\geq1,i_k,i_{k+1}\geq0}{i_1+\cdots+i_{k+1}=n}}\sum_{\sigma\in S(i_1,\ldots,i_{k+1})}\frac{1}{(k-1)!}h\Big( \Kos_{k}\left(\kappa_{i_1}(\ldots),\ldots,\kappa_{i_{k+1}+1}(\ldots,z)  \right) \kappa_{i_{k}+1}(\ldots,y)\Big) \end{multline}
	We notice that the first two lines of the rightmost term of the previous equations account for all the summands in the right hand side of \eqref{eq:needtoprove} except for the ones
	\begin{equation}\label{shorteq} \sum_{i=0}^n\sum_{\sigma\in S(i,n-i)}h\Kos_{2}\left(\kappa_{i+1}(\ldots,y),\kappa_{n-i+1}(\ldots,z)\right).
	\end{equation}
	We consider the $i=n$ term in the above sum. Using in order the hynductive hypothesis, Equation \eqref{eq:failureA2} and again the inductive hypothesis, we may rewrite this as 
	\begin{multline}\label{longeq2}
	  h\Kos_2\left(\kappa_{n+1}(\ldots,y),\tau(z)\right) = \\ = \sum_{\stackrel{k\geq2,i_1,\ldots,i_{k-1}\geq1,i_k\geq0}{i_1+\cdots+i_k=n}}\sum_{\sigma\in S(i_1,\ldots,i_k)}\frac{1}{(k-1)!}h\Kos_2\left( h\Kos_{k}(\kappa_{i_1}(\ldots),\ldots,\kappa_{i_k+1}(\ldots,y)),\tau(z)\right) = \\ = - \frac{1}{(k-1)!} h\Kos_{k}(\kappa_{i_1}(\ldots),\ldots,\kappa_{i_k+1}(\ldots,y))\tau(z) + \\ +\sum_{\stackrel{k\geq2,i_1,\ldots,i_{k-1}\geq1,i_k\geq0}{i_1+\cdots+i_k=n}}\sum_{\sigma\in S(i_1,\ldots,i_k)}\frac{1}{(k-1)!}h\Big(\Kos_{k}(\kappa_{i_1}(\ldots),\ldots,\kappa_{i_k+1}(\ldots,y))\tau(z)\Big)= \end{multline}\begin{equation*}= -\kappa_{n+1}(\ldots,y)\tau(z) + \sum_{\stackrel{k\geq2,i_1,\ldots,i_{k-1}\geq1,i_k\geq0}{i_1+\cdots+i_k=n}}\sum_{\sigma\in S(i_1,\ldots,i_k)}\frac{1}{(k-1)!}h\Big(\Kos_{k}(\kappa_{i_1}(\ldots),\ldots,\kappa_{i_k+1}(\ldots,y))\tau(z)\Big)
	 \end{equation*}
	Reasoning in the same say, we may rewrite the $i=0$ term in \eqref{shorteq} as 
	\begin{multline}\label{longeq3}  h\Kos_2\left(\tau(y),\kappa_{n+1}(\ldots,z)\right) = -\tau(y)\kappa_{n+1}(\ldots,z) + \\ + \sum_{\stackrel{k\geq2,i_1,\ldots,i_{k-1}\geq1,i_k\geq0}{i_1+\cdots+i_k=n}}\sum_{\sigma\in S(i_1,\ldots,i_k)}\frac{1}{(k-1)!}h\Big(\Kos_{k}(\kappa_{i_1}(\ldots),\ldots,\kappa_{i_k+1}(\ldots,z))\tau(y)\Big).
	\end{multline}
	When $0<i<n$ we can mimick the same argument, but this time using Equation \eqref{eq:failureA1} in place of \eqref{eq:failureA2}, to conclude that
	\begin{multline}\label{longeq4} \sum_{\sigma\in S(i,n-i)}h\Kos_2\left(\kappa_{i+1}(\ldots,y),\kappa_{n-i+1}(\ldots,z)\right) = -\sum_{\sigma\in S(i,n-i)}\kappa_{i+1}(\ldots,y)\kappa_{n-i+1}(\ldots,z) + \\ + \sum_{\stackrel{k\geq2,i_1,\ldots,i_{k-1}\geq1,i_k\geq0}{i_1+\cdots+i_{k}=i}}\sum_{\sigma\in S(i_1,\ldots,i_{k},n-i)}\frac{1}{(k-1)!}h\Big( \Kos_{k}\left(\kappa_{i_1}(\ldots),\ldots,\kappa_{i_{k}+1}(\ldots,y)  \right)\kappa_{n-i+1}(\ldots,z)\Big) +\\+ \sum_{\stackrel{k\geq2,i_1,\ldots,i_{k-1}\geq1,i_k\geq0}{i_1+\cdots+i_{k+}=n-i}}\sum_{\sigma\in S(i_1,\ldots,i_{k},i)}\frac{1}{(k-1)!}h\Big( \Kos_{k}\left(\kappa_{i_1}(\ldots),\ldots,\kappa_{i_{k}+1}(\ldots,z)  \right) \kappa_{i+1}(\ldots,y)\Big).
	\end{multline}
	Putting all the above computations together, we can plug the last three Equations \eqref{longeq2}-\eqref{longeq4} into \eqref{longeq1} to conclude that
	\begin{multline*}
\kappa_{n+1}(\ldots,yz) = \sum_{i=0}^{n} \sum_{\sigma\in S(i,n-i)}\kappa_{i+1}(\ldots,y)\kappa_{n-i+1}(\ldots,z)+ \\+
\sum_{\stackrel{k\geq2,i_1,\ldots,i_{k-1}\geq1, i_k\geq0}{i_1+\cdots+i_k=n}}\sum_{\sigma\in S(i_1,\ldots,i_k)}\frac{1}{(k-1)!}h\Kos_k\left(\kappa_{i_1}(\ldots),\ldots,\kappa_{i_k+2}(\ldots,y,z)\right) +  \\ +\sum_{\stackrel{k,i_1,\ldots,i_{k-1}\geq1, i_{k},i_{k+1}\geq0}{i_1+\cdots+i_{k+1}=n}}\sum_{\sigma\in S(i_1,\ldots,i_{k+1})}\frac{1}{(k-1)!}h\Kos_{k+1}\left(\kappa_{i_1}(\ldots),\ldots,\kappa_{i_{k}+1}(\ldots,y),\kappa_{i_{k+1}+1}(\ldots,z)\right)
\end{multline*}	
Finally, using the recursion \eqref{eq:recursioncumulants} for the cumulants we have proved the desired identity \eqref{eq:needtoprove}.	
	
	The fact that the Koszul brackets $\Kos(d_B)_n$ obey the necessary recursion could be shown by a similar computation, replacing the identities \eqref{eq:failureA1}-\eqref{eq:failureA2} with the ones \eqref{eq:failureA3}-\eqref{eq:failureA4} in the above argument. On the other hand, we observe that since $\kappa(\tau)_1=\tau$ is injective, so is the coalgebra morphism $\kappa(\tau):S(B)\to S(A)$. In particular, there is at most one DG coalgebra strucure on $S(B)$ making $\kappa(\tau)$ into a morphism of DG coalgebras from $S(B)$ to $(S(A),\Kos(d_A))$. Since both the $L_\infty[1]$ algebra structure on $B$ induced via homotopy transfer and the one associated to $\Kos(d_B)$ satisfy this property, the former by the first part of the proof and the latter  by Proposition \ref{prop:cumulantsvsKoszuldg}, we conclude that they have to coincide. \end{proof}	

We conclude this subsection by reviewing the Standard Perturbation Lemma. This is a very well known and classical result, see for instance \cite{Shih,Brown,Gug1,Gug2,Hueb2}, and has been applied to perform homotopical transfer of algebraic structures since the work of Kadeishvili \cite{Kad1,Kad2}.

\begin{definition} Given a DG space $(A,d_A)$ and a degree one map $\delta_A:A\to A[1]$, we say that $\delta_A$ is a perturbation of the differential $d_A$ on $A$ if $\breve{d}_A:=d_A+\delta_A$ satisfies $(\breve{d}_A)^2=0$.
\end{definition}

\begin{lemma}\label{lem:SPL} Given a pair of DG spaces $(A,d_A)$ and $(B,d_B)$, a contraction $(\sigma,\tau,h)$ of $A$ onto $B$ and a perturbation $\delta_A$ of the differential on $A$, there is (under appropriate hypotheses ensuring convergence) an induced perturbation
	\[ \delta_B := \sum_{n\geq0} \sigma\delta_A(h\delta_A)^n\tau \]
	of the differential $d_B$ on $B$, as well as a perturbed contraction
	\[ \breve{\sigma}:= \sum_{n\geq0}\sigma(\delta_Ah)^n\]
	\[ \breve{\tau}:= \sum_{n\geq0}(h\delta_A)^n\tau \]
	\[ \breve{h}:= \sum_{n\geq0}(h\delta_A)^nh \]
	of $(A,\breve{d}_A)$ onto $(B,\breve{d}_B)$.
	
\end{lemma}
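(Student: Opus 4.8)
The plan is to resum all four series into two \emph{resolvent} operators and then verify the eight contraction relations by purely formal manipulation. Set $\Phi := \sum_{n\ge0}(h\delta_A)^n$ and $\Psi := \sum_{n\ge0}(\delta_A h)^n$, the formal inverses of $\id_A - h\delta_A$ and $\id_A - \delta_A h$. The only hypothesis-dependent point is convergence: I would assume that $h\delta_A$ (equivalently $\delta_A h$) is topologically nilpotent, which holds in all the complete filtered settings used later in the paper (for instance when $\delta_A$ is divisible by $t$, or strictly raises a complete filtration), so that $\Phi$ and $\Psi$ are well defined and may be manipulated termwise. With this notation the perturbed data become $\breve{\tau} = \Phi\tau$, $\breve{h} = \Phi h$, $\breve{\sigma} = \sigma\Psi$ and $\delta_B = \sigma\delta_A\Phi\tau$, and I record at the outset the identities obtained by reindexing the series, namely the resolvent relations $\Phi = \id_A + h\delta_A\Phi = \id_A + \Phi h\delta_A$ (and their analogues for $\Psi$) together with the intertwiners $h\Psi = \Phi h$ and $\delta_A\Phi = \Psi\delta_A$.

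First I would dispatch the four identities not involving the differentials, where the original side conditions $h^2 = 0$, $\sigma h = 0$, $h\tau = 0$ force drastic cancellation. Since $(h\delta_A)^n$ begins with $h$ and $(\delta_A h)^n$ ends with $h$ for $n\ge1$, one has $h(h\delta_A)^n = 0 = (\delta_A h)^n h$ for $n\ge1$, whence $h\Phi = h$, $\Psi h = h$ and $h\Phi h = 0$. These collapse everything immediately, using $\breve{h} = \Phi h = h\Psi$: one gets $\breve{h}^2 = \Phi(h\Phi)h = \Phi h^2 = 0$, then $\breve{h}\breve{\tau} = \Phi(h\Phi)\tau = \Phi h\tau = 0$, and $\breve{\sigma}\breve{h} = \sigma\Psi(h\Psi) = \sigma(\Psi h)\Psi = \sigma h\,\Psi = 0$. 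For $\breve{\sigma}\breve{\tau}$ one notes that in $\Psi\Phi$ every mixed term carries an internal factor $h^2 = 0$, so $\Psi\Phi = \Psi + \Phi - \id_A$; combined with $\sigma(h\delta_A)^n = 0$ and $(\delta_A h)^n\tau = 0$ for $n\ge1$ this yields $\breve{\sigma}\breve{\tau} = \sigma\tau = \id_B$.

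It remains to treat the differential identities: that $\delta_B$ is a genuine perturbation (i.e.\ $(\breve{d}_B)^2 = 0$), that $\breve{\sigma}$ and $\breve{\tau}$ are chain maps for $\breve{d}_A = d_A+\delta_A$ and $\breve{d}_B = d_B+\delta_B$, and the deformation-retract identity $\breve{h}\breve{d}_A + \breve{d}_A\breve{h} = \breve{\tau}\breve{\sigma} - \id_A$. Here I would introduce the total perturbation $\nabla := \delta_A\Phi = \Psi\delta_A$, which satisfies the Dyson-type fixed-point relations $\nabla = \delta_A + \delta_A h\nabla = \delta_A + \nabla h\delta_A$ and lets me rewrite $\delta_B = \sigma\nabla\tau$, $\breve{\sigma} = \sigma + \sigma\nabla h$, $\breve{\tau} = \tau + h\nabla\tau$ and $\breve{h} = h + h\nabla h$. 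The engine of all three verifications is to commute $d_A$ past $\nabla$: from $d_A\delta_A + \delta_A d_A = -\delta_A^2$ (equivalent to $(\breve{d}_A)^2 = 0$) and the original relations $d_A h + h d_A = \tau\sigma - \id_A$, $\sigma d_A = d_B\sigma$, $d_A\tau = \tau d_B$, one derives a single commutator identity for $\nabla$, and substituting it repeatedly — together with the side conditions already established — reduces each of the three claims to its unperturbed counterpart.

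I expect this last bookkeeping, and in particular the verification of $(\breve{d}_B)^2 = 0$, to be the main obstacle: it is the one place where the perturbation hypothesis $(\breve{d}_A)^2 = 0$ is genuinely used, and it requires tracking how $d_A$ threads through the infinite geometric series \emph{without} the telescoping annihilations that trivialized the side conditions in the second step. The remaining steps are the routine but lengthy term-by-term manipulations, which I would carry out using only the resolvent and fixed-point identities above.
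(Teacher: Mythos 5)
The paper itself gives no proof of this lemma: it is reviewed as the classical Standard Perturbation Lemma, with references \cite{Shih,Brown,Gug1,Gug2,Hueb2}, so your attempt must be judged on its own merits rather than against an in-paper argument. What you actually write out is correct: the resolvent identities $\Phi=\id_A+h\delta_A\Phi=\id_A+\Phi h\delta_A$, $h\Psi=\Phi h$, $\delta_A\Phi=\Psi\delta_A$, the annihilations $h\Phi=h$, $\Psi h=h$, $\sigma\Phi=\sigma$, $\Psi\tau=\tau$, and the factorization $\Psi\Phi=\Phi+\Psi-\id_A$ are all valid, and they do give complete proofs of the side conditions $\breve{h}^2=0$, $\breve{h}\breve{\tau}=0$, $\breve{\sigma}\breve{h}=0$ and of $\breve{\sigma}\breve{\tau}=\id_B$.

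The genuine gap is that your proposal stops exactly where the content of the lemma begins. That $\breve{\sigma}$ and $\breve{\tau}$ are chain maps for $\breve{d}_A$ and $\breve{d}_B$, that $\breve{h}\breve{d}_A+\breve{d}_A\breve{h}=\breve{\tau}\breve{\sigma}-\id_A$, and that $(\breve{d}_B)^2=0$ are what make $(\breve{\sigma},\breve{\tau},\breve{h})$ a contraction and $\delta_B$ a perturbation; for these you never state the ``single commutator identity for $\nabla$'' on which your plan hinges, and you never carry out any of the substitutions. Deferring them as routine bookkeeping while simultaneously calling them ``the main obstacle'' leaves the lemma unproved. Your aside that $(\breve{d}_A)^2=0$ is genuinely used only in proving $(\breve{d}_B)^2=0$ is moreover incorrect --- it enters every one of the differential identities, since commuting $d_A$ past a single factor $h\delta_A$ already requires $d_A\delta_A=-\delta_Ad_A-\delta_A^2$ --- which indicates the computation was not actually attempted. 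For the record, the missing identity is
\[ \Phi^{-1}d_A-d_A\Phi^{-1}=(\id_A-h\delta_A)d_A-d_A(\id_A-h\delta_A)=\tau\sigma\delta_A-\delta_A+h\delta_A^2, \]
equivalently $d_A\Phi-\Phi d_A=\Phi\left(\tau\sigma\delta_A-\delta_A+h\delta_A^2\right)\Phi$, and it does encode both the homotopy relation and $(\breve{d}_A)^2=0$. With it the remaining verifications are short: for instance, using $\tau d_B=d_A\tau$ and $\delta_B=\sigma\delta_A\Phi\tau$,
\[ \breve{d}_A\breve{\tau}-\breve{\tau}\breve{d}_B=(d_A\Phi-\Phi d_A)\tau+\delta_A\Phi\tau-\Phi\tau\sigma\delta_A\Phi\tau=(\id_A-\Phi)\delta_A\Phi\tau+\Phi h\delta_A^2\Phi\tau=0, \]
since $\id_A-\Phi=-\Phi h\delta_A$; the identities for $\breve{\sigma}$, for $\breve{h}$, and for $(\breve{d}_B)^2$ follow by the same mechanism. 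So your strategy would succeed, but as submitted the proposal establishes only the side conditions, not the lemma.
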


\begin{lemma}\label{lem:semifullstable} The class of semifull algebra contractions is stable under arbitrary perturbations. The class of semifull DG algebra contractions is stable under perturbations by algebra derivations.
\end{lemma}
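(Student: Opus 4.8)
The plan is to verify directly the eight defining identities of a semifull algebra contraction for the perturbed data $(\breve\sigma, \breve\tau, \breve h)$. The crucial simplification is that these identities involve only the graded commutative algebra structure together with the maps $\sigma,\tau,h$, and \emph{not} the differentials; since Lemma \ref{lem:SPL} already guarantees that $(\breve\sigma, \breve\tau, \breve h)$ is a contraction of $(A,\breve d_A)$ onto $(B,\breve d_B)$, no separate work is needed on the contraction relations, and I may treat the semifull conditions as purely ``differential-free''.

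The key structural observations, all read off immediately from the formulas in Lemma \ref{lem:SPL}, are the following. First, every summand of $\breve h = \sum_{n\geq 0}(h\delta_A)^n h$ begins with $h$ on the left, so $\operatorname{Im}(\breve h)\subseteq\operatorname{Im}(h)$; likewise every summand of $\breve\tau = \sum_{n\geq 0}(h\delta_A)^n\tau$ with $n\geq 1$ begins with $h$, so $\operatorname{Im}(\breve\tau)\subseteq\operatorname{Im}(\tau)+\operatorname{Im}(h)$, the $n=0$ term being exactly $\tau$. Second, every summand of $\breve h$ ends with $h$ on the right, so $\operatorname{Ker}(h)\subseteq\operatorname{Ker}(\breve h)$; and writing $\breve\sigma = \sigma\sum_{n\geq 0}(\delta_A h)^n$, each summand with $n\geq 1$ ends in $h$, so $\breve\sigma(v)=\sigma(v)$ for every $v\in\operatorname{Ker}(h)$.

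With these in hand, set $I=\operatorname{Im}(h)$ and $J=\operatorname{Im}(\tau)$. By the image inclusions and graded commutativity, each of the products $\breve h(a)\breve h(b)$, $\breve h(a)\breve\tau(x)$, $\breve\tau(x)\breve\tau(y)$ lands in the subspace $I\cdot I + I\cdot J + J\cdot J$, and the original semifull identities state that $h$ annihilates all of these products as well as $\mathbf 1_A$; combined with $\operatorname{Ker}(h)\subseteq\operatorname{Ker}(\breve h)$, this yields all four $\breve h$-identities at once. For the $\breve\sigma$-identities, the same products lie in $\operatorname{Ker}(h)$, so $\breve\sigma$ may be replaced by $\sigma$ there; then $\sigma(I\cdot I)=\sigma(I\cdot J)=0$ dispatches $\breve\sigma(\breve h(a)\breve h(b))=\breve\sigma(\breve h(a)\breve\tau(x))=0$, while $h(\mathbf 1_A)=0$ together with $\sigma(\mathbf 1_A)=\mathbf 1_B$ gives $\breve h(\mathbf 1_A)=0$ and $\breve\sigma(\mathbf 1_A)=\mathbf 1_B$.

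The one identity demanding genuine care, and the main obstacle, is $\breve\sigma(\breve\tau(x)\breve\tau(y))=xy$, since its value is not simply zero. I would handle it by decomposing $\breve\tau(x)=\tau(x)+r_x$ with $r_x\in I$ (the $n\geq 1$ part), expanding $\breve\tau(x)\breve\tau(y)$ into four summands, replacing $\breve\sigma$ by $\sigma$ as justified above, and noting that $\sigma$ kills the three summands carrying at least one factor from $I$ (these lie in $I\cdot I$ or $I\cdot J$), leaving precisely $\sigma(\tau(x)\tau(y))=xy$. This completes the first assertion. The second then follows at once: a perturbation $\delta_A$ by an algebra derivation keeps $\breve d_A=d_A+\delta_A$ a derivation, so a semifull DG algebra contraction is carried to a contraction that is semifull by the first part and has derivation source differential, hence is again semifull DG.
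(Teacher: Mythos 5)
Your proof is correct and follows essentially the same route as the paper: the paper disposes of the first claim with ``follows immediately from Definition \ref{def:semifullalgebracontraction}'', and your image/kernel observations ($\operatorname{Im}(\breve h)\subseteq\operatorname{Im}(h)$, $\operatorname{Im}(\breve\tau)\subseteq\operatorname{Im}(\tau)+\operatorname{Im}(h)$, $\operatorname{Ker}(h)\subseteq\operatorname{Ker}(\breve h)$, and $\breve\sigma=\sigma$ on $\operatorname{Ker}(h)$) are precisely the details behind that immediacy. Your deduction of the second claim (the first claim plus the fact that a sum of derivations is a derivation) is exactly the remark the paper makes.
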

\begin{proof} The first claim follows immediately from Definition \ref{def:semifullalgebracontraction}. The second claim was proved in \cite{Real}, see also \cite[Proposition 2.17]{BCSX}. We further remark that it follows immediately from Proposition \ref{prop:transfer}. In fact, by the first claim we know already that the perturbed contraction $(\breve{\sigma},\breve{\tau},\breve{h})$ is a semifull algebra contraction, and if both $d_A$ and the perturbation $\delta_A$ are algebra derivations so is the perturbed differential $\breve{d}_A$ (cf. also the previous Remarl \ref{rem:semifullDG}). 
\end{proof}

\subsection{Cocumulants and Koszul cobrackets}\label{sec:coetcet} All of the previous constructions and results admit dual versions in the context of coalgebras.

Here and in the rest of the paper, we shall always work with coalgebras which are coassociative, cocommutative, counitary, cougmented and cocomplete. Given such a coalgebra $C$, we shall denote: by $\Delta_C:C\to C\otimes C$ the coproduct; by $\epsilon_C:C\to\mathbb{K}$ the counit; by $\mathbb{K}\to C:1\to\mathbf{1}_C$ the coaugmentation; by $\overline{C}=\operatorname{Ker}(\epsilon_C)$ the reduced coalgebra, with the reduced coproduct \[\overline{\Delta}_C:\overline{C}\to\overline{C}\otimes\overline{C}:c\to\overline{\Delta}_C(c):=\Delta_C(c)-\mathbf{1}_C\otimes c-c\otimes\mathbf{1}_C\] 
(notice in particular that $C=\mathbb{K}\mathbf{1}_C\oplus\overline{C}$); by $\overline{\Delta}^{n-1}_C:\overline{C}\to\overline{C}^{\otimes n}$ the iterated coproducts. We say that $C$ is \emph{cocomplete} if $\overline{C}=\bigcup_{n\geq1}\operatorname{Ker}(\overline{\Delta}^{n-1}_C)$, or in other words if for any $c\in \overline{C}$ there exists $N=N(c)\gg0$ such that $\overline{\Delta}^{N}(c)=0$.

\begin{remark} In the rest of this paper, all the coalgebras considered shall be coassociative, cocommutative, counitary, coaugmented and cocomplete, \textbf{without further mention of these properties}.\end{remark}

We shall use the following notation
\[\End_{cu}(C):=\{\delta\in\End(C)\,\,\mbox{s.t.}\,\,\epsilon_C\circ \delta = 0,\,\delta(\mathbf{1}_C)=0 \}.\] 

Moreover, given a pair of counitary cocommutative of graded coalgebras $C$ and $D$ we shall denote by \[\Hom^0_{cu}(C,D):=\{f\in\Hom^0(C,D)\,\,\mbox{s.t.}\,\,\epsilon_D\circ f = \epsilon_C,\,f(\mathbf{1}_C)=\mathbf{1}_D \}.\] 
Given $f\in\Hom^0_{cu}(C,D)$, the \emph{cocumulants} of $f$ are degree zero maps $\kappa^{co}(f)_n:C\to D^{\odot n}$ measuring the deviation of $f$ from being a morphism of graded coalgebras. Similarly, given $\delta\in\End^k_{cu}(C)$, the associated \emph{Koszul cobrackets} $\Kos^{co}(\delta)_n:C\to C^{\odot n}$ are degree $k$ maps measuring the deviation of $\delta$ from being a coalgebra coderivation.

The most convenient way to introduce these maps is via the dual of formulas \eqref{eq:defcumulants} and \eqref{eq:defkoszulbrackets} respectively. More precisely, for $n\geq1$ we denote as before by $\Delta^{n-1}_C:C\to C^{\otimes n}$ the iterated coproduct ($\Delta^1_C=:\id_{C}$), by $\pi:C^{\otimes n}\to C^{\odot n}$ the projection and by $E^{co}:\widehat{S}(C)\to \widehat{S}(C)$ the unique unitary algebra automorphism extending
\[ E^{co}(x) = \sum_{n\geq1} \frac{1}{n!}\pi \Delta^{n-1}_C(x), \]
for $x\in C$ (where we denote by $\widehat{S}(C)=\prod_{n\ge0}C^{\odot n}$ the completed symmetric algebra over $C$). We denote by $L^{co}:\widehat{S}(C)\to \widehat{S}(C)$ the inverse of $E^{co}$. It is the unique unitary algebra automorphism extending
\[L^{co}(x) = \sum_{n\geq1} \frac{(-1)^{n-1}}{n}\pi\Delta^{n-1}_C(x),\qquad x\in C. \]
We call $E^{co}$ and $L^{co}$ respectively the \emph{coexponential and cologarithmic automorphisms} of $\widehat{S}(C)$.

Given $f\in\Hom^0_{cu}(C,D)$, we denote by $\widehat{\kappa}^{co}(f):\widehat{S}(C)\to \widehat{S}(D)$ the morphism of unitary graded algebras 
given by the composition 
\[ \widehat{\kappa}^{co}(f)\colon \widehat{S}(C)\xrightarrow{L^{co}} \widehat{S}(C)\xrightarrow{\widehat{S}(f)} \widehat{S}(D)\xrightarrow{E^{co}} \widehat{S}(D).\]
The \emph{cocumulants} $\kappa^{co}(f)_n:C\to D^{\odot n}$ are then defined as the composition \[ \kappa^{co}(f)_n\colon C\hookrightarrow \widehat{S}(C)\xrightarrow{\widehat{\kappa}^{co}(f)}\widehat{S}(D)\twoheadrightarrow D^{\odot n}, \]
where the left and right maps are the canonical inclusions and projections respectively. 

It is easy to compute explicitly the first few cocumulants. To simplify formulas, we adopt Sweedler's notation and write $\Delta^{n-1}_C(x) = \sum_{(x)} x_{(1)}\otimes\cdots\otimes x_{(n)}$. With these notations, we have
\[ \kappa^{co}(f)_1(x) = f(x),\qquad \kappa^{co}(f)_2(x) = \frac{1}{2}\sum_{(f(x))}  f(x)_{(1)}\odot f(x)_{(2)} - \frac{1}{2}\sum_{(x)} f(x_{(1)})\odot f(x_{(2)})  \]\begin{multline*} \kappa^{co}(f)_3(x) = \frac{1}{6}\sum_{(f(x))} f(x)_{(1)}\odot f(x)_{(2)}\odot f(x)_{(3)} -\frac{1}{4}\sum_{(x),(f(x_{(1)}))} f(x_{(1)})_{(1)}\odot f(x_{(1)})_{(2)}\odot f(x_{(2)})-\\ -\frac{1}{4}\sum_{(x),(f(x_{(2)}))} f(x_{(1)})\odot f(x_{(2)})_{(1)}\odot f(x_{(2)})_{(2)} + \frac{1}{3}\sum_{(x)} f(x_{(1)})\odot f(x_{(2)})\odot f(x_{(3)})=\\ =\frac{1}{6}\sum_{(f(x))} f(x)_{(1)}\odot f(x)_{(2)}\odot f(x)_{(3)} -\frac{1}{2} \sum_{(x),(f(x_{(1)}))}f(x_{(1)})_{(1)}\odot f(x_{(1)})_{(2)}\odot f(x_{(2)})+\\  + \frac{1}{3} \sum_{(x)}f(x_{(1)})\odot f(x_{(2)})\odot f(x_{(3)}),  
\end{multline*}
where in the last passage we used the fact that $\sum_{(x),(f(x_{(1)}))} f(x_{(1)})_{(1)}\odot f(x_{(1)})_{(2)}\odot f(x_{(2)})= \sum_{(x),(f(x_{(2)}))}f(x_{(1)})\odot f(x_{(2)})_{(1)}\odot f(x_{(2)})_{(2)}$, due to cocommutativity of $\Delta_C$. 

Notice that $f\in\Hom^0_{cu}(C,D)$ is a coalgebra morphism if and only if $\kappa^{co}(f)_n=0$ for all $n\ge2$. 

\begin{remark}\label{rem:recursioncocumulants} The cocumulants of $f$ obey a recursion dual to \eqref{eq:recursioncumulants}. More precisely, define maps $\widetilde{\kappa}^{co}(f)_{n}:C\to C^{\otimes n}$, $n\ge1$, recursively by $\widetilde{\kappa}^{co}(f)_{1}=f$ and 
	\[ \widetilde{\kappa}^{co}(f)_{n+1} = \left(\id_D^{\otimes n-1}\otimes \Delta_D\right)\widetilde{\kappa}^{co}(f)_n-\sum_{k=0}^{n-1}\left(\shuffle_{k,n-k-1}\otimes\id_D^{\otimes2}\right)\tau_k\big(\widetilde{\kappa}^{co}(f)_{k+1}\otimes\widetilde{\kappa}^{co}(f)_{n-k}\big)\Delta_C, \]
	where we denote by $\tau_k:D^{\otimes n+1}\to D^{\otimes n+1}$ the permutation \[\tau_k(y_1\otimes\cdots\otimes y_k\otimes y_{k+1}\otimes y_{k+2}\otimes \cdots\otimes y_n\otimes y_{n+1})=\pm_K y_1\otimes\cdots\otimes y_k\otimes y_{k+2}\otimes \cdots\otimes y_n\otimes y_{k+1}\otimes y_{n+1}\] and by $\shuffle_{k,n-k-1}$ the $(k,n-k-1)$-component $\shuffle_{k,n-k-1}:D^{\otimes n-1}=D^{\otimes k}\otimes D^{\otimes n-k-1}\xrightarrow{\shuffle} D^{\otimes n-1}$ of the shuffle product $\shuffle$. Thus for instance a straightforward computation shows that $\widetilde{\kappa}^{co}(f)_2=\Delta_Df-f^{\otimes 2}\Delta_C$ and
	$\widetilde{\kappa}^{co}(f)_3=\Delta_D^2f-\big(f\shuffle\Delta_Df\big)\Delta_C+2f^{\otimes 3}\Delta_C^2$. 
	It can be shown that the image of $\widetilde{\kappa}^{co}(f)_n$ is contained in the $S_n$-invariant part of $U^{\otimes n}$, and the cocumulants $\kappa^{co}(f):C\to C^{\odot n}$ are given by $\kappa^{co}(f)_n=\frac{1}{n!}\pi\widetilde{\kappa}^{co}(f)_n$, where we denote by $\pi:C^{\otimes n}\to C^{\odot n}$ the natural projection.
\end{remark}

\begin{remark}\label{rem:conilpotencycocumulants} Assuming, as we are, that $C$ and $D$ are cocomplete, it is not hard to see that  $\widehat{\kappa}^{co}(f)(c)\in S(D)\subset\widehat{S}(D)$ for all $c\in C$, or in other words that for all $c\in C$ there exists $n(c)\gg0$ such that $\kappa^{co}(f)_{N}(c)=0$ for all $N\ge n(c)$. Thus $\widehat{\kappa}^{co}(f):\widehat{S}(C)\to\widehat{S}(D)$ restricts to a morphism of unitary graded algebras \[\kappa^{co}(f)=\sum_{n\ge1}\kappa^{co}(f)_n:S(C)\to S(D). \]
In fact, one can show a stronger statement. If $f\in\Hom^0_{cu}(C,D)$, then $f(\mathbf{1}_C)=\mathbf{1}_D$ and $f$ restricts to $\overline{f}:=f_{|\overline{C}}:\overline{C}\to\overline{D}$. We denote by $\overline{E}^{co}:S(\overline{D})\to S(\overline{D})$ and $\overline{L}^{co}:S(\overline{C})\to S(\overline{C})$ the morphisms of unitary graded algebras extending
\[ \overline{E}^{co}(y) = \sum_{n\geq1} \frac{1}{n!}\pi \overline{\Delta}^{n-1}_D(y),\qquad \overline{L}^{co}(x) = \sum_{n\geq1} \frac{(-1)^{n-1}}{n}\pi\overline{\Delta}^{n-1}_C(x),\qquad x\in \overline{C},y\in\overline{D}, \]
where $\overline{\Delta}^{n-1}_C:\overline{C}\to \overline{C}^{\otimes n}$, $\overline{\Delta}_D^{n-1}:\overline{D}\to \overline{D}^{\otimes n}$ are the iterated reduced coproducts, and the above infinite sums make sense since $C$ and $D$ are cocomplete. Finally, we define $\kappa^{co}(\overline{f}):=\overline{E}^{co}\circ S(\overline{f})\circ \overline{L}^{co}:S(\overline{C})\to S(\overline{D})$.  With these notations, it is not hard to show that the following diagram is commutative
	\[\xymatrix{ \overline{C}\ar[d]\ar[r]^-{\kappa^{co}(\overline{f})}& S(\overline{D})\ar[d] \\ C\ar[r]_-{\widehat{\kappa}^{co}(f)} & \widehat{S}(D) } \]
where the vertical arrows are the natural inclusions. This implies the claim at the beginning of the remark, and shows moreover that we may compute the cocumulants using the reduced coproducts in place of the unreduced ones. This applies in particular to the previous explicit formulas for $\kappa^{co}(f)_n$, $n\le3$, where Sweedler's notation might be intended for the reduced coproducts  instead of the unreduced ones, and to the previous Remark \ref{rem:recursioncocumulants}, where in the recursive formula for $\widetilde{\kappa}^{co}(f)_{n+1}$ we might replace $\Delta_C,\Delta_D$ with $\overline{\Delta}_C,\overline{\Delta}_D$. 

\end{remark}

Given $\delta\in\End_{cu}(C)$, we denote by $\widehat{\Kos}^{co}(\delta):\widehat{S}(C)\to \widehat{S}(C)$ the derivation defined as the composition
\[ \widehat{\Kos}^{co}(\delta)\colon \widehat{S}(C)\xrightarrow{L^{co}} \widehat{S}(C)\xrightarrow{\widetilde{\delta}} \widehat{S}(C)\xrightarrow{E^{co}} \widehat{S}(C),\]
where as in subsection \ref{sec:cumulantsandkoszul} we denote by $\widetilde{\delta}: \widehat{S}(C)\to \widehat{S}(C)$ the linear derivation extending $\delta$. The \emph{Koszul cobrackets} $\Kos^{co}(\delta)_n:C\to D^{\odot n}$ are defined as the compositions \[ \Kos^{co}(\delta)_n\colon C\hookrightarrow \widehat{S}(C)\xrightarrow{\Kos^{co}(\delta)}\widehat{S}(C)\twoheadrightarrow C^{\odot n}. \]
The first few Koszul cobrackets of $\delta\in\End_{cu}(C)$ are
\[ \Kos^{co}(\delta)_1(x) = \delta(x),\qquad \Kos^{co}(\delta)_2(x) = \frac{1}{2}\sum_{(\delta(x))} \delta(x)_{(1)}\odot \delta(x)_{(2)} - \sum_{(x)}\delta(x_{(1)})\odot x_{(2)}\]\begin{multline*} \Kos^{co}(\delta)_3(x) = \frac{1}{6}\sum_{(\delta(x))} \delta(x)_{(1)}\odot \delta(x)_{(2)}\odot \delta(x)_{(3)} - \\ -\frac{1}{2} \sum_{(x),(\delta(x_{(1)}))} \delta(x_{(1)})_{(1)}\odot \delta(x_{(1)})_{(2)}\odot x_{(2)}+ \frac{1}{2} \sum_{(x)} \delta(x_{(1)})\odot x_{(2)}\odot x_{(3)}.
\end{multline*}
We notice that $\delta$ is a colagebra coderivation if and only if $\Kos^{co}(\delta)_n=0$ for all $n\ge2$.
\begin{remark}\label{rem:recursioncoKos} The Koszul cobrackets of $\delta$ obey a recursion dual to \eqref{eq:recursionkoszul}. More precisely, define maps $\widetilde{\Kos}^{co}(\delta)_{n}:C\to C^{\otimes n}$, $n\ge1$, recursively by $\widetilde{\Kos}^{co}(\delta)_{1}=\delta$ and 
	\[ \widetilde{\Kos}^{co}(\delta)_{n+1} = \left(\id_C^{\otimes n-1}\otimes \Delta_C\right)\widetilde{\Kos}^{co}(\delta)_n-(\id_C^{\otimes n+1}+\tau_{n,n+1})\big(\widetilde{\Kos}^{co}(f)_{n}\otimes\id_C\big)\Delta_C, \]
	where we denote by $\tau_{n,n+1}:C^{\otimes n+1}\to C^{\otimes n+1}$ the transposition $\tau_{n,n+1}(c_1\otimes\cdots\otimes c_n\otimes c_{n+1})=\pm_Kc_1\otimes \cdots\otimes c_{n+1}\otimes c_n$. Thus for instance \[\widetilde{\Kos}^{co}(\delta)_2=\Delta_C\delta-(\delta\otimes\id_C+\id_C\otimes\delta)\Delta_C\] 
	\[\widetilde{\Kos}^{co}(\delta)_3=\Delta_C^2\delta-\big(\id_C\shuffle\Delta_C\delta\big)\Delta_C+(\delta\otimes\id_C\otimes\id_C+\id_C\otimes\delta\otimes\id_C+\id_C\otimes\id_C\otimes\delta)\Delta_C^2.\] 
	It can be shown that the image of $\widetilde{\Kos}^{co}(\delta)_n$ is contained in the $S_n$-invariant part of $U^{\otimes n}$, and the Koszul cobrackets $\Kos^{co}(\delta):C\to C^{\odot n}$ are given by $\Kos^{co}(\delta)_n=\frac{1}{n!}\pi\widetilde{\Kos}^{co}(\delta)_n$, where we denote by $\pi:C^{\otimes n}\to C^{\odot n}$ the natural projection.
\end{remark}
\begin{remark}\label{rem:conilpotencycoKos} As in the previous remark \ref{rem:conilpotencycocumulants}, when $C$ is cocomplete we can define a derivation $\Kos^{co}(\overline{\delta}):=\overline{E}^{co}\circ\widetilde{\overline{\delta}}\circ \overline{L}^{co}:S(\overline{C})\to S(\overline{C})$, where we denote by $\overline{\delta}$ the restriction $\overline{\delta}:=\delta_{|\overline{C}}:\overline{C}\to \overline{C}$ and by $\widetilde{\overline{\delta}}:S(\overline{C})\to S(\overline{C})$ the linear derivatione extending it. It is not hard to check that the following diagram is commutative 
	\[\xymatrix{ \overline{C}\ar[d]\ar[r]^-{\Kos^{co}(\overline{\delta})}& S(\overline{C})\ar[d] \\ C\ar[r]_-{\widehat{\Kos}^{co}(\delta)} & \widehat{S}(C) } \]
	where the vertical arrows are the natural inclusions.
	In particular $\widehat{\Kos}^{co}(\delta)(c)\in S(C)\subset\widehat{S}(C)$ for all $c\in C$, and thus $\widehat{\Kos}^{co}(\delta):\widehat{S}(C)\to\widehat{S}(C)$ restricts to a derivation \[\Kos^{co}(\delta)=\sum_{n\ge1}\Kos^{co}(\delta)_n:S(C)\to S(C). \]
\end{remark}

All the results from subsection \ref{sec:cumulantsandkoszul} (namely, Lemmas \ref{prop:cumulantsandcomposition}, \ref{lem:KosDGLiemor} and Propositions \ref{prop:cumulantsvsKoszuldg}, \ref{prop:cumulantsvsKoszulexp}) have analogues in this setting, which can be proved by the same arguments, mutatis mutandis. We shall also need to consider the analogue of Definition \ref{def:diffop}.

\begin{definition}\label{def:codiffop}
	Given a graded (cocommutative, et cet.) coalgebra $C$ and $k\geq1$, we denote by 
	\[ \operatorname{coDiff}_{cu,\leq k}(C) := \{ \delta\in\End_{cu}(C)\,\,\mbox{s.t.}\,\,\Kos^{co}(\delta)_{k+1}=0 \}  \]
(that is, the subset of $\End_{cu}(C)$ consisting of codifferential operators of order $\le k$). We denote by $\operatorname{coDiff}_{cu}(C) = \bigcup_{k\geq 1} \operatorname{coDiff}_{cu,\leq k}(C)$. It is easy to check that  $\operatorname{coDiff}_{cu}(C)\subset\End_{cu}(C)$ is both a graded subalgebra and a graded Lie subalgebra. 
\end{definition}
We turn to the results from subsection \ref{subsec:comp}. 

\begin{definition} Given a pair of graded (cocommutative, et cet.) coalgebras $(C,\Delta_C,\epsilon_C)$ and $(D,\Delta_D,\epsilon_D)$ equipped with differentials $d_C,d_D$ (we assume $d_C\in\End_{cu}(C)$, $d_D\in\End_{cu}(D)$, but not require $d_C,d_D$ to be coderivations), a \emph{semifull coalgebra contraction} $(\sigma,\tau,h)$ of the former onto the latter is a contraction of $(C,d_C)$ onto $(D,d_D)$ satisfying the further requirements \[\sigma\in\Hom^0_{cu}(C,D),\qquad\tau\in\Hom^0_{cu}(D,C),\qquad h\in\End_{cu}(C),\]\begin{multline*}
	 (h\otimes h)\circ\Delta_C \circ h = (h\otimes\sigma)\circ\Delta_C\circ h=(\sigma\otimes\sigma)\circ\Delta_C\circ h=0,\\ (h\otimes h)\circ\Delta_C \circ\tau = (h\otimes\sigma)\circ\Delta_C\circ\tau=0,\quad (\sigma\otimes\sigma)\circ\Delta_C\circ\tau=\Delta_D. 
	 \end{multline*}

In the above hypotheses, if furthermore $d_C$ is a coalgebra coderivations we say that $(\sigma,\tau,h)$ a \emph{semifull DG coalgebra contraction}.
\end{definition}

\begin{remark} The following Proposition \ref{prop:cotransfer} will imply immediately that if $(\sigma,\tau,h)$ is a semifull DG algebra contraction then necessarily $d_D$ is a coderivation and $\sigma$ is a morphism of graded coalgebras.
\end{remark}

\begin{lemma}\label{lem:semifullcostable} The class of semifull contractions of coalgebras with a differential is stable under arbitrary perturbations. The class of semifull DG coalgebra contractions is stable under perturbations by coalgebra coderivations.
\end{lemma}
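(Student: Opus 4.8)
The plan is to dualize the proof of Lemma~\ref{lem:semifullstable}: I would verify the six defining identities of a semifull coalgebra contraction directly for the perturbed data $(\breve{\sigma},\breve{\tau},\breve{h})$ supplied by the Standard Perturbation Lemma~\ref{lem:SPL}, and then deduce the DG statement formally. First I would extract the purely structural features of the perturbed maps from their series expansions. Set $K_h:=\operatorname{Ker}(h)$ and $N:=\operatorname{Ker}(h)\cap\operatorname{Ker}(\sigma)$. Every summand of $\breve{h}=\sum_{n\geq0}(h\delta_C)^nh$ begins and ends with $h$, every summand of $\breve{\tau}=\sum_{n\geq0}(h\delta_C)^n\tau$ has $h$ (for $n\geq1$) or $\tau$ on the left, and every summand of $\breve{\sigma}=\sum_{n\geq0}\sigma(\delta_C h)^n$ has $h$ on the right for $n\geq1$. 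This yields at once $\operatorname{Im}(\breve{h})\subseteq\operatorname{Im}(h)$, $\operatorname{Im}(\breve{\tau})\subseteq\operatorname{Im}(h)+\operatorname{Im}(\tau)$, $\breve{h}|_{K_h}=0$ and $\breve{\sigma}|_{K_h}=\sigma|_{K_h}$; in particular both $\breve{h}$ and $\breve{\sigma}$ vanish on $N$. Using $h,\delta_C\in\End_{cu}(C)$ one also checks routinely that $\breve{\sigma}\in\Hom^0_{cu}(C,D)$, $\breve{\tau}\in\Hom^0_{cu}(D,C)$ and $\breve{h}\in\End_{cu}(C)$, and I would record the recursions $\breve{\tau}=\tau+h\delta_C\breve{\tau}$ and $\breve{\sigma}=\sigma+\breve{\sigma}\delta_C h$.

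The second ingredient is to repackage the original semifull identities as subspace containments, using the elementary fact $\operatorname{Ker}(f\otimes g)=\operatorname{Ker}(f)\otimes C+C\otimes\operatorname{Ker}(g)$ and cocommutativity of $\Delta_C$. The three conditions $(h\otimes h)\Delta_C h=(h\otimes\sigma)\Delta_C h=(\sigma\otimes\sigma)\Delta_C h=0$, together with $(\sigma\otimes h)\Delta_C h=0$ coming from cocommutativity, amount to $\Delta_C(\operatorname{Im}(h))\subseteq N\otimes C+C\otimes N$; similarly $(h\otimes h)\Delta_C\tau=(h\otimes\sigma)\Delta_C\tau=0$ and cocommutativity give $\Delta_C(\operatorname{Im}(\tau))\subseteq\operatorname{Ker}(h\otimes h)\cap\operatorname{Ker}(h\otimes\sigma)=K_h\otimes C+C\otimes N$, while the remaining identity $(\sigma\otimes\sigma)\Delta_C\tau=\Delta_D$ is set aside. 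The three identities precomposed with $\breve{h}$ then follow immediately: $\Delta_C\breve{h}(c)\in\Delta_C(\operatorname{Im}(h))\subseteq N\otimes C+C\otimes N$, which is annihilated by $P\otimes P'$ for any $P,P'\in\{\breve{h},\breve{\sigma}\}$ since both kill $N$. For the two vanishing identities precomposed with $\breve{\tau}$ I would split $\breve{\tau}$ into its $\operatorname{Im}(h)$- and $\operatorname{Im}(\tau)$-parts; the first is handled as above, and the second uses that $\breve{h}\otimes\breve{h}$ annihilates $\operatorname{Ker}(h\otimes h)=K_h\otimes C+C\otimes K_h$ and that $\breve{h}\otimes\breve{\sigma}$ annihilates $K_h\otimes C+C\otimes N$, because $\breve{h}$ kills $K_h$ and $\breve{\sigma}$ kills $N$.

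The delicate identity is $(\breve{\sigma}\otimes\breve{\sigma})\Delta_C\breve{\tau}=\Delta_D$, and this is where I expect the main obstacle. In contrast with the algebra case one cannot simply replace $\breve{\sigma}$ by $\sigma$, since $\breve{\sigma}$ fails to vanish on all of $\operatorname{Ker}(\sigma)$ (only on the smaller $N$), and since $\delta_C$ is \emph{not} a coderivation, so $\Delta_C$ does not commute with the perturbation. My resolution is twofold. First, substitute $\breve{\tau}=\tau+h\delta_C\breve{\tau}$: the correction lies in $\operatorname{Im}(h)$, whose $\Delta_C$-image lands in $N\otimes C+C\otimes N$ and is thus killed by $\breve{\sigma}\otimes\breve{\sigma}$. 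Second, on the surviving $\Delta_C\tau$ expand $\breve{\sigma}\otimes\breve{\sigma}=\sigma\otimes\sigma+(\breve{\sigma}\delta_C h)\otimes\sigma+\sigma\otimes(\breve{\sigma}\delta_C h)+(\breve{\sigma}\delta_C h)\otimes(\breve{\sigma}\delta_C h)$; each correction term carries an $h$ in at least one tensor slot, so applied to $\Delta_C\tau$ it factors through one of $(h\otimes\sigma)\Delta_C\tau$, $(\sigma\otimes h)\Delta_C\tau$ or $(h\otimes h)\Delta_C\tau$, all of which vanish, leaving only $(\sigma\otimes\sigma)\Delta_C\tau=\Delta_D$. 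This establishes the first claim.

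For the second claim I would argue as in Lemma~\ref{lem:semifullstable}: by the first claim $(\breve{\sigma},\breve{\tau},\breve{h})$ is already a semifull coalgebra contraction, and if both $d_C$ and the perturbation $\delta_C$ are coalgebra coderivations then so is $\breve{d}_C=d_C+\delta_C$, whence $(\breve{\sigma},\breve{\tau},\breve{h})$ is a semifull DG coalgebra contraction. The only computational content beyond bookkeeping is the repeated use of the kernel-of-a-tensor-product identity together with cocommutativity to force the various $\Delta_C$-images into subspaces on which the perturbed projections $\breve{h},\breve{\sigma}$ are controlled.
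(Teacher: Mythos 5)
Your proof is correct and takes the same route as the paper: the paper's entire proof of Lemma \ref{lem:semifullcostable} is the one-line assertion that it ``follows straightforwardly from the definitions,'' i.e.\ precisely the direct verification of the six perturbed identities that you carry out. You have simply supplied the details the paper leaves implicit --- the kernel identity $\operatorname{Ker}(f\otimes g)=\operatorname{Ker}(f)\otimes C+C\otimes\operatorname{Ker}(g)$ over a field, cocommutativity of $\Delta_C$, the recursions $\breve{\tau}=\tau+h\delta_C\breve{\tau}$ and $\breve{\sigma}=\sigma+\breve{\sigma}\delta_C h$ --- and in particular your treatment of the only delicate identity $(\breve{\sigma}\otimes\breve{\sigma})\Delta_C\breve{\tau}=\Delta_D$, where each correction term factors through one of the vanishing compositions $(h\otimes\sigma)\Delta_C\tau$, $(\sigma\otimes h)\Delta_C\tau$, $(h\otimes h)\Delta_C\tau$, is sound.
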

\begin{proof} It follows straightforwardly from the definitions. \end{proof}
\begin{proposition}\label{prop:cotransfer} Given graded (cocommutative, et cet.) coalgebras $(C,\Delta_C,\epsilon_C)$, $(D,\Delta_D,\epsilon_D)$, together with differentials (not necessarily coderivations) $d_C\in\End^1_{cu}(C),d_D\in\End^1_{cu}(D)$ and a semifull coalgebra contraction $(\sigma,\tau,h)$ of $(C,d_C)$ onto $(D,d_D)$, the Koszul cobrackets $\Kos^{co}(d_D)_n$ and the cocumulants $\kappa^{co}(\sigma)_n$ are induced via homotopy transfer from the $L_\infty[-1]$ coalgebra structure on $C$ associated with the Koszul cobrackets $\Kos^{co}(d_C)_n$. 
\end{proposition}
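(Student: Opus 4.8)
The plan is to dualize the proof of Proposition~\ref{prop:transfer} step by step, replacing products by coproducts, the map $\Kos(d_A)_2$ by $\Kos^{co}(d_C)_2$, and the recursions \eqref{eq:recursioncumulants}, \eqref{eq:recursionkoszul} by their duals from Remarks~\ref{rem:recursioncocumulants} and \ref{rem:recursioncoKos}. As a first step I would record that, since $\sigma$ is a chain map (so $\sigma d_C=d_D\sigma$), the dual of Proposition~\ref{prop:cumulantsvsKoszuldg} already shows that $\kappa^{co}(\sigma)\colon (S(\overline{C}),\Kos^{co}(d_C))\to(S(\overline{D}),\Kos^{co}(d_D))$ is a morphism of DG algebras, i.e.\ an $L_\infty[-1]$ coalgebra morphism $C\to D$ with linear part $\sigma$. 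By Remarks~\ref{rem:conilpotencycocumulants} and \ref{rem:conilpotencycoKos} all the maps involved restrict to the reduced symmetric algebras $S(\overline{C})$, $S(\overline{D})$, so every expression below is well defined. The goal is then to identify this morphism, and the differential $\Kos^{co}(d_D)$, with the output of the $L_\infty[-1]$ coalgebra homotopy transfer theorem (the dual of Theorem~\ref{th:transfer}) applied to $\Kos^{co}(d_C)$ along $(\sigma,\tau,h)$.

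Second, I would establish the duals of the failure identities \eqref{eq:failureA1}--\eqref{eq:failureA4}. As in the primal case these come from the contraction relation $\tau\sigma-\id_C=hd_C+d_Ch$ together with the defining identities of a semifull coalgebra contraction, namely $(h\otimes h)\Delta_Ch=(h\otimes\sigma)\Delta_Ch=(\sigma\otimes\sigma)\Delta_Ch=0$, $(h\otimes h)\Delta_C\tau=(h\otimes\sigma)\Delta_C\tau=0$ and $(\sigma\otimes\sigma)\Delta_C\tau=\Delta_D$. The content of these dual identities is that the deviation of the ``strong'' form of the semifull relations from the ``weak'' one is measured by $\Kos^{co}(d_C)_2$ precomposed with $h$ or $\tau$ and postcomposed with $h\otimes h$, $h\otimes\sigma$, or $\sigma\otimes\sigma$; they are obtained by exactly the manipulations of \eqref{eq:failureA1}--\eqref{eq:failureA4} read backwards, using that $\widetilde{\Kos}^{co}(d_C)_2=\Delta_Cd_C-(d_C\otimes\id+\id\otimes d_C)\Delta_C$ measures the failure of $d_C$ to be a coderivation.

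Third, I would run the induction. Feeding the dual failure identities into the recursion of Remark~\ref{rem:recursioncocumulants} for $\kappa^{co}(\sigma)_n$ and the recursion of Remark~\ref{rem:recursioncoKos} for $\Kos^{co}(d_C)_n$, one verifies by induction on $n$ that the cocumulants $\kappa^{co}(\sigma)_n$ satisfy the recursion characterizing the transferred $L_\infty[-1]$ coalgebra morphism, i.e.\ the dual of the displayed recursion for $\kappa(\tau)_n$ in the proof of Proposition~\ref{prop:transfer}; the warm-up cases $n=2,3$ transcribe essentially verbatim. Once the morphism is identified, the differential is pinned down by the dual of the injectivity argument: since $\sigma\tau=\id_D$ the linear part $\kappa^{co}(\sigma)_1=\sigma$ is surjective, hence so is the algebra morphism $\kappa^{co}(\sigma)\colon S(\overline{C})\to S(\overline{D})$ (a filtered Nakayama argument). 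Consequently there is \emph{at most one} differential on $S(\overline{D})$ turning $\kappa^{co}(\sigma)$ into a morphism of DG algebras out of $(S(\overline{C}),\Kos^{co}(d_C))$; both the transferred structure (by the first part of the induction) and $\Kos^{co}(d_D)$ (by the dual of Proposition~\ref{prop:cumulantsvsKoszuldg}) enjoy this property, so they must agree.

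The main obstacle is the third step: the bookkeeping of Koszul signs, shuffles and iterated coproducts in the inductive verification, which is genuinely long even though it is formally dual to the primal computation. One is tempted to avoid it by transposing Proposition~\ref{prop:transfer} directly, since for finite-dimensional $C,D$ the data $(C^\vee,D^\vee,\tau^\vee,\sigma^\vee,h^\vee)$ form a semifull algebra contraction and the statement follows by dualization. This shortcut does not reach the general cocomplete case, however, because the contraction need not restrict to finite-dimensional subcoalgebras; one is therefore forced to carry out the dualized computation directly, which goes through unchanged precisely because of the (co)nilpotency established in Remarks~\ref{rem:conilpotencycocumulants} and \ref{rem:conilpotencycoKos}.
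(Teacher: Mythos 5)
Your main plan is viable and is in fact the first route the paper itself indicates: the paper reduces to the statement about cocumulants exactly as you do, records the recursion $\kappa^{co}(\sigma)(c)=\sigma(c)+\kappa^{co}(\sigma)\big(\Kos^{co}(d_C)_+(h(c))\big)$, carries out the case $n=2$ explicitly (obtaining $\kappa^{co}(\sigma)_2=(\sigma\odot\sigma)\Kos^{co}(d_C)_2h$ from the semifull relations), and then states that the general case ``might be proved by adapting the recursive computation'' of Proposition \ref{prop:transfer} to the dual setting -- which is your steps 2 and 3. Your uniqueness reduction is also correct, and simpler than you make it sound: since $\overline{\sigma}$ is surjective (it has $\overline{\tau}$ as a right inverse) and $\kappa^{co}(\sigma)=\overline{E}^{co}\circ S(\overline{\sigma})\circ\overline{L}^{co}$ is a composition of a surjection with two automorphisms, $\kappa^{co}(\sigma)$ is surjective; no filtered Nakayama argument is needed.

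However, your closing paragraph contains a genuine error, and it dismisses precisely the argument by which the paper actually completes the proof in full generality. Dualization here goes from coalgebras to algebras, and that direction needs no finiteness hypothesis whatsoever: the dual of \emph{any} coalgebra is an algebra (the obstruction lies only in the reverse direction, where the dual of a multiplication $A^\vee\to(A\otimes A)^\vee$ need not factor through $A^\vee\otimes A^\vee$). So for arbitrary cocomplete $C,D$ the dual data $(\tau^\vee,\sigma^\vee,h^\vee)$ form a semifull \emph{algebra} contraction of $(C^\vee,d_C^\vee)$ onto $(D^\vee,d_D^\vee)$, and Proposition \ref{prop:transfer} applies to it on the nose; there is no need to restrict to finite-dimensional subcoalgebras at any point. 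One then observes that cumulants and Koszul brackets of the dual data are the duals of cocumulants and Koszul cobrackets, i.e.\ $\kappa(\sigma^\vee)_n=\kappa^{co}(\sigma)_n^\vee$ and $\Kos(d_C^\vee)_n=\Kos^{co}(d_C)_n^\vee$, and that the homotopy transfer formulas (finite sums of finite compositions of the structure maps, $h$, $\sigma$, $\tau$) commute with dualization; since functionals of the form $\phi_1\odot\cdots\odot\phi_n$ with $\phi_i\in D^\vee$ separate points of $D^{\odot n}$, the resulting identities between dual maps descend to the identities $g_n=\kappa^{co}(\sigma)_n$ and $r_n=\Kos^{co}(d_D)_n$ one wants. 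This is the paper's actual complete proof, and it spares exactly the long sign-and-shuffle bookkeeping that you correctly identify as the main obstacle to your third step; you should not discard it.
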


\begin{proof} Reasoning as at the end of the proof of \ref{prop:transfer}, it is enough to prove the statement about the cocumulants. 
	
We have to show that the morphism of graded algebras $\kappa^{co}(\sigma):S(C)\to S(D)$ obeyes (and is recursively defined from)
	 
	\[ \kappa^{co}(\sigma)(c) = \sigma(c) + \kappa^{co}(\sigma)\Big(\Kos^{co}(d_C) _+\Big(h(c)\Big)\Big) \]
for all $c\in C$, where we denote by $\Kos^{co}(d_C)_+:=\Kos^{co}(d_C)-\widetilde{d_C}:S(C)\to S(C)$ (here as usual $\widetilde{d_C}$ is the linear derivation extending $d_C$). These recursions are the duals of the recursions for $\kappa(\tau)$, $\Kos(d_B)$, at the beginning of the proof of Proposition \ref{prop:transfer}. They can be shown by dualizing the computations jn the same proof. For instance, we have to show that the first two cocumulants of $\sigma$ are $\kappa^{co}(\sigma)_1=\sigma$ (it is so by definition) and $\kappa^{co}(\sigma)_2 = (\sigma\odot\sigma)\Kos^{co}(d_C)_2h$. For the latter identity, using the fact that $\Delta_D=(\sigma\otimes\sigma)\Delta_C\tau$ and $(\sigma\otimes\sigma)\Delta_C h=0$ since $(\sigma,\tau,h)$ is a semifull coalgebra contraction, we see that (where $\pi:C^{\otimes 2}\to C^{\odot 2}$ is the natural projection)
\[\kappa^{co}(\sigma)_2=\frac{1}{2}\pi\Big(\Delta_D\sigma-(\sigma\otimes\sigma)\Delta_C\Big) = \frac{1}{2}\pi(\sigma\otimes\sigma)\Delta_C(\tau\sigma-\id_C) = \frac{1}{2}\pi(\sigma\otimes\sigma)\Delta_Cd_Ch. \]
On the other hand $\Kos^{co}(d_C)_2=\frac{1}{2}\pi\Big(\Delta_Cd_C-(d_C\otimes \id_C+\id_C\otimes d_C)\Delta_C\Big)$. Thus \[ (\sigma\odot\sigma)\Kos^{co}(d_C)_2h = \frac{1}{2}\pi(\sigma\otimes\sigma)\Big(\Delta_Cd_C-(d_C\otimes \id_C+\id_C\otimes d_C)\Delta_C\Big)h =\frac{1}{2}\pi(\sigma\otimes\sigma)\Delta_Cd_Ch=\kappa^{co}(\sigma)_2, \]
where we used that $(\sigma\otimes\sigma)(d_C\otimes \id_C+\id_C\otimes d_C)\Delta_Ch=(d_D\otimes\id_D+\id_D\otimes d_D)(\sigma\otimes\sigma)\Delta_Ch=0$. 

In the general case the claim might be proved by adapting the recursive computation in the proof of Proposition \ref{prop:transfer} to this dual setting. 

Alternatively, one might reason as follows. We denote by $R=r_1+\cdots+r_n+\cdots$, $r_n:D\to D^{\odot n}$ $r_1=d_D$, the $L_\infty[-1]$ coalgebra structure on $D$ induced via homotopy transfer, and likewise we denote by $G=g_1+\cdots+g_n+\cdots$, $g_n:C\to D^{\odot n}$, $g_1=\sigma$, the induced $L_\infty[-1]$ coalgebra morphism. To show that $g_n=\kappa^{co}(\sigma)_n$, $\forall\, n\ge1$, it is enough to show that the dual maps coincide, i.e.,  $g_n^\vee=\kappa^{co}(\sigma)_n^\vee:(D^\vee)^{\odot n}\to (D^{\odot n} )^\vee\to C^\vee$. On the one hand, it follows directly from the definitions that cumulants and cocumulants are dual to each other, i.e., $\kappa^{co}(\sigma)^\vee_n=\kappa(\sigma^\vee)_n$, where the cumulants of $\sigma^\vee\in\Hom^0_u(D^\vee,C^\vee)$ are computed with respect to the dual algebra structures on $D^\vee$ and $C^\vee$. Similarly, Koszul brackets and cobrackets are dual to each other in the sense that $\Kos^{co}(d_C)_n^\vee = \Kos(d_C^\vee)_n$. On the other hand, it is easy to see that homotopy transfer is compatible with duality in the following sense: the $L_\infty[1]$ algebra structure $(r_1^\vee,\ldots,r_n^\vee,\ldots)$ on $D^\vee$ and the $L_\infty[1]$ morphism $(g_1^\vee,\ldots,g_n^\vee,\ldots):D^\vee\to C^\vee$ are induced by transferring the dual $L_\infty[1]$ algebra structure $(d^\vee_C,\ldots,\Kos^{co}(d_C)_n^\vee,\ldots) = (d_C^\vee,\ldots,\Kos(d_C^\vee)_n,\ldots)$ on $C^\vee$ along the dual contraction $(\sigma^\vee,\tau^\vee,h^\vee)$. Finally, putting these two observations togetether with Proposition \ref{prop:transfer} (notice that $(\sigma^\vee,\tau^\vee,h^\vee)$ is a semifull algebra contraction) we can conclude, as desired, that $g_n^\vee=\kappa(\sigma^\vee)_n=\kappa^{co}(\sigma)_n^\vee$.
 \end{proof}

\subsection{Homotopy transfer for $L_\infty[1]$-algebras (revisited)}\label{subsec:Lootransfer}

Given a pair of symmetric coalgebras $S(U),S(V)$, the graded space $\Hom(S(U),S(V))$ becomes a unitary graded  commutative algebra via the convolution product $\star$: explicitly,  
	\[ F\star G = \odot\circ(F\otimes G)\circ\Delta. \]
where $\Delta$ is the unshuffle coproduct on $S(U)$ and $\odot$ is the symmetric product on $S(V)$. The unit is the map $\varepsilon:S(U)\to S(V)$ defined by $\varepsilon(\1_{S(U)})=\1_{S(V)}$ and $\varepsilon(x_1\odot\cdots\odot x_n)=0$ for all $n\geq1$ and $x_1,\ldots,x_n\in U$.

Given $\varphi\in\Hom^0(S(U),S(V))$ such that $\varphi(\1_{S(U)})=0$, it is well defined (by cocompleteness of $S(U)$) the exponential $\exp_\star(\varphi)=\varepsilon+\sum_{k\geq1}\frac{1}{k!}\varphi^{\star k}$ of $\varphi$ with respect to the convolution product. Conversely, given $F\in\Hom^0(S(U),S(V))$ with $F(\1_{S(U)})=\1_{S(V)}$, it is well defined the logarithm 
$\log_\star(F)=\sum_{k\geq1}\frac{(-1)^{k-1}}{k}(F-\varepsilon)^{\star k}$ of $F$ with respect to the convolution product.

By definition, given $F\in\Hom^0_{cu}(S(U),S(V))$, it is a morphism of graded coalgebras if and only if the higher cocumulants vanish, i.e., if and only if $\kappa^{co}(F)_n=0$ for all $n\ge 2$. In the following lemma we assume instead $F\in\Hom^0_{u}(S(U),S(V))$ (that is, $F(\mathbf{1}_{S(U)})=\mathbf{1}_{S(V)}$), and give an equivalent condition for $F$ to be a morphism of graded coalgebras, this time in terms of the cumulants $\kappa(F)_n$.

\begin{lemma}\label{lem:coalgebramorphisms} Given a pair of graded spaces $U,V$ and $F\in\Hom^0_u(S(U),S(V))$, then $F$ is a morphism of graded coalgebras if and only if $\kappa(F)_n(U,\ldots,U)\subset V\subset S(V)$ for all $n\geq1$. If this happens, denoting by $f_n:U^{\odot n}\to V$, $n\geq1$, the maps defined by $f_n(x_1,\ldots,x_n)=\kappa(F)_n(x_1,\ldots,x_n)$, then $F$ is the unique morphism of coalgebras with Taylor coefficients $pF=(f_1,\ldots,f_n,\ldots)$ ($p:S(V)\to V$ being the natural projection).\end{lemma}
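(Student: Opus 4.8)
The plan is to prove both implications, together with the identification of the Taylor coefficients, by directly comparing the explicit cumulant formula \eqref{eq:cumulants} with the formula \eqref{morfromtaylor} that expresses a coalgebra morphism through its Taylor coefficients.

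First I would treat the forward implication. Assume $F$ is a coalgebra morphism, with Taylor coefficients $f_m\colon U^{\odot m}\to V$, $f_m=pF|_{U^{\odot m}}$. For $x_1,\dots,x_n\in U$ and $I\subseteq\{1,\dots,n\}$, rewriting \eqref{morfromtaylor} as a sum over unordered set partitions gives
\[ F\Big(\bigodot_{i\in I} x_i\Big)=\sum_{\rho\,\vdash\, I}\pm_K\bigodot_{J\in\rho} f_{|J|}\big(x_J\big), \]
the factor $1/k!$ cancelling against the $k!$ orderings of the blocks (the symmetric product being commutative). Substituting this into \eqref{eq:cumulants} (with $a_i=x_i$, so $\prod_{i\in I}a_i=\bigodot_{i\in I}x_i$) and expanding, each term is indexed by a partition $\pi=\{I_1,\dots,I_k\}$ of $\{1,\dots,n\}$ together with a refinement $\rho=\bigsqcup_j\rho_j$ of $\pi$. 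Reorganizing the double sum by the finer partition $\rho$, the coefficient of the monomial $\bigodot_{J\in\rho}f_{|J|}(x_J)$ becomes $\sum_{\pi\geq\rho}(-1)^{\ell(\pi)-1}(\ell(\pi)-1)!$, where $\pi$ runs over the coarsenings of $\rho$ and $\ell(\pi)$ is the number of blocks.

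The key point is the combinatorial identity, for an $m$-element set,
\[ \sum_{P}(-1)^{\ell(P)-1}(\ell(P)-1)!=\begin{cases} 1, & m=1,\\ 0, & m\geq2, \end{cases} \]
the sum ranging over all partitions $P$, which is the M\"obius relation for the partition lattice (equivalently, the statement that $E$ and $L$ are mutually inverse). Since coarsenings $\pi\geq\rho$ biject with partitions of the set of blocks of $\rho$, this forces every term to vanish except that with $\rho$ the one-block partition, leaving $\kappa(F)_n(x_1,\dots,x_n)=f_n(x_1,\dots,x_n)\in V$; this simultaneously proves $\kappa(F)_n(U,\dots,U)\subset V$ and identifies the cumulants with the Taylor coefficients of $F$. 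Conceptually, the containment alone can also be read off from \eqref{eq:MCcumulants}: in the formal setting $e^{ta}$ is grouplike for $a\in U$, hence so is $F(e^{ta})$, so $\kappa(F)_*(ta)=\log F(e^{ta})$ is primitive, i.e.\ lies in $V[[t]]$, and extracting $t$-degrees and polarizing recovers $\kappa(F)_n(U,\dots,U)\subset V$.

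For the converse, assume $\kappa(F)_n(U,\dots,U)\subset V$ for all $n$, set $f_n:=\kappa(F)_n|_{U^{\odot n}}$, and let $\widetilde F\colon S(U)\to S(V)$ be the unique coalgebra morphism with Taylor coefficients $(f_1,\dots,f_n,\dots)$ afforded by cofreeness. By the forward implication applied to $\widetilde F$, the cumulants $\kappa(\widetilde F)_m$ and $\kappa(F)_m$ agree on all $U$-inputs. I would then show that any $\kappa(F)_m(w_1,\dots,w_m)$ with $w_i\in S(U)$ is determined by its values on $U$-inputs: by multilinearity and the vanishing of cumulants whenever an argument is the unit (for $m\geq2$), one reduces to the $w_i$ lying in the augmentation ideal, i.e.\ products of elements of $U$; then the recursion \eqref{eq:recursioncumulants}, read so as to split a product argument $b\odot c$ into two separate arguments, strictly decreases a suitable complexity (lexicographically: the total number of $U$-factors, then the excess of factors over the number of arguments), yielding an induction down to all-$U$ inputs. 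Hence $\kappa(F)=\kappa(\widetilde F)$ as morphisms $S(S(U))\to S(S(V))$; since $\kappa(-)=L\circ S(-)\circ E$ with $E,L$ invertible, this gives $S(F)=S(\widetilde F)$ and so $F=\widetilde F$, a coalgebra morphism with the asserted Taylor coefficients. The main obstacle is the bookkeeping in the forward computation — matching \eqref{morfromtaylor} with set partitions, controlling the Koszul signs under the double-partition reorganization, and isolating the M\"obius identity — together with pinning down, in the converse, the complexity measure that makes \eqref{eq:recursioncumulants} strictly decreasing so the reduction to $U$-inputs terminates; the grouplike viewpoint gives a clean check on the containment but neither the exact equality $\kappa(F)_n=f_n$ nor the converse, for which the combinatorial and recursive arguments seem unavoidable.
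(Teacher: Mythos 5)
Your proof is correct, but it takes a genuinely different route from the paper's. The paper's proof rests on one identity: for $x_1,\ldots,x_n\in U$,
\[ \kappa(F)_n(x_1,\ldots,x_n)=\log_\star(F)(x_1\odot\cdots\odot x_n), \]
obtained by matching \eqref{eq:cumulants} against convolution powers of $F-\varepsilon$. From this, both directions are nearly immediate: writing $F=\exp_\star(\log_\star(F))$ and evaluating on $U$-monomials yields exactly the cofree formula \eqref{morfromtaylor} with cumulants in place of Taylor coefficients, so if the cumulants land in $V$ then $F$ \emph{is} the coalgebra morphism they cogenerate (that is the paper's entire converse), while if $F$ is a coalgebra morphism, comparing the two expansions by induction on $n$ gives $\kappa(F)_n|_{U^{\odot n}}=f_n$. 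Your forward direction replaces this with a direct substitution of \eqref{morfromtaylor} into \eqref{eq:cumulants} and M\"obius inversion on the partition lattice; as you yourself observe, the identity $\sum_{P}(-1)^{\ell(P)-1}(\ell(P)-1)!=[m=1]$ is precisely the statement that $E$ and $L$ (equivalently $\exp_\star$ and $\log_\star$) are mutually inverse, so this is the same mechanism unpacked by hand. The real divergence is the converse: where the paper gets it for free, you build the cofree morphism $\widetilde F$ from the cumulants and prove $F=\widetilde F$ by uniqueness, which forces an auxiliary lemma — that every $\kappa(F)_m(w_1,\ldots,w_m)$ with $w_i\in S(U)$ is determined by the $U$-input cumulants, via the recursion \eqref{eq:recursioncumulants} and induction on the excess $\sum_i k_i-m$. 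That reduction is sound (the excess of each term on the right of the rearranged recursion strictly drops, and unit arguments are killed by the vanishing of higher cumulants on $\mathbf{1}_{S(U)}$), but it is extra work the paper's route avoids; also, once $\kappa(F)=\kappa(\widetilde F)$ you can conclude directly from $\kappa(F)_1=F$ rather than through invertibility of $E$ and $L$. What your version buys is explicitness and self-containedness — the partition combinatorics is laid bare and no convolution formalism is needed; what the paper's buys is brevity and reuse, since the same $\log_\star$ identity is recycled in Lemma \ref{lem:transkos} and Proposition \ref{prop:morphismscomparison}. Your grouplike/primitive remark is a valid consistency check for the containment in the complete setting, but, as you say, it gives neither the equality $\kappa(F)_n=f_n$ nor the converse.
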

\begin{proof} A straightforward computation, using Formula \eqref{eq:cumulants} for the cumulants and the definition of the convolution product $\star$, shows that for all $n\geq1$ and $x_1,\ldots,x_n\in U$
	\begin{multline*} \kappa(F)_n(x_1,\ldots,x_n) = \\ = \sum_{\stackrel{k,i_1,\ldots,i_k\geq1}{i_1+\cdots+i_k=n}}\sum_{\sigma\in S(i_1,\ldots,i_k)}\pm_K\frac{(-1)^{k-1}}{k}F(x_{\sigma(1)}\odot\cdots)\odot\cdots\odot F(\cdots\odot x_{\sigma(n)}) = \log_\star(F)(x_1\odot\cdots\odot x_n) 
	\end{multline*}
	In particular, we see that \begin{multline*}  F(x_1\odot\cdots\odot x_n) = \exp_\star(\log_\star(F))(x_1\odot\cdots\odot x_n) =\\=  \sum_{\stackrel{k,i_1,\ldots,i_k\geq1}{i_1+\cdots+i_k=n}}\sum_{\sigma\in S(i_1,\ldots,i_k)}\pm_K\frac{1}{k!}\kappa(F)_{i_1}(x_{\sigma(1)},\ldots)\odot\cdots\odot\kappa(F)_{i_k}(\ldots,x_{\sigma(n)}).  
	\end{multline*}
	The above identity, together with the one \eqref{morfromtaylor}, shows that if there exist maps $f_n:U^{\odot n}\to V$ such that $\kappa(F)_n(x_1,\ldots,x_n)=f_n(x_1,\ldots,x_n)$, then $F$ is precisely the morphism of coalgebras with corestriction $pF=(f_1,\ldots,f_n,\ldots)$. Conversely, if $F$ is a morphism of  graded coalgebras with corestriction $pF=(f_1,\ldots,f_n,\ldots)$, the above identity and a straightforward induction show that $\kappa(F)_n(x_1,\ldots,x_n)=f_n(x_1,\ldots,x_n)\in V\subset S(V)$ for all $n\geq1$ and $x_1,\ldots,x_n\in U$, and thus that $\kappa(F)_n(U,\ldots,U)\subset V$.  
\end{proof}

Analagously, given $Q\in\End_{cu}(S(U))$, we know that it is a coderivation if and only if $\Kos^{co}(Q)_n=0$ for all $n\ge 2$. In the following lemma we assume $Q\in\End_{u}(S(U))$ and we give an equivalent condition for $Q$ to be a coderivation in terms of the Koszul brackets $\Kos(Q)_n$.

\begin{lemma}\label{lem:transkos} Given $Q\in\End_{u}(S(U))$, then $Q$ is a coderivation if and only if $\Kos(Q)_n(U,\ldots,U)\subset U\subset S(U)$ for all $n\geq1$. If this happens, denoting by $q_n:U^{\odot n}\to U$, $n\geq1$, the maps defined by $q_n(x_1,\ldots,x_n)=\Kos(Q)_n(x_1,\ldots,x_n)$, then $Q$ is the unique coalgebra coderivation with corestriction $pQ=(0,q_1,\ldots,q_n,\ldots)$ ($p:S(U)\to U$ being as usual the natural projection).
\end{lemma}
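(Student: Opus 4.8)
The plan is to deduce the statement from Lemma \ref{lem:coalgebramorphisms} through the exponential correspondence of Proposition \ref{prop:cumulantsvsKoszulexp}, following the principle that the Koszul brackets are the infinitesimal counterpart of the cumulants. Being a coderivation, the inclusions $\Kos(Q)_n(U,\ldots,U)\subset U$, and the identification $q_n=\Kos(Q)_n|_{U^{\odot n}}$ are all homogeneous linear conditions on $Q$ (the first because coderivations form a degree-graded linear subspace, the others because $\Kos(\#)$ is linear by Lemma \ref{lem:KosDGLiemor}), so I may assume $Q$ homogeneous. I then introduce a formal variable $t$ of degree $-|Q|$, so that $tQ$ is a degree-zero element of $\End_u(S(U))$ over $\mathbb{K}[[t]]$, and consider $\exp(tQ)=\sum_{k\ge0}\tfrac{t^k}{k!}Q^k$. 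This is a well-defined degree-zero endomorphism in $\End(S(U))[[t]]$, and it lies in $\Hom^0_u$ since $Q(\mathbf{1}_{S(U)})=0$ gives $\exp(tQ)(\mathbf{1}_{S(U)})=\mathbf{1}_{S(U)}$. The first point is that $Q$ is a coderivation if and only if $\exp(tQ)$ is a coalgebra morphism: if $Q$ is a coderivation then $Q\otimes\id$ and $\id\otimes Q$ commute, so the coderivation identity propagates to $\exp(tQ)$, while the converse follows by reading off the coefficient of $t^1$ in the morphism identity.

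Next I apply Proposition \ref{prop:cumulantsvsKoszulexp} over $\mathbb{K}[[t]]$ to the degree-zero endomorphism $tQ$, obtaining the identity of coalgebra morphisms $\kappa(\exp(tQ))=\exp(t\,\Kos(Q))$ (using $\Kos(tQ)=t\,\Kos(Q)$). By Lemma \ref{lem:coalgebramorphisms} applied to $F=\exp(tQ)$, the map $\exp(tQ)$ is a coalgebra morphism precisely when $\kappa(\exp(tQ))_n(U,\ldots,U)\subset U[[t]]$ for all $n$. Everything thus reduces to the formal equivalence
\[ \exp(t\,\Kos(Q))_n(U,\ldots,U)\subset U[[t]]\ \forall n \quad\Longleftrightarrow\quad \Kos(Q)_n(U,\ldots,U)\subset U\ \forall n. \]
The implication $\Rightarrow$ is immediate, since the coefficient of $t^1$ in the $n$-th Taylor coefficient of $\exp(t\,\Kos(Q))=\id+t\,\Kos(Q)+O(t^2)$ is exactly $\Kos(Q)_n$. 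For $\Leftarrow$, let $W\subset S(S(U))$ be the span of those outer symmetric products all of whose letters lie in $U\subset S(U)$; by formula \eqref{coderfromtaylor} and the hypothesis $\Kos(Q)_i(U,\ldots,U)\subset U$, the coderivation $\mathcal{K}=\Kos(Q)$ maps $W$ into $W$, hence so does every power $\mathcal{K}^k$, and $\exp(t\mathcal{K})$ maps $W$ into $W[[t]]$. Since the component of $W$ in degree-$1$ is precisely $U$, projecting yields $\exp(t\,\Kos(Q))_n(U,\ldots,U)\subset U[[t]]$.

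It remains to identify the Taylor coefficients. When the equivalent conditions hold, $\exp(tQ)$ is a coalgebra morphism whose $n$-th Taylor coefficient is, by Lemma \ref{lem:coalgebramorphisms}, the restriction $\kappa(\exp(tQ))_n|_{U^{\odot n}}=\exp(t\,\Kos(Q))_n|_{U^{\odot n}}$, whose $t^1$-coefficient is $\Kos(Q)_n|_{U^{\odot n}}=q_n$. On the other hand $\exp(tQ)=\id+tQ+O(t^2)$ shows that this same $t^1$-coefficient equals $pQ|_{U^{\odot n}}$ (for $n=1$ there is in addition the $t^0$-term $\id_U$, which is irrelevant here). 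Matching the two computations gives $pQ=(0,q_1,\ldots,q_n,\ldots)$ with $q_n=\Kos(Q)_n|_{U^{\odot n}}$, and since a coderivation is determined by its corestriction this is the asserted uniqueness. The one genuinely delicate point --- and the main obstacle --- is legitimizing the passage to the $\mathbb{K}[[t]]$-linear setting, i.e.\ checking that Proposition \ref{prop:cumulantsvsKoszulexp} and Lemma \ref{lem:coalgebramorphisms} survive this extension of scalars; this is harmless, since both rest on the multilinear identities \eqref{eq:cumulants} and \eqref{eq:koszul} which are stable under base change, but it deserves an explicit remark. (A self-contained but combinatorially heavier alternative would expand $\Kos(Q)_n$ directly via \eqref{eq:koszul} and analyse when the output lands in $U\subset S(U)$.)
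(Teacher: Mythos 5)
Your strategy---reduce to degree zero via a formal parameter, then combine Lemma \ref{lem:coalgebramorphisms} with Proposition \ref{prop:cumulantsvsKoszulexp}---is sound and complete when $|Q|$ is even, but it breaks down exactly in the case the paper actually needs: $|Q|$ odd (Lemma \ref{lem:transkos} is invoked in the proof of Theorem \ref{th:transferL} for the degree-one coderivation $Q$, and similarly in the $IBL_\infty$ section). The problem is the parity of your parameter. If $t$ has degree $-|Q|$ with $|Q|$ odd, then $t$ is an odd variable, and graded commutativity in characteristic zero forces $t^2=0$; so ``$\K[[t]]$'' is not a power-series ring but the dual numbers on an odd generator, $\exp(tQ)$ truncates to $\id+tQ$, and all your bookkeeping with the full exponential series, the powers $\mathcal{K}^k$, and ``the coefficient of $t^1$ in $\id+t\,\Kos(Q)+O(t^2)$'' has to be reinterpreted. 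Declaring $t$ to be an even central variable of odd degree is not a way out: that destroys the consistency of the Koszul sign rules on which the symmetric (co)algebra formalism rests, and then $tQ$ is not a degree-zero endomorphism in any sense to which Proposition \ref{prop:cumulantsvsKoszulexp} (stated for $\Delta\in\End^0_u(A)$ over the field $\K$) applies. A symptom of the same oversight appears already in your first step: for odd $Q$ the operators $Q\otimes\id$ and $\id\otimes Q$ do \emph{not} commute, they anticommute, so the claimed propagation of the coderivation identity to $\exp(tQ)$ is not available as justified. Your closing remark that the extension of scalars is ``harmless'' is correct only for an even base; the odd case is precisely where the sign-sensitive work lies, and it is not covered by the cited statements.

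The gap is fixable---for odd $Q$ one can work over an auxiliary graded base with an odd square-zero parameter, where $\exp(tQ)=\id+tQ$, and re-prove first-order versions of Lemma \ref{lem:coalgebramorphisms} and Proposition \ref{prop:cumulantsvsKoszulexp} over that base---but this is genuinely additional work, not base change. It is worth noting that the paper's own proof sidesteps exponentials entirely and is degree-agnostic: it observes that $\id_{S(U)}$ is $\star$-invertible with inverse the antipode $s$, that formula \eqref{eq:koszul} gives $\Kos(Q)_n=(Q\star s)|_{U^{\odot n}}$, hence $Q=(Q\star s)\star\id_{S(U)}$, which yields the closed formula \eqref{eq:Q}; comparing \eqref{eq:Q} with \eqref{coderfromtaylor} gives both directions of the equivalence and the identification of the Taylor coefficients at once. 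The ``combinatorially heavier alternative'' you mention in parentheses is in fact essentially this route, and it is shorter than it looks once packaged in the convolution algebra.
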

\begin{proof} We notice that the identity $\id_{S(U)}$ is invertible with respect to the convolution product of $\End(S(U))$, with inverse the map $s:S(U)\to S(U)$ defined by 
\[ s(\1_{S(U)})=\1_{S(U)},\qquad s(x_1\odot\cdots\odot x_k) =(-1)^k x_1\odot\cdots\odot x_k \]
for all $k\geq1$ and $x_1,\ldots,x_k\in U$ (in other words, $s$ is the antipode of the natural Hopf algebra structure on $S(U)$). The explicit formula \eqref{eq:koszul} for the Koszul brackets $\mathcal{K}(Q)_n$ shows that
\[ \Kos(Q)_n(x_1,\ldots,x_n) = (Q\star s)(x_1\odot\cdots\odot x_n). \]
In other words, denoting by $i:U\to S(U)$ the natural inclusion, by $S(i)$ the induced inclusion $S(U)\to S(S(U)):x_1\odot\cdots\odot x_n\to i(x_1)\odot\cdots\odot i(x_n)$, and by $k:S(U)\to S(U)$ the composition
\[ k: S(U)\xrightarrow{S(i)} S(S(U))\xrightarrow{\Kos(Q)}S(S(U))\xrightarrow{p} S(U),\]
the above shows that $Q\star s = k$ in the convolution algebra $\End(S(U))$, and thus that $Q=k\star\operatorname{id}_{S(U)}$, that is, 
\begin{equation}\label{eq:Q}  Q(x_1\odot\cdots\odot x_n) = \sum_{i=1}^n\sum_{\sigma\in S(i,n-i)} \pm_K \Kos(Q)_i(x_{\sigma(1)},\ldots,x_{\sigma(i)})\odot x_{\sigma(i+1)}\odot\cdots\odot x_{\sigma(n)}
\end{equation}
for all $n\geq1$ and $x_1,\ldots,x_n\in U$. If $\Kos(Q)_n(x_1,\ldots,x_n)=q_n(x_1,\ldots,x_n)\in U$ for certain $q_n:U^{\odot n}\to U$, then \eqref{coderfromtaylor} shows that $Q$ is precisely the coderivation with Taylor coefficients $q_0=0,q_1,\ldots,q_n,\ldots$. Conversely, if $Q$ is a coderivation, the above and a straightforward induction show that $\Kos(Q)_n(x_1,\ldots,x_n)=q_n(x_1,\ldots,x_n)\in U$ for all $n\geq1$ and $x_1,\ldots,x_n\in U$, and thus that $\Kos(Q)_n(U,\ldots,U)\subset U\subset S(U)$, as desired.   
\end{proof}

As a final preparation, we shall show that the contraction obtained from the symmetrized tensor trick is a semifull contraction with respect to both the algebraic and coalgebraic structures.

\begin{lemma}\label{lem:symmetrizedcontraction} Given a pair of complexes $(U,d_U),(V,d_V)$ and a contraction $(\sigma,\tau,h)$ of $(U,d_U)$ onto $(V,d_V)$, there is an induced contraction $(S(\sigma),S(\tau),\widehat{h})$ of $(S(U),\widetilde{d_U})$ onto $(S(V),\widetilde{d_V})$, where the contracting homotopy $\widehat{h}$ is given by the symmetrized tensor trick: explicitly,  $\widehat{h}(\1_{S(U)})=0$ and
	\begin{equation*} \widehat{h}(x_1\odot\cdots\odot x_n) =  \frac{1}{n!}\sum_{j=1}^{n}\sum_{\sigma\in S_n}\pm_K \tau\sigma(x_{\sigma(1)})\odot\cdots\odot\tau\sigma(x_{\sigma(j-1)})\odot h(x_{\sigma(j)})\odot x_{\sigma(j+1)}\odot\cdots\odot x_{\sigma(n)},
	\end{equation*}
	 where $\pm_K$ is the Koszul sign associated to $h,x_1,\ldots,x_n\mapsto x_{\sigma(1)},\ldots,x_{\sigma(j-1)},h,x_{\sigma(j)},\ldots,x_{\sigma(n)}$ (keeping in mind that $|h|=-1$). This is both a semifull DG algebra contraction with respect to the symmetric products and a semifull DG coalgebra contraction with respecto to the unshuffle coproducts (in other words, we might call it a semifull DG bialgebra contraction).
\end{lemma}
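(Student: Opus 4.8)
The plan is to realize $(S(\sigma),S(\tau),\widehat{h})$ as the classical symmetrized tensor trick and then to reduce the semifull identities to the stronger ``full'' identities, which I verify by a one-factor recursion. Write $\pi\colon T(U)\to S(U)$ for the canonical projection from the tensor algebra and $\iota\colon S(U)\to T(U)$ for the symmetrization, so that $\pi\iota=\id_{S(U)}$, and let $H=\sum_j(\tau\sigma)^{\otimes(j-1)}\otimes h\otimes\id^{\otimes(\cdots)}$ be the (unsymmetrized) tensor-trick homotopy on $T(U)$. Reading the displayed formula against this, one has $\widehat{h}=\pi H\iota$, and likewise $S(\sigma)=\pi\,T(\sigma)\,\iota$, $S(\tau)=\pi\,T(\tau)\,\iota$. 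Since $\widetilde{d_U}$ is intertwined by $\iota,\pi$ with $\sum_i\id^{\otimes(i-1)}\otimes d_U\otimes\id^{\otimes(\cdots)}$, the contraction axioms for $(S(\sigma),S(\tau),\widehat{h})$ follow from those of the tensor-level contraction $(T(\sigma),T(\tau),H)$ together with $S_n$-equivariance of the remaining data (e.g.\ $S(\sigma)\widehat{h}=\pi\,T(\sigma)H\,\iota=0$ because $T(\sigma)H=0$). The one place where Koszul signs are essential is $\widehat{h}^2=0$, which holds by the graded anticommutativity of the factors landing in $\operatorname{Im}(h)$; this part is classical and I would cite it, only indicating the sign computation.

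Set $P:=\tau\sigma$, and record the elementary vanishings: $P^2=P$, $h|_{\operatorname{Im}\tau}=0$ (from $h\tau=0$), $h|_{\operatorname{Im}h}=0$ (from $h^2=0$), $hP=Ph=0$ and $\sigma h=0$. Rather than checking Definition~\ref{def:semifullalgebracontraction} directly, I would verify the stronger identities \eqref{eqA1bis}, \eqref{eqA2bis}, \eqref{eqA4bis} and the unlabelled $\sigma$-Leibniz identity for $(S(\sigma),S(\tau),\widehat{h})$; together with $\widehat{h}^2=0$ and graded commutativity these imply the semifull conditions \emph{unconditionally}. Indeed, putting $b=\widehat{h}(b')$ in \eqref{eqA1bis} and using $\widehat{h}^2=0$ gives $\widehat{h}(\widehat{h}(a)\odot\widehat{h}(b'))=0$, while putting $a=\widehat{h}(a')$ in \eqref{eqA2bis} gives $\widehat{h}(\widehat{h}(a')\odot S(\tau)(x))=0$, and similarly for the $\sigma$-side. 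Among the strong identities the two with $S(\sigma)$ on the outside are immediate, since $S(\sigma)$ is an algebra morphism with $S(\sigma)\widehat{h}=0$ and $S(\sigma)S(\tau)=\id$; and $\widehat{h}(S(\tau)(x)\odot S(\tau)(y))=0$ is immediate because every term of $\widehat{h}$ applies $h$ to a factor in $\operatorname{Im}\tau$.

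The substance is thus the module identity \eqref{eqA2bis} and the Leibniz identity \eqref{eqA1bis} for $\widehat{h}$, and here the main tool is the one-factor recursion $\widehat{h}(x_1\odot\cdots\odot x_n)=\tfrac1n\sum_i\pm_K\big(h(x_i)\odot(x_1\odot\cdots\odot x_n,\text{ omit }x_i)+P(x_i)\odot\widehat{h}(x_1\odot\cdots\odot x_n,\text{ omit }x_i)\big)$, read off the definition. Using it I would prove \eqref{eqA2bis}, i.e.\ $\widehat{h}(a\odot S(\tau)(x))=\widehat{h}(a)\odot S(\tau)(x)$, by induction on the number of factors of $a$: a factor $w\in\operatorname{Im}\tau$ peeled off contributes nothing through $h(w)=0$ and is fixed by $P(w)=w$, so it ``rides along'' and the two contributions recombine with exactly the right weights. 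The Leibniz identity \eqref{eqA1bis} is then attacked by the same inductive scheme on $a\odot b$, now feeding in \eqref{eqA2bis} and the anticommutation $h(u)\odot h(v)=-h(v)\odot h(u)$ of odd $\operatorname{Im}(h)$-factors to force the symmetrized terms to collapse. The coalgebra half is entirely dual: $S(\sigma)$ being a coalgebra morphism with $\widehat{h}\,S(\tau)=0$ and $S(\sigma)\widehat{h}=0$ dispatches the conditions with $\Delta$ feeding into $S(\sigma)$, and the co-module and co-Leibniz identities for $(\widehat{h}\otimes\widehat{h})\Delta$ and $(\widehat{h}\otimes S(\sigma))\Delta$ are proved by the co-recursion dual to the one above.

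I expect \eqref{eqA1bis} to be the genuine obstacle. Unlike \eqref{eqA2bis}, $\widehat{h}$ does \emph{not} satisfy a naive Leibniz rule for $\odot$, since the symmetrization mixes the ``before'' and ``after'' regions of $H$; the induction must therefore track how the $\tfrac1n$ weights and the Koszul signs conspire, and the decisive cancellations only appear after invoking graded anticommutativity of $\operatorname{Im}(h)$. Equivalently, one may organize the same computation through the tensor lift, writing $\widehat{h}(a\odot b)=\pi H(\iota a\shuffle\iota b)$ and exploiting the concatenation-Leibniz rule $H(\alpha\cdot\beta)=H(\alpha)\cdot\beta\pm T(P)(\alpha)\cdot H(\beta)$ for $H$ on $T(U)$; reconciling this with the shuffle product is the one genuinely computational step, everything else being formal.
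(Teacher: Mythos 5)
Your reductions are sound: the observation that $S(\sigma)$, $S(\tau)$ being (co)algebra morphisms together with $S(\sigma)\widehat{h}=0$, $\widehat{h}S(\tau)=0$, $\widehat{h}^2=0$ dispatches most of the conditions in Definition \ref{def:semifullalgebracontraction} matches the paper, your one-factor recursion for $\widehat{h}$ is correct, and your inductive proof of the module identity \eqref{eqA2bis} works (a factor in $\operatorname{Im}(\tau)$ is killed by $h$ and fixed by $\tau\sigma$, so it rides along and the weights $\tfrac1n$ recombine). But the proposal stalls exactly where you say it does: the graded Leibniz identity \eqref{eqA1bis} for $\widehat{h}$ --- the sole input from which you extract the remaining semifull condition $\widehat{h}(\widehat{h}(a)\odot\widehat{h}(b))=0$ --- is never proved; you describe two possible organizations of that computation and defer both, and the coalgebra-side identities $(\widehat{h}\otimes\widehat{h})\Delta\widehat{h}=0$ and $(\widehat{h}\otimes S(\sigma))\Delta\widehat{h}=0$ are waved off as ``entirely dual''. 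Since this is precisely where all the content of the lemma lives, what you have is a correct plan with a hole at its center, not a proof.

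The gap is also self-inflicted, because semifullness only requires the \emph{weak} identities, and these can be checked without ever establishing a Leibniz rule for $\widehat{h}$. This is what the paper does: by a polarization argument (characteristic zero), it suffices to verify the four non-immediate identities on elements $x^{\odot m}\odot y^{\odot n}$ (resp.\ $x^{\odot n}$ for the coalgebra side) with $x,y$ of degree zero. Then $\widehat{h}(x^{\odot m})\odot\widehat{h}(y^{\odot n})$ is a sum of terms $h(x)\odot h(y)\odot\tau\sigma(x)^{\odot i}\odot\tau\sigma(y)^{\odot p}\odot x^{\odot j}\odot y^{\odot q}$, and $\widehat{h}$ kills every such term: the inner $h$ cannot land on anything in $\operatorname{Im}(h)$ or $\operatorname{Im}(\tau)$ (by $h^2=h\tau=0$, and $\tau\sigma h=0$ handles factors moved into $\operatorname{Im}(\tau\sigma)$), so it must hit an untouched $x$ or $y$, producing a repeated odd factor $h(x)\odot h(x)$ or $h(y)\odot h(y)$, which vanishes by graded commutativity; the identities involving $\Delta\widehat{h}$ die the same way. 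If you wish to salvage your inductive scheme, the honest fix is to run your recursion directly on the weak identity $\widehat{h}(\widehat{h}(a)\odot\widehat{h}(b))=0$, or simply adopt the polarization shortcut. Note finally that \eqref{eqA1bis} \emph{is} true here, but in the paper's logic it is a consequence of semifullness plus the fact that $\widetilde{d_U}$ is a derivation (Remark \ref{rem:semifullDG}, via \eqref{eq:failureA1}), not a stepping stone toward it.
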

\begin{proof} The fact that $(S(\sigma),S(\tau),\widehat{h})$ defines a contraction of $(S(U),\widetilde{d_U})$ onto $(S(V),\widetilde{d_V})$ is well known, and in any case easy to show.  Furthermore, it is clear that $S(\sigma),S(\tau)$ are both algebra and coalgebra morphisms, which immediately implies some of the necessary relations. The only  relations to be shown which are not immediate are the following ones
	\[ \widehat{h}\circ\odot\circ(\widehat{h}\otimes \widehat{h}) = 0,\quad \widehat{h}\circ\odot\circ(\widehat{h}\otimes S(\tau)) = 0,\quad (\widehat{h}\otimes \widehat{h})\circ \Delta\circ \widehat{h}=0,\quad(\widehat{h}\otimes S(\sigma))\circ \Delta\circ \widehat{h}=0, \]
	where $\odot$ and $\Delta$ are respectively the symmetric product and the unshuffle coproduct on $S(U)$. For the latter two, by a standard polarization argument it is enough to show them on elements of the form $x^{\odot n}$, for some $x\in U^0$ and $n\ge1$. In this case, we notice that \[\widehat{h}\Big(h(x)\odot\tau\sigma(x)^{\odot i}\odot x^{\odot j}\Big)=0,\qquad S(\sigma)\Big(h(x)\odot\tau\sigma(x)^{\odot i}\odot x^{\odot j}\Big)=0,\] for all $i,j\ge0$, since $h^2=h\tau=\sigma h=0$ and $h(x)\odot h(x)=0$ by degree reasons. Thus, denoting by $K$ either $\widehat{h}$ or $S(\sigma)$, 
	\begin{multline*} (\widehat{h}\otimes K)\Delta \widehat{h}(x^{\odot n})=(\widehat{h}\otimes K)\Delta\left( \sum_{i+j=n-1}  h(x)\odot \tau\sigma(x)^{\odot i}\odot x^{\odot j}\right) =\\=\sum_{p+q+r+s=n-1} \binom{p+r}{p}\binom{q+s}{q}\widehat{h}\Big(h(x)\odot \tau\sigma(x)^{\odot p}\odot x^{\odot q}\Big)\otimes K\Big(\tau\sigma(x)^{\odot r}\odot x^{\odot s}\Big)+\\+\binom{p+r}{p}\binom{q+s}{q}\widehat{h}\Big(\tau\sigma(x)^{\odot p}\odot x^{\odot q}\Big)\otimes K\Big(h(x)\odot \tau\sigma(x)^{\odot r}\odot x^{\odot s}\Big) =0.	\end{multline*} 
	The first two identities are shown similarly. First of all, it is enough to prove them on elements of the form $x^{\odot m}\otimes y^{\odot n}$, with $x$ and $y$ of degree zero. For this, we compute
	\[ \widehat{h}\Big(\widehat{h}(x^{\odot m})\odot \widehat{h}(y^{\odot n})\Big)=\sum_{i+j=m-1}\sum_{p+q=n-1}\widehat{h}\Big( h(x)\odot h(y)\odot\tau\sigma(x)^{\odot i}\odot\tau\sigma(y)^{\odot p}\odot x^{\odot j}\odot y^{\odot q} \Big)=0, \]
	\[ \widehat{h}\Big(\widehat{h}(x^{\odot m})\odot \tau(y)^{\odot n}\Big)=\sum_{i+j=m-1}\widehat{h}\Big( h(x)\odot\tau\sigma(x)^{\odot i}\odot x^{\odot j}\odot\tau(y)^{\odot n}\Big)=0, \]
once again, using the identities $h^2=h\tau=0$ and the fact that $h(x)^{\odot 2}=h(y)^{\odot 2}=0$ by degree reasons.
\end{proof}

We are ready to revisit the homotopy transfer Theorem \ref{th:transfer} for $L_\infty[1]$ algebras, providing a new proof based on the results from this section. As presented here, the argument might seem a bit circular, as we used Theorem \ref{th:transfer} in the proof of Proposition \ref{prop:transfer}: this issue is addressed in the following Remark \ref{rem:circular}. For our purposes, this will serve more as a preparation for the similar proofs of the homotopy transfer theorems for commutative $BV_\infty$ and $IBL_\infty$ algebras (Theorems \ref{th:transferBV} and \ref{th:transferIBL} respectively). 
We point out that a simpler proof of homotopy transfer for (curved) $L_\infty[1]$ algebras can be given in the spirit of this paper, but avoiding the use of Proposition \ref{prop:transfer} by replacing it with a much simpler argument: this will appear elsewhere \cite{prep}.

Let $(g,f,h)$ be a contraction of the complex $(U,d_U)$ onto the one $(V,d_V)$, and let $Q$ be an $L_\infty[1]$ structure on $U$ with linear part $q_1=d_U$. There is an induced contraction $(S(g),S(f),\widehat{h})$ of $(S(U),\widetilde{d_U})$ onto $(S(V),\widetilde{d_V})$ as in the previous lemma. Putting $Q_+=Q-\widetilde{d_U}$, and regarding it as a perturbation of the differential $\widetilde{d_U}$ on $S(U)$, by the Standard Perturbation Lemma \ref{lem:SPL} there is an induced perturbation $R_+=\sum_{k\geq1} S(\sigma)Q_+\big(\widehat{h}Q_+\big)^k S(\tau)$ of the differential $\widetilde{d_V}$ on $S(V)$, as well as a perturbed contraction $(G,F,H)$ of $(S(U),Q)$ onto $(S(V),R:=\widetilde{d_V}+R_+)$.

\begin{remark}\label{rem:H} By the previous Lemma \ref{lem:symmetrizedcontraction}, we know that $(S(g),S(f),\widehat{h})$ is both a semifull DG coalgebra contraction and a semifull DG algebra contraction of $(S(U),\widetilde{d_U})$ onto $(S(V),\widetilde{d_V})$, and since the perturbation $Q_+$ is a coderivation of $S(U)$, according to Lemmas \ref{lem:semifullstable} and \ref{lem:semifullcostable} we know that $(G,F,H)$ is both a semifull DG coalgebra contraction and a semifull algebra contraction (not a semifull DG algebra contraction, though). For future reference, we also point out that since both $\widehat{h}$ and $Q_+$ preserve the subspaces $S_{\le n}(U):=\oplus_{i=0}^n U^{\odot i}$, so does the perturbed homotopy $H=\sum_{k\ge0}(\widehat{h}Q_+)^k\widehat{h}$. Finally, we notice that since the subspace $U\subset S(U)$ is in the kernel of the perturbation $Q_+$, we have $H(x)=\sum_{k\ge0}(\widehat{h}Q_+)^k\widehat{h}(x) = h(x)\in U\subset S(U)$, $\forall\,x\in U$.
\end{remark}
\begin{theorem}\label{th:transferL} In the previous hypotheses, $R$ is an $L_\infty[1]$ algebra structure on $V$, and $G,F$ are $L_\infty[1]$ morphisms.
\end{theorem}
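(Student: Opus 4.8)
The plan is to treat the coderivation $R$ and the morphism $G$ by means of the coalgebra structure, and the remaining morphism $F$ by means of the algebra structure, exploiting the observation of Remark \ref{rem:H} that the perturbed contraction $(G,F,H)$ is \emph{simultaneously} a semifull DG coalgebra contraction and a semifull algebra contraction of $(S(U),Q)$ onto $(S(V),R)$. Throughout I will use three elementary facts, all contained in Remark \ref{rem:H} together with the Standard Perturbation Lemma \ref{lem:SPL}: that $R=\widetilde{d_V}+R_+$ is a degree one map with $R^2=0$, $R(\mathbf{1}_{S(V)})=0$ and linear part $d_V$; that the perturbation $Q_+=Q-\widetilde{d_U}$ annihilates $U=U^{\odot1}\subset S(U)$ (on a single generator $Q$ reduces to its linear part $d_U$); and that $H$ restricts to $h$ on $U$, so $H(U)\subset U$ and, since $S(f)(v)=f(v)\in U$ is killed by $Q_+$, also $F(v)=f(v)\in U$ for $v\in V$.

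First I would dispatch $R$ and $G$. As $Q$ is a coderivation and $(S(g),S(f),\widehat{h})$ is a semifull DG coalgebra contraction by Lemma \ref{lem:symmetrizedcontraction}, Lemma \ref{lem:semifullcostable} shows that $(G,F,H)$ is again a semifull DG coalgebra contraction. By the consequence of Proposition \ref{prop:cotransfer} recorded in the remark preceding it, this forces $R$ to be a coderivation and $G$ to be a morphism of graded coalgebras. Together with the first fact above, $R$ is then a degree one coderivation squaring to zero with vanishing constant term, i.e.\ an $L_\infty[1]$ algebra structure on $V$ with linear part $d_V$; and $G$, being both a coalgebra morphism and a chain map ($GQ=RG$), is an $L_\infty[1]$ morphism $(S(U),Q)\to(S(V),R)$.

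The substance of the argument is the morphism $F$, where I would instead invoke the \emph{algebraic} characterization of coalgebra morphisms in Lemma \ref{lem:coalgebramorphisms}. Since $F\in\Hom^0_u(S(V),S(U))$, it suffices to prove that $\kappa(F)_n(V,\ldots,V)\subset U\subset S(U)$ for all $n\geq1$. Because $(G,F,H)$ is a semifull algebra contraction, Proposition \ref{prop:transfer} applies and identifies the cumulants $\kappa(F)_n$ as the Taylor coefficients of the $L_\infty[1]$ morphism obtained by transferring the Koszul-bracket structure $\Kos(Q)$ along $(G,F,H)$; explicitly $\kappa(F)_1=F$ and, for $n\geq2$,
\[ \kappa(F)_n(\beta_1,\ldots,\beta_n)=\sum_{\stackrel{k\geq2,\,i_1,\ldots,i_k\geq1}{i_1+\cdots+i_k=n}}\frac{1}{k!}\sum_{\sigma\in S(i_1,\ldots,i_k)}\pm_K\, H\Kos(Q)_k\big(\kappa(F)_{i_1}(\beta_{\sigma(1)},\ldots),\ldots,\kappa(F)_{i_k}(\ldots,\beta_{\sigma(n)})\big), \]
with $\beta_j\in S(V)$. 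I would then induct on $n$, restricting the inputs to $\beta_j=v_j\in V$. The base case $\kappa(F)_1(v)=F(v)=f(v)\in U$ is the third fact above. For the inductive step, each inner factor $\kappa(F)_{i_j}(\ldots)$, being evaluated on arguments in $V$, lies in $U$ by the inductive hypothesis (note $i_j<n$ since $k\geq2$); since $Q$ is a coderivation, Lemma \ref{lem:transkos} gives $\Kos(Q)_k(U,\ldots,U)\subset U$, so each $\Kos(Q)_k(\ldots)\in U$; and finally $H(U)\subset U$. Hence $\kappa(F)_n(v_1,\ldots,v_n)\in U$, completing the induction. By Lemma \ref{lem:coalgebramorphisms}, $F$ is a coalgebra morphism, and being a chain map ($FR=QF$) it is an $L_\infty[1]$ morphism $(S(V),R)\to(S(U),Q)$.

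The main obstacle is precisely this last induction for $F$: it is the one place where the algebraic semifullness of the contraction is genuinely used, and where the three characterizing results (the Koszul-bracket criterion \ref{lem:transkos}, the cumulant criterion \ref{lem:coalgebramorphisms}, and the transfer recursion of Proposition \ref{prop:transfer}) must be combined. One should also note that this appeal to Proposition \ref{prop:transfer} is what makes the argument appear circular, since that proposition was proved using Theorem \ref{th:transfer}; this is the point to be addressed in Remark \ref{rem:circular}.
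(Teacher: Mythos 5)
Your proof is correct and follows essentially the same route as the paper's: the coalgebra side ($R$ and $G$) is dispatched via the stability of semifull DG coalgebra contractions and the consequence of Proposition \ref{prop:cotransfer}, while $F$ is handled by the same induction combining the transfer recursion of Proposition \ref{prop:transfer} with Lemmas \ref{lem:transkos} and \ref{lem:coalgebramorphisms} and the facts $F|_V=f$, $H|_U=h$. Even your closing observation about the apparent circularity is exactly the point the paper defuses in Remark \ref{rem:circular}.
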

\begin{proof} As observed in the previous remark, $(G,F,H)$ is both a semifull DG coalgebra contraction and a semifull algebra contraction. In particular, $R$ is a coderivation and $G:S(U)\to S(V)$ is a morphism of graded coalgebras: in fact, the same computations as in the proof of Proposition \ref{prop:cotransfer} show that $\kappa^{co}(G)_2 = (G\odot G)\Kos^{co}(Q)_2H=0$, hence $G$ is a morphism of graded coalgebras, and $\Kos^{co}(R)_2 = (G\odot G)\Kos^{co}(Q)_2F=0$, hence $R$ is a coderivation. 
	
It remains to show that $F:S(V)\to S(U)$ is a morphism of graded coalgebras: this follows from Proposition \ref{prop:transfer} and the above Lemmas \ref{lem:coalgebramorphisms} and \ref{lem:transkos}. More precisely, starting with \[ \kappa(F)_1(x)=F(x)=\left(\sum_{k\geq0}(\widehat{h}Q_+)^kS(f)\right)(x) =\sum_{k\geq0}(\widehat{h}Q_+)^kf(x) = f(x)=:f_1(x)\in U\subset S(U),\] using Proposition \ref{prop:transfer}, Lemmas \ref{lem:coalgebramorphisms}, \ref{lem:transkos} and induction on $n$ we see that
\begin{multline*} \kappa(F)_n(x_1,\ldots,x_n) =\\= \sum_{\stackrel{k\geq2,i_1,\ldots,i_k\geq1}{i_1+\cdots+i_k=n}}\sum_{\sigma\in S(i_1,\ldots,i_k)}\pm_K \frac{1}{k!} H\Kos(Q_+)_k(\kappa(F)_{i_1}(x_{\sigma(1)},\ldots),\ldots,\kappa(F)_{i_k}(\ldots,x_{\sigma(n)}) )=\\ =\sum_{\stackrel{k\geq2,i_1,\ldots,i_k\geq1}{i_1+\cdots+i_k=n}}\sum_{\sigma\in S(i_1,\ldots,i_k)}\pm_K \frac{1}{k!} hq_k(f_{i_1}(x_{\sigma(1)},\ldots),\ldots,f_{i_k}(\ldots,x_{\sigma(n)}) )=\\=: f_n(x_1,\ldots,x_n) \in U\subset S(U).
\end{multline*} 
Using Lemma \ref{lem:coalgebramorphisms} we can conclude that $F$ is the morphism of graded coalgebras, and we also recovered the usual recursion for its Taylor coefficients $f_1,\ldots,f_n,\ldots$. \end{proof}

\begin{remark}\label{rem:circular}
As the previous proof depends on Proposition \ref{prop:transfer}, and in the claim of the latter we invoke the homotopy transfer Theorem \ref{th:transfer}, the whole argument might seem circular. A more careful look at the proofs shows that this is not really the case, as the only thing we actually need from Proposition \ref{prop:transfer} are the recursions stated at the beginning of its proof, and the rest of said proof is just a (rather long) direct computation to prove these recursions, and doesn't actually depend on Theorem \ref{th:transfer}.
\end{remark}

\begin{remark}\label{rem:Loocotrans} One might prove the homotopy transfer Theorem for $L_\infty[-1]$ coalgebras along the same lines. In fact, given an complexes $(U,d_U)$ and $(V,d_V)$ together with a contraction $(\sigma,\tau,h)$ of $U$ onto $V$, there is an induced contraction of $\widehat{S}(U)$ onto $\widehat{S}(V)$ as in Lemma \ref{lem:symmetrizedcontraction}. Given an $L_\infty[-1]$ coalgebra structure on $U$, i.e., a DG algebra structure on $\widehat{S}(U)$, we might regard it as a perturbation of the differential induced by $d_U$, and thus via the standard perturbation Lemma there are induced a perturbed differential on $\widehat{S}(V)$ and a perturbed contraction $(F,G,H)$ of $\widehat{S}(U)$ onto $\widehat{S}(V)$. Since the perturbation was an algebra derivation, it follows at once from Lemmas \ref{lem:symmetrizedcontraction} and \ref{lem:semifullstable} (see also Remark \ref{rem:semifullDG}) that the perturbed differential is an $L_\infty[-1]$ coalgebra structure on $V$, and that $F:\widehat{S}(V)\to\widehat{S}(U)$ is a morphism of $L_\infty[-1]$ coalgebras. The only thing that requires a little more work is to show that $G:\widehat{S}(U)\to\widehat{S}(V)$ is also a morphism of $L_\infty[-1]$ coalgebras. This can be achieved along the lines of the previous proof: we leave details to the interested reader.\end{remark}
\section{Derived $BV$ algebras}

\emph{Commutative $BV_\infty$ algebras} were introduced by O. Kravchenko \cite{Krav} and have been applied in several contexts, such as deformation quantization, quantum field theory and Poisson geometry, just to name a few, see for instance \cite{Hueb1,CL,DSV1,DSV2,BrLaz,Vit,BashVor,Campos,ParkQFT,LRS,AVor} (in the references \cite{ParkQFT,LRS} they are called \emph{binary QFT algebras}, but in fact the two notions seem to be equivalent).
As explained in the introduction, these are not homotopy $BV$ algebras in the full operadic sense \cite{GCTV}, hence the name might be misleading. In the rest of the paper we shall call \emph{derived BV algebras} the commutative $BV_\infty$ algebras in the sense of Kravchenko, following a terminology introduced in \cite{BCSX}.

\subsection{Derived $BV$ algebras}

\begin{definition} Let $A$ be a graded commutative algebra and $k\in\mathbb{Z}$ be an odd integer. Let $t$ be a central variable of (even) degree $1-k$: we denote by $A[[t]]$ the corresponding algebra of formal power series. A \emph{degree $k$ derived $BV$ algebra} structure on $A$ is the datum of a degree one $\mathbb{K}[[t]]$-linear map $\Delta = \sum_{n\geq0}t^n\Delta_n\in\End_{u,\K[[t]]}(A[[t]])=\K[[t]]\otimes\End_u(A)$, $|\Delta_n| = 1 + n(k-1)$, such that
	\begin{itemize} \item $\Delta^2=\frac{1}{2}[\Delta,\Delta]=0$, which is equivalent to $\sum_{i=0}^n[\Delta_i,\Delta_{n-i}]=0$ for all $n\geq0$; and
		\item $\Kos(\Delta)_{n}(a_1,\ldots,a_n)\equiv 0\pmod{t^{n-1}}$ for all $n\geq2$ and $a_1,\ldots,a_n\in A$. 
	\end{itemize}
\end{definition} 
\begin{remark} It is easy to check that the last requirement is equivalent to  $\Delta_n\in\Diff_{u,\leq n+1}(A)$ for all $n\geq0$. For instance, $\Kos(\Delta)_2(a,b)=\Kos(\Delta_0)_2(a,b)+t\Kos(\Delta_1)_2(a,b)+\cdots\equiv\Kos(\Delta_0)_2(a,b)\pmod{t}$, thus \[\Kos(\Delta)_2(a,b)\equiv0\pmod{t}\qquad\Leftrightarrow\qquad \Kos(\Delta_0)_2(a,b)=0\qquad\Leftrightarrow\qquad\Delta_0\in\Diff_{u,\le1}(A)=\operatorname{Der}(A). \] In general, if we assume that $\Delta_i\in\Diff_{u,\leq(i+1)}(A)$ for all $i<n$, this implies in particular that $\Kos(\Delta_i)_{n+2}=0$ for all $i<n$, and thus that \[\Kos(\Delta)_{n+2}(a_1,\ldots,a_{n+2})\equiv t^{n}\Kos(\Delta_n)_{n+2}(a_1,\ldots,a_{n+2})\pmod{t^{n+1}}.\] for all $a_1,\ldots,a_{n+2}\in A$. Hence, \[\Kos(\Delta)_{n+2}\equiv0\pmod{t^{n+1}}\quad\Leftrightarrow\quad\Kos(\Delta_n)_{n+2}=0\quad\Leftrightarrow\quad\Delta_n\in\Diff_{u,\leq(n+1)}(A). \]
\end{remark}
 
\begin{remark}\label{rem:BVvsPoisson} By definition, given a degree $k$ derived $BV$ algebra $(A,\Delta)$,  we have
	\[ \Kos(\Delta)_n(a_1,\ldots,a_n) \equiv t^{n-1}\Kos(\Delta_{n-1})_{n}(a_1,\ldots,a_n)\pmod{t^n}. \]
	According to Lemma \ref{lem:KosDGLiemor}, this shows that 
	\begin{eqnarray} \nonumber 0 & = & \frac{1}{2}\Kos([\Delta,\Delta])_n(a_1,\ldots,a_n) \\ \nonumber &=& \frac{1}{2}[\Kos(\Delta),\Kos(\Delta)]_n(a_1,\ldots,a_n) \\
	\nonumber &=& \sum_{i=1}^{n}\sum_{\sigma\in S(i,n-i)}\pm_K \Kos(\Delta)_{n-i+1}(\Kos(\Delta)_i(a_{\sigma(1)},\ldots,a_{\sigma(i)}),a_{\sigma(i+1)},\ldots,a_{\sigma(n)})\\
	\nonumber &\equiv& \sum_{i=1}^{n}\sum_{\sigma\in S(i,n-i)}\pm_K t^{n-1} \Kos(\Delta_{n-i})_{n-i+1}(\Kos(\Delta_{i-1})_i(a_{\sigma(1)},\ldots,a_{\sigma(i)}),a_{\sigma(i+1)},\ldots,a_{\sigma(n)})\pmod{t^n},
	\end{eqnarray}
	which is equivalent to say that the degree $1+(n-1)(k-1)$ maps $\Kos(\Delta_{n-1})_n:A^{\odot n}\to A$, $n\geq1$, define a structure of $L_\infty[1]$ algebra on $A[1-k]$. Here we denote by $\pm_K$ the Koszul sign associated to the permutation $a_1,\ldots,a_n\mapsto a_{\sigma(1)},\ldots,a_{\sigma(n)}$: notice that, since $k$ is supposed to be odd, this is the same whether we consider $a_1,\ldots,a_n$ as elements of $A$ or of $A[1-k]$.
	
	Furthermore, since by hypothesis $\Kos(\Delta_{n-1})_{n+1}=0$, the recursive formula \eqref{eq:recursionkoszul} for the Koszul brackets shows that 
	\[ \Kos(\Delta_{n-1})_n(a_1,\ldots,a_{n-1},bc) = \Kos(\Delta_{n-1})_n(a_1,\ldots,a_{n-1},b)c+(-1)^{|b||c|}\Kos(\Delta_{n-1})_n(a_1,\ldots,a_{n-1},c)b \]
	for all $n\geq1,a_1,\ldots,a_{n-1},b,c\in A$, i.e., the map $\Kos(\Delta_{n-1})_{n}$ is a multiderivation for all $n\geq1$. With the terminology of \cite{BCSX}, this shows that the sequence of maps $\Kos(\Delta_{n-1})_n,n\geq1$, defines a structure of degree $k$ derived Poisson algebra on $A$.
	
\end{remark} \newcommand{\LieBV}{\mathcal{BV}_{[k]}(A)}

\begin{remark}\label{rem:LieBV} We shall denote by $\LieBV\subset\End_{u,\mathbb{K}[[t]]}(A[[t]])\cong\End_u(A)[[t]]$ the graded subspace spanned by those $\Delta=\sum_{n\geq0}t^n\Delta_n$ such that $\Delta_n\in\Diff_{u,\leq(n+1)}(A),\:\forall n\geq1$. It follows immediately from the fact that $\big[\Diff_{u,\leq j}(A),\Diff_{u,\leq k}(A)\big]\subset\Diff_{u,\leq (k+j-1)}(A)$ that $\LieBV$ is a graded Lie subalgebra of $\End_{u,\mathbb{K}[[t]]}(A[[t]])$. The degree $k$ derived $BV$ algebra structures on $A$ are precisely the Maurer-Cartan elements of $\LieBV$, that is, the solutions $\Delta\in\LieBV^1$ of the equation \[ \frac{1}{2}[\Delta,\Delta] = 0. \]
\end{remark} 
Given $\Delta=\sum_{n\ge0}t^n\Delta_n\in\LieBV$, we denote by $\mathcal{P}(\Delta)\in\Coder(S(A[1-k]))$ the coderivation given in Taylor coefficients by \[p\mathcal{P}(\Delta)=(0,\Kos(\Delta_0)_1,\ldots,\Kos(\Delta_{n-1})_n,\ldots).\]
As in Remark \ref{rem:BVvsPoisson}, we notice that the Taylor coefficients $\Kos(\Delta_{n-1})_n$ of $\mathcal{P}(\Delta)$ are multiderivations. Moreover, denoting by $\mathcal{P}_{[k]}(A)\subset\operatorname{Coder}(S(A[1-k]))$ the subspace spanned by those coderivations whose Taylor coefficients are multiderivations (and vanishing on $\mathbf{1}_{S(A[1-k])}$), it is easy to check that $\mathcal{P}_{[k]}(A)$ is closed under the commutator brackets, hence a graded Lie subalgebra of $\operatorname{Coder}(S(A[1-k]))$. Finally, the same kind of computations as in Remark \ref{rem:BVvsPoisson} show more in general the following fact. 
\begin{proposition}\label{prop:morphismfromBVtoPoisson}  The correspondence \[ \mathcal{P}\colon\LieBV\to\mathcal{P}_{[k]}(A)\colon\Delta\mapsto \mathcal{P}(\Delta)\]
	is a morphism of graded Lie algebras.
\end{proposition}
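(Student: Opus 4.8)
The plan is to verify the identity $\mathcal{P}([\Delta,\Delta'])=[\mathcal{P}(\Delta),\mathcal{P}(\Delta')]$ directly on Taylor coefficients, reducing everything to the graded Lie algebra morphism $\Kos(\#)\colon\End_u(A)\to\Coder(S(A))$ of Lemma \ref{lem:KosDGLiemor} applied to the $t$-components. First I would record that $\LieBV$ is a graded Lie subalgebra (Remark \ref{rem:LieBV}), so that $[\Delta,\Delta']\in\LieBV$ and $\mathcal{P}([\Delta,\Delta'])$ is defined. Writing $[\Delta,\Delta']=\sum_m t^m[\Delta,\Delta']_m$ with $[\Delta,\Delta']_m=\sum_{a+b=m}[\Delta_a,\Delta'_b]$ (the powers of the central even variable $t$ simply add), the $n$-th Taylor coefficient of the left hand side is, by the definition of $\mathcal{P}$, by $\K$-linearity of $\Kos$ and by Lemma \ref{lem:KosDGLiemor},
\[ \mathcal{P}([\Delta,\Delta'])_n=\Kos([\Delta,\Delta']_{n-1})_n=\sum_{a+b=n-1}\Kos([\Delta_a,\Delta'_b])_n=\sum_{a+b=n-1}[\Kos(\Delta_a),\Kos(\Delta'_b)]_n. \]

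Next I would expand the right hand side. By definition $\mathcal{P}(\Delta)$ and $\mathcal{P}(\Delta')$ are the coderivations of $S(A[1-k])$ with Taylor coefficients $\mathcal{P}(\Delta)_i=\Kos(\Delta_{i-1})_i$ and $\mathcal{P}(\Delta')_i=\Kos(\Delta'_{i-1})_i$, so their commutator bracket $[\mathcal{P}(\Delta),\mathcal{P}(\Delta')]_n$ is given by the coderivation bracket formula from the review of $L_\infty[1]$ algebras: a sum over $i$ and over $(i,n-i)$-unshuffles of terms $\mathcal{P}(\Delta)_{n-i+1}(\mathcal{P}(\Delta')_i(\ldots),\ldots)$ minus the symmetric ones. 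The aim is to match this, summand by summand, with the double sum displayed above.

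The heart of the argument, and the step I expect to be the \emph{main obstacle}, is the arity bookkeeping showing that each bracket $[\Kos(\Delta_a),\Kos(\Delta'_b)]_n$ in the sum collapses to a single surviving term. Here I would use crucially that $\Delta_j\in\Diff_{u,\leq(j+1)}(A)$ for elements of $\LieBV$, i.e.\ $\Kos(\Delta_j)_m=0$ for $m>j+1$. In $[\Kos(\Delta_a),\Kos(\Delta'_b)]_n$ with $a+b=n-1$, a composite $\Kos(\Delta_a)_{n-i+1}(\Kos(\Delta'_b)_i(\ldots),\ldots)$ can be nonzero only if simultaneously $i\leq b+1$ (so $\Kos(\Delta'_b)_i\neq0$) and $n-i+1\leq a+1$, i.e.\ $i\geq n-a=b+1$; hence only $i=b+1$ contributes, yielding $\Kos(\Delta_a)_{a+1}(\Kos(\Delta'_b)_{b+1}(\ldots),\ldots)$, and symmetrically only $i=a+1$ survives in the opposite composite. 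Reindexing the outer sum over $a+b=n-1$ by $i=b+1$ in the first family and by $i=a+1$ in the second, these surviving summands are exactly the terms of $[\mathcal{P}(\Delta),\mathcal{P}(\Delta')]_n$, since $\Kos(\Delta_{n-i})_{n-i+1}=\mathcal{P}(\Delta)_{n-i+1}$ and $\Kos(\Delta'_{i-1})_i=\mathcal{P}(\Delta')_i$. This gives the equality of Taylor coefficients for every $n\geq1$.

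Finally I would check that all signs agree, which is where the hypothesis that $k$ is odd enters. Since $1-k$ is even, for a fixed homogeneous $\Delta$ all components $\Delta_n$ share the parity of $|\Delta|$, so the commutator sign $(-1)^{|\Kos(\Delta_a)||\Kos(\Delta'_b)|}$ in each term on the right equals $(-1)^{|\Delta||\Delta'|}$, which is in turn the sign $(-1)^{|\mathcal{P}(\Delta)||\mathcal{P}(\Delta')|}$ governing the coderivation bracket (note that $\mathcal{P}$ preserves degree); moreover, as observed in Remark \ref{rem:BVvsPoisson}, the unshuffle Koszul signs $\pm_K$ are unchanged by suspending $A$ by the even amount $1-k$. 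I expect this matching of signs through the unshuffle reindexing to be the only genuinely delicate point. A cleaner conceptual alternative, which I would record as a remark, is to factor $\mathcal{P}$ as $\Delta\mapsto\Kos(\Delta)\in\Coder(S(A))[[t]]$ followed by the rescaling that divides the weight-$(n-1)$ component (the part built from arity-$n$ Taylor coefficients) by $t^{n-1}$ and then sets $t=0$: the derived $BV$ condition $\Kos(\Delta)_n\equiv0\pmod{t^{n-1}}$ is precisely what makes this rescaling land in non-negative powers of $t$, the rescaling is a graded Lie algebra morphism because it acts by $t^{-\mathrm{weight}}$ while the commutator bracket of coderivations is additive in this weight, and the evaluation at $t=0$ (together with the suspension by $1-k$) is again a graded Lie algebra morphism, exhibiting $\mathcal{P}$ as a composite of such.
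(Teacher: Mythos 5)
Your proof is correct and takes essentially the same approach as the paper, whose proof of this proposition is precisely "the same kind of computations as in Remark \ref{rem:BVvsPoisson}": expand the coderivation bracket in Taylor coefficients, invoke Lemma \ref{lem:KosDGLiemor}, and use the order conditions $\Delta_j\in\Diff_{u,\leq(j+1)}(A)$ together with the evenness of $|t|=1-k$ to handle signs. The only difference is organizational: the paper phrases the $t$-bookkeeping as the congruence $\Kos(\Delta)_m\equiv t^{m-1}\Kos(\Delta_{m-1})_m\pmod{t^m}$ and compares coefficients of $t^{n-1}$ modulo $t^n$, while you extract $t$-components first and show each bracket $[\Kos(\Delta_a),\Kos(\Delta'_b)]_n$ collapses to a single surviving term --- but these two devices encode exactly the same vanishing, so the computations coincide.
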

We conclude this subsection by recalling the definition of Maurer-Cartan elements of a derived BV-algebra $(A,\Delta)$. For this, we assume that $A$ is equipped with a complete filtration $F^\bullet A$ which is compatible with both the multiplicative structure and the BV operator $\Delta$. We call $(F^\bullet A,\Delta)$ a \emph{complete derived $BV$ algebra}. 
\begin{definition}\label{def:BVMC} Given a complete degree $k$ derived $BV$ algebra $(A,\Delta)$, we say that a degree zero element \[a = \sum_{i\geq-1}t^ia_i = \frac{a_{-1}}{t} + a_0 + ta_1 +\cdots\in\frac{1}{t} A[[t]],\qquad a_i\in A^{i(k-1)},  \]
is a \emph{Maurer-Cartan element} of $A$ (cf. \cite{CL}) if
\[  \Delta(e^a)=0, \]
or equivalently (see \eqref{eq:MCkoszul}) if
\[ \sum_{n\geq1}\frac{1}{n!}\Kos(\Delta)_n(a,\ldots,a)=0. \]
Here we consider the algebras  $\mathbb{K}((t))=\cup_{n\in\mathbb{Z}}\,t^n\mathbb{K}[[t]]$, $A((t))=\cup_{n\in\mathbb{Z}}\,t^nA[[t]]$ of formal Laurent series, and we continue to denote by $\Delta:A((t))\to A((t))$ and $\Kos(\Delta)_n:A((t))^{\odot n}\to A((t))$ the $\K((t))$-linear extensions of $\Delta$ and its Koszul brackets.
\end{definition}
Notice that since $\Kos(\Delta)_n\equiv t^{n-1}\Kos(\Delta_{n-1})_n\pmod{t^n}$ the left hand side of the last identity becomes
\[\frac{1}{t}\sum_{n\geq1} \frac{1}{n!} \Kos(\Delta_{n-1})_n(a_{-1},\ldots,a_{-1})\quad+\quad\mbox{terms in $A[[t]]$}.  \]
In other words, $a_{-1}\in A^{1-k}$ has to be a Maurer-Cartan element of the associated derived Poisson algebra $(A,\mathcal{P}(\Delta))$. 
\subsection{Morphisms of derived $BV$ algebras}\label{sec:BVmor}
\begin{definition}\label{def:morphismsBV} Given a pair of degree $k$ derived $BV$ algebras $(A,\Delta),(B,\Delta')$, a morphism between them is a degree zero map $f\in\Hom^0_{u,\K[[t]]}(A[[t]],B[[t]])$ such that
	\begin{itemize}
	\item  $f\circ\Delta =\Delta'\circ f$; 
	\item $\kappa(f)_n(a_1,\ldots,a_n)\equiv0\pmod{t^{n-1}}$ for all $n\geq2$ and $a_1,\ldots,a_n\in A$.	\end{itemize}
\end{definition}
Our first aim is to provide some justification for the previous definition, which, to the best of our knowledge, hasn't appeared before in the literature\footnote{Actually, after a first draft of this paper was ready, we learned that the same definition has been considered (with a different terminology) by J.-S. Park \cite{ParkQFT}.}. Of course, the first thing we need to show is the following proposition, saying that with the above definition of morphisms, degree $k$ derived $BV$ algebras form indeed a category. 
\begin{proposition}\label{prop:BVcat} Given deree $k$ derived $BV$ algebras $(A,\Delta)$, $(B,\Delta')$ and $(C,\Delta'')$, together with morphisms $f:A[[t]]\to B[[t]]$ and $g:B[[t]]\to C[[t]]$ of derived $BV$ algebras, the composition $gf:A[[t]]\to C[[t]]$ is a morphism of derived $BV$ algebras.
\end{proposition}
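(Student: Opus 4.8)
The plan is to check the two conditions of Definition \ref{def:morphismsBV} directly for $gf$. Compatibility with the $BV$ operators is immediate: by associativity of composition together with $f\Delta=\Delta'f$ and $g\Delta'=\Delta''g$ one has
\[ (gf)\Delta = g(f\Delta) = g(\Delta'f) = (g\Delta')f = (\Delta''g)f = \Delta''(gf), \]
so no work is needed there, and the whole point is to control the higher cumulants $\kappa(gf)_n$.

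For the cumulant condition I would start from Lemma \ref{prop:cumulantsandcomposition}, which gives $\kappa(gf)=\kappa(g)\circ\kappa(f)$ as a composite of coalgebra morphisms. (The lemma is stated over a field, but its proof only uses that $E,L$ are mutually inverse automorphisms and that $S(gf)=S(g)\circ S(f)$, so it applies verbatim over the base ring $\K[[t]]$ with all maps $\K[[t]]$-linear.) Writing out the Taylor coefficients of this composite via the standard composition formula for coalgebra morphisms derived from \eqref{morfromtaylor}, I would obtain, for $n\geq1$,
\[ \kappa(gf)_n = \sum_{\substack{k\geq1,\,i_1,\ldots,i_k\geq1\\ i_1+\cdots+i_k=n}}\frac{1}{k!}\sum_{\sigma\in S(i_1,\ldots,i_k)}\pm_K\,\kappa(g)_k\big(\kappa(f)_{i_1}(\ldots),\ldots,\kappa(f)_{i_k}(\ldots)\big). \]
Then I would count orders in $t$. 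By Definition \ref{def:morphismsBV} applied to $f$ and $g$ (and noting that the cases $i=1$ and $k=1$ are vacuous) one has $\kappa(f)_{i}\equiv0\pmod{t^{i-1}}$ and $\kappa(g)_{k}\equiv0\pmod{t^{k-1}}$ for all $i,k\geq1$. Hence each argument $\kappa(f)_{i_j}(\ldots)$ lies in $t^{i_j-1}B[[t]]$, and inserting these into the $\K[[t]]$-linear map $\kappa(g)_k$, which itself raises the $t$-order by at least $k-1$, produces a term in $t^{(k-1)+\sum_j(i_j-1)}C[[t]]=t^{n-1}C[[t]]$, since $\sum_j(i_j-1)=n-k$. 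Summing over partitions then gives $\kappa(gf)_n\equiv0\pmod{t^{n-1}}$ for every $n\geq2$, as required.

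The argument is short, and the only step needing genuine care is the $t$-order bookkeeping: I must use that $\kappa(g)_k$ is $\K[[t]]$-\emph{multilinear}, so that pulling the factors $t^{i_j-1}$ out of each slot multiplies the output by $t^{\sum_j(i_j-1)}$, and that the congruence $\kappa(g)_k\equiv0\pmod{t^{k-1}}$, which a priori is imposed only on arguments from $B$, propagates to all of $B[[t]]^{\odot k}$ by $\K[[t]]$-linearity. With these two observations the $t$-valuations genuinely add, the bound $(k-1)+(n-k)=n-1$ is exactly the exponent of $t^{n-1}$ (and is attained at the extreme partitions $k=1$ and $k=n$, so it is sharp), and this is precisely the threshold demanded by the definition of morphism. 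I do not expect any deeper obstacle: the proposition reduces to multiplicativity of cumulants combined with additivity of $t$-adic valuations.
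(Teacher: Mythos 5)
Your proof is correct and follows essentially the same route as the paper: both invoke Lemma \ref{prop:cumulantsandcomposition} to write $\kappa(gf)_n$ via the composition formula for Taylor coefficients and then add $t$-adic valuations, $(k-1)+\sum_j(i_j-1)=n-1$. Your explicit remarks on $\K[[t]]$-multilinearity (propagating the congruences from arguments in $B$ to arguments in $B[[t]]$) just spell out a step the paper treats as immediate.
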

\begin{proof} This follows from Proposition \ref{prop:cumulantsandcomposition}, which shows
	\[ \kappa(g f)_n(a_1,\ldots, a_n)= \sum_{\stackrel{k,i_1,\ldots,i_k\geq1}{i_1+\cdots+i_k=n}}\sum_{\sigma\in S(i_1,\ldots,i_k)} \frac{1}{k!}\pm_K\kappa(g)_k\left(\kappa(f)_{i_1}(a_{\sigma(1)},\ldots),\ldots,\kappa(f)_{i_k}(\ldots,a_{\sigma(n)})\right) \]
	for all $a_1,\ldots,a_n\in A$, where as usual we denote by $\pm_K$ the Koszul sign associated with $a_1,\ldots,a_n\mapsto a_{\sigma(1)},\ldots,a_{\sigma(n)}$.
	
	The above formula shows immediately that if \[ \kappa(f)_{i_j}(a'_1,\ldots,a'_{i_j})\equiv 0\pmod{t^{i_j-1}}\qquad\mbox{and}\qquad \kappa(g)_k(b_1,\ldots,b_k)\equiv 0\pmod{t^{k-1}}\] for all $1\leq j\leq k$, $a'_1,\ldots,a'_{i_j}\in A$, $b_1,\ldots,b_k\in B$, then $\kappa(gf)_n(a_1,\ldots,a_n)\cong 0 \pmod{t^{n-1}}$ for all $a_1,\ldots,a_n\in A$. 
\end{proof}
A more convincing justification comes from the following proposition (analogous to \cite[Proposition 2.11]{BCSX}), which shows that (formally) the exponential group of the Lie algebra $\LieBV^0$ from the previous Remark \ref{rem:LieBV} identifies with the group of $\K[[t]]$-module automorphisms of $A[[t]]$ satisfying the second condition from the previous Definition \ref{def:morphismsBV} (and preserving the unit $\1_{A[[t]]}$). In the framework of deformation theory (see \cite{ManDef}), given a derived $BV$ algebra $(A,\Delta)$, this shows that the DG Lie algebra $\big(\LieBV,[\Delta,-],[-,-]\big)$ controls the deformations of $(A,\Delta)$ in the category of derived $BV$ algebras. 

Given $\Delta\in\End^0_{u,\K[[t]]}(A[[t]])$, we consider the associated formal flow $\exp(s\Delta)$: to avoid convergence issues, we consider the parameter $s$ as a central variable of degree $0$, and the formal flow $\exp(s\Delta)\in\Hom_{u,\K[[s,t]]}(A[[s,t]],A[[s,t]])$ as a $\mathbb{K}[[s,t]]$-linear endomorphism of the algebra of formal power series $A[[s,t]]$.
\begin{proposition} Given $\Delta\in\End^0_{u,\K[[t]]}(A[[t]])$, we have 
	\[  \kappa(\exp(s\Delta))_n(a_1,\ldots,a_n)\equiv 0\pmod{t^{n-1}} \]
	for all $n\geq2$ and $a_1,\ldots,a_n\in A$, if and only if	
	\[  \Kos(\Delta)_n(a_1,\ldots,a_n)\equiv 0\pmod{t^{n-1}} \]
	for all $n\geq2$ and $a_1,\ldots,a_n\in A$, that is, if and only if $\Delta\in\LieBV^0$.
\end{proposition}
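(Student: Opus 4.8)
The plan is to deduce everything from Proposition \ref{prop:cumulantsvsKoszulexp}. Regarding $s$ as a central variable of degree zero, the formal flow $\exp(s\Delta)$ is well defined over $\K[[s,t]]$, and since the assignment $\Delta\mapsto\widetilde{\Delta}$, hence $\Delta\mapsto\Kos(\Delta)=L\circ\widetilde{\Delta}\circ E$, is $\K[[s,t]]$-linear, one has $\Kos(s\Delta)=s\,\Kos(\Delta)$. Applying Proposition \ref{prop:cumulantsvsKoszulexp} to $s\Delta$ (over $\K[[s,t]]$) therefore gives the identity of coalgebra automorphisms
\[ \kappa(\exp(s\Delta)) = \exp(\Kos(s\Delta)) = \exp\bigl(s\,\Kos(\Delta)\bigr). \]
Writing $P:=\Kos(\Delta)$ for the coderivation with Taylor coefficients $p_n=\Kos(\Delta)_n$ (so $p_0=0$ and $p_1=\Delta$), the statement reduces to comparing the $t$-adic valuations of the Taylor coefficients of $P$ and of $\exp(sP)$, since each Taylor coefficient $\kappa(\exp(s\Delta))_n$ is a genuine power series in $s$ (only finitely many factors $\exp(s\Delta)$ enter) and $\kappa(\exp(s\Delta))_n|_{s=0}=\kappa(\id)_n=0$ for $n\ge2$.

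For the implication ``$\Kos(\Delta)_n\equiv0\pmod{t^{n-1}}$ $\Rightarrow$ $\kappa(\exp(s\Delta))_n\equiv0\pmod{t^{n-1}}$'' I would use a conserved-quantity argument. Decompose $P=\sum_{k\ge1}P_{[k]}$, where $P_{[k]}$ is the coderivation-component acting through the single coefficient $p_k$ via \eqref{coderfromtaylor}; thus $P_{[k]}$ sends $A^{\odot N}$ into $A^{\odot(N-k+1)}$, lowering the symmetric degree by $k-1$. The hypothesis $p_k\equiv0\pmod{t^{k-1}}$ (automatic for $k=1$) means that $P_{[k]}$ takes values in $t^{k-1}S(A)[[t]]$. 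Hence, on a homogeneous element of $A^{\odot N}$ of $t$-valuation $v$, the image under $P_{[k]}$ lands in $A^{\odot(N-k+1)}$ with $t$-valuation at least $v+(k-1)$, so the quantity $q:=v_t+(w-1)$, where $v_t$ denotes the $t$-valuation and $w$ the symmetric ($\odot$-)degree, does not decrease under any $P_{[k]}$, hence under $P$, hence under every summand of $\exp(sP)=\sum_m\tfrac{s^m}{m!}P^m$. Starting on $A^{\odot n}$, where $q=n-1$, and corestricting to $A=A^{\odot1}$, where $q=v_t$, forces $\kappa(\exp(s\Delta))_n$ to be divisible by $t^{n-1}$; the Koszul signs are irrelevant to this valuation estimate.

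For the converse I would simply linearize in $s$. From $\exp(sP)=\id_{S(A)}+sP+\tfrac{s^2}{2}P^2+\cdots$ and the fact that the $n$-th Taylor coefficient of $\id_{S(A)}$ vanishes for $n\ge2$, taking the $n$-th Taylor coefficient yields, for $n\ge2$,
\[ \kappa(\exp(s\Delta))_n = s\,\Kos(\Delta)_n + O(s^2), \]
an equality of power series in $s$ whose coefficients are graded maps $A^{\odot n}\to A[[t]]$. If the left-hand side lies in $t^{n-1}A[[s,t]]$, then each coefficient of a power of $s$ is divisible by $t^{n-1}$; in particular the coefficient of $s^1$, which is exactly $\Kos(\Delta)_n$, is divisible by $t^{n-1}$. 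Finally, by the reformulation recorded after the definition of $\LieBV$ in Remark \ref{rem:LieBV}, the condition $\Kos(\Delta)_n\equiv0\pmod{t^{n-1}}$ for all $n\ge2$ is precisely $\Delta\in\LieBV^0$, which completes the equivalence.

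The only genuinely delicate point is the valuation bookkeeping in the forward direction: one must verify that composing the components $P_{[k]}$ really does trade the drop in symmetric degree for an equal gain in $t$-valuation, and the conserved quantity $q$ is the clean device that makes this transparent. The remaining ingredients are purely formal, namely the identity furnished by Proposition \ref{prop:cumulantsvsKoszulexp} together with the linearity $\Kos(s\Delta)=s\,\Kos(\Delta)$, and the one-line extraction of the $s$-linear term for the converse.
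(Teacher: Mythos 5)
Your proof is correct, and beyond the shared first step it takes a genuinely different route from the paper's. Both arguments begin identically, applying Proposition \ref{prop:cumulantsvsKoszulexp} to $s\Delta$ (via $\Kos(s\Delta)=s\Kos(\Delta)$) to get $\kappa(\exp(s\Delta))=\exp(s\Kos(\Delta))$. The paper then differentiates this identity in $s$, obtaining the flow equation \eqref{eq:partials}, and proves each implication by induction on $n$: for the implication from cumulants to Koszul brackets it uses that $\kappa(s)_1=\exp(s\Delta)$ is invertible, and for the reverse implication it shows $(\partial_s)^k\kappa(s)_n\equiv\Delta^k\kappa(s)_n\pmod{t^{n-1}}$ and Taylor-expands at $s=0$. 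You replace both inductions. The implication from cumulants to Koszul brackets follows with no induction at all by extracting the coefficient of $s^1$ from $\exp(sP)=\id_{S(A)}+sP+O(s^2)$, whose $n$-th Taylor coefficient is exactly $\Kos(\Delta)_n$. The other implication follows from your valuation bookkeeping, which is sound: by \eqref{coderfromtaylor} the component $P_{[k]}$ lowers symmetric degree by $k-1$ while, under the hypothesis and by $\K[[t]]$-linearity, raising $t$-valuation by at least $k-1$, so $q=v_t+(w-1)$ cannot decrease under $P$, hence under any $P^m$; corestricting $A^{\odot n}$ (where $q=n-1$) to $A$ (where $q=v_t$) yields that $t^{n-1}$ divides $\kappa(\exp(s\Delta))_n$. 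As for what each approach buys: the paper's ODE scheme treats both directions uniformly and parallels the recursion-style arguments used elsewhere in the paper (e.g.\ Proposition \ref{prop:transfer}), whereas yours isolates the easy direction as a one-line linearization and recasts the substantive direction as the statement that the exponential of a filtration-preserving coderivation preserves the filtration --- more elementary, and slightly more general, since it never uses that $\exp(sP)$ solves a differential equation.
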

\begin{proof} To simplify notations, we denote by $\kappa(s):=\kappa(\exp(s\Delta))=\exp(s\Kos(\Delta))$, where the second identity follows from  Proposition \ref{prop:cumulantsvsKoszulexp}. Denoting as usual by $p:S(A[[t]])\to A[[t]]$ the natural projection, we see that  \begin{multline}\label{eq:partials}
	 \partial_s\Big(\kappa(s)_n(a_1,\ldots,a_n)\Big) =  p\Big(\partial_s\kappa(\exp(s\Delta))\Big)(a_1\odot\dots\odot a_n)= \\ = p\Big( \partial_s\exp(s\Kos(\Delta))\Big)(a_1\odot\dots\odot a_n)  =  p\Big(\Kos(\Delta)\circ\kappa(s)\Big)(a_1\odot\dots\odot a_n) = \\ = \sum_{\stackrel{k,i_1,\ldots,i_k\geq1}{i_1+\cdots+i_k=n}}\sum_{\sigma\in S(i_1,\ldots,i_k)}\pm_K\frac{1}{k!}\Kos(\Delta)_k\left(\kappa(s)_{i_1}(a_{\sigma(1)},\ldots),\ldots,\kappa(s)_{i_k}(\ldots,a_{\sigma(n)})\right). \end{multline}
	 First we assume that $\kappa(s)_n\equiv0\pmod{t^{n-1}}$, $\forall\,n\ge2$, and show that $\Kos(\Delta)_n\equiv0\pmod{t^{n-1}}$, $\forall n\ge2$, by induction on $n$. Suppose inductively that we have shown $\Kos(\Delta)_i\equiv 0\pmod{t^{i-1}}$ for all $i<n$. Using the above Equation \ref{eq:partials} together with the inductive hypothesis we see that
	 \[ 0\equiv \partial_s\big(\kappa(s)_n(a_1,\ldots,a_n)\big)\pmod{t^{n-1}}\equiv \Kos(\Delta)_n\big(\kappa(s)_1(a_1),\ldots,\kappa(s)_1(a_n)\big)\pmod{t^{n-1}},  \]
	 for all $a_1,\ldots,a_n\in A$. Since $\kappa(s)_1=\exp(s\Delta)$ is an isomorphism, this shows that $\Kos(\Delta)_n\equiv0\pmod{t^{n-1}}$, and completes the inductive step.
	 
	 Next we assume $\Kos(\Delta)_n\equiv0\pmod{t^{n-1}}$, $\forall\,n\ge2$, and show that $\kappa(s)_n\equiv0\pmod{t^{n-1}}$, $\forall n\ge2$, once again by induction on $n$. Suppose inductively that $\kappa(s)_i\equiv 0\pmod{t^{i-1}}$ for all $i<n$: together with Equation \eqref{eq:partials}, this implies that
	 \[ \partial_s\big(\kappa(s)_n(a_1,\ldots,a_n)\big)\equiv \Kos(\Delta)_1\big(\kappa(s)_n(a_1,\ldots,a_n)\big)\pmod{t^{n-1}}\equiv\Delta\big(\kappa(s)_n(a_1,\ldots,a_n)\big)\pmod{t^{n-1}}. \]
	Thus $(\partial_s)^k\big(\kappa(s)_n(a_1,\ldots,a_n)\big)\equiv \Delta^k\big(\kappa(s)_n(a_1,\ldots,a_n)\big)\pmod{t^{n-1}}$. Expanding $\kappa(s)_n(a_1,\ldots,a_n)$ in formal Taylor series with respect to $s$, and noticing that $\kappa(0)_n=0$ for all $n\geq2$, we conclude that
	\[ \kappa(s)_n(a_1,\ldots,a_n) \equiv \sum_{k\geq0} \frac{s^k}{k!}\Delta^k\left(\kappa(0)_n(a_1,\ldots,a_n)\right)\pmod{t^{n-1}}\equiv0\pmod{t^{n-1}}. \] 
\end{proof}
In the following proposition, we promote the correspondence between derived $BV$ algebras and derived Poisson algebras explained in the previous Remark \ref{rem:BVvsPoisson} to a full-fledged functor (morphisms of derived Poisson algebras were introduced in \cite{BCSX}).
\begin{proposition}\label{prop:BVPois} Given a pair of degree $k$ derived $BV$ algebras $(A,\Delta),(B,\Delta')$ and a morphism $f:A[[t]]\to B[[t]]$ between them, the maps $\mathcal{P}(f)_n:A^{\odot n}\to B$, $n\geq1$, defined by the identities
	\[ \kappa(f)_n(a_1,\ldots,a_n)\equiv t^{n-1}\mathcal{P}(f)_n(a_1,\ldots,a_n)\pmod{t^n}  \]
	for all $a_1,\ldots,a_n\in A$, induce a morphism $\mathcal{P}(f):(S(A[1-k]),\mathcal{P}(\Delta))\to (S(B[1-k]),\mathcal{P}(\Delta'))$ between the associated degree $k$ derived Poisson algebras $A$ and $B$.
\end{proposition}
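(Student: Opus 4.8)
The plan is to deduce everything from the compatibility of cumulants with Koszul brackets and then pass to leading $t$-adic order. Since $f$ is a morphism of derived $BV$ algebras we have $f\circ\Delta=\Delta'\circ f$, so Proposition \ref{prop:cumulantsvsKoszuldg} (applied $\K[[t]]$-linearly) gives the identity of $\K[[t]]$-linear maps
\[ \kappa(f)\circ\Kos(\Delta) = \Kos(\Delta')\circ\kappa(f)\colon S(A[[t]])\to S(B[[t]]). \]
The whole point is then to extract the leading terms of this identity and recognise them as the relation $\mathcal{P}(f)\circ\mathcal{P}(\Delta)=\mathcal{P}(\Delta')\circ\mathcal{P}(f)$ expressing that $\mathcal{P}(f)$ is a morphism of the associated $L_\infty[1]$ structures, i.e. of derived Poisson algebras.

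First I would record the uniform $t$-adic behaviour on matrix components. Writing $\Phi^k_i$ for the component $A^{\odot i}\hookrightarrow S(A[[t]])\xrightarrow{\Phi}S(B[[t]])\twoheadrightarrow B^{\odot k}$, formula \eqref{morfromtaylor} expresses $\kappa(f)^k_i$ as a sum over partitions of $\{1,\dots,i\}$ into $k$ non-empty blocks $I_1,\dots,I_k$ of terms built from $\kappa(f)_{|I_1|}\odot\cdots\odot\kappa(f)_{|I_k|}$; since $\kappa(f)_m\equiv t^{m-1}\mathcal{P}(f)_m\pmod{t^m}$ each such term is divisible by $t^{(|I_1|-1)+\cdots+(|I_k|-1)}=t^{\,i-k}$, whence
\[ \kappa(f)^k_i \equiv t^{\,i-k}\,\mathcal{P}(f)^k_i \pmod{t^{\,i-k+1}}, \]
where $\mathcal{P}(f)^k_i$ is the $(i,k)$-component of the coalgebra morphism $\mathcal{P}(f)\colon S(A[1-k])\to S(B[1-k])$ determined by the Taylor coefficients $\mathcal{P}(f)_n$. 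The identical computation via \eqref{coderfromtaylor}, together with $\Kos(\Delta)_m\equiv t^{m-1}\Kos(\Delta_{m-1})_m\pmod{t^m}$ (and likewise for $\Delta'$), gives $\Kos(\Delta)^j_i\equiv t^{\,i-j}\mathcal{P}(\Delta)^j_i\pmod{t^{\,i-j+1}}$ and $\Kos(\Delta')^k_j\equiv t^{\,j-k}\mathcal{P}(\Delta')^k_j\pmod{t^{\,j-k+1}}$. Here I should also check, exactly as in Remark \ref{rem:BVvsPoisson}, that the shift by $1-k=\deg t$ turns each $\mathcal{P}(f)_n$ into a map of the correct degree $(A[1-k])^{\odot n}\to B[1-k]$, graded symmetric because it is the leading coefficient of the graded-symmetric $\kappa(f)_n$; thus $\mathcal{P}(f)$ is a genuine coalgebra morphism of the shifted symmetric coalgebras.

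Next I would read off the $(i,k)$-component of both sides of the displayed identity. On the left, $(\kappa(f)\Kos(\Delta))^k_i=\sum_{k\le j\le i}\kappa(f)^k_j\,\Kos(\Delta)^j_i$; by the previous step every summand is divisible by $t^{(j-k)+(i-j)}=t^{\,i-k}$ with leading coefficient $\mathcal{P}(f)^k_j\mathcal{P}(\Delta)^j_i$, so the order-$t^{\,i-k}$ coefficient is $\sum_j\mathcal{P}(f)^k_j\mathcal{P}(\Delta)^j_i=(\mathcal{P}(f)\circ\mathcal{P}(\Delta))^k_i$. The right-hand side is handled identically and yields $(\mathcal{P}(\Delta')\circ\mathcal{P}(f))^k_i$. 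Comparing leading coefficients for all $i,k$ gives $\mathcal{P}(f)\circ\mathcal{P}(\Delta)=\mathcal{P}(\Delta')\circ\mathcal{P}(f)$, which (its linear part being the $(1,1)$-instance of the same identity) is precisely the assertion that $\mathcal{P}(f)$ is a morphism of derived Poisson algebras.

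The only real subtlety, and the step I expect to demand the most care, is the uniform-leading-term claim: one must verify that summing the block contributions $|I_j|-1$ over a partition into $k$ blocks of a set of size $i$ always produces exactly $t^{\,i-k}$, so that there is a single well-defined leading order with no accidental lower-order interference, and that this order is additive under composition, yielding ``leading coefficient of a composite equals composite of leading coefficients''. A more conceptual alternative is to encode these congruences as a complete descending filtration on $\Hom(S(A[[t]]),S(B[[t]]))$, placing the $(i,k)$-component in filtration degree $i-k$ relative to the $t$-adic filtration, with respect to which $\kappa$ and $\Kos$ descend on the associated graded to $\mathcal{P}$; the intertwining relation is then automatic after passing to associated graded. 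Either formulation reduces to the same elementary degree count, and I would use whichever reads more cleanly.
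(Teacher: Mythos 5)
Your leading-coefficient argument is sound as far as it goes, and it reproduces the first half of the paper's proof: from $f\circ\Delta=\Delta'\circ f$ and Proposition \ref{prop:cumulantsvsKoszuldg} you get $\kappa(f)\circ\mathcal{K}(\Delta)=\mathcal{K}(\Delta')\circ\kappa(f)$, and extracting the coefficient of the appropriate power of $t$ (you do this on matrix components, the paper on corestrictions, an immaterial difference) yields $\mathcal{P}(f)\circ\mathcal{P}(\Delta)=\mathcal{P}(\Delta')\circ\mathcal{P}(f)$. The gap is your closing claim that this identity \emph{is} the assertion that $\mathcal{P}(f)$ is a morphism of derived Poisson algebras. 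It is not. In the sense of \cite{BCSX}, which is the sense used in the statement, a degree $k$ derived Poisson algebra is an $L_\infty[1]$ algebra whose brackets are in addition multiderivations of the commutative product (cf.\ Remark \ref{rem:BVvsPoisson}), and a morphism of derived Poisson algebras is an $L_\infty[1]$ morphism whose Taylor coefficients satisfy the additional product compatibility
\[ \mathcal{P}(f)_{n+1}(a_1,\ldots,a_n,bc)=\sum_{i=0}^n\sum_{\sigma\in S(i,n-i)}\pm_K\,\mathcal{P}(f)_{i+1}(a_{\sigma(1)},\ldots,a_{\sigma(i)},b)\,\mathcal{P}(f)_{n-i+1}(a_{\sigma(i+1)},\ldots,a_{\sigma(n)},c) \]
for all $n\ge0$ and $a_1,\ldots,a_n,b,c\in A$ (see \cite[\S 2.2]{BCSX}). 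Your proof never touches this condition, so it establishes only that $\mathcal{P}(f)$ is an $L_\infty[1]$ morphism, which is strictly weaker than what the proposition claims.

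This extra identity is also the point where the second hypothesis of Definition \ref{def:morphismsBV} is used beyond merely defining the maps $\mathcal{P}(f)_n$, and it is invisible to your associated-graded formalism, which retains only the intertwining relation together with the divisibilities $\kappa(f)_m\equiv t^{m-1}\mathcal{P}(f)_m\pmod{t^m}$; the commutative products of $A$ and $B$ enter nowhere in that formalism. The paper derives the identity from the recursion \eqref{eq:recursioncumulants} for cumulants: the morphism hypothesis gives $\kappa(f)_{n+2}(\ldots,b,c)\equiv0\pmod{t^{n+1}}$, so in particular the coefficient of $t^{n}$ of the right-hand side of \eqref{eq:recursioncumulants} must vanish; since the $t^{n}$-coefficient of $\kappa(f)_{n+1}(\ldots,bc)$ is $\mathcal{P}(f)_{n+1}(\ldots,bc)$ and that of each product $\kappa(f)_{i+1}(\ldots,b)\,\kappa(f)_{n-i+1}(\ldots,c)$ is $\mathcal{P}(f)_{i+1}(\ldots,b)\,\mathcal{P}(f)_{n-i+1}(\ldots,c)$, this vanishing is precisely the required identity. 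Adding this short computation would close the gap; without it the proof is incomplete.
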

\begin{proof} Denoting by $p:S(B[[t]])\to B[[t]]$ the natural projection, we have
	\begin{multline*}  p\Big(\kappa(f)\circ\Kos(\Delta)\Big)(a_1,\ldots,a_n)=\\=\sum_{i=1}^n\sum_{\sigma\in S(i,n-i)}\pm_K\kappa(f)_{n-i+1}\Big(\Kos(\Delta)_i(a_{\sigma(1)},\ldots),\ldots,a_{\sigma(n)}\Big)\equiv\\\equiv \sum_{i=1}^n\sum_{\sigma\in S(i,n-i)}\pm_K t^{n-1}\mathcal{P}(f)_{n-i+1}\Big(\mathcal{P}(\Delta)_i(a_{\sigma(1)},\ldots),\ldots,a_{\sigma(n)}\Big)\pmod{t^n},
	\end{multline*}
	where the last congruence follows directly from the definitions of $\mathcal{P}(f)$ and $\mathcal{P}(\Delta)$. Also,
	\begin{multline*}  p\Big(\Kos(\Delta')\circ\kappa(f)\Big)(a_1,\ldots,a_n)=\\=\sum_{\stackrel{k,i_1,\ldots,i_k\ge1}{i_1+\cdots+i_k=n}}\frac{1}{k!}\sum_{\sigma\in S(i_1,,\ldots,i_k)}\pm_K\Kos(\Delta')_{k}\Big(\kappa(f)_{i_1}(a_{\sigma(1)},\ldots),\ldots,\kappa(f)_{i_k}(\ldots,a_{\sigma(n)})\Big)\equiv\\\equiv \sum_{\stackrel{k,i_1,\ldots,i_k\ge1}{i_1+\cdots+i_k=n}}\frac{1}{k!}\sum_{\sigma\in S(i_1,,\ldots,i_k)}\pm_Kt^{n-1}\mathcal{P}(\Delta')_{k}\Big(\mathcal{P}(f)_{i_1}(a_{\sigma(1)},\ldots),\ldots,\mathcal{P}(f)_{i_k}(\ldots,a_{\sigma(n)})\Big)\pmod{t^n},
	\end{multline*}
	
	Since by hypothesis $f\circ\Delta=\Delta'\circ f$, using Proposition \ref{prop:cumulantsvsKoszuldg} we have $\kappa(f)\circ\Kos(\Delta)=\Kos(\Delta')\circ\kappa(f)$. By the above, this shows that
	\begin{multline*}  \sum_{i=1}^n\sum_{\sigma\in S(i,n-i)}\pm_K \mathcal{P}(f)_{n-i+1}\Big(\mathcal{P}(\Delta)_i(a_{\sigma(1)},\ldots),\ldots,a_{\sigma(n)}\Big)=\\= \sum_{\stackrel{k,i_1,\ldots,i_k\ge1}{i_1+\cdots+i_k=n}}\frac{1}{k!}\sum_{\sigma\in S(i_1,,\ldots,i_k)}\pm_K\mathcal{P}(\Delta')_{k}\Big(\mathcal{P}(f)_{i_1}(a_{\sigma(1)},\ldots),\ldots,\mathcal{P}(f)_{i_k}(\ldots,a_{\sigma(n)})\Big)
\end{multline*}
for all $n\ge1$ and $a_1,\ldots,a_n\in A$, i.e., $\mathcal{P}(f):(S(A[1-k]),\mathcal{P}(\Delta))\to (S(B[1-k]),\mathcal{P}(\Delta'))$ is an $L_\infty[1]$ morphism.
	
	
Furthermore, using the recursion \eqref{eq:recursioncumulants} for the cumulants, we have
\begin{eqnarray} \nonumber 0 &\equiv & \kappa(f)_{n+2}(\ldots,b,c)\pmod{t^{n+1}} \\
\nonumber &\equiv& t^{n}\left( \mathcal{P}(f)_{n+1}(\ldots,bc)-\sum_{i=0}^n\sum_{\sigma\in S(i,n-i)}\pm_K\mathcal{P}(f)_{i+1}(\ldots,b)\mathcal{P}(f)_{n-i+1}(\ldots,c)  \right)\pmod{t^{n+1}},
\end{eqnarray}
that is,
\[ \mathcal{P}(f)_{n+1}(a_1,\ldots,a_n,bc)=\sum_{i=0}^n\sum_{\sigma\in S(i,n-i)}\pm_K\mathcal{P}(f)_{i+1}(a_{\sigma(1)},\ldots,a_{\sigma(i)},b)\mathcal{P}(f)_{n-i+1}(a_{\sigma(i+1)},\ldots,a_{\sigma(n)},c)\]
for all $n\geq0$ and $a_1,\ldots,a_n,b,c\in A$. This is precisely the additional requirement the Taylor coefficients $\mathcal{P}(f)_n$ have to satisfy in order for $\mathcal{P}(f)$ to define a morphism of derived Poisson algebras, see  \cite[\S 2.2]{BCSX}.
\end{proof}
\begin{remark} Putting $f=\sum_{n\geq0}t^nf_n$, where $f_n\in\Hom^{n(k-1)}(A,B)$, $f_0(\1_A)=\1_B$, $f_n(\1_A)=0$ for $n\geq1$, we can write down the conditions for $f$ to be a morphism of derived $BV$ algebras more explicitly as follows. On the one hand, the condition $f\circ\Delta = \Delta'\circ f$ translates into \[ \sum_{i=0}^n f_i\circ\Delta_{n-i} = \sum_{i=0}^n \Delta'_i\circ f_{n-i},\qquad\forall n\geq0. \]
On the other hand, the second condition in Definition \ref{def:morphismsBV} translates into a hierarchy of identities, the first three being \begin{eqnarray}\nonumber f_0(ab) &=& f_0(a)f_0(b),\\ \nonumber f_1(abc) &=& f_1(ab)f_0(c) + f_1(ac)f_0(b)+f_1(bc)f_0(a) \\\nonumber &-& f_1(a)f_0(b)f_0(c)-f_1(b)f_0(a)f_0(c)-f_1(c)f_0(a)f_0(b),\\ \nonumber f_2(abcd)  & = &  f_2(abc)f_0(d)+f_2(abd)f_0(c)+f_2(acd)f_0(b)+f_2(bcd)f_0(a)  \\\nonumber &+& f_1(ab)f_1(cd)+f_1(ac)f_1(bd)+f_1(ad)f_1(bc)\\\nonumber &-&f_2(ab)f_0(c)f_0(d)-f_2(ac)f_0(b)f_0(d)-f_2(ad)f_0(b)f_0(c)\\\nonumber &-&f_2(bc)f_0(a)f_0(d)-f_2(bd)f_0(a)f_0(c)-f_2(cd)f_0(a)f_0(b)\\\nonumber &-&f_1(ab)f_1(c)f_0(d)-f_1(ab)f_1(d)f_0(c)-f_1(ac)f_1(b)f_0(d)-f_1(ac)f_1(d)f_0(b)\\\nonumber &-&f_1(ad)f_1(b)f_0(c)-f_1(ad)f_1(c)f_0(b)-f_1(bc)f_1(a)f_0(d)-f_1(bc)f_1(d)f_0(a)\\\nonumber &-&f_1(bd)f_1(a)f_0(c)-f_1(bd)f_1(c)f_0(a)-f_1(cd)f_1(a)f_0(b)-f_1(cd)f_1(b)f_0(a)\\\nonumber &+&f_2(a)f_0(b)f_0(c)f_0(d)+f_2(b)f_0(a)f_0(c)f_0(d)+f_2(c)f_0(b)f_0(a)f_0(d)+f_2(d)f_0(b)f_0(c)f_0(a)\\\nonumber &+&f_1(a)f_1(b)f_0(c)f_0(d)+f_1(a)f_1(c)f_0(b)f_0(d)+f_1(a)f_1(d)f_0(b)f_0(c)\\\nonumber &+&f_1(b)f_1(c)f_0(a)f_0(d)+f_1(b)f_1(d)f_0(a)f_0(c)+f_1(c)f_1(d)f_0(a)f_0(b),\\\nonumber \cdots&&
\end{eqnarray}
for all $a,b,c,d\in A$. Notice that $\kappa(f)_{n+1}\equiv0\pmod{t^{n}}$ automatically implies $\kappa(f)_{N}\equiv0\pmod{t^{n}}$ for all $N>n$ (by the recursion \eqref{eq:recursioncumulants} and a straightforward induction), thus, given the first $(n-1)$ identities, the condition $\kappa(f)_{n+2}\equiv 0\pmod{t^{n+1}}$ translates into a single additional identity on $f_0,\ldots,f_n$. In fact, we are imposing that the coefficient of $t^i$ in the expansion of $\kappa(f)_{n+2}(a_1,\ldots,a_{n+2})\in A[[t]]$ vanishes for all $i\leq n$ and $a_1,\ldots,a_{n+2}\in A$: but for $i<n$ this already follows from the first $(n-1)$ identities, thus we only need to look at the coefficient of $t^n$. Essentially, the $n$-th identity shows how to express $f_n(a_1\cdots a_{n+2})$ as a linear combination of products of the form $f_{i_1}(a_{\sigma(1)}\cdots a_{\sigma(k_1)})\cdots f_{i_j}(a_{\sigma(n-k_j+3)}\cdots a_{\sigma(n+2)})$ with $j\geq2$, $i_1+\cdots+i_j=n$, $i_1\ge\cdots\ge i_j\geq0$, $k_1+\cdots+k_j=n+2$ , $1\leq k_1\leq i_1+1,\ldots, 1\leq k_j\leq i_j+1$, and $\sigma\in S(k_1,\ldots,k_j)$.
\end{remark}

Another important property is that Maurer-Cartan elements can be pushed forward along morphism of derived $BV$ algebras. 

\begin{proposition}\label{prop:MCpushforward} Given a pair of complete derived $BV$ algebras $(A,\Delta), (B,\Delta')$, a continuous morphism $f:A[[t]]\to B[[t]]$ between them and a Maurer-Cartan element $a=\sum_{k\geq-1} t^ka_k\in\frac{1}{t}A[[t]]$ of $A$, then \[ b:=\sum_{n\ge1}\frac{1}{n!}\kappa(f)_n(a,\ldots,a) \]
is a Maurer-Cartan element of $B$ (where we continue to denote by $\kappa(f)_n:A((t))^{\odot n}\to B((t))$ the $\K((t))$-linear extension of the cumulants of $f$). Moreover, $b_{-1}$ is the push-forward of the Maurer-Cartan element $a_{-1}$ along the morphism $\mathcal{P}(f):(A,\mathcal{P}(\Delta))\to(B,\mathcal{P}(\Delta'))$ of derived Poisson algebras from the previous Proposition \ref{prop:BVPois}. 
\end{proposition}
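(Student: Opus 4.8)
The plan is to reduce both assertions to the exponential characterization of Maurer--Cartan elements in Definition \ref{def:BVMC}, combined with the exponential interpretation of cumulants in \eqref{eq:MCcumulants}. First I would observe that, by its very definition, $b=\sum_{n\ge1}\frac{1}{n!}\kappa(f)_n(a,\ldots,a)=\kappa(f)_*(a)$, so that formula \eqref{eq:MCcumulants} gives $b=\log(f(e^a))$, or equivalently $e^b=f(e^a)$. Before using this I must make sure that all the series involved converge: since $a$ lies in the Laurent series $\tfrac{1}{t}A[[t]]$ rather than in $F^1A^0$, I would work in the completion of $A((t))$ with respect to the topology combining the $t$-grading with the complete filtration $F^\bullet A$, the point being that $a_{-1}\in F^1A$ guarantees that $e^a$, $f(e^a)$ and $\log(f(e^a))$ are all well defined. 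The identity \eqref{eq:MCcumulants}, being a formal consequence of the cumulant expansion, then persists in this completed setting by continuity of $f$.

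Granting $e^b=f(e^a)$, the first assertion is immediate. Applying $\Delta'$ and using that $f$ is a morphism of derived $BV$ algebras, so that $\Delta'\circ f=f\circ\Delta$ after $\K((t))$-linear extension to $A((t))$, together with the hypothesis $\Delta(e^a)=0$, I obtain
\[ \Delta'(e^b)=\Delta'\big(f(e^a)\big)=f\big(\Delta(e^a)\big)=f(0)=0, \]
so $b$ is a Maurer--Cartan element of $B$ by Definition \ref{def:BVMC}.

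For the second assertion I would extract the coefficient of $t^{-1}$ in $b$. Expanding $a=\sum_{j\ge-1}t^j a_j$ in each slot gives
\[ \kappa(f)_n(a,\ldots,a)=\sum_{j_1,\ldots,j_n\ge-1}t^{\,j_1+\cdots+j_n}\,\kappa(f)_n(a_{j_1},\ldots,a_{j_n}), \]
and since $\kappa(f)_n(a_{j_1},\ldots,a_{j_n})\equiv t^{n-1}\mathcal{P}(f)_n(a_{j_1},\ldots,a_{j_n})\pmod{t^n}$ by the defining congruence of $\mathcal{P}(f)$ in Proposition \ref{prop:BVPois}, every summand has $t$-degree at least $(j_1+\cdots+j_n)+(n-1)\ge-1$. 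This lower bound is attained precisely when all $j_i=-1$ and one reads off the leading Koszul coefficient, so the only contribution to the $t^{-1}$-part comes from $\frac{1}{n!}\,t^{-n}\cdot t^{n-1}\mathcal{P}(f)_n(a_{-1},\ldots,a_{-1})$ (there are no Koszul signs, as $a_{-1}$ has even degree $1-k$). Summing over $n$ yields
\[ b_{-1}=\sum_{n\ge1}\frac{1}{n!}\,\mathcal{P}(f)_n\big(a_{-1}^{\odot n}\big)=\operatorname{MC}\big(\mathcal{P}(f)\big)(a_{-1}), \]
which is exactly the push-forward of $a_{-1}$ along the morphism $\mathcal{P}(f)$ of the associated derived Poisson (hence $L_\infty[1]$) algebras.

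The main obstacle I anticipate is analytic rather than algebraic: carefully setting up the completed topology on $A((t))$ in which $e^a$ and $\log(f(e^a))$ converge and in which \eqref{eq:MCcumulants} remains valid, and checking that the $\K((t))$-linear extensions of $f$, $\Delta$ and $\Delta'$ are continuous for it, so that the manipulations $\Delta'(f(e^a))=f(\Delta(e^a))$ are justified. Once convergence is secured, both claims reduce to the two short formal computations above.
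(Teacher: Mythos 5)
Your proof is correct, but the first assertion is handled by a genuinely different route than the paper's. The paper never touches exponentials: by Proposition \ref{prop:cumulantsvsKoszuldg} the cumulants $\kappa(f)_n$ are the Taylor coefficients of an $L_\infty[1]$ morphism $(A((t)),\Kos(\Delta))\to (B((t)),\Kos(\Delta'))$, and the first claim is then the standard push-forward of Maurer--Cartan elements along continuous $L_\infty[1]$ morphisms, phrased entirely in terms of the Koszul-bracket form $\sum_{n\ge1}\frac{1}{n!}\Kos(\Delta)_n(a^{\odot n})=0$ of the MC equation. You instead use the exponential form $\Delta(e^a)=0$ of Definition \ref{def:BVMC} together with \eqref{eq:MCcumulants}, obtaining $e^b=f(e^a)$ and hence $\Delta'(e^b)=f(\Delta(e^a))=0$ directly from the intertwining $f\circ\Delta=\Delta'\circ f$. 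Both routes are legitimate, since Definition \ref{def:BVMC} declares the two MC equations equivalent; yours is more elementary (it bypasses the $L_\infty[1]$ push-forward machinery entirely), but it carries exactly the analytic overhead you flag, namely a completion of $A((t))$ in which $e^a$ and the logarithm actually converge and in which \eqref{eq:MCcumulants} persists, whereas the paper's route needs only convergence of the MC sums themselves. Your treatment of the second assertion --- the congruence $\kappa(f)_n\equiv t^{n-1}\mathcal{P}(f)_n\pmod{t^n}$ forcing every term of $\frac{1}{n!}\kappa(f)_n(a^{\odot n})$ to have $t$-adic valuation at least $-1$, with $t^{-1}$ coefficient $\sum_{n\ge1}\frac{1}{n!}\mathcal{P}(f)_n(a_{-1}^{\odot n})$ --- coincides with the paper's argument. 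One ordering remark: the membership $b\in\frac{1}{t}B[[t]]$ is part of what it means for $b$ to be a Maurer--Cartan element of the derived BV algebra $B$, so this valuation count is already needed to complete your first assertion, not only to extract $b_{-1}$; the pieces are all present in your write-up, but they should be assembled in that order.
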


\begin{proof} The Koszul brackets $\Kos(\Delta)_n$ define an $L_\infty[1]$ algebra structures on $A((t))$. Denoting by $\operatorname{MC}_{BV}(A)$ the set of Maurer-Cartan elements of the derived $BV$ algebra $A$, as in Definition \ref{def:BVMC}, and by $\operatorname{MC}_{L_\infty[1]}(A((t)))$ the set of Maurer-Cartan elements of the $L_\infty[1]$ algebra $A((t))$, we have by definition $\operatorname{MC}_{BV}(A) = \operatorname{MC}_{L_\infty[1]}(A((t)))\bigcap \frac{1}{t}A[[t]]$. Similarly for $B$.
	
According to Proposition \ref{prop:cumulantsvsKoszuldg}, the cumulants $\kappa(f)_n:A((t))^{\odot n}\to B((t))$ induce a morphism of $L_\infty[1]$ algebras from $A((t))$ to $B((t))$. In particular, they induces a push-forward 
\[ \operatorname{MC}(f) : \operatorname{MC}_{L_\infty[1]}(A((t)))\to \operatorname{MC}_{L_\infty[1]}(B((t))):x\to\sum_{n\ge1}\frac{1}{n!}\kappa(f)_n(x,\ldots,x) \] 
between the sets of Maurer-Cartan elements.
 	
Finally, given a Maurer-Cartan element $a=\sum_{i\geq-1}t^ia_i \in\operatorname{MC}_{BV}(A)$ of the derived $BV$ algebra $A$, since $\kappa(f)_n\equiv t^{n-1}\mathcal{P}(f)_n\pmod{t^n}$ (using the notations of the previous Proposition \ref{prop:BVPois}), we see that 
	\[\operatorname{MC}(f)(a) = \frac{1}{t}\sum_{n\geq1} \frac{1}{n!} \mathcal{P}(f)_n(a_{-1},\ldots,a_{-1})\quad+\quad\mbox{terms in $B[[t]]$}.  \]
This shows $b:=\operatorname{MC}(f)(a)\in\operatorname{MC}_{L_\infty[1]}(B((t)))\bigcap \frac{1}{t}B[[t]]=\operatorname{MC}_{BV}(B)$, as well as the last claim.
	
\end{proof}

In \cite{CL} the authors introduced and studied morphisms between derived $BV$ algebras, but only when the source derived $BV$ algebra is free as a graded commutative algebra. We close this subsection by comparing the previous Definition \ref{def:morphismsBV} of morphism with the one from \cite{CL}, in the situations when the latter applies. We shall see that the two definitions are equivalent: more precisely, there is a bijective correspondence (given by the exponential and logarithm in the convolution algebra, see below) between morphism of derived $BV$ algebras in our sense and in the sense of \cite{CL}. 

Given a graded vector space $U$ and a unitary graded commutative algebra $B$, the graded cocommutative coalgebra structure on $S(U)$ given by the unshuffle coproduct induces a unitary graded commutative algebra structure on $\Hom(S(U),B[[t]])$, via the convolution product $\star$, cf. Subsection \ref{subsec:Lootransfer}.

Given  derived $BV$ algebra structures $\Delta,\Delta'$ on $S(U)$ and $B$ respectively, then, according to the definition given in \cite{CL}, a map $\varphi\in\Hom^0(S(U),B[[t]])$ such that $\varphi(\1_{S(U)})=0$ defines a morphism of derived $BV$ algebras if the following conditions are satisfied:
\begin{itemize} \item $\exp_\star(\varphi)\circ\Delta=\Delta'\circ\exp_\star(\varphi)$, where we continue to denote by $\exp_\star(\varphi):S(U)[[t]]\to B[[t]]$ the $K[[t]]$-linear extension of $\exp_\star(\varphi)$;
	\item writing $\varphi = \sum_{n\geq0}t^n\varphi_n$, then $\varphi_{n}$ vanishes on $S_{>n+1}(U)=\oplus_{i\geq n+2}U^{\odot n}$ for all $n\geq0$.
\end{itemize}
\begin{proposition}\label{prop:morphismscomparison}. In the previous situation, $\varphi$ defines a morphism of derived $BV$ algebras according to the definition from \cite{CL} if and only if $\exp_\star(\varphi)$ defines a morphism of derived $BV$ algebras according to Definition \ref{def:morphismsBV}.
\end{proposition}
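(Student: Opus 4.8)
The plan is to treat the two conditions in each definition separately. The first conditions literally coincide: the condition of \cite{CL} reads $\exp_\star(\varphi)\circ\Delta=\Delta'\circ\exp_\star(\varphi)$, which is exactly $f\circ\Delta=\Delta'\circ f$ for $f:=\exp_\star(\varphi)$, the first bullet of Definition \ref{def:morphismsBV}. Since $\varphi(\mathbf{1}_{S(U)})=0$ we have $f(\mathbf{1}_{S(U)})=\mathbf{1}_B$, so $f\in\Hom^0_{u,\K[[t]]}(S(U)[[t]],B[[t]])$ is a legitimate candidate and $\varphi=\log_\star(f)$; thus $\varphi\mapsto\exp_\star(\varphi)$ is a bijection onto the unital maps, and it remains only to match the second conditions.

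The bridge between the two second conditions is the identity, valid for $x_1,\ldots,x_n\in U$,
\[ \kappa(f)_n(x_1,\ldots,x_n) = \log_\star(f)(x_1\odot\cdots\odot x_n) = \varphi(x_1\odot\cdots\odot x_n), \]
call it $(\ast)$. This is precisely the computation opening the proof of Lemma \ref{lem:coalgebramorphisms} (the combinatorial expansion of the cumulant formula \eqref{eq:cumulants} on generators coincides term by term with that of the convolution logarithm); it uses only that the source is the Hopf algebra $S(U)$ and the target a graded commutative algebra, so it applies verbatim with $B[[t]]$ in place of $S(V)$. Writing $\varphi=\sum_n t^n\varphi_n$, identity $(\ast)$ shows that the coefficient of $t^n$ in $\kappa(f)_m(x_1,\ldots,x_m)$ is the restriction $\varphi_n|_{U^{\odot m}}$. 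Hence the support condition of \cite{CL}, namely that $\varphi_n$ vanish on $U^{\odot i}$ for $i\geq n+2$, is equivalent to the condition $\kappa(f)_m(x_1,\ldots,x_m)\equiv0\pmod{t^{m-1}}$ for all generators $x_i\in U$ and all $m\geq2$. Everything therefore reduces to showing that this last condition, which a priori only controls generator arguments, is in fact equivalent to the full second condition of Definition \ref{def:morphismsBV}.

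One implication is trivial. For the converse I would first reduce to arguments in $\overline{S(U)}$, using multilinearity together with the fact that $\kappa(f)_m(\ldots,\mathbf{1}_{S(U)},\ldots)=0$ for $m\geq2$ (a short induction on $m$ from the recursion \eqref{eq:recursioncumulants} with one argument set to the unit). By multilinearity it then suffices to treat arguments $w_i=x^{(i)}_1\odot\cdots\odot x^{(i)}_{k_i}$ that are products of $k_i\geq1$ generators, and I would prove $\kappa(f)_m(w_1,\ldots,w_m)\equiv0\pmod{t^{m-1}}$ by induction on $d:=\bigl(\sum_i k_i\bigr)-m\geq0$. The base case $d=0$ (all $w_i\in U$) is the generator condition via $(\ast)$. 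In the inductive step, when some $k_j\geq2$ I would split $w_j=b\odot c$ with $b$ a single generator and invoke the rearranged recursion \eqref{eq:recursioncumulants}, which expresses $\kappa(f)_m(\ldots,b\odot c)$ as $\kappa(f)_{m+1}(\ldots,b,c)$ plus a sum of products $\kappa(f)_{i+1}(\ldots,b)\,\kappa(f)_{m-i}(\ldots,c)$.

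The heart of the argument, and the step I expect to be most delicate, is the bookkeeping of $t$-divisibility through this recursion. Passing from $b\odot c$ to the separated pair $b,c$ raises the slot count to $m+1$ while keeping the total number of generators fixed, so the invariant $d$ of the leading term drops by one and induction gives divisibility by $t^m$. For each product term the two factors have invariants $d_1,d_2$ with $d_1+d_2=d-1$, so both are $<d$ and induction applies, while their divisibilities by $t^{i}$ and $t^{m-i-1}$ multiply to exactly $t^{m-1}$; the boundary cases $i=0,m-1$, where one factor is $\kappa(f)_1=f$, are handled by observing that $f$ merely multiplies an already sufficiently divisible factor. Checking these inequalities, that $d_1,d_2\geq0$, and that the Koszul signs in \eqref{eq:recursioncumulants} are consistent with the chosen splitting is the only genuinely technical point; granting it, every term is $\equiv0\pmod{t^{m-1}}$, which closes the induction and, with the two paragraphs above, the proposition.
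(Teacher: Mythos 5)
Your proposal is correct and follows essentially the same route as the paper's proof: the same key identity $\kappa(f)_n(x_1,\ldots,x_n)=\log_\star(f)(x_1\odot\cdots\odot x_n)$ on generators, the same reduction to non-unit arguments via the recursion \eqref{eq:recursioncumulants}, and the same induction on the invariant (total number of generators minus number of slots) with the identical $t$-divisibility bookkeeping. The only cosmetic difference is that you split off a single generator $b$ from $w_j$, whereas the paper allows an arbitrary nontrivial factorization $X_n=X'_n\odot X''_n$; this changes nothing.
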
  
\begin{proof} As already observed in the proof of Lemma \ref{lem:coalgebramorphisms}, given $f\in\Hom^0_u(S(U),B[[t]])$, we have
	\begin{equation}\label{eq}  
	\log_\star(f)(x_1\odot\ldots\odot x_n) = \kappa(f)_n(x_1,\ldots,x_n)  
	\end{equation}
for all $n\geq1$ and $x_1,\ldots,x_n\in U\subset S(U)$, as can be seen  via a direct computation using the formula \eqref{eq:cumulants} for the cumulants. In other words, denoting by $i:U\to S(U)$ and $p:S(B)\to B$ the natural inclusion and projection respectively (as well as their $\mathbb{K}[[t]]$-linear extension), the following diagram is commutative 
\begin{equation}\label{diagram} \xymatrix{ S(S(U))\ar[r]^-{\kappa(f)} & S(B)[[t]]\ar[d]^{p}  \\ S(U)\ar[u]^{S(i)}\ar[r]_-{\log_\star(f)} & B[[t]] }
\end{equation}

If $\exp_\star(\varphi)$ is a morphism of derived $BV$ algeras in the sense of Definition \ref{def:morphismsBV}, then the above identity  \eqref{eq} shows that
	\[ \varphi(x_1\odot\cdots\odot x_n) = \log_\star(\exp_\star(\varphi))(x_1\odot\cdots\odot x_n) = \kappa(\exp_\star(\varphi))_n(x_1,\ldots,x_n)\equiv0\pmod{t^{n-1}}, \]
	hence that $\varphi_i(x_1\odot\cdots\odot x_n)=0$ for $i<n-1$, as desired.
	
	Conversely, we want to show that if $\varphi$ is a morphism in the sense of \cite{CL}, that is, it satisfies the two conditions stated before the proposition, then $\exp_\star(\varphi)$ is a morphism in our sense. Since the first condition from Definition \ref{def:morphismsBV} is satisfied by hypothesis, we only have to show the second condition, that is,
	\begin{equation}\label{eq:} \kappa(\exp_\star(\varphi))_n(X_1,\ldots,X_n)\equiv 0\pmod{t^{n-1}},\qquad \forall\:n\geq1,X_1\in U^{\odot i_1},\ldots, X_n\in U^{\odot i_n}. 
	\end{equation}
	The recursion \eqref{eq:recursioncumulants} and a straightforward induction show that for any pair of graded commutative algebras $A,B$ and $f\in\Hom^0_u(A,B)$, we have $\kappa(f)_n(a_1,\ldots,a_n)=0$ whenever $a_i=\1_A$ for some $1\leq i\leq n$. In particular, we only have to show \eqref{eq:} for $i_1,\ldots,i_n\geq1$: we shall proceed by induction on $i_1+\cdots+i_n-n$. If this quantity is zero then $i_1=\cdots=i_n=1$, and the same reasoning as in the first part of the proof (using \eqref{eq} and the hypotheses on $\varphi$) shows the desired result. Otherwise, it is not restrictive to assume $i_n>1$ and that there is a non-trivial factorization $X_n=X'_n\odot X''_{n}$ for some $X'_n\in U^{\odot j_1}, X''_{n}\in U^{\odot j_2}, j_1,j_2\geq1, j_1+j_2=i_n$. Using the recursion \eqref{eq:recursioncumulants} for the cumulants,    
	\begin{multline}\label{eqq} \kappa(\exp_\star(\varphi))_n(X_1,\ldots,X_n)=\kappa(\exp_\star(\varphi))_{n+1}(X_1,\ldots,X'_n,X''_{n}) + \\ \sum_{k=0}^{n-1}\sum_{\sigma\in S(k,n-k-1)}\pm_K\kappa(\exp_\star(\varphi))_{k+1}(X_{\sigma(1)},\ldots,X_{\sigma(k)},X'_{n})\kappa(\exp_\star(\varphi))_{n-k}(X_{\sigma(k+1)},\ldots,X_{\sigma(n-1)},X''_{n}). 
		\end{multline}
	Since \begin{multline*} \Big(i_{\sigma(1)}+\cdots+i_{\sigma(k)}+j_1-k-1\Big)+\Big(i_{\sigma(k+1)}+\cdots+i_{\sigma(n-1)}+j_2-n+k\Big)= \\ = i_1+\cdots+i_{n-1}+j_1+j_2-n-1=i_1+\cdots+i_n-n-1<i_1+\cdots+i_n-n,
	\end{multline*}
	 we can apply the inductive hypothesis to the right hand side of the previous identity \eqref{eqq} to deduce that it is congruent to zero modulo $t^{n-1}$, as desired.
\end{proof}

\subsection{Homotopy transfer for derived $BV$ algebras}\label{sec:BVtrans} Our aim in this subsection is to show that derived $BV$ algebra structures can be transferred along semifull DG algebra contractions (Definition \ref{def:semifullalgebracontraction}). An analogous homotopy transfer theorem for derived Poisson algebras was proved in \cite[Theorem 2.18]{BCSX}.

Given a pair of DG commutative algebras $(A,d_A)$ and $(B,d_B)$ and a contraction $(\sigma,\tau,h)$ of $A$ onto $B$, by abuse of notations we shall continue to denote by $(\sigma,\tau,h)$ the induced contraction $(\id_{\K[[t]]}\otimes\sigma,\id_{\K[[t]]}\otimes\tau,\id_{\K[[t]]}\otimes h)$ of $(A[[t]],\id_{\K[[t]]}\otimes d_A)$ onto $(B[[t]],\id_{\K[[t]]}\otimes d_B)$. Given a degree $k$ derived $BV$ algebra structure $\Delta=\sum_{n\geq0}t^n\Delta_n$ on $A$ with $\Delta_0=d_A$, we write $\Delta=d_A+\Delta_+$, where $\Delta_+=\sum_{n\geq1} t^n\Delta_n$. Considering $\Delta_+$ as a perturbation of the differential $d_A$ on $A[[t]]$, we are in the hypotheses of the Standard Perturbation Lemma \ref{lem:SPL}, therefore there is induced a perturbation
\[ \Delta'_+ := \sum_{n\geq0} \sigma\Delta_+(h\Delta_+)^n\tau \]
of the differential $\id_{\K[[t]]}\otimes d_B$ on $B[[t]]$, as well as a perturbed contraction
\[ \breve{\sigma}:= \sum_{n\geq0}\sigma(\Delta_+h)^n\]
\[ \breve{\tau}:= \sum_{n\geq0}(h\Delta_+)^n\tau \]
\[ \breve{h}:= \sum_{n\geq0}(h\Delta_+)^nh \]
of $(A[[t]],\Delta)$ onto $(B[[t]],\Delta':=d_B+\Delta'_+)$. 
\begin{theorem}\label{th:transferBV} In the above hypotheses, if the contraction $(\sigma,\tau,h)$ is a semifull DG algebra contraction, then $\Delta'$ is a degree $k$ derived $BV$ algebra structure on $B$, and $\breve{\tau}$ is a morphism of derived $BV$ algebras from $(B,\Delta')$ to $(A,\Delta)$. Furthermore, the induced degree $k$ derived Poisson algebra structure $\mathcal{P}(\Delta')$ on $B$ and the induced morphism $\mathcal{P}(\breve{\tau}):(B,\mathcal{P}(\Delta'))\to(A,\mathcal{P}(\Delta))$ of derived Poisson algebras coincide with those induced via homotopy transfer from the derived Poisson algebra structure $\mathcal{P}(\Delta)$ on $A$, as in \cite[Theorem 2.18]{BCSX}.
\end{theorem}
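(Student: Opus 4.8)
The plan is to derive everything from Proposition~\ref{prop:transfer} applied to the perturbed contraction, together with a $t$-adic valuation estimate. First I would note that, since the (scalar-extended) contraction $(\sigma,\tau,h)$ of $(A[[t]],d_A)$ onto $(B[[t]],d_B)$ is a semifull DG algebra contraction and $\Delta_+$ is a perturbation of $d_A$, Lemma~\ref{lem:semifullstable} guarantees that the perturbed contraction $(\breve\sigma,\breve\tau,\breve h)$ is a semifull algebra contraction of $(A[[t]],\Delta)$ onto $(B[[t]],\Delta')$ (no longer a DG one, as $\Delta_+$ is not a derivation). The Standard Perturbation Lemma~\ref{lem:SPL} already yields $(\Delta')^2=0$ and $\breve\tau\circ\Delta'=\Delta\circ\breve\tau$, while unitality ($\Delta'(\1_B)=0$, $\breve\tau(\1_B)=\1_A$, etc.) follows from $\Delta_+(\1_A)=0$ and the contraction identities; in particular $\tau(\1_B)=\1_A$ is obtained by applying $\tau\sigma-\id_A=hd_A+d_Ah$ to $\1_A$. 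Applying Proposition~\ref{prop:transfer} over the ground ring $\K[[t]]\supseteq\Q$ to this semifull algebra contraction then produces the recursions
\begin{align*}
\kappa(\breve\tau)_n &= \sum_{\substack{k\geq2,\ i_1+\cdots+i_k=n}}\frac{1}{k!}\sum_{\sigma}\pm_K\,\breve h\,\Kos(\Delta)_k\bigl(\kappa(\breve\tau)_{i_1}(\ldots),\ldots,\kappa(\breve\tau)_{i_k}(\ldots)\bigr),\\
\Kos(\Delta')_n &= \sum_{\substack{k\geq2,\ i_1+\cdots+i_k=n}}\frac{1}{k!}\sum_{\sigma}\pm_K\,\breve\sigma\,\Kos(\Delta)_k\bigl(\kappa(\breve\tau)_{i_1}(\ldots),\ldots,\kappa(\breve\tau)_{i_k}(\ldots)\bigr).
\end{align*}

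For the two congruence conditions I would argue by induction on $n$ using the $t$-adic valuation $v(-)$. The inputs are $v(\breve h),v(\breve\sigma)\geq0$ (their leading terms being $h$ and $\sigma$), the hypothesis $v(\Kos(\Delta)_k)\geq k-1$ (equivalent to $\Delta$ being a derived $BV$ structure), and the inductive bound $v(\kappa(\breve\tau)_{i})\geq i-1$ for $i<n$, which also holds for $i=1$ since $\breve\tau=\tau+O(t)$. As every summand in the recursion for $\kappa(\breve\tau)_n$ has indices $i_j<n$, its valuation is at least $(k-1)+\sum_j(i_j-1)=(k-1)+(n-k)=n-1$, so $\kappa(\breve\tau)_n\equiv0\pmod{t^{n-1}}$, closing the induction. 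Feeding these estimates into the recursion for $\Kos(\Delta')_n$ gives $\Kos(\Delta')_n\equiv0\pmod{t^{n-1}}$ by the identical count. This shows $\Delta'$ is a degree $k$ derived $BV$ structure and $\breve\tau$ a morphism of derived $BV$ algebras in the sense of Definition~\ref{def:morphismsBV}.

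For the final compatibility statement I would extract the leading $t$-coefficients of the two recursions. By construction $\Kos(\Delta)_k\equiv t^{k-1}\mathcal{P}(\Delta)_k\pmod{t^k}$, $\kappa(\breve\tau)_i\equiv t^{i-1}\mathcal{P}(\breve\tau)_i\pmod{t^i}$, $\breve\sigma\equiv\sigma$ and $\breve h\equiv h\pmod t$. Since the valuation bound above is \emph{sharp} (equality forces each factor to its leading term), the coefficient of $t^{n-1}$ in each recursion is obtained by replacing every factor by its leading term, giving
\begin{align*}
\mathcal{P}(\breve\tau)_n &= \sum_{\substack{k\geq2,\ i_1+\cdots+i_k=n}}\frac{1}{k!}\sum_{\sigma}\pm_K\,h\,\mathcal{P}(\Delta)_k\bigl(\mathcal{P}(\breve\tau)_{i_1}(\ldots),\ldots,\mathcal{P}(\breve\tau)_{i_k}(\ldots)\bigr),\\
\mathcal{P}(\Delta')_n &= \sum_{\substack{k\geq2,\ i_1+\cdots+i_k=n}}\frac{1}{k!}\sum_{\sigma}\pm_K\,\sigma\,\mathcal{P}(\Delta)_k\bigl(\mathcal{P}(\breve\tau)_{i_1}(\ldots),\ldots,\mathcal{P}(\breve\tau)_{i_k}(\ldots)\bigr).
\end{align*}
These are exactly the recursions of Theorem~\ref{th:transfer} for the transferred $L_\infty[1]$ morphism $F\colon(B[1-k],\mathcal{P}(\Delta'))\to(A[1-k],\mathcal{P}(\Delta))$ (linear part $\tau$) and structure $\mathcal{P}(\Delta')$ obtained from the derived Poisson algebra $\mathcal{P}(\Delta)$ along $(\sigma,\tau,h)$; the Koszul signs agree with those of the unshifted recursions because $k$ is odd, so shifting by $1-k$ does not alter them. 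As \cite[Theorem~2.18]{BCSX} identifies this transfer with the derived Poisson data on $B$ and the morphism $B\to A$, the two constructions coincide.

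The main obstacle I expect is purely combinatorial bookkeeping: verifying that the valuation estimate $v\geq n-1$ is sharp so that no cross-terms survive modulo $t^n$, and matching the factors $\tfrac{1}{k!}$, the unshuffle sums, and the Koszul signs appearing in Proposition~\ref{prop:transfer} with those of Theorem~\ref{th:transfer} after the degree shift. A secondary point worth an explicit remark is that Proposition~\ref{prop:transfer}, stated over a field, applies verbatim over $\K[[t]]$: its proof is a formal recursion valid over any $\Q$-algebra, and all maps here are $\K[[t]]$-linear.
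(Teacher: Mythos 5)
Your proposal is correct and follows essentially the same route as the paper's proof: the Standard Perturbation Lemma plus Lemma \ref{lem:semifullstable} reduce everything to the recursions of Proposition \ref{prop:transfer} for the perturbed semifull algebra contraction, and then an induction on $n$ (your $t$-adic valuation count is exactly the paper's congruence computation) yields both the conditions $\kappa(\breve{\tau})_n\equiv0\equiv\Kos(\Delta')_n\pmod{t^{n-1}}$ and, by extracting leading coefficients, the recursions identifying $\mathcal{P}(\Delta')$ and $\mathcal{P}(\breve{\tau})$ with the homotopy-transferred data of Theorem \ref{th:transfer}, hence with \cite[Theorem 2.18]{BCSX}. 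The only presentational difference is that the paper runs a single simultaneous induction proving $\kappa(\breve{\tau})_i\equiv t^{i-1}\mathcal{P}(\breve{\tau})_i\pmod{t^i}$, whereas you split this into a vanishing step and a leading-term step; the content is the same.
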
	

\begin{proof} The relations $(\Delta')^2=0$, $\t\circ\Delta'=\Delta\circ\t$, are satisfied by the standard perturbation Lemma. If $(\sigma,\tau,h)$ is a semifull DG algebra contraction, according to Lemma \ref{lem:semifullstable} the perturbed contraction $(\breve{\sigma},\breve{\tau},\breve{h})$ is a semifull algebra contraction of $(A[[t]],\Delta)$ onto $(B[[t]],\Delta')$ (not a semifull DG algebra contraction, though). The rest of the theorem follows easily from (the proof of) Proposition \ref{prop:transfer}, which shows that the cumulants $\kappa(\t)_n$ and the Koszul brackets $\Kos(\Delta')_n$ obey the recursions
\[ \kappa(\t)_n(x_1,\ldots,x_n) = \sum_{\stackrel{k\geq2,i_1,\ldots,i_k\geq1}{i_1+\cdots+i_k=n}}\sum_{\sigma\in S(i_1,\ldots,i_k)}\pm_K\frac{1}{k!}\h\Kos(\Delta)_k\left(\kappa(\t)_{i_1}(x_{\sigma(1)}\ldots),\ldots,\kappa(\t)_{i_k}(\ldots,x_{\sigma(n)})\right), \]
\[ \Kos(\Delta')_n(x_1,\ldots,x_n) = \sum_{\stackrel{k\geq2,i_1,\ldots,i_k\geq1}{i_1+\cdots+i_k=n}}\sum_{\sigma\in S(i_1,\ldots,i_k)}\pm_K\frac{1}{k!}\s\Kos(\Delta)_k\left(\kappa(\t)_{i_1}(x_{\sigma(1)}\ldots),\ldots,\kappa(\t)_{i_k}(\ldots,x_{\sigma(n)})\right), \]
for all $x_1,\ldots,x_n\in B[[t]]$.

The above formulas and a straightforward induction on $n$, together with the fact that $\Delta$ is a derived $BV$ algebra structure on $A$, show that $\kappa(\t)_n\equiv 0\pmod{t^{n-1}}$, $\Kos(\Delta')_n\equiv 0\pmod{t^{n-1}}$ for all $n\ge2$, that is, $\Delta'$ is a derived $BV$ algebra structure on $B$ and $\t$ is a morphism of derived $BV$ algebras. In fact, if we assume inductively to have shown that $\kappa(\t)_i(x_1,\ldots,x_i)\equiv t^{i-1}\mathcal{P}(\t)_i(x_1,\ldots,x_i)\pmod{t^i}$ (for certain maps $\mathcal{P}(\t)_i:B^{\odot i}\to A$) for all $x_1,\ldots,x_i\in B$ and $i<n$, we see that 
\begin{multline*} \kappa(\t)_n(x_1,\ldots,x_n) \equiv\\ 
\equiv \sum_{\stackrel{k\geq2,i_1,\ldots,i_k\geq1}{i_1+\cdots+i_k=n}}\sum_{\sigma\in S(i_1,\ldots,i_k)}\pm_K\frac{t^{n-1}}{k!}h\mathcal{P}(\Delta)_k\left(\mathcal{P}(\t)_{i_1}(x_{\sigma(1)}\ldots),\ldots,\mathcal{P}(\t)_{i_k}(\ldots,x_{\sigma(n)})\right)\pmod{t^n}
\end{multline*}
\begin{multline*} \Kos(\Delta')_n(x_1,\ldots,x_n) \equiv \\
\equiv \sum_{\stackrel{k\geq2,i_1,\ldots,i_k\geq1}{i_1+\cdots+i_k=n}}\sum_{\sigma\in S(i_1,\ldots,i_k)}\pm_K\frac{t^{n-1}}{k!}\sigma\mathcal{P}(\Delta)_k\left(\mathcal{P}(\t)_{i_1}(x_{\sigma(1)}\ldots),\ldots,\mathcal{P}(\t)_{i_k}(\ldots,x_{\sigma(n)})\right)\pmod{t^n}
\end{multline*}
This proves the inductive step, and thus completes the proof that $\Delta'$ is a derived BV algebra structure and $\t:B[[t]]\to A[[t]]$ a morphism of derived BV algebras. Furthermore, it shows that the Taylor coefficients of $\mathcal{P}(\t)$ and $\mathcal{P}(\Delta')$ obey the recursions 
\[ \mathcal{P}(\t)_n(x_1,\ldots,x_n) = \sum_{\stackrel{k\geq2,i_1,\ldots,i_k\geq1}{i_1+\cdots+i_k=n}}\sum_{\sigma\in S(i_1,\ldots,i_k)}\pm_K\frac{1}{k!}h\mathcal{P}(\Delta)_k\left(\mathcal{P}(\t)_{i_1}(x_{\sigma(1)}\ldots),\ldots,\mathcal{P}(\t)_{i_k}(\ldots,x_{\sigma(n)})\right), \]
\[ \mathcal{P}(\Delta')_n(x_1,\ldots,x_n) = \sum_{\stackrel{k\geq2,i_1,\ldots,i_k\geq1}{i_1+\cdots+i_k=n}}\sum_{\sigma\in S(i_1,\ldots,i_k)}\pm_K\frac{1}{k!}\sigma\mathcal{P}(\Delta)_k\left(\mathcal{P}(\t)_{i_1}(x_{\sigma(1)}\ldots),\ldots,\mathcal{P}(\t)_{i_k}(\ldots,x_{\sigma(n)})\right). \]
In other words, the induced $L_\infty[1]$ algebra structure $\mathcal{P}(\Delta')$ on $B[1-k]$ and the induced $L_\infty[1]$ morphism $\mathcal{P}(\t):(S(B[1-k]),\mathcal{P}(\Delta'))\to(S(A[1-k]),\mathcal{P}(\Delta))$ coincide with those obtained by transferring the $L_\infty[1]$ algebra structure $\mathcal{P}(\Delta)$ on $A[1-k]$ along the contraction $(\sigma,\tau,h)$, as in Theorem \ref{th:transfer}, which is precisely what we needed to show in order to prove the last claim of the theorem (see \cite{BCSX}).
\end{proof}

In the same hypotheses as in the previous Theorem \ref{th:transferBV}, we assume moreover that $A$ is equipped with a complete filtration making it into a complete derived BV algebra, and that $h,\tau\sigma$ are continuous with respect to this filtration. Then it is easy to check that $(B,\Delta')$ is a complete derived BV algebra with respect to the induced filtration (i.e., the only one making $\sigma$ and $\tau$ continuous) and that $\t:B[[t]]\to A[[t]]$ is continuous. In this situation we might consider the Maurer-Cartan sets $\operatorname{MC}_{BV}(A):=\operatorname{MC}_{L_\infty[1]}(A((t)))\cap \frac{1}{t}A[[t]]$ and $\operatorname{MC}_{BV}(B):=\operatorname{MC}_{L_\infty[1]}(B((t)))\cap \frac{1}{t}B[[t]]$ (cf. the proof of Proposition \ref{prop:MCpushforward} for the notations). Since $(\breve{\sigma},\breve{\tau},\breve{h})$ is a semifull algebra contraction of $(A((t)),\Delta)$ onto $(B((t)),\Delta')$, according to Proposition \ref{prop:transfer} the Koszul brackets $\Kos(\Delta')_n:B((t))^{\odot n}\to B((t))$ and the cumulants $\kappa(\t)_n:B((t))^{\odot n}\to A((t))$ are induced via homotopy transfer from the Koszul brackets $\Kos(\Delta)_n:A((t))^{\odot n}\to A((t))$ along the contraction $(\s,\t,\h)$, which was the key point in the proof of the previous theorem. Applying the formal Kuranishi Theorem \ref{th:kuranishi}, together with Proposition \ref{prop:MCpushforward}, we obtain the following result.
\begin{theorem}\label{th:BVKur} In the above hypotheses, there is a bijective correspondence
	\[ \operatorname{MC}_{\operatorname{BV}}(B) \cong \operatorname{MC}_{\operatorname{BV}}(A)\cap\operatorname{Ker}(\breve{h}). \] 
In one direction it's given by the push-forward $\operatorname{MC}(\t):\operatorname{MC}_{BV}(B)\to\operatorname{MC}_{BV}(A)$ along the continuous morphism of complete derived $BV$ algebras $\breve{\tau}:(B,\Delta')\to(A,\Delta)$; in the other direction, it sends $a\in\operatorname{MC}_{BV}(A)\cap\operatorname{Ker}(\h)$ to the Maurer-Cartan element $\s(a)\in\operatorname{MC}_{BV}(B)$.
\end{theorem}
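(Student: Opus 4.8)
The plan is to deduce the statement from the formal Kuranishi Theorem \ref{th:kuranishi} for $L_\infty[1]$ algebras, applied to the two complete $L_\infty[1]$ algebras obtained from the Koszul brackets on $A((t))$ and $B((t))$. Recall that $\Kos(\Delta)_n$ and $\Kos(\Delta')_n$ define complete $L_\infty[1]$ algebra structures on $A((t))$ and $B((t))$ respectively, and that by definition $\operatorname{MC}_{BV}(A)=\operatorname{MC}_{L_\infty[1]}(A((t)))\cap\frac{1}{t}A[[t]]$, and likewise for $B$. As noted just before the statement, $(\s,\t,\h)$ is a semifull algebra contraction of $(A((t)),\Delta)$ onto $(B((t)),\Delta')$, so by Proposition \ref{prop:transfer} the transferred $L_\infty[1]$ structure on $B((t))$ is exactly $\Kos(\Delta')$ and the induced $L_\infty[1]$ morphism $B((t))\to A((t))$ is exactly $\kappa(\t)$. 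This puts us precisely in the situation of Theorem \ref{th:kuranishi}, with $g_1=\s$, $f_1=\t$, and the transferred morphism $F$ equal to $\kappa(\t)$, so that $\operatorname{MC}(F)$ is the push-forward $\operatorname{MC}(\t)$.

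First I would invoke the last part of Theorem \ref{th:kuranishi} to obtain, at the level of $L_\infty[1]$ algebras, a bijection $\operatorname{MC}(\t)\colon\operatorname{MC}_{L_\infty[1]}(B((t)))\to\operatorname{Ker}(\h)\cap\operatorname{MC}_{L_\infty[1]}(A((t)))$ whose inverse is the restriction of the linear map $\s$. The whole theorem then reduces to checking that this bijection and its inverse restrict to the $BV$ Maurer--Cartan subsets, i.e. to the elements lying in $\frac{1}{t}B[[t]]$ and $\frac{1}{t}A[[t]]$ respectively.

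For the forward map I would appeal to Proposition \ref{prop:MCpushforward}: since $\t\colon(B,\Delta')\to(A,\Delta)$ is a continuous morphism of complete derived $BV$ algebras (by Theorem \ref{th:transferBV} together with the continuity hypotheses), that proposition guarantees $\operatorname{MC}(\t)$ sends $\operatorname{MC}_{BV}(B)$ into $\operatorname{MC}_{BV}(A)$, hence, combined with the Kuranishi output, into $\operatorname{MC}_{BV}(A)\cap\operatorname{Ker}(\h)$. For the inverse map I would use that $\s=\sum_{n\geq0}\sigma(\Delta_+h)^n$ is $\K[[t]]$-linear (being assembled from the $\K[[t]]$-linear extensions of $\sigma$ and $h$ and from $\Delta_+$), so its $\K((t))$-linear extension preserves $\frac{1}{t}(-)[[t]]$; thus $\s$ carries $\operatorname{MC}_{BV}(A)\cap\operatorname{Ker}(\h)$ into $\operatorname{MC}_{L_\infty[1]}(B((t)))\cap\frac{1}{t}B[[t]]=\operatorname{MC}_{BV}(B)$. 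Since the two maps are already mutually inverse on the ambient $L_\infty[1]$ Maurer--Cartan sets, their restrictions are mutually inverse, which yields the asserted bijection together with the explicit description of both directions.

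The main obstacle I anticipate is not any single hard computation but rather the bookkeeping needed to be sure the $L_\infty[1]$-level Kuranishi bijection descends to the $BV$ subsets in \emph{both} directions simultaneously: the forward preservation rests on Proposition \ref{prop:MCpushforward} (which is where the derived $BV$, as opposed to merely $L_\infty[1]$, nature of $\t$ is genuinely used), while the backward preservation rests on the $\K[[t]]$-linearity of $\s$ and the compatibility of the convergence hypotheses with passage to $\K((t))$-coefficients. One should also double-check that the filtration making $A((t))$ a complete $L_\infty[1]$ algebra is the one for which Theorem \ref{th:kuranishi} applies and for which the recursion \eqref{eq:getzlerrecursion} converges, but this is the same setup already exploited in Proposition \ref{prop:MCpushforward}.
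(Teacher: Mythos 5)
Your proposal is correct and follows essentially the same route as the paper: the paper's argument (given in the paragraph preceding the theorem) likewise observes that $(\breve{\sigma},\breve{\tau},\breve{h})$ is a semifull algebra contraction of $(A((t)),\Delta)$ onto $(B((t)),\Delta')$, invokes Proposition \ref{prop:transfer} to identify the transferred $L_\infty[1]$ structure and morphism with $\mathcal{K}(\Delta')$ and $\kappa(\breve{\tau})$, and then combines the formal Kuranishi Theorem \ref{th:kuranishi} with Proposition \ref{prop:MCpushforward}. Your additional remarks on why both directions of the Kuranishi bijection restrict to the $BV$ Maurer--Cartan subsets (continuity plus Proposition \ref{prop:MCpushforward} for $\operatorname{MC}(\breve{\tau})$, and $\mathbb{K}[[t]]$-linearity of $\breve{\sigma}$ for the inverse) correctly fill in details the paper leaves implicit.
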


\subsection{Derived $BV$ coalgebras}\label{sec:BVco}

\begin{definition}\label{def:coBV} A \emph{degree $k$ derived $BV$ coalgebra} is a (cocommutative, et cet.) graded coalgebra $C$ together with a degree one $\K[[t]]$-linear (where $t$ is again a central variable of degree $1-k$) endomorphism $\delta\in\End^1_{cu,\mathbb{K}[[t]]}(C[[t]])$ such that $\delta^2=0$ and
	\[ \Kos^{co}(\delta)_n\equiv0\pmod{t^{n-1}}\qquad\mbox{for all $n\geq2$}, \]
where the last condition is equivalent to $\delta=\sum_{n\geq0}t^n\delta_n$ and $\delta_n\in\operatorname{coDiff}_{cu,\leq(n+1)}(C)$ for all $n\geq0$ (see Definition \ref{def:codiffop}).
	
Given a pair of degree $k$ derived $BV$ coalgebras $(C,\delta)$ and $(D,\delta')$, a morphism between them is a $\K[[t]]$-linear morphism $f\in\Hom^0_{cu,\mathbb{K}[[t]]}(C[[t]],D[[t]])$ such that $f\circ\delta=\delta'\circ f$ and
	\[ \kappa^{co}(f)_n\equiv0\pmod{t^{n-1}}\qquad\mbox{for all $n\geq2$}. \]
\end{definition}

As in Proposition \ref{prop:BVcat}, with the above definitions degree $k$ derived $BV$ coalgebras form a category. Likewise, the various results from the previous subsections admit dual versions in the context of derived $BV$ coalgebras.

We are particularly interested in the Homotopy Transfer Theorem \ref{th:transferBV} and Proposition \ref{prop:morphismscomparison}. For the former, given a derived $BV$ coalgebra $(C,\delta)$ together with a semifull DG coalgebra contraction $(\sigma,\tau,h)$ of $(C,d_C:=\delta_0)$ onto a DG coalgebra $(D,d_D)$, the perturbation $\delta_+:= \sum_{n\geq1}t^n\delta_n$ of $\id_{\K[[t]]}\otimes d_C$ induces (via the standard Perturbation Lemma \ref{lem:SPL}) a perturbed differential $\delta'$ on $D[[t]]$, together with a perturbed contraction $(\s,\t,\h)$ of $(C[[t]],\delta)$ onto $(D[[t]],\delta')$.
\begin{theorem}\label{th:transfercoBV} In the above hypotheses, $\delta'$ is a derived $BV$ coalgebra structure on $D$, and $\s$ is a morphism of derived $BV$ coalgebras.
\end{theorem}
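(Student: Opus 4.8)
The plan is to dualize, \emph{mutatis mutandis}, the proof of Theorem \ref{th:transferBV}, replacing cumulants and Koszul brackets throughout by cocumulants and Koszul cobrackets, and invoking Proposition \ref{prop:cotransfer} and Lemma \ref{lem:semifullcostable} in the roles played there by Proposition \ref{prop:transfer} and Lemma \ref{lem:semifullstable}. First I would dispose of the two \emph{formal} relations: applying the Standard Perturbation Lemma \ref{lem:SPL} to the perturbation $\delta_+=\sum_{n\geq1}t^n\delta_n$ of $\id_{\K[[t]]}\otimes d_C$ immediately yields $(\delta')^2=0$ and $\s\circ\delta=\delta'\circ\s$. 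Since $(\sigma,\tau,h)$ is a semifull DG coalgebra contraction, Lemma \ref{lem:semifullcostable} then guarantees that the perturbed contraction $(\s,\t,\h)$ is a semifull coalgebra contraction of $(C[[t]],\delta)$ onto $(D[[t]],\delta')$ (no longer a semifull DG one, since $\delta,\delta'$ need not be coderivations).

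Next I would apply Proposition \ref{prop:cotransfer} to this perturbed semifull coalgebra contraction: it asserts that the Koszul cobrackets $\Kos^{co}(\delta')_n$ and the cocumulants $\kappa^{co}(\s)_n$ are induced via homotopy transfer from the Koszul cobrackets $\Kos^{co}(\delta)_n$. Dualizing the recursions stated at the beginning of the proof of Proposition \ref{prop:transfer} (which is exactly what the proof of Proposition \ref{prop:cotransfer} carries out), and starting from $\kappa^{co}(\s)_1=\s$, $\Kos^{co}(\delta')_1=\delta'$, this transfer is encoded for all $n\geq2$ in
\[ \kappa^{co}(\s)_n = \sum_{\stackrel{k\geq2,\,i_1,\ldots,i_k\geq1}{i_1+\cdots+i_k=n}}\frac{1}{k!}\big(\kappa^{co}(\s)_{i_1}\odot\cdots\odot\kappa^{co}(\s)_{i_k}\big)\circ\Kos^{co}(\delta)_k\circ\h, \]
\[ \Kos^{co}(\delta')_n = \sum_{\stackrel{k\geq2,\,i_1,\ldots,i_k\geq1}{i_1+\cdots+i_k=n}}\frac{1}{k!}\big(\kappa^{co}(\s)_{i_1}\odot\cdots\odot\kappa^{co}(\s)_{i_k}\big)\circ\Kos^{co}(\delta)_k\circ\t, \]
where $\Kos^{co}(\delta)_k\colon C[[t]]\to C[[t]]^{\odot k}$ and the displayed symmetric product of cocumulants sends $C[[t]]^{\odot k}$ to $D[[t]]^{\odot n}$.

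Finally I would complete the proof by induction on $n$, in exact parallel with Theorem \ref{th:transferBV}. Because $\delta$ is a degree $k$ derived $BV$ coalgebra structure, Definition \ref{def:coBV} gives $\Kos^{co}(\delta)_k\equiv0\pmod{t^{k-1}}$, while the inductive hypothesis supplies $\kappa^{co}(\s)_{i_j}\equiv0\pmod{t^{i_j-1}}$ for each $i_j<n$ (and $k\geq2$ forces every $i_j<n$). In each summand of the two recursions the factor $\Kos^{co}(\delta)_k$ is divisible by $t^{k-1}$ and the product $\kappa^{co}(\s)_{i_1}\odot\cdots\odot\kappa^{co}(\s)_{i_k}$ is divisible by $t^{(i_1-1)+\cdots+(i_k-1)}=t^{n-k}$, so each summand, hence $\kappa^{co}(\s)_n$ and $\Kos^{co}(\delta')_n$, is divisible by $t^{(k-1)+(n-k)}=t^{n-1}$. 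This yields $\kappa^{co}(\s)_n\equiv0\pmod{t^{n-1}}$ and $\Kos^{co}(\delta')_n\equiv0\pmod{t^{n-1}}$ for all $n\geq2$, which by Definition \ref{def:coBV} is precisely the assertion that $\delta'$ is a degree $k$ derived $BV$ coalgebra structure on $D$ and that $\s$ is a morphism of derived $BV$ coalgebras.

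I expect the only genuine obstacle to be pinning down the correct \emph{dual} recursions above; once Proposition \ref{prop:cotransfer} is in hand (its proof already explains how to transpose the long computation of Proposition \ref{prop:transfer} to the coalgebraic setting), the $t$-adic bookkeeping is routine and mirrors line for line the induction used in Theorem \ref{th:transferBV}. I would also remark, as in the algebra case, that nothing here is circular: all that is borrowed from Proposition \ref{prop:cotransfer} are these recursions, whose verification is an independent direct computation.
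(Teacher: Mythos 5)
Your proposal is correct and takes essentially the same approach as the paper: the paper's own proof of Theorem \ref{th:transfercoBV} is a one-line reference stating that it is proved exactly as Theorem \ref{th:transferBV}, with Proposition \ref{prop:cotransfer} replacing Proposition \ref{prop:transfer}. What you have written out in detail (the Standard Perturbation Lemma for the formal identities, Lemma \ref{lem:semifullcostable} for stability of semifull coalgebra contractions, the dualized recursions supplied by Proposition \ref{prop:cotransfer}, and the $t$-adic induction) is precisely the argument the paper intends.
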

\begin{proof} This is shown as in the proof of Theorem \ref{th:transferBV}, using Proposition \ref{prop:cotransfer} in place of Proposition \ref{prop:transfer}.
\end{proof}

Let $(C,\delta)$ be a derived $BV$ coalgebra, and $(S(V),\delta')$ a second derived $BV$ coalgebra whose underlying graded coalgebra is cofree. The graded cocommutative coalgebra structure on $C$ and the symmetric algebra structure on $S(V)[[t]]$ induce a graded commutative algebra structure on $\Hom(C,S(V)[[t]])\cong\Hom(C,S(V))[[t]]$ via the convolution product $\star$. We denote the unit in this algebra by $\varepsilon:C\to S(V)[[t]]$: it is the map sending $\mathbf{1}_C$ to $\mathbf{1}_{S(V)}$ and $c\in\overline{C}$ to zero. As in subsection \ref{subsec:Lootransfer}, given $f\in\Hom_{cu}(C,S(V)[[t]])$ we denote by $\log_\star(f)=\sum_{n\ge1}\frac{(-1)^{n-1}}{n}(f-\varepsilon)^{\star n}$. In the following lemma we give necessary and sufficient conditions for  $f:C[[t]]\to S(V)[[t]]$ to be a morphism of derived BV coalgebras. More precisely, we show that this is equivalent to $f\circ\delta=\delta'\circ f$ and item (b) in the claim of the following lemma: this should be compared with Proposition \ref{prop:morphismscomparison}. When the source DG coalgebra $C$ is also cofree, we give further equivalent conditions (items (c) and (d) in the claim of the following lemma), this time in terms of the cumulants $\kappa(f)_k$, $k\ge1$. This should be compared with Lemma \ref{lem:coalgebramorphisms}, and will play a similar role in the proof of the homotopy transfer Theorem \ref{th:transferIBL} for $IBL_\infty$ algebras as the latter did in the proof of Theorem \ref{th:transferL}.

\begin{lemma}\label{lem:comorphisms} Given derived $BV$ coalgebras $(C,\delta)$, $(S(V),\delta')$, together with a degree zero map  $f=\sum_{n\ge0}t^nf_n\in\Hom^0(C,S(V)[[t]])$ such that $f(\mathbf{1}_C)=\mathbf{1}_{S(V)}$, if furthermore (the $\K[[t]]$-linear extension of) $f$ satisfies $f\circ\delta = \delta'\circ f$, then the following conditions are equivalent (where we denote by $\varphi_n:C\to S(V)$, $n\ge0$, the maps  defined by $\log_\star(f)=:\varphi= \sum_{n\geq0}t^n\varphi_n$):\begin{enumerate} \item[(a)] $f$ is a morphism of derived BV-coalgebras;
\item[(b)]	$\operatorname{Im}(\varphi_i)\subset S_{\leq (i+1)}(V):=\oplus_{j=0}^{i+1}V^{\odot j}$, $\forall \,i\geq0$.\end{enumerate}
If moreover $C=S(U)$ is also cofree, these are further equivalent to any of the following:
\begin{enumerate} \item[(c)]\label{itema1}   $\kappa(f)_k(U,\ldots,U)\subset\oplus_{n\geq0}t^n S_{\leq(n+1)}(V)$ for all $k\geq1$;
	\item[(d)]\label{itemb1} $\kappa(f)_k(S_{\leq i_1}(U),\ldots,S_{\leq i_k}(U))\subset\oplus_{n\geq0}t^n S_{\leq(i_1+\cdots+i_k-k+n+1)}(V)$ for all $k,i_1,\ldots,i_k\geq1$.
\end{enumerate} \end{lemma}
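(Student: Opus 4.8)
The plan is first to dispose of the standing hypothesis and isolate the genuine content. Since $f\circ\delta=\delta'\circ f$ is assumed, condition (a) is equivalent to the single requirement $\kappa^{co}(f)_n\equiv0\pmod{t^{n-1}}$ for all $n\ge2$; thus the whole lemma reduces to comparing this cocumulant condition with (b), and, in the cofree case, with (c) and (d). The conceptual heart, which I would establish first in the unfiltered ($t$-free) situation, is a dual of the computation in Lemma \ref{lem:coalgebramorphisms}: writing $f=\exp_\star(\varphi)$ in the convolution algebra $\Hom(C,S(V)[[t]])$ (commutative, since $C$ is cocommutative and $S(V)$ commutative), and using that $\Delta_{S(V)}$ and the two unit insertions $j_1,j_2\colon S(V)\to S(V)\otimes S(V)$ are algebra morphisms of the bialgebra $S(V)$, post-composition with each commutes with $\log_\star$. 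Since $(f\otimes f)\circ\Delta_C=(j_1 f)\star(j_2 f)$, one gets $\log_\star(\Delta_{S(V)}\circ f)=\Delta_{S(V)}\circ\varphi$ while $\log_\star((f\otimes f)\circ\Delta_C)=\varphi\otimes\mathbf{1}+\mathbf{1}\otimes\varphi$, so that $f$ is a coalgebra morphism if and only if $\varphi$ is primitive-valued, i.e. $\operatorname{Im}(\varphi)\subset V$. This is exactly the unfiltered version of (b).

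For (a)$\Leftrightarrow$(b) I would then refine this to the $t$-filtered setting. Setting $\varphi^{(n)}:=p_n\circ\varphi\colon C\to V^{\odot n}$ (with $p_n$ the projection onto the $V$-degree $n$ part), condition (b) reads $\varphi^{(n)}\equiv0\pmod{t^{n-1}}$ for all $n\ge2$, which by injectivity of $\overline{\Delta}_{S(V)}$ on $V^{\odot n}$ for $n\ge2$ is equivalent to the vanishing mod $t^{a+b-1}$ of every bidegree-$(a,b)$ component of the obstruction $\psi:=\overline{\Delta}_{S(V)}\circ\varphi$. I would prove the equivalence with the cocumulant condition by induction on $n$, using the recursion of Remark \ref{rem:recursioncocumulants} for $\widetilde{\kappa}^{co}(f)_n$ together with the fact, from the unfiltered heart above, that modulo the terms already controlled by the inductive hypothesis $\kappa^{co}(f)_n$ agrees with the image of $\varphi^{(n)}$ under the inclusion $V^{\odot n}\hookrightarrow(S(V))^{\odot n}$ of the primitive-in-each-slot part. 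The hard part will be exactly this step: one must show that all correction terms produced by the recursion — products and coproducts of lower cocumulants, respectively of the lower $\varphi^{(m)}$ — vanish to the required $t$-order, which forces a careful bookkeeping of the interaction between the $V$-degree and the $t$-adic filtration. Equivalently, one may try to linearize the problem by conjugating by the operator rescaling $V^{\odot m}$ by $t^{1-m}$, turning the mod-$t^{n-1}$ conditions into honest coalgebra-morphism conditions over $S(V)((t))$, at the cost of a convergence check.

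When $C=S(U)$ is cofree, (b)$\Leftrightarrow$(c) is immediate from the identity \eqref{eq}, which gives $\kappa(f)_k(x_1,\dots,x_k)=\varphi(x_1\odot\cdots\odot x_k)$ for $x_1,\dots,x_k\in U$; hence $\kappa(f)_k(U,\dots,U)=\varphi(U^{\odot k})$, and requiring this to lie in $\bigoplus_{n\ge0}t^nS_{\le(n+1)}(V)$ for every $k\ge1$ is, after summing over $k$ and recalling $\varphi(\mathbf{1}_{S(U)})=0$, precisely $\operatorname{Im}(\varphi_n)\subset S_{\le(n+1)}(V)$ for all $n$, that is (b).

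Finally (d)$\Rightarrow$(c) by taking $i_1=\cdots=i_k=1$ and using that cumulants vanish on any argument equal to $\mathbf{1}_{S(U)}$, while (c)$\Rightarrow$(d) I would prove by induction on $i_1+\cdots+i_k-k$, mirroring the proof of Proposition \ref{prop:morphismscomparison}: when some $i_j\ge2$ one factors the corresponding argument as $Y\odot x$ with $x\in U$ and applies the cumulant recursion \eqref{eq:recursioncumulants}, which rewrites the given cumulant as a cumulant with one more, lower-degree argument plus a sum of $\star$-products of cumulants, all of strictly smaller total weight $i_1+\cdots+i_k-k$; the inductive hypothesis then yields the asserted containment, the shift of the bound by the $V$-degree being accounted for by the symmetric products appearing in the recursion. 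The main obstacle overall is the $t$-filtered refinement of the second paragraph; the remaining equivalences are, respectively, the dual and an adaptation of arguments already carried out in Lemma \ref{lem:coalgebramorphisms} and Proposition \ref{prop:morphismscomparison}.
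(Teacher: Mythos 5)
Your reduction of (a) to the cocumulant condition, your ``unfiltered heart'' (that $f$ is a coalgebra morphism iff $\log_\star(f)$ is primitive-valued, proved by a clean convolution-algebra argument), and your treatments of (b)$\Leftrightarrow$(c) and (c)$\Leftrightarrow$(d) all agree with the paper: the last two are exactly the paper's arguments, via the identity $\kappa(f)_k(x_1,\ldots,x_k)=\varphi(x_1\odot\cdots\odot x_k)$ for $x_1,\ldots,x_k\in U$ and the induction on the weight $i_1+\cdots+i_k-k$ using the recursion \eqref{eq:recursioncumulants}.

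However, there is a genuine gap at exactly the point you flag as ``the hard part'', namely (b)$\Rightarrow$(a), and the induction you propose --- on $n$ --- would fail. Condition (b) only controls the $(1,\ldots,1)$ multi-degree component of $\kappa^{co}(f)_n$, i.e.\ $p^{\odot n}\kappa^{co}(f)_n=\varphi^{(n)}$; the components $(p_{i_1}\odot\cdots\odot p_{i_n})\kappa^{co}(f)_n$ with some $i_j\ge2$ cannot be reached from lower-index cocumulants. Concretely: if you assume $\kappa^{co}(f)_m\equiv0\pmod{t^{m-1}}$ for all $m<n$ and feed this into the recursion of Remark \ref{rem:recursioncocumulants}, both the coproduct term $(\operatorname{id}^{\otimes n-2}\otimes \Delta)\widetilde{\kappa}^{co}(f)_{n-1}$ and the product terms $\widetilde{\kappa}^{co}(f)_{k+1}\otimes\widetilde{\kappa}^{co}(f)_{n-1-k}$ are only guaranteed to vanish mod $t^{n-2}$, one power short of what is needed; the induction on $n$ loses a power of $t$ at every step and never closes. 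The paper instead reformulates (a) as the vanishing mod $t^{n-1}$ of \emph{all} components $(p_{i_1}\odot\cdots\odot p_{i_n})\kappa^{co}(f)$, $i_1,\ldots,i_n\ge1$, and inducts on the weight $i_1+\cdots+i_n-n$ uniformly over all lengths $n$: when some $i_j=j_1+j_2\ge2$, injectivity of the reduced coproduct on $V^{\odot(j_1+j_2)}$ trades that slot for two slots of lower symmetric degree at the cost of raising the cocumulant index from $n$ to $n+1$, which is harmless because the weight drops by one and the inductive congruence mod $t^{n}$ for the length-$(n+1)$ component is even stronger than required. (This is the same weight induction you yourself use, correctly, for (c)$\Rightarrow$(d); the paper carries it out in detail in the proof of Lemma \ref{lem:IBLtransKos}.) Your alternative rescaling idea does not repair the gap as stated: the operator multiplying $V^{\odot m}$ by $t^{1-m}$ does not even map $S(V)((t))$ to itself (a series whose symmetric degree grows with the $t$-degree is sent to a series unbounded below in $t$), so the ``honest coalgebra-morphism condition over $S(V)((t))$'' is not well posed without precisely the filtration bookkeeping you are trying to bypass.
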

\begin{proof} We denote by $i:C\to S(C)$ and $p:S(V)\to V$ the natural inclusion and projection (as well as their $\mathbb{K}[[t]]$-linear extensions). We notice that $\varphi:=\log_\star(f)$ fits into the following commutative diagram
	\begin{equation}\label{diagram2} \xymatrix{ S(C)\ar[r]^-{\kappa^{co}(f)} & S(S(V))[[t]]\ar[d]^{S(p)} \\ C\ar[u]^{i}\ar[r]_-{\log_\star(f)} & S(V)[[t]] }
	\end{equation} 
which is dual to the one \eqref{diagram}, and can be similarly shown by a direct computation. In particular, 
\[ \varphi(c)=\log_\star(f)(c) = S(p)\Big(\kappa^{co}(f)(c) \Big).  \]
If $f$ is a morphism of derived $BV$ coalgebras, then the above identity shows that the composition of $\varphi$ with the natural projection $S(V)[[t]]\twoheadrightarrow V^{\odot n}[[t]]$ vanishes modulo $t^{n-1}$ for all $n\geq2$, or in other words that the composition $C\xrightarrow{\varphi_i}S(V)\twoheadrightarrow V^{\odot n}$ vanishes whenever $i<n-1$, which shows that (a) implies (b). 

The converse statement can be shown by dualizing the inductive argument in the proof of Proposition \ref{prop:morphismscomparison}, using the recursion for the cocumulants explained in Remark \ref{rem:recursioncocumulants}. More precisely, denoting by $p_i:S(V)\to V^{\odot i}$ the natural projection, then by definition $f$ is a morphism of derived BV coalgebras if and only if $\kappa^{co}(f)_{n}\equiv0\pmod{t^{n-1}}$ for all $n\ge2$ if and only if \begin{equation}\label{qe} (p_{i_1}\odot\cdots\odot p_{i_n})\kappa^{co}(f)\equiv0\pmod{t^{n-1}}
\end{equation} for all $n\ge2$ and $i_1,\ldots,i_n\ge1$. If (a) holds, then by the above commutative diagram \eqref{diagram2} equation \eqref{qe} is true for $i_1=\cdots=i_n=1$. To show that is true in general, one uses induction on $i_1+\cdots+i_n-n$ and the recursion for the cocumulants from Remark \ref{rem:recursioncocumulants}. We leave details to the reader, cf. also the proof of the following Lemma \ref{lem:IBLtransKos}.

This concludes the proof that (a) is equivalent to (b). 

Next we assume that $C=S(U)$ is cofree.

The equivalence between (c) and (d) can be seen by induction on $i_1+\cdots+i_k-k$, using the recursion \eqref{eq:recursioncumulants} for the cumulants in a similar way as what we did in order to conclude the proof of Proposition \ref{prop:morphismscomparison}. 

To show that (c) is equivalent to (b), we put together the previous commutative diagrams \eqref{diagram} and \eqref{diagram2} to conclude that the following one is also commutative
\begin{equation}\label{diagram3} \xymatrix{ S(S(U))\ar[r]^-{\kappa(f)} & S(S(V))[[t]]\ar[d]^{p} \\ S(U)\ar[u]^{S(i)}\ar[d]_{i}\ar[r]^-{\log_\star(f)} & S(V)[[t]] \\ S(S(U))\ar[r]^-{\kappa^{co}(f)}& S(S(V))[[t]]\ar[u]_{S(p)}}
\end{equation}

In particular
\[ \kappa(f)_k(x_1,\ldots,x_k) = \varphi(x_1\odot\cdots\odot x_k)=\sum_{n\ge0}t^n\varphi_n(x_1\odot\cdots\odot x_k),\]
for all $k\ge1$ and $x_1,\ldots,x_k\in U$. Thus item (c) is equivalent to $\operatorname{Im}(\varphi_n)\subset S_{\le(n+1)}(V)$ for all $n\ge0$, which is precisely item (b).
\end{proof}


\section{Homotopy transfer for $IBL_\infty$ algebras}

\emph{$IBL_\infty$ algebras} (short for Involutive Lie Bialgebras up to coherent homotopies) were introduced in the paper \cite{Fuk}, with applications to string topology, symplectic field theory and Lagrangian Floer theory, and have been further investigated in several other papers since then, for instance \cite{Campos,MarklVor,DJP,Haj1,Merk,HLV,NWill,CV,Haj}.

\subsection{$IBL_\infty[1]$ algebras}\label{sec:IBL}

We shall work with the following definition of $IBL_\infty[1]$ algebra, which is slightly different from (and in a certain sense dual to) the one usually appearing in the literature (see Remark \ref{rem:IBL} for a comparison).

\begin{definition}\label{def:IBL} An \emph{$IBL_\infty[1]$ algebra} structure on a graded space $U$ is a degree $(-1)$ derived $BV$ coalgebra structure $\delta$ on the symmetric coalgebra $S(U)$ .
	
Given a pair of $IBL_\infty[1]$-algebras $(U,\delta)$ and $(V,\delta')$, an $IBL_\infty[1]$ morphism between them is a morphism of derived $BV$ coalgebras $f:S(U)[[t]]\to S(V)[[t]]$.

\end{definition}
\newcommand{\fp}{\mathfrak{p}}

In the following lemma, given a graded space $U$ together with a $\K[[t]]$-linear differential $\delta=\sum_{n\ge0}t^n\delta_n$ on $S(U)[[t]]$, we give necessary and sufficient conditions in order for $\delta$ to define an $IBL_\infty[1]$ algebra structure on $U$ in terms of the associate Koszul brackets $\Kos(\delta_n)_k$, $n\ge0,\,k\ge1$. This result in analogous to Lemma \ref{lem:comorphisms} from the previous section, and should also be compared with Lemma \ref{lem:transkos}: as in the proof of the latter, we will denote by \[s:S(U)\to S(U):x_1\odot\cdots\odot x_n\to (-1)^nx_1\odot\cdots\odot x_n\] 
the antipode in the Hopf algebra $S(U)$ (as well as its $\mathbb{K}[[t]]$-linear extension).

\begin{lemma}\label{lem:IBLtransKos} Given a graded space $U$ and $\delta=\sum_{n\ge0}t^n\delta_n\in\End^1_{u,\K[[t]]}(S(U)[[t]])$ such that $\delta^2=0$, then the following conditions are equivalent (where we denote by $\delta\star s=:\varphi=\sum_{n\ge0}t^n\varphi_n$, and the convolution product is computed in $\Hom(S(U),S(U)[[t]])$):\begin{enumerate} \item[(a)] $\delta$ defines an $IBL_\infty[1]$ algebra structure on $U$; \item[(b)] $\operatorname{Im}(\varphi_n)\subset S_{\le(n+1)}(U)$, $\forall\, i\ge0$; \item[(c)]  $\Kos(\delta_n)_k(U,\ldots,U)\subset  S_{\leq(n+1)}(U)$ for all $n\ge0,\,k\geq1$;
		\item[(d)] $\Kos(\delta_n)_k(S_{\leq i_1}(U),\ldots,S_{\leq i_k}(U))\subset  S_{\leq(i_1+\cdots+i_k-k+n+1)}(U)$ for all $n\ge0,\,k,i_1,\ldots,i_k\geq1$.
	\end{enumerate}
\end{lemma}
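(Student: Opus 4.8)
The plan is to exploit that $S(U)$ is simultaneously a free commutative algebra and a cofree cocommutative coalgebra (a Hopf algebra, with antipode $s$), and that $\varphi=\delta\star s$ admits two complementary descriptions: one through the \emph{algebraic} Koszul brackets $\Kos(\delta_n)_k$ and one through the \emph{coalgebraic} Koszul cobrackets $\Kos^{co}(\delta_n)_\ell$. These are the exact analogues of the two routes through diagrams \eqref{diagram} and \eqref{diagram2} in the proof of Lemma \ref{lem:comorphisms}, and they yield respectively the equivalences (b)$\Leftrightarrow$(c)$\Leftrightarrow$(d) and (a)$\Leftrightarrow$(b). (Throughout, condition (a) is understood within $\End_{cu}$, i.e. we take $\epsilon\delta=0$ as part of the derived $BV$ coalgebra data, and $\delta^2=0$ is a standing hypothesis.) Note that since $\Kos$, $\Kos^{co}$ and $\star\,s$ are all linear in $\delta$ and $s$ is $t$-independent, we may read off every identity coefficient by coefficient in $t$, so that $\varphi_n=\delta_n\star s$, $\Kos(\delta)_k=\sum_n t^n\Kos(\delta_n)_k$, etc.

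First I would record the two bridge identities. The \emph{algebraic} one is handed to us by Lemma \ref{lem:transkos}: its formula $\Kos(Q)_k(x_1,\ldots,x_k)=(Q\star s)(x_1\odot\cdots\odot x_k)$ applied to $Q=\delta$ gives $\Kos(\delta_n)_k(x_1,\ldots,x_k)=\varphi_n(x_1\odot\cdots\odot x_k)$ for all $x_1,\ldots,x_k\in U$. Since such elements span $\overline{S(U)}$ and $\varphi_n(\mathbf{1})=\delta_n(\mathbf{1})=0\in S_{\le(n+1)}(U)$, this shows at once that (b) and (c) are equivalent. The \emph{coalgebraic} bridge, dual to the identity $\log_\star f=S(p)\circ\kappa^{co}(f)\circ i$ of diagram \eqref{diagram2}, should read: the $U^{\odot\ell}$-component of $\varphi_n$ equals $p^{\odot\ell}\circ\Kos^{co}(\delta_n)_\ell$, where $p\colon S(U)\to U$ is the projection onto the primitives and $p^{\odot\ell}\colon S(U)^{\odot\ell}\to U^{\odot\ell}$ applies $p$ in each slot (globally, $\varphi=S(p)\circ\widehat{\Kos}^{co}(\delta)\circ i$, the completion issues being harmless by cocompleteness, cf. Remark \ref{rem:conilpotencycoKos}). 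Establishing this is a direct computation with the Hopf structure of $S(U)$, parallel to the verification of \eqref{eq} and of diagram \eqref{diagram2}.

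Granting the bridges, the remaining two equivalences follow by the inductive scheme of Lemma \ref{lem:comorphisms}. For (c)$\Rightarrow$(d) (the converse being trivial) I would induct on $i_1+\cdots+i_k-k$: writing an argument $X_j\in S_{\le i_j}(U)$ of word-length $\ge2$ as a product $X_j'\odot X_j''$ and applying the Koszul-bracket recursion \eqref{eq:recursionkoszul}, each summand is either a bracket of strictly smaller excess $i_1+\cdots+i_k-k$, or a bracket of smaller excess multiplied externally by $X_j'$ or $X_j''$; tracking word-lengths through the multiplication places every term in $S_{\le(i_1+\cdots+i_k-k+n+1)}(U)$, which is (d). For (a)$\Leftrightarrow$(b), the forward direction is immediate from the coalgebraic bridge: if $\delta_n\in\operatorname{coDiff}_{cu,\le(n+1)}(S(U))$, i.e. $\Kos^{co}(\delta_n)_\ell=0$ for $\ell\ge n+2$, then the $U^{\odot\ell}$-component $p^{\odot\ell}\Kos^{co}(\delta_n)_\ell$ of $\varphi_n$ vanishes for $\ell\ge n+2$, which is (b). The converse dualizes the induction on $i_1+\cdots+i_n-n$ from Lemma \ref{lem:comorphisms}: from (b) we know only that the \emph{primitive} projections $p^{\odot\ell}\Kos^{co}(\delta_n)_\ell$ vanish, and one upgrades this to the vanishing of all components of $\Kos^{co}(\delta_n)_\ell$ ($\ell\ge n+2$) by inducting on the total word-length of the tensor factors, expanding a factor of word-length $\ge2$ via the coproduct of $S(U)$ and using the co-Koszul recursion of Remark \ref{rem:recursioncoKos}.

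The step I expect to be the main obstacle is exactly the coalgebraic bridge together with its inductive converse: unlike the algebraic bridge, which is supplied by Lemma \ref{lem:transkos}, this identity genuinely intertwines the product and the coproduct of $S(U)$, and the passage from vanishing of the primitive projections to vanishing of the full cobrackets demands careful bookkeeping with the co-Koszul recursion. As a sanity check / alternative, when $U$ is finite dimensional one may bypass the bridge and obtain (a)$\Leftrightarrow$(b) by duality: the identity $\Kos^{co}(\delta_n)^\vee=\Kos(\delta_n^\vee)$ (as in the proof of Proposition \ref{prop:cotransfer}) identifies $\operatorname{coDiff}_{cu,\le(n+1)}(S(U))$ with $\Diff_{u,\le(n+1)}(\widehat{S}(U^\vee))$, while under the same duality (b) becomes the vanishing of $\Kos(\delta_n^\vee)_\ell$ on generators for all $\ell\ge n+2$, which is precisely the defining property of $\Diff_{u,\le(n+1)}$ in Definition \ref{def:diffop}; the general case then follows by reduction to finite-dimensional pieces, each of the conditions (a)--(d) involving only finitely many arities at a time.
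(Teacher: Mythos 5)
Your proposal is correct and follows essentially the same route as the paper: the two bridge identities you describe are exactly the top and bottom squares of the paper's commutative diagram \eqref{diagram4} (with the coalgebraic square likewise left as a direct computation there), and the remaining equivalences are proved by the same inductions on word-length excess using the recursion \eqref{eq:recursionkoszul} for the Koszul brackets and the co-Koszul recursion of Remark \ref{rem:recursioncoKos}, including the same trick of splitting a factor of word-length $\ge 2$ via the (reduced) coproduct and exploiting $S_k$-invariance of the cobracket images. The finite-dimensional duality argument you sketch at the end is an extra alternative not present in the paper, but the core of your proof coincides with its proof.
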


\begin{proof} $\delta=\sum_{n\ge0}t^n\delta_n$ defines a derived BV coalgebra structure on $S(U)$ if and only if
	\[ \Kos^{co}(\delta)_k=\sum_{n\ge0}t^n\Kos^{co}(\delta_n)_k\equiv0\pmod{t^{k-1}}\]
	for all $n\ge0$, $k\ge2$, that is, if and only if $\Kos^{co}(\delta_n)_k=0$ whenever $n<k-1$. As in the claim of the lemma, we put $\delta\star s=:\varphi=\sum_{n\ge0}t^n\varphi_n$: then $\varphi_n=\delta_n\star s$. We have the following commutative diagram (which should be compared with the previous one \eqref{diagram3})
	\begin{equation}\label{diagram4} \xymatrix{ S(S(U))\ar[r]^-{\Kos(\delta_n)} & S(S(U))\ar[d]^{p} \\ S(U)\ar[u]^{S(i)}\ar[d]_{i}\ar[r]^-{\delta_n\star s} & S(U) \\ S(S(U))\ar[r]^-{\Kos^{co}(\delta_n)}& S(S(U))\ar[u]_{S(p)}}
	\end{equation}
	where the commutativity of the top square was already observed in the proof of Lemma \ref{lem:transkos}, and the commutativity of the bottom square can be similarly shown by a direct computation. Using this, we can proceed as in the proof of Lemma \ref{lem:comorphisms}. 
	
	If $\delta$ is a derived BV-coalgebra structure, i.e., if $\Kos^{co}(\delta_n)_k=0$ for all $0\leq n \le k-2$, then the above shows that the composition of $\varphi_n=\delta_n\star s$ and the projection $S(V)\to V^{\odot k}$ vanishes whenever $k>n+1$: thus $\operatorname{Im}(\varphi_n)\subset S_{\le(n+1)}(U)$, showing that (a) implies (b). 
	
	Conversely, assume that (b) holds: we need to show that $\Kos^{co}(\delta_n)_k=0$ if $k>n+1$. We shall use the recursion for the Koszul cobrackets explained in Remark \ref{rem:recursioncoKos}. Thus, considering the maps $\widetilde{\Kos}^{co}(\delta_n)_k:S(U)\to S(U)^{\otimes k}$ introduced there, and denoting by $p_i:S(U)\to U^{\odot i}$ the natural projection, we need to show that
	\begin{equation}\label{qee} (p_{i_1}\otimes\cdots\otimes p_{i_k})\widetilde{\Kos}^{co}(\delta_n)_k =0
	\end{equation}
for all $k>n+1$ and $i_1,\ldots,i_k\ge1$. We shall proceed by induction on $i_1+\cdots+i_k-k$. If this quantity is zero, then $i_1=\cdots=i_k=1$ and the thesis follows from (b) and the above commutative diagram \eqref{diagram4}. Otherwise, it is not restrictive to assume $i_k>1$: this follows from the fact, already observed in Remark \ref{rem:recursioncoKos}, that the image of $\widetilde{\Kos}^{co}(\delta_n)_k$ is contained in the $S_k$-invariant part of $S(U)^{\otimes k}$. When $i_k>1$, Equation \eqref{qee} is equivalent to 
  \[ 0=(id^{\otimes k-1}\otimes \overline{\Delta})(p_{i_1}\otimes\cdots\otimes p_{i_k})\widetilde{\Kos}^{co}(\delta_n)_k =\sum_{\stackrel{j_1,j_2\ge1}{j_1+j_2=i_k}}(p_{i_1}\otimes\cdots\otimes p_{i_{k-1}}\otimes p_{j_1}\otimes p_{j_2})(id^{\otimes k-1}\otimes \overline{\Delta})\widetilde{\Kos}^{co}(\delta_n)_k, \]
where we denote by $\overline{\Delta}$ the reduced coproduct on $S(U)$. Using the recursion from Remark \ref{rem:recursioncoKos} (cf. also Remark \ref{rem:conilpotencycoKos} and the discussion at the end of Remark \ref{rem:conilpotencycocumulants}), we see that 
\[ (id^{\otimes k-1}\otimes \overline{\Delta})\widetilde{\Kos}^{co}(\delta_n)_k  = \widetilde{\Kos}^{co}(\delta_n)_{k+1}+(\id^{\otimes k+1}+\tau_{k,k+1})\big(\widetilde{\Kos}^{co}(\delta_n)_{k}\otimes\id\big)\overline{\Delta}.\]
Finally, using the inductive hypothesis we have $(p_{i_1}\otimes\cdots\otimes p_{i_{k-1}}\otimes p_{j_1}\otimes p_{j_2})\widetilde{\Kos}^{co}(\delta_n)_{k+1}=0$, as well as
\begin{multline*}  (p_{i_1}\otimes\cdots\otimes p_{i_{k-1}}\otimes p_{j_1}\otimes p_{j_2})(\id^{\otimes k+1}+\tau_{k,k+1})\big(\widetilde{\Kos}^{co}(\delta_n)_{k}\otimes\id\big)= \\ =  (p_{i_1}\otimes\cdots\otimes p_{i_{k-1}}\otimes p_{j_1})\widetilde{\Kos}^{co}(\delta_n)_{k}\otimes p_{j_2}+\tau_{k,k+1}\Big( (p_{i_1}\otimes\cdots\otimes p_{i_{k-1}}\otimes p_{j_2})\widetilde{\Kos}^{co}(\delta_n)_{k}\otimes p_{j_1}\Big) = 0.
\end{multline*}
This concludes the proof of the equivalence between (a) and (b).
	
The equivalence netween (b) and (c) follows from the top square in the above commutative diagram \eqref{diagram4}, which shows that
\begin{equation*} \Kos(\delta_n)_k(x_1,\ldots,x_k) = \varphi_n(x_1\odot\cdots\odot x_k),\qquad\forall\, k\ge1,\,x_1,\ldots,x_k\in U.
\end{equation*}	
	
	Finally, the equivalence between (c) and (d) can be seen by induction on $i_1+\cdots+i_k-k$. When $i_1+\cdots+i_k-k=0$ then $i_1=\cdots=i_k=1$, and (d) reduces to (c). Otherwise, given $X_1\in U^{\odot i_i}, \ldots, X_k\in U^{\odot i_k}$, it is not restrictive to assume $i_k>1$ and $X_k=X'_k\odot X''_k$ for some $X'_k\in U^{\odot j_1}, X''_k\in U^{\odot j_2}$, $j_1,j_2\ge1$, $j_1+j_2=i_k$. Using the recursion \eqref{eq:recursionkoszul} for the Koszul brackets, we see that
	\begin{multline*} \Kos(\delta_n)_k(X_1,\ldots,X_k) = \Kos(\delta_n)_{k+1}(X_1,\ldots,X_{k-1},X'_k,X''_k)+\\+\Kos(\delta_n)_{k}(X_1,\ldots,X_{k-1},X'_k)\odot X''_k\pm_K\Kos(\delta_n)_{k}(X_1,\ldots,X_{k-1},X''_k)\odot X'_k,
	\end{multline*}
	and the right hand side of the above identity belongs to $S_{\le i_1+\cdots+i_k-k+n+1}(U)$ by the inductive hypothesis.
	\end{proof}

\begin{remark}\label{rem:SIBL} Given a graded space $U$, we denote by $IBL(U)\subset S(U)[[t]]$ the subspace \[IBL(U):= \oplus_{n\ge0}t^nS_{\le(n+1)}(U).\]
We observe that if $\delta=\sum_{n\ge0}t^n\delta_n:S(U)[[t]]\to S(U)[[t]]$ is an $IBL_\infty[1]$ algebra structure on $U$, then the Koszul brackets $\Kos(\delta)_k$ preserve the subspace $IBL(U)$, i.e., they restrict to maps $\Kos(\delta)_k:IBL(U)^{\odot k}\to IBL(U)$ inducing an $L_\infty[1]$ algebra structure on $IBL(U)$. In fact, given $x^1,\ldots,x^k\in IBL(U)$, for $j=1,\ldots,k$ we write $x^j=\sum_{i\ge0}t^ix^j_{i}$, $x^j_{i}\in S_{\le(i+1)}(U)$. Then 
\[\Kos(\delta)_k(x_1,\ldots,x_k)=\sum_{n,i_1,\ldots,i_k\ge0} t^{n+i_1+\cdots+i_k} \Kos(\delta_n)_k(x^1_{i_1},\ldots,x^k_{i_k} )\in IBL(U)\]
since by item (d) in the previous lemma we have $\Kos(\delta_n)_k(x^1_{i_1},\ldots,x^k_{i_k} )\in S_{\le n+i_1+\cdots+i_k+1}(U)$ for all $n,i_1,\ldots,i_k\ge0$. 

If $(U,\delta)$, $(V,\delta')$ are $IBL_\infty[1]$ algebras and $f:S(U)[[t]]\to S(V)[[t]]$ is an $IBL_\infty[1]$ morphism, the cumulants of $f$ restrict to maps $\kappa(f)_n:IBL(U)^{\odot n}\to IBL(V)$, which are the Taylor coefficients of an $L_\infty[1]$ morphism $\kappa(f):(IBL(U),\Kos(\delta))\to(IBL(V),\Kos(\delta'))$. In fact, with the same notations as before, 
\[\kappa(f)_k(x_1,\ldots,x_k)=\sum_{i_1,\ldots,i_k\ge0} t^{i_1+\cdots+i_k} \kappa(f)_k(x^1_{i_1},\ldots,x^k_{i_k} )\in IBL(V)\]
since by item (d) in Lemma \ref{lem:comorphisms} we have $\kappa(f)_k(x^1_{i_1},\ldots,x^k_{i_k} )\in \oplus_{n\ge0} \,t^nS_{\le n+i_1+\cdots+i_k+1}(V)$ for all $i_1,\ldots,i_k\ge0$.

\end{remark}

\begin{remark}\label{rem:IBL} Given an $IBL_\infty[1]$ algebra structure $\delta=\sum_{n\ge0}t^n\delta_n:S(U)[[t]]\to S(U)[[t]]$, according to Definition \ref{def:coBV} we have $\delta(\mathbf{1}_{S(U)})=0$. Moreover, according to Lemma \ref{lem:IBLtransKos} we have $\Kos(\delta)_i(x_1,\ldots,x_i)\in\oplus_{n\ge0}t^nS_{n+1}(U)$ for all $i\ge1$ and $x_1,\ldots,x_i\in U$. Given $i\ge1$ and $n\ge0$, we denote by $\fp_{i,n}:U^{\odot i}\to S_{\le(n+1)}(U)$ the maps defined by the identity
	\[ \Kos(\delta)_i(x_1,\ldots,x_i) = \sum_{n\ge0} t^{n}\fp_{i,n}(x_1,\ldots,x_i). \]
Moreover, given $i,j\ge1$ and $g\ge0$, we denote by $\fp_{i,j,g}:U^{\odot i}\to U^{\odot j}$ the composition of $\fp_{i,j+g-1}:U^{\odot i}\to S_{\le j+g}(U)$ and the projection $S_{\le j+g}(U)\to U^{\odot j}$. Thus, 
	\[ \Kos(\delta)_i(x_1,\ldots,x_i) = \sum_{j\ge1,\,g\ge0} t^{j+g-1}\fp_{i,j,g}(x_1,\ldots,x_i). \]
According to \eqref{eq:Q}
\begin{multline*} \delta(x_1\odot\cdots\odot x_k) = \sum_{i=1}^k\sum_{\sigma\in S(i,k-i)}\pm_K\Kos(\delta)_i\left(x_{\sigma(1)},\ldots,x_{\sigma(i)}\right)\odot\cdots\odot x_{\sigma(k)} =\\= \sum_{i=1}^k\sum_{\sigma\in S(i,k-i)}\sum_{j\ge1,\,g\ge0}\pm_K\, t^{j+g-1}\fp_{i,j,g}\left(x_{\sigma(1)},\ldots,x_{\sigma(i)}\right)\odot\cdots\odot x_{\sigma(k)} 
\end{multline*}
In other words, $\delta = \sum_{i,j\ge 1,\,g\ge0}t^{j+g-1}\widehat{\fp}_{i,j,g}$, where we denote by $\widehat{\fp}_{i,j,g}:S(U)\to S(U)$ the maps defined by $\widehat{\fp}_{i,j,g}(x_1\odot\cdots\odot x_k)=0$ if $k<i$ and 
\[\widehat{\fp}_{i,j,g}(x_1\odot\cdots\odot x_k):=\sum_{\sigma\in S(i,k-i)}\pm_K\, \fp_{i,j,g}\left(x_{\sigma(1)},\ldots,x_{\sigma(i)}\right)\odot\cdots\odot x_{\sigma(k)}. \]
if $k\ge i$. Finally, the condition $\delta^2=0$ translates into a hierarchy of identities on the maps $\fp_{i,j,g}$. More precisely, arguing as in \cite[Lemma 2.6]{Fuk} or \cite[Proposition D.2.6]{Haj}, we see that $\delta^2=0$ if and only if for all $i,j\ge1,g\ge0$ we have  
\begin{equation}\label{eq:IBL} \sum_{h=1}^{g+1}\sum_{i_1+i_2=i+h}\sum_{j_1+j_2=j+h}\sum_{g_1+g_2=g+1-h}\fp_{i_1,j_1,g_1}\circ_h\fp_{i_2,j_2,g_2}=0,
\end{equation}
where we denote by $\fp_{i_1,j_1,g_1}\circ_h\fp_{i_2,j_2,g_2}$ ``the composition of $\fp_{i_1,j_1,g_1}$ and $\fp_{i_2,j_2,g_2}$ along $h$ matching inputs/outputs'' (we refer to \cite{Fuk} or \cite{Haj} for a more precise definition). This shows that our definition of $IBL_\infty[1]$ algebra is essentially equivalent to the one in \cite{Fuk}, if not for some minor differences. Whereas our maps $\fp_{i,j,g}$ have degree $|\fp_{i,j,g}|=1-2(j+g-1)$, the ones in the definition from \cite{Fuk} have degree $2(i+g-1)-1$: thus, when $U$ is finite dimensional, an $IBL_\infty[1]$ algebra structure in our sense corresponds to an $IBL_\infty[1]$ algebra structure in the sense of \cite{Fuk} on the dual space $U^\vee$ (this might also be compared with the discussion in \cite[Remark 11]{MarklVor}). For our purposes, the previous Definition \ref{def:IBL} has some technical advantages.  
\end{remark}

\subsection{Homotopy transfer for $IBL_\infty[1]$ algebras}\label{sec:IBLtrans}
Consider an $IBL_\infty[1]$ algebra $(U,\delta)$ with linear part $d_U:=\fp_{1,1,0}:U\to U$: in particular, by the previous identity \eqref{eq:IBL} for $(i,j,g)=(1,1,0)$, we see that $d_U^2=0$, hence $d_U$ is a differential on $U$. Given a contraction $f:V\to U$, $g:U\to V$, $h:U\to U[-1]$ of $(U,d_U)$ onto a complex $(V,d_V)$, we want to show how to transfer the $IBL_\infty[1]$ algebra structure on $U$ along this contraction via the symmetrized tensor trick and the standard perturbation Lemma. 

It is convenient to break the process into two steps.

We write $\delta=\sum_{n\ge0} t^n\delta_n$, and observe that $\delta_0$ is degree one coderivation on $S(U)$ such that $\delta_0(\mathbf{1}_{S(U)})=0$: in other words, $\delta_0$ is an $L_\infty[1]$ algebra structure on $U$. We can transfer this $L_\infty[1]$ algebra structure along the contraction $(g,f, h)$ as explained in subsection \ref{subsec:Lootransfer}: first we consider the induced contraction $(S(g),S(f), \widehat{h})$ of $(S(U),\widetilde{d_U})$ onto $(S(V),\widetilde{d_V})$ as in Lemma \ref{lem:symmetrizedcontraction}, then we apply the standard perturbation Lemma \ref{lem:SPL} to the above contraction and the perturbation $\delta_{0}^+:=\delta_0-\widetilde{d_U}$ of $\widetilde{d_U}$ in order to get a transferred $L_\infty[1]$ algebra structure $\delta'_0$ on $V$, as well as a perturbed contraction $(G_0,F_0,H_0)$ of $(S(U),\delta_0)$ onto $(S(V),\delta'_0)$; moreover, $F_0$ and $G_0$ are $L_\infty[1]$ morphisms. As observed in Remark \ref{rem:H}, $(G_0,F_0,H_0)$ is both a semifull DG coalgebra contraction and a semifull algebra contraction: moreover, the perturbed homotopy $H_0$ preserves the subspaces $S_{\le n}(U)\subset S(U)$.

By abuse of notations, we continue to denote by $\delta_0$, $\delta_0'$, $(G_0,F_0,H_0)$ their extensions to $\K[[t]]$-linear differentials on $S(U)[[t]]$ and $S(V)[[t]]$ respectively and a $\K[[t]]$-linear contraction of $(S(U)[[t]],\delta_0)$ onto $(S(V)[[t]],\delta'_0)$. Finally, we apply the standard perturbation Lemma again, this time to the contraction $(G_0,F_0,H_0)$ of $(S(U)[[t]],\delta_0)$ onto $(S(V)[[t]],\delta'_0)$ and the perturbation $\delta_+:=\delta-\delta_0=\sum_{n\ge1}t^n\delta_n$ of $\delta_0$, in order to get a perturbed differential $\delta'$ on $S(V)[[t]]$ and a perturbed contraction $(G,F,H)$ of $(S(U)[[t]],\delta)$ onto $(S(V)[[t]],\delta')$. 
\begin{remark}\label{rem:HH} We notice that the perturbed differential $\delta'$ and the perturbed contraction $(G,F,H)$ are $\K[[t]]$-linear, as such are the original contraction $(G_0,F_0,H_0)$ and the perturbation $\delta_+$. As in Remark \ref{rem:SIBL}, we denote by $IBL(U):=\oplus_{n\ge0} t^nS_{\le n+1}(U)\subset S(U)[[t]]$. We notice that both $\delta_0$ and $H_0$ preserve $IBL(U)$, as they are both $\K[[t]]$-linear and preserve the subspaces $S_{\le n+1}(U)\subset S(U)$ (for $H_0$ this has already been observed, for the coderivation $\delta_0$ it follows from the formula \eqref{coderfromtaylor}). Since $\delta:S(U)[[t]]\to S(U)[[t]]$ is an $IBL_\infty[1]$ algebra structure, it also preserves the subspace $IBL(U)\subset S(U)[[t]]$, this time by Remark \ref{rem:SIBL}: hence, also the perturbation $\delta_+:=\delta-\delta_0$ preserves $IBL(U)$, as does the perturbed homotopy $H=\sum_{k\ge0}(H_0\delta_+)^kH_0$. 
\end{remark}
\begin{theorem}\label{th:transferIBL} In the above hypotheses, $\delta'$ is an $IBL_\infty[1]$ algebra structure on $V$ (with linear part $d_V$) and $F:S(V)[[t]]\to S(U)[[t]]$, $G:S(U)[[t]]\to S(V)[[t]]$ are $IBL_\infty[1]$ morphisms (with linear parts $f$ and $g$ respectively).
\end{theorem}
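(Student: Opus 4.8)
The plan is to mimic the proof of Theorem~\ref{th:transferL}, splitting the verification into a coalgebraic half (handling $\delta'$ and $G$) and an algebraic half (handling $F$), while tracking $t$-adic orders exactly as in the proof of Theorem~\ref{th:transferBV}. First I would record what the Standard Perturbation Lemma~\ref{lem:SPL} already supplies: $\delta'$ is a $\K[[t]]$-linear degree one endomorphism of $S(V)[[t]]$ with $(\delta')^2=0$, while $F,G$ are $\K[[t]]$-linear chain maps, $F\circ\delta'=\delta\circ F$ and $G\circ\delta=\delta'\circ G$; since the second perturbation $\delta_+$ has strictly positive $t$-order, the linear parts of $F,G$ are those of $F_0,G_0$, namely $f$ and $g$. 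The structural input I would establish next is that $(G,F,H)$ is \emph{simultaneously} a semifull coalgebra contraction and a semifull algebra contraction of $(S(U)[[t]],\delta)$ onto $(S(V)[[t]],\delta')$. This follows by tracing the two-step construction: by Lemma~\ref{lem:symmetrizedcontraction} the contraction $(S(g),S(f),\widehat{h})$ is semifull for both the symmetric product and the unshuffle coproduct; perturbing by the coderivation $\delta_0^+$ keeps $(G_0,F_0,H_0)$ semifull for both structures by Lemmas~\ref{lem:semifullstable} and~\ref{lem:semifullcostable}; and perturbing once more by the \emph{arbitrary} perturbation $\delta_+$ preserves both semifull classes by the first statements of the same two lemmas.

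With this in place I would treat $\delta'$ and $G$ on the coalgebra side. Applying Proposition~\ref{prop:cotransfer} (extended $\K[[t]]$-linearly) to the semifull coalgebra contraction $(G,F,H)$, with $G$ in the role of $\sigma$ and $F$ in the role of $\tau$, expresses the Koszul cobrackets $\Kos^{co}(\delta')_n$ and the cocumulants $\kappa^{co}(G)_n$ through the homotopy-transfer recursions dual to those of Proposition~\ref{prop:transfer}: as sums of terms built from $\Kos^{co}(\delta)_k$ with $k\ge2$, the lower cocumulants $\kappa^{co}(G)_{i_j}$ (with $\sum_j i_j=n$), and the homotopy $H$. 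Since $(U,\delta)$ is an $IBL_\infty[1]$ algebra, $\delta$ is a derived $BV$ coalgebra structure and hence $\Kos^{co}(\delta)_k\equiv0\pmod{t^{k-1}}$ for all $k\ge2$. An induction on $n$ whose $t$-counting is identical to that in the proof of Theorem~\ref{th:transferBV} — a summand with $k$ factors carries $t$-order at least $(k-1)+\sum_j(i_j-1)=n-1$ — then yields $\Kos^{co}(\delta')_n\equiv0\pmod{t^{n-1}}$ and $\kappa^{co}(G)_n\equiv0\pmod{t^{n-1}}$ for all $n\ge2$. By Definition~\ref{def:coBV} this means precisely that $\delta'$ is a derived $BV$ coalgebra structure on $S(V)$, i.e. an $IBL_\infty[1]$ algebra structure on $V$ (Definition~\ref{def:IBL}), and that $G$ is a morphism of derived $BV$ coalgebras, i.e. an $IBL_\infty[1]$ morphism.

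Finally I would treat $F$ on the algebra side, where Proposition~\ref{prop:cotransfer} gives no information, as it controls the cocumulants of $\sigma=G$ and not of $\tau=F$; as in Theorem~\ref{th:transferL}, this is the step that forces a passage to the algebraic structure. Since $(G,F,H)$ is a semifull algebra contraction, Proposition~\ref{prop:transfer} (with $F$ in the role of $\tau$) yields, for the cumulants of $F$ with respect to the free commutative algebra structures on $S(V)$ and $S(U)$, the recursion
\[ \kappa(F)_n(x_1,\dots,x_n)=\sum_{\substack{k\ge2,\,i_1,\dots,i_k\ge1\\ i_1+\cdots+i_k=n}}\sum_{\sigma\in S(i_1,\dots,i_k)}\pm_K\tfrac{1}{k!}\,H\Kos(\delta)_k\big(\kappa(F)_{i_1}(x_{\sigma(1)},\dots),\dots,\kappa(F)_{i_k}(\dots,x_{\sigma(n)})\big), \]
with $\kappa(F)_1=F$. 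The decisive point is that $\Kos(\delta)_k$ and $H$ both preserve the subspace $IBL(U)=\oplus_{n\ge0}t^nS_{\le(n+1)}(U)$ (Remarks~\ref{rem:SIBL} and~\ref{rem:HH}); since $\kappa(F)_1(v)=F(v)\in IBL(U)$ for $v\in V$, an induction on $n$ gives $\kappa(F)_n(V,\dots,V)\subset IBL(U)$ for all $n\ge1$. Combined with $F\circ\delta'=\delta\circ F$ and the fact — just established — that $(S(V),\delta')$ is a derived $BV$ coalgebra, condition~(c) of Lemma~\ref{lem:comorphisms}, applicable because both $S(V)$ and $S(U)$ are cofree, shows that $F$ is a morphism of derived $BV$ coalgebras, i.e. an $IBL_\infty[1]$ morphism.

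I expect the genuine difficulty to lie entirely in the bookkeeping rather than in any new idea. One must verify that each map produced by the two successive perturbations is $\K[[t]]$-linear and compatible with the weight filtration $S_{\le m}$ (equivalently, preserves $IBL(U)$), so that the two $t$-adic inductions close, and one must set up the dual transfer recursions of Proposition~\ref{prop:cotransfer} carefully enough to read off the $t$-orders. The ordering is also essential: the argument for $F$ uses that $\delta'$ is already known to be a derived $BV$ coalgebra structure, which is precisely why the coalgebra-side step must precede the algebra-side one.
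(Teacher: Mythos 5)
Your proposal is correct and follows essentially the same route as the paper: the coalgebra half is handled by the transfer result for derived $BV$ coalgebras (you inline Proposition~\ref{prop:cotransfer} plus the $t$-adic induction, which is exactly the content of Theorem~\ref{th:transfercoBV} that the paper cites), and the algebra half for $F$ uses the semifull algebra contraction, the recursion of Proposition~\ref{prop:transfer}, the preservation of $IBL(U)$ from Remarks~\ref{rem:SIBL} and~\ref{rem:HH}, and the equivalence (a)$\Leftrightarrow$(c) of Lemma~\ref{lem:comorphisms}, precisely as in the paper. The only difference is packaging, not substance.
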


\begin{proof} We already observed that $(G_0,F_0,H_0)$ is a semifull DG coalgebra contraction, thus we can apply the homotopy transfer Theorem \ref{th:transfercoBV} for derived BV colagebras to deduce that $\delta'$ is an $IBL_\infty[1]$ algebra structure on $V$ and $G$ is an $IBL_\infty[1]$ morphism. 
	
In order to conclude, we still need to show that $F$ is an $IBL_\infty[1]$ morphism. We shall apply Lemma \ref{lem:comorphisms}, and in particular the equivalence (a) $\Leftrightarrow$ (c). With the notations from Remark \ref{rem:SIBL}, we denote by $IBL(U):=\oplus_{n\ge0}t^nS_{\le(n+1)}(U)\subset S(U)[[t]]$. Thus, we need to prove $\kappa(F)_i(V,\ldots,V)\subset IBL(U)$ for all $i\ge1$. We proceed by induction on $i$. For the basis of the induction, we need to check that \[\kappa(F)_1(x)=F(x):=\sum_{k\ge0}(H_0\delta_+)^kF_0(x)=\sum_{k\ge0}(H_0\delta_+)^k \big( f(x)\big)\in IBL(U)\]
for all $x\in V$, which follows since $f(x)\in U\subset IBL(U)$ and $H_0\delta_+$ preserve the subspace $IBL(U)\subset S(U)[[t]]$, according to Remark \ref{rem:HH}. 

According to Lemma \ref{lem:semifullstable}, the contraction $(G,F,H)$ is a ($\K[[t]]$-linear) semifull algebra contraction of $(S(U)[[t]],\delta)$ onto $(S(V)[[t]],\delta')$. By Proposition \ref{prop:transfer}, the cumulants $\kappa(F)_i$ are induced via homotopy transfer from the Koszul brackets $\Kos(\delta)_k$, i.e., they obey the recursion 
\[ \kappa(F)_i(x_1,\ldots,x_i) = \sum_{\stackrel{k\geq2,p_1,\ldots,p_k\geq1}{p_1+\cdots+p_k=i}}\sum_{\sigma\in S(p_1,\ldots,p_k)}\pm_K\frac{1}{k!}H\Kos(\delta)_k\left(\kappa(F)_{p_1}(x_{\sigma(1)}\ldots),\ldots,\kappa(F)_{p_k}(\ldots,x_{\sigma(i)})\right). \]
According to Remarks \ref{rem:SIBL} and \ref{rem:HH} we have that \[ H\Kos(\delta)_k\Big(IBL(U),\ldots,IBL(U)\Big)\subset H\Big(IBL(U)\Big)\subset IBL(U), \] 
which immediately implies the inductive step $\kappa(F)_i(V,\ldots,V)\subset IBL(U)$.\end{proof}

Finally, we turn our attention to the analogue of the formal Kuranishi Theorem \ref{th:kuranishi} in the context of $IBL_\infty[1]$ algebras.

\begin{definition} An $IBL_\infty[1]$ algebra $(U,\delta)$ is complete if $U$ is a complete space and all the maps $\fp_{i,j,g}$ as in the previous Remark \ref{rem:IBL} are continuous (with respect to the induced filtrations). As in Remark \ref{rem:SIBL}, we denote by $IBL(U):=\oplus_{n\ge0} t^nS_{\le n+1}(U)\subset S(U)[[t]]$, and by $\widehat{IBL}(U)$ its completion with respect to the induced filtration. According to Remark \ref{rem:SIBL}, the Koszul brackets $\Kos(\delta)_i$ induce a complete $L_\infty[1]$ algebra structure on $\widehat{IBL}(U)$. 

A \emph{Maurer-Cartan element} of the complete $IBL_\infty[1]$ algebra $(U,\delta)$ is by definition a Maurer-Cartan element of the corresponding $L_\infty[1]$ algebra $\widehat{IBL}(U)$. In other words, it is a degree zero element $x=\sum_{n\geq0}t^n x_n$, where $x_n$ is in the completion of $S_{\leq n+1}(U)$, such that 
	\[ \sum_{n\geq1}\frac{1}{n!}\Kos(\delta)_n(x,\ldots,x) = 0 \]
	(in particular, this implies that $x_0\in U^0$ is a Maurer-Cartan element of the underlying $L_\infty[1]$ algebra $(U,\delta_0)$). To distinguish it from others Maurer-Cartan sets we have introduced so far, we shall denote by $\operatorname{MC}_{IBL_\infty[1]}(U):=\operatorname{MC}_{L_\infty[1]}\Big(\widehat{IBL}(U)\Big)$ the set of Maurer-Cartan elements of the $IBL_\infty[1]$ algebra $(U,\delta)$.
	
	Given a continuous morphism of complete $IBL_\infty[1]$ algebras $f:S(U)[[t]]\to S(V)[[t]]$, according to Remark \ref{rem:SIBL} the cumulants $\kappa(f)_i$ are the Taylor coefficients of a continuous $L_\infty[1]$ morphism $\widehat{IBL}(U)\to\widehat{IBL}(V)$, hence there is an induced push-forward 
	\[ \operatorname{MC}(f):\operatorname{MC}_{IBL_\infty[1]}(U)\to\operatorname{MC}_{IBL_\infty[1]}(V):x\to\sum_{n\geq1}\frac{1}{n!}\kappa(f)_n(x,\ldots,x)\]
	between the corresponding Maurer-Cartan sets.
\end{definition}

In the hypotheses of the previous Theorem \ref{th:transferIBL}, we assume moreover that $U$ is a complete $IBL_\infty[1]$ algebra, and that $h,fg$ are continuous. Then it is not hard to check that $(V,\delta')$ is a complete $IBL_\infty[1]$ algebra with respect to the induced filtration (the only one making $f$ and $g$ continuous) and $F:(V,\delta')\to (U,\delta)$, $G:(U,\delta)\to (V,\delta')$ are continuous $IBL_\infty[1]$ morphisms. Moreover, according to Remark \ref{rem:HH} the contraction $(G,F,H)$ of $(S(U)[[t]],\delta)$ onto $(S(V)[[t]],\delta')$ restricts to a continuous contraction of $IBL(U)$ onto $IBL(V)$, hence induces a continuous contraction of $\widehat{IBL}(U)$ onto $\widehat{IBL}(V)$ which we continue to denote by $(G,F,H)$. As in the proof of the previous Theorem \ref{th:transferIBL}, the $L_\infty[1]$ algebra structure on $\widehat{IBL}(V)$ associated with the Koszul brackets $\Kos(\delta')_i$ and the $L_\infty[1]$ morphism $\widehat{IBL}(V)\to\widehat{IBL}(U)$ associated with the cumulants $\kappa(F)_i$ are induced via homotopy transfer along the contraction $(G,F,H)$ from the $L_\infty[1]$ algebra structure on $\widehat{IBL}(U)$ associated with the Koszul brackets $\Kos(\delta)_i$. Although we didn't show that the $L_\infty[1]$ morphism $\widehat{IBL}(U)\to\widehat{IBL}(V)$ associated with the cumulants $\kappa(G)_i$ is induced via homotopy transfer, this is not essential for the claim of Theorem \ref{th:kuranishi} to be true, and we only need to check that the $L_\infty[1]$ morphism $\kappa(G):\widehat{IBL}(U)\to\widehat{IBL}(V)$ is a left inverse to $\kappa(F):\widehat{IBL}(V)\to\widehat{IBL}(U)$, which follows from Lemma \ref{prop:cumulantsandcomposition} (cf. with [DELIGNE]). With these preparations, we can apply the formal Kuranishi Theorem \ref{th:kuranishi} to deduce the following result.

\begin{theorem}\label{th:IBLKur} In the above hypotheses, there are induced bijective correspondences 
		\[ \operatorname{MC}_{IBL_{\infty[1]}}(U) \cong \operatorname{MC}_{IBL_{\infty[1]}}(V) \times H\left(\widehat{IBL}(U)^0\right). \]
	\[ \operatorname{MC}_{IBL_{\infty[1]}}(V) \cong \operatorname{MC}_{IBL_{\infty[1]}}(U) \cap\operatorname{Ker}(H), \]
For the former, in one direction it sends $x\in\operatorname{MC}_{IBL_\infty[1]}(U)$ to the pair $(\operatorname{MC}(G)(x),H(x))\in \operatorname{MC}_{IBL_{\infty[1]}}(V) \times H\left(\widehat{IBL}(U)^0\right)$: the correspondence in the other direction might be defined recursively, as in the claim of Theorem \ref{th:kuranishi}. For the latter, in one direction it is given by $\operatorname{MC}(F):\operatorname{MC}_{IBL_{\infty[1]}}(V) \to \operatorname{MC}_{IBL_{\infty[1]}}(U) \cap\operatorname{Ker}(H)$, and in the other direction it sends a Maurer-Cartan element $x\in \operatorname{MC}_{IBL_{\infty[1]}}(U) \cap\operatorname{Ker}(H)$ to the one $G(x)=\operatorname{MC}(G)(x)\in\operatorname{MC}_{IBL_{\infty[1]}}(V)$.
\end{theorem}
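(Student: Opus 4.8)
The plan is to deduce both correspondences as a direct application of the formal Kuranishi Theorem \ref{th:kuranishi}, applied not to the $IBL_\infty[1]$ algebras themselves but to the associated complete $L_\infty[1]$ algebras $\big(\widehat{IBL}(U),\Kos(\delta)\big)$ and $\big(\widehat{IBL}(V),\Kos(\delta')\big)$. By the very definition of $\operatorname{MC}_{IBL_\infty[1]}$ we have $\operatorname{MC}_{IBL_\infty[1]}(U) = \operatorname{MC}_{L_\infty[1]}\big(\widehat{IBL}(U)\big)$ and likewise for $V$, so the whole statement reduces to checking that the hypotheses of Theorem \ref{th:kuranishi} are met for a suitable contraction.

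First I would assemble the transfer data. By Remark \ref{rem:HH} the perturbed contraction $(G,F,H)$ restricts to a continuous contraction of $IBL(U)$ onto $IBL(V)$, hence, after completion, of $\widehat{IBL}(U)$ onto $\widehat{IBL}(V)$. As explained in the paragraph preceding the statement, Proposition \ref{prop:transfer} together with the proof of Theorem \ref{th:transferIBL} identify the transferred $L_\infty[1]$ structure $\Kos(\delta')_i$ on $\widehat{IBL}(V)$ and the $L_\infty[1]$ morphism $\kappa(F)$ with the homotopy transfer of $\Kos(\delta)_i$ along this contraction. Thus we are exactly in the complete setting (Remark \ref{rem:complete}) of Theorem \ref{th:transfer}, with transferred morphism $\kappa(F):\widehat{IBL}(V)\to\widehat{IBL}(U)$.

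The one hypothesis not handled verbatim by Theorem \ref{th:kuranishi} concerns the morphism $\kappa(G):\widehat{IBL}(U)\to\widehat{IBL}(V)$, which we have not shown to be the homotopy-transferred morphism. Here I would use the observation flagged before the statement: the proof of Theorem \ref{th:kuranishi} requires of this morphism only that it be an $L_\infty[1]$ left inverse of $\kappa(F)$, so that $\operatorname{MC}(\kappa(G))\circ\operatorname{MC}(\kappa(F)) = \operatorname{id}$. This follows at once from Lemma \ref{prop:cumulantsandcomposition}: since $(G,F,H)$ is a contraction we have $G\circ F = \id$, whence $\kappa(G)\circ\kappa(F) = \kappa(G\circ F) = \kappa(\id) = \id$.

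With these points verified, Theorem \ref{th:kuranishi} produces the bijection $x\mapsto(\operatorname{MC}(G)(x),H(x))$ of $\operatorname{MC}_{IBL_\infty[1]}(U)$ onto $\operatorname{MC}_{IBL_\infty[1]}(V)\times H\big(\widehat{IBL}(U)^0\big)$ --- the first correspondence, noting that $\operatorname{MC}(G) = \operatorname{MC}(\kappa(G))$ by the definition of push-forward for $IBL_\infty[1]$ morphisms --- together with its recursive inverse. Restricting this bijection to the preimage of $\operatorname{MC}_{IBL_\infty[1]}(V)\times\{0\}$, namely $\operatorname{MC}_{IBL_\infty[1]}(U)\cap\operatorname{Ker}(H)$, and using $\rho^{-1}(-,0) = \operatorname{MC}(F)$, yields the second correspondence with forward map $\operatorname{MC}(F)$ and inverse $\operatorname{MC}(G)$. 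The main obstacle is not computational but conceptual: one must re-examine the proof of Theorem \ref{th:kuranishi} to confirm that its conclusion survives when $\kappa(G)$ is only a left inverse of the transferred morphism $\kappa(F)$, rather than itself being homotopy-transferred; granting this, the remainder is routine bookkeeping in descending the transfer data to the $IBL$-subspaces.
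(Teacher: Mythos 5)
Your proposal is correct and follows essentially the same route as the paper: restricting the perturbed contraction $(G,F,H)$ to the completed subspaces $\widehat{IBL}(U)$, $\widehat{IBL}(V)$, identifying $\Kos(\delta')$ and $\kappa(F)$ with homotopy-transferred data via Proposition \ref{prop:transfer}, handling the fact that $\kappa(G)$ was not shown to be homotopy-transferred by observing (via Lemma \ref{prop:cumulantsandcomposition}) that it is a left inverse of $\kappa(F)$, and then invoking the formal Kuranishi Theorem \ref{th:kuranishi}. The paper makes exactly the same flag about $\kappa(G)$ and resolves it identically, so no further comparison is needed.
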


\thebibliography{10}

\bibitem{Alf} J. Alfaro, P. H. Damgaard, \emph{Non-abelian antibrackets}, Phys. Lett. B \textbf{369} (1996), 289–294; \texttt{arXiv:hep-th/9511066}.

\bibitem{BDA} Alfaro J., Bering K., Damgaard P.H.,  \emph{Algebra of higher antibrackets}, Nuclear Phys. B \textbf{478} (1996), 459–503; 
\texttt{hep-th/9604027}.

\bibitem{BMcois} R. Bandiera, M. Manetti, \emph{On coisotropic deformations of holomorphic submanifolds}, Journal of Mathematical Sciences \textbf{22} (2015), Kodaira centennial issue, University of Tokyo. \texttt{arXiv:1301.6000 [math.AG]}.

\bibitem{Bder} R. Bandiera, \emph{Nonabelian higher derived brackets}, Journal of Pure and Applied Algebra \textbf{219} (2015);  \texttt{arXiv:1304.4097 [math.QA]}.

\bibitem{BKap} R. Bandiera, \emph{Formality of Kapranov's brackets in K\"ahler geometry via pre-Lie deformation theory}, International Mathematics Research Notices (2016), no. 21, 6626-6655; \texttt{arXiv:1307.8066v3 [math.QA]}.

\bibitem{BDel} R. Bandiera, \emph{Descent of Deligne-Getzler $\infty$-groupoids}; \texttt{arXiv:1705.02880 [math.AT]}. 

\bibitem{BCSX} R. Bandiera, Z. Chen, M. Sti\'enon, P. Xu, \emph{Shifted derived Poisson manifolds associated with Lie pairs}, Commun. Math. Phys. \textbf{375} (2019),  1717–1760; \texttt{arXiv:1712.00665 [math.QA]}.

\bibitem{prep} R. Bandiera, \emph{Homotopy transfer for curved $L_\infty[1]$ algebras}, in preparation.

\bibitem{BashVor} D. Bashkirov, A. A. Voronov, \emph{The BV formalism for $L_\infty$-algebras}, J. Homotopy Relat. Struct. \textbf{12} (2017), 305–327; \texttt{arXiv:1410.6432 [math.QA]}.

\bibitem{Bat0} I. Batalin, K. Bering, P. H. Damgaard, \emph{Second class constraints in a higher-order
	Lagrangian formalism}, Phys. Lett. B \textbf{408} (1997),  235–240; \texttt{arXiv:hep-th/9703199}.

\bibitem{Bat1} I. Batalin, R. Marnelius, \emph{Quantum antibrackets}, Phys. Lett. B \textbf{434} (1998), 312–320; \texttt{arXiv:hep-th/9805084}.

\bibitem{Bat2} I. Batalin, R. Marnelius, \emph{Dualities between Poisson brackets and antibrackets}, Internat. J. Modern
	Phys. A \textbf{14} (1999), no. 32, 5049–5073; \texttt{arXiv:hep-th/9809210}.

\bibitem{BerglundHPT} A. Berglund, \emph{Homological perturbation theory for algebras over operads}; Algebraic \& Geometric Topology \textbf{14} (2014), 2511-2548; \texttt{arXiv:0909.3485v2 [math.AT]}.

\bibitem{Ber} K. Bering, \emph{Non-commutative Batalin-Vilkovisky algebras, homotopy Lie algebras and
the Courant bracket}, Comm. Math. Phys. \textbf{274} (2007), 297–341; \texttt{arXiv:hep-th/0603116}.

\bibitem{Bjor} K. B\"orjeson, \emph{$A_\infty$-algebras derived from associative algebras with a non-derivation differential}, J. Gen. Lie Theory Appl. \textbf{8} (2014); \texttt{arXiv:1304.6231 [math.QA]}.

\bibitem{BrLaz} C. Braun, A. Lazarev, \emph{Homotopy BV algebras in Poisson geometry}, Trans. Moscow Math. Soc.  \textbf{74} (2013), 217–227; \texttt{arXiv:1304.6373 [math.QA]}.

\bibitem{Brown} R. Brown, \emph{The twisted Eilenberg-Zilber theorem}, in \emph{Simposio di Topologia (Messina, 1964)}, Edizioni Oderisi, Gubbio, 1965, pp. 33–37.

\bibitem{Campos} R. Campos, S. Merkulov, T. Willwacher, \emph{The Frobenius properad is Koszul}, Duke Math. J.
\textbf{165} (2016), 2921-2989; \texttt{arXiv:1402.4048 [math.QA]}.

\bibitem{CF} A. S. Cattaneo, G. Felder, \emph{Relative formality theorem and quantisation of coisotropic submanifolds}, Adv. Math.
\textbf{208} (2007), 521–548; \texttt{arXiv:math/0501540 [math.QA]}.

\bibitem{CL} K. Cieliebak, J. Latschev,
\emph{The role of string topology in symplectic field theory}, in \emph{New perspectives and challenges in symplectic field theory}, 113–146, CRM Proc. Lecture Notes \textbf{49}, Amer. Math. Soc., Providence, RI, 2009; \texttt{arXiv:0706.3284 [math.SG]}.

\bibitem{Fuk} K. Cieliebak, K. Fukaya, J. Latschev,
\emph{Homological algebra related to surfaces with boundary}; \texttt{arXiv:1508.02741 [math.QA]}.

\bibitem{CV} K. Cieliebak, E. Volkokv, \emph{Eight flavours of cyclic homology}; \texttt{arXiv:2003.02528 [math.AT]}.

\bibitem{DSV1} V. Dotsenko, S. Shadrin, B. Vallette, \emph{Givental group action on Topological Field Theories and homotopy Batalin-Vilkovisky algebras}, Adv. in Math. \textbf{236} (2013), 224-256; \texttt{arXiv:1112.1432 [math.QA]}.

\bibitem{DSV2} V. Dotsenko, S. Shadrin, B. Vallette, \emph{De Rham cohomology and homotopy Frobenius manifolds}, J. Eur. Math. Soc. \textbf{17} (2015), 535-547; \texttt{arXiv:1203.5077 [math.KT]}.

\bibitem{DJP} Martin Doubek, Branislav Jurco, Lada Peksova, \emph{Properads and Homotopy Algebras Related to Surfaces}; \texttt{arXiv:1708.01195 [math.AT]}.

\bibitem{Park1} G.C. Drummond-Cole, J.S. Park, J. Terilla, \emph{Homotopy probability theory I}, J. Homotopy Relat. Struct. \textbf{10} (2015), 425–435; \texttt{arXiv:1302.3684 [math.PR]}.

\bibitem{Park2} G.C. Drummond-Cole, J.S. Park, J. Terilla, \emph{Homotopy probability theory II}, J. Homotopy Relat. Struct. \textbf{10} (2015), 623–635; \texttt{arXiv:1302.5325 [math.PR]}.

\bibitem{FMKos} D. Fiorenza, M. Manetti, \emph{Formality of Koszul brackets and deformations of holomorphic Poisson manifolds}, Homology, Homotopy and Applications \textbf{14} (2012), pp.63-75; \texttt{arXiv:1109.4309 [math.QA]}

\bibitem{Fuk00} K. Fukaya, \emph{Deformation theory, homological algebra and mirror symmetry}, in \emph{Geometry
	and physics of branes (Como, 2001)}, Ser. High Energy Phys. Cosmol. Gravit., IOP, Bristol, 2003, pp. 121–209

\bibitem{GCTV} I. Galvez-Carrillo, A. Tonks, B. Vallette, \emph{Homotopy Batalin-Vilkovisky algebras}, J. Noncomm. Geom.\textbf{6} (2012), 539-602; \texttt{arXiv:0907.2246 [math.QA]}.

	\bibitem{GetzlerLie} E.~Getzler, \emph{Lie theory for nilpotent $L_{\infty}$-algebras}, Ann. of Math. \textbf{170}, no. 1 (2009), 271-301; \texttt{arXiv:math/0404003v4}.

\bibitem{Getzlerpert} E.~Getzler, \emph{Maurer-Cartan elements and homotopical perturbation theory}; \texttt{arXiv:1802.06736 [math.KT]}.

\bibitem{Gug1} V. K. A. M. Gugenheim, \emph{On the chain-complex of a fibration}, Illinois J. Math. \textbf{16} (1972), 398–414. 

\bibitem{Gug2} V. K. A. M. Gugenheim, L. A. Lambe, and J. D. Stasheff, \emph{Perturbation theory in differential homological algebra, II}, Illinois J.
Math. \textbf{35} (1991), 357–373. 

\bibitem{Hueb2} J. Huebschmann, T. Kadeishvili, \emph{Small models for chain algebras}, Math. Z. \textbf{207} (1991), 245–280.

\bibitem{Hueb1} J. Huebschmann, \emph{Higher homotopies and Maurer-Cartan algebras: quasi-Lie-Rinehart, Gerstenhaber, and Batalin-Vilkovisky algebras}, in \emph{The breadth of symplectic and Poisson geometry}, Progr. Math., vol. \textbf{232}, Birkhäuser, Boston, 
2005, pp. 237–302.

\bibitem{HuebSt} J. Huebschmann, J. D. Stasheff, \emph{Formal solution of the master equation via HPT and deformation theory}, Forum
Math. \textbf{14} (2002), 847–868. MR 1932522

\bibitem{Hueb4} J. Huebschmann, \emph{The sh-Lie algebra perturbation lemma}, Forum Math. \textbf{23} (2011); \texttt{arXiv:0710.2070}.

\bibitem{Haj1} P. H\'ajek, \emph{Twisted IBL-infinity-algebra and string topology: First look and examples}; \texttt{arXiv:1811.05281 [math-ph]}.

\bibitem{Haj} P. H\'ajek, \emph{IBL-Infinity Model of String
	Topology from Perturbative
	Chern-Simons Theory}, Ph.D. Thesis, University of Augsburg, October 2019; \texttt{arXiv:2003.07933 [math-ph]}.

\bibitem{HLV} E. Hoffbeck, J. Leray, B. Vallette, \emph{Properadic homotopical calculus}, Int. Math. Res. Not., rnaa091, \texttt{https://doi.org/10.1093/imrn/rnaa091}; \texttt{arXiv:1910.05027 [math.QA]}.

\bibitem{Kad1} T. Kadeishvili, \emph{On the homology theory of fibre spaces}, Uspekhi Mat. Nauk. \textbf{35} (1980)  (Russian), english
version: \texttt{arXiv:math/0504437}.
\bibitem{Kad2} T. Kadeishvili, \emph{The algebraic structure in the cohomology of A($\infty$)-algebras}, Soobshch. Akad. Nauk
Gruzin. SSR \textbf{108} (1982), 249-252.
\bibitem{KosSchw} Y. Kosmann-Schwarzbach Y., \emph{Graded Poisson Brackets and Field Theory}, in \emph{Modern Group Theoretical Methods in Physics}, Mathematical Physics Studies, vol \textbf{18}, 189-196.

\bibitem{Kos} J.-L. Koszul,
\emph{Crochet de Schouten-Nijenhuis et cohomologie},
 Ast\'erisque, n◦ hors series, Soc. Math. Fr., 1985, pp. 257-271.

\bibitem{Krav} O. Kravchenko, \emph{Deformations of Batalin–Vilkovisky algebras}, in \emph{Poisson geometry (Warsaw, 1998)}, vol. \textbf{51} of Banach Center Publ., Polish Acad. Sci., Warsaw (2000), 131-139; \texttt{arXiv:math/9903191}

\bibitem{LaSt} T. Lada, J. Stasheff, \emph{Introduction to SH Lie algebras for physicists}, Internat. J. Theoret. Phys. \textbf{32} (1993), 1087–1103; \texttt{hep-th/9209099}.

\bibitem{LRS} R. Lawrence, N. Ranade, D. Sullivan, \emph{Quantitative towers in finite difference calculus approximating the continuum}; \texttt{arXiv:2011.07505 [math.NA]}

\bibitem{Lehn} F. Lehner, \emph{Cumulants in noncommutative probability theory I. Noncommutative exchangeability systems},
Math. Zeit. \textbf{248} (2004), 67–100; \texttt{arXiv:math/0210442 [math.CO]}

\bibitem{LV}  J.-L. Loday, B. Vallette, \emph{Algebraic operads}, Grundlehren der Mathematischen
Wissenschaften \textbf{346}, SpringerVerlag, Berlin, 2012.

\bibitem{ManDef} M. Manetti, \emph{Differential graded Lie algebras and formal deformation theory}, in \emph{Algebraic Geometry: Seattle
2005}, Proc. Sympos. Pure Math. \textbf{80}, 785-810, 2009.

\bibitem{Mcontr} M. Manetti, \emph{A relative version of the ordinary perturbation lemma}, 	Rend. Mat. Appl. \textbf{30} (2010), 221-238; \texttt{arXiv:1002.0683 [math.KT]}

\bibitem{MKos1} M. Manetti, G. Ricciardi, \emph{Universal Lie Formulas for Higher Antibrackets},
SIGMA \textbf{12} (2016), 053, 20 pages; \texttt{arXiv:1509.09032 [math.QA]}.

\bibitem{MKos2} M. Manetti, \emph{Uniqueness and intrinsic properties of non-commutative Koszul brackets}, J. Homotopy Relat. Struct. \textbf{12} (2017), 487-509; \texttt{arXiv:1512.05480 [math.QA]}.

\bibitem{Markl1} M. Markl, \emph{On the origin of higher braces and higher-order derivations}, J. Homotopy Relat. Struct. \textbf{10} (2015), 637–667; \texttt{arXiv:1309.7744 [math.KT]}.

\bibitem{Markl2} M.  Markl, \emph{Higher  braces  via  formal  (non)commutative  geometry}, in \emph{Geometric Methods in Physics. Trends in Mathematics},  Birkhäuser, 2015, 67-81; \texttt{arXiv:1411.6964 [math.AT]}.

\bibitem{MarklVor} M. Markl, A. A. Voronov, \emph{The $MV$ formalism for $IBL_\infty$ and $BV_\infty$ algebras}, Lett. Mat. Phys. \textbf{107} (2017), 1515-1543; \texttt{arXiv:1511.01591 [math.QA]}.


\bibitem{Merk} S. Merkulov, \emph{Prop of ribbon hypergraphs and strongly homotopy involutive Lie bialgebras}; \texttt{arXiv:1812.04913 [math.QA]}.

\bibitem{NWill} F. Naef, T. Willwacher, \emph{String topology and configuration spaces of two points}; \texttt{arXiv:1911.06202 [math.QA]}

\bibitem{ParkHP} J.-S. Park, \emph{Homotopy theory of probability spaces I: classical independence and homotopy Lie algebras}; \texttt{arXiv:1510.08289 [math.PR]}.

\bibitem{ParkQFT} J.S. Park, \emph{Homotopical Computations in Quantum Field Theory}; \texttt{arXiv:1810.09100 [math.QA]}.

\bibitem{SulRan} N. Ranade, D. Sullivan, \emph{The cumulant bijection and differential forms}; \texttt{arXiv:1407.0422v2}.

\bibitem{Ranade1} N. Ranade, \emph{Topological perspective on Statistical Quantities I};  \texttt{arXiv:1707.02900 [math.AT]}.

\bibitem{Ranade2} N. Ranade, \emph{Topological perspective on Statistical Quantities II};  \texttt{1710.06381v1 [math.AT]}.

\bibitem{Vit} L. Vitagliano, \emph{Representations of homotopy Lie-Rinehart algebras},
Math. Proc. Camb. Phil. Soc. \textbf{158} (2015), 155-191; 

\bibitem{Real} P. Real, \emph{Homological perturbation theory and associativity}, Homology Homotopy Appl. \textbf{2} (2000), 51-88.

\bibitem{Shih} W. Shih, \emph{Homologie des espaces fibr\'es}, Inst. Hautes \'Etudes Sci. Publ. Math. (1962), no. 13, 88.

\bibitem{Vor1} T. Voronov, \emph{Higher derived brackets and homotopy algebras}, J. Pure Appl. Algebra \textbf{202} (2005), 133-153;
\texttt{arXiv:0304038 [math.QA]}.
\bibitem{Vor2} T. Voronov, \emph{Higher derived brackets for arbitrary derivations}, Travaux math\'ematiques, fasc. XVI, Univ.
Luxemb., Luxembourg (2005), 163-186; \texttt{arXiv:0412202 [math.QA]}.

\bibitem{AVor} A. A. Voronov, \emph{Quantizing deformation theory II}; Pure and Applied Mathematics Quarterly \textbf{16} (2020), 125-152;  \texttt{arXiv:1806.11197 [math.QA]}.
\end{document}